\numberwithin{equation}{section}
\DeclareMathOperator\supp{supp}
\newtheorem{theorem}{Theorem}[section]
\newtheorem{proposition}[theorem]{Proposition}
\newtheorem{lemma}[theorem]{Lemma}
\newtheorem{definition}[theorem]{Definition}
\newtheorem{remark}[theorem]{Remark}
\newtheorem{corollary}[theorem]{Corollary}
\newtheorem{convention}[theorem]{Convention}
\newtheorem{assumption}[theorem]{Standing assumption}
\DeclareMathOperator*{\esssup}{ess\,sup}
\begin{document}
\title[Grushin Pseudo-multipliers]{Sparse bounds for pseudo-multipliers associated to Grushin operators, II}

\author[S. Bagchi, R. Basak, R. Garg and A. Ghosh]
{Sayan Bagchi \and Riju Basak \and Rahul Garg \and Abhishek Ghosh}

\address[S. Bagchi]{Department of Mathematics and Statistics, Indian Institute of Science Education and Research Kolkata, Mohanpur--741246, West Bengal, India.}
\email{sayansamrat@gmail.com}

\address[R. Basak]{Department of Mathematics, Indian Institute of Science Education and Research Bhopal, Bhopal--462066, Madhya Pradesh, India.}
\email{riju.basak@iiserb.ac.in}

\address[R. Garg]{Department of Mathematics, Indian Institute of Science Education and Research Bhopal, Bhopal--462066, Madhya Pradesh, India.}
\email{rahulgarg@iiserb.ac.in}

\address[A. Ghosh]{Tata Institute of Fundamental Research, Centre For Applicable Mathematics, Bangalore--560065, Karnataka, India.} 
\email{abhi170791@gmail.com}

\subjclass[2020]{58J40, 43A85, 42B25}

\keywords{Grushin operator, pseudo-differential operators, maximal operator, sparse operators, Muckenhoupt weights}

\begin{abstract}
In this article, we establish pointwise sparse domination results for Grushin pseudo-multipliers corresponding to various symbol classes, as a continuation of our investigation initiated in \cite{BBGG-1}. As a consequence, we deduce quantitative weighted estimates for these pseudo-multipliers.
\end{abstract}
\maketitle

\tableofcontents

\section{Introduction} \label{sec:intro}
This article is a continuation of our work \cite{BBGG-1} where we have considered pseudo-multipliers associated to Grushin operators and studied sparse operator bounds for pseudo-multipliers with symbols satisfying Mihlin--H\"ormander type conditions. In this article, we  analyse other symbol classes associated to Grushin operators and study their quantitative weighted boundedness. Before discussing our main results, let us first review some well known results regarding pseudo-differential operators on the Euclidean space which serve as the motivation for our investigation. 
\subsection{Pseudo-differential operators on the Euclidean space} \label{subsec:intro-euclidean-pseudo-diff}
The pseudo-differential operator $m(x, D)$ associated with a function $m \in L^{\infty} \left( \mathbb{R}^{n} \times \mathbb{R}^{n} \right) $ is defined by 
$$m(x, D)f(x) := \int_{\mathbb{R}^{n}} m(x, \xi) \widehat{f}(\xi) e^{i x \cdot \xi} \, d\xi, $$
for Schwartz class functions $f$ on $\mathbb{R}^{n}$, where $\widehat{f}$ denotes the Fourier transform of $f$ which is defined by $\widehat{f}(\xi) = (2\pi)^{-n/2} \int_{\mathbb{R}^{n}} f(x) e^{- i x \cdot \xi} \, dx.$ Recall the following class of symbols. For any $\sigma \in \mathbb{R},$ and $ \rho, \delta \geq 0$, $S^{\sigma}_{\rho,\delta} \left( \mathbb{R}^{n} \right)$ denote the set consisting of all  functions $m \in C^\infty \left( \mathbb{R}^{n} \times \mathbb{R}^{n} \right)$ such that for all $\alpha, \beta \in \mathbb{N}^{n}$, 
$$ \left| \partial_{x}^\beta \partial_\xi^\alpha m(x, \xi)\right| \lesssim_{\alpha, \beta} (1+|\xi|)^{\sigma - \rho |\alpha| + \delta |\beta|}.$$ 

Pseudo-differential operators associated with the symbol class $S^{0}_{1,0} \left( \mathbb{R}^{n} \right)$ belong to the realm of Calder\'on--Zygmund operators and they are bounded on $ L^p(\mathbb{R}^{n}) $ for $1 <p < \infty$ (see \cite{SteinHarmonicBook93}) and also they are of weak type $(1, 1)$. For others symbol classes we recall first the following fundamental result of Fefferman.

\begin{theorem}[Fefferman \cite{Fefferman-Israel-Journal}]
Fix $0< a<1$ and $0\leq \delta<1-a$. 
\begin{itemize}
\item For $m\in S^{-na/2}_{1-a, \delta} \left( \mathbb{R}^{n} \right)$, the operator $m(x, D)$ extends to a bounded operator from the Hardy space $H^1(\mathbb{R}^n)$ to $L^1(\mathbb{R}^n)$. Also, $m(x, D)$ admits a bounded extension from $L^{\infty}(\mathbb{R}^n)$ to $BMO(\mathbb{R}^n)$.

\item In general, for any $\sigma \leq 0$ and $m \in S^{\sigma}_{1-a, \delta}$, the operator $m(x, D)$ is bounded on $L^{p}(\mathbb{R}^n)$ provided $|\frac{1}{p}-\frac{1}{2}| \leq - \sigma/na$.
\end{itemize}
\end{theorem}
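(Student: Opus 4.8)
The plan is to deduce the full statement from two endpoint facts by duality and interpolation, so that only one substantial estimate remains. Since $S^{\sigma}_{1-a,\delta}(\mathbb{R}^n)\subseteq S^{\sigma'}_{1-a,\delta}(\mathbb{R}^n)$ whenever $\sigma\le\sigma'$, it suffices to treat the critical order $\sigma=-na/2$ (which yields the endpoints $1/p\in\{0,1\}$) together with order $\sigma=0$ (which yields $p=2$). Indeed, for an arbitrary $m\in S^{\sigma}_{1-a,\delta}$ with $-na/2<\sigma<0$ one embeds $m(x,D)$ into an analytic family $\{T_z\}$ of pseudo-differential operators with symbols $(1+|\xi|^2)^{w(z)}m(x,\xi)$, $w$ affine in $z$, arranged so that the order of $T_z$ is $-na/2$ on one edge of the strip, $0$ on the other, and equals $\sigma$ at a real interior point $z_0$ with $T_{z_0}=m(x,D)$; Stein's analytic interpolation theorem applied between the $H^1\to L^1$, $L^2\to L^2$ and $L^\infty\to BMO$ bounds for $\{T_z\}$ (using complex interpolation between Hardy and Lebesgue spaces), combined with the standard duality between $L^p\to L^p$ and $L^{p'}\to L^{p'}$ estimates for the adjoint operator, then produces boundedness on the full range $|1/p-1/2|\le -\sigma/(na)$ claimed in the theorem. (For $\sigma\le -na/2$ the claimed range is all of $[1,\infty]$ and the inclusion $S^{\sigma}_{1-a,\delta}\subseteq S^{-na/2}_{1-a,\delta}$ reduces this to the endpoint directly.)

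The $L^2$ endpoint is the Calder\'on--Vaillancourt theorem: $m(x,D)$ is bounded on $L^2(\mathbb{R}^n)$ for every $m\in S^{0}_{1-a,\delta}(\mathbb{R}^n)$, which applies precisely because $0\le\delta<1-a<1$ (so $\delta\le\rho<1$ with $\rho=1-a$); one decomposes the symbol at unit frequency scale and applies the Cotlar--Stein almost-orthogonality lemma to the resulting operators and their adjoint compositions. For the other endpoint, since $S^{-na/2}_{1-a,\delta}$ is closed under adjoints (the symbolic expansion $m(x,D)^\ast\sim\sum_\alpha\tfrac{1}{\alpha!}\partial_\xi^\alpha D_x^\alpha\overline{m}$ is asymptotic because $\delta<1-a$) and $(H^1)^\ast=BMO$, it is enough to prove the single bound
\[
\|m(x,D)b\|_{L^1(\mathbb{R}^n)}\lesssim 1\qquad\text{uniformly over all $H^1$-atoms $b$},\qquad m\in S^{-na/2}_{1-a,\delta}(\mathbb{R}^n);
\]
the $L^\infty\to BMO$ bound is then the dual statement for the (again admissible) adjoint symbol.

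To prove this atomic bound I would Littlewood--Paley decompose $m=\sum_{k\ge 0}m_k$ with $m_k$ supported in $\{|\xi|\sim 2^k\}$ and $|\partial_x^\beta\partial_\xi^\alpha m_k(x,\xi)|\lesssim 2^{k(-na/2-(1-a)|\alpha|+\delta|\beta|)}$, and then, following Fefferman, split each frequency annulus further into $\sim 2^{nak}$ balls of radius $2^{(1-a)k}$, $m_k=\sum_\nu m_{k,\nu}$; non-stationary phase (integration by parts in $\xi$) shows that each $m_{k,\nu}(x,D)$ has kernel a modulated bump essentially concentrated in $\{|x-y|\lesssim 2^{-(1-a)k}\}$ with $L^1(dy)$-mass $\lesssim 2^{-nak/2}$, together with the corresponding off-scale rapid decay and $y$-regularity. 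Given an atom $b$ supported in $B=B(x_0,r)$ with $\|b\|_\infty\le|B|^{-1}$ and $\int b=0$, split $\mathbb{R}^n=2B\sqcup(2B)^c$. On $2B$ use Cauchy--Schwarz and the $L^2$ bound: $\|m(x,D)b\|_{L^1(2B)}\le|2B|^{1/2}\|m(x,D)b\|_{L^2}\lesssim r^{n/2}\|b\|_{L^2}\lesssim 1$. On $(2B)^c$, decompose dyadically in $|x-x_0|\sim 2^{\ell}r$ ($\ell\ge 1$), use $\int b=0$ to replace $m(x,D)b(x)$ by $\int(K(x,y)-K(x,x_0))b(y)\,dy$, and estimate the contribution of each annulus by Cauchy--Schwarz together with an off-diagonal $L^2$ bound of the shape $\|m(x,D)b\|_{L^2(|x-x_0|\sim 2^\ell r)}\lesssim 2^{-\ell(n/2+\varepsilon)}r^{-n/2}$; summing the geometric series in $\ell$ gives $\lesssim 1$. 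The off-diagonal bound is where the two decompositions are used: for the low frequencies ($2^{(1-a)k}\cdot 2^\ell r\lesssim 1$) one exploits the cancellation of $b$, for the high frequencies the rapid kernel decay, and throughout one must retain the \emph{almost-orthogonality} of the pieces $m_{k,\nu}(x,D)$ (essentially disjoint frequency supports, the $x$-dependence being harmless since $\delta<1-a$) rather than summing the Littlewood--Paley series by the triangle inequality.

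The main obstacle is precisely this last point. At frequency $2^k$ there are $\sim 2^{nak}$ pieces, each of size $\sim 2^{-nak/2}$ in the relevant $L^1$ and $L^2$ norms, so a triangle-inequality summation in $(k,\nu)$ diverges; a model symbol such as $e^{i|\xi|^{1-a}}(1+|\xi|^2)^{-na/4}$, whose kernel behaves like $|x-y|^{-n}$ times a sharp oscillation near the diagonal, shows that it diverges by exactly the critical amount, and that the correct $O(1)$ bound in the far region is recovered only by keeping the cross-frequency oscillation — equivalently, by a stationary-phase analysis of the full kernel $K=\sum_k K_k$, or by genuine use of the almost-orthogonality above. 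This balance of $2^{nak}$ against $2^{-nak/2}$ is what makes the estimate delicate and what forces the order $-na/2$ and the line $|1/p-1/2|=-\sigma/(na)$ to be sharp. In the present paper these Euclidean statements serve only as motivation for the Grushin setting, so we do not reproduce the argument in detail.
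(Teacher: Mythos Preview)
The paper does not prove this statement: it is quoted in the introduction as Fefferman's classical theorem \cite{Fefferman-Israel-Journal}, purely to motivate the Grushin results, and no argument is given beyond the citation. Your closing sentence already recognises this.

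As for the sketch itself, it is a faithful outline of Fefferman's original strategy: reduction to the endpoint $\sigma=-na/2$ by analytic interpolation against the Calder\'on--Vaillancourt $L^2$ bound at $\sigma=0$, passage to the adjoint for the $L^\infty\to BMO$ statement (using $\delta<1-a$ so that the class is closed under adjoints), and for the $H^1\to L^1$ bound the Littlewood--Paley decomposition followed by the second dyadic decomposition of each annulus $\{|\xi|\sim 2^k\}$ into $\sim 2^{nak}$ balls of radius $2^{(1-a)k}$, with the crucial point being that almost-orthogonality (not the triangle inequality) must be used to sum the $\sim 2^{nak}$ pieces of size $\sim 2^{-nak/2}$. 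You have identified the genuine obstacle correctly. One small comment: in the local estimate on $2B$ you silently use $S^{-na/2}_{1-a,\delta}\subset S^{0}_{1-a,\delta}$ to invoke the $L^2$ bound, which is fine but worth saying. It may also be worth noting that the paper's own Grushin analogue (Theorem~\ref{thm:unweightedLp:pseudo}) follows instead the \'Alvarez--Hounie route---Hardy--Littlewood--Sobolev for the near part and direct kernel-difference estimates for the far part---rather than Fefferman's second decomposition, since the latter does not transfer cleanly to the Grushin geometry.
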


We also refer to \cite{Alvarez-Hounie} for more general results in this direction. Since we are primarily interested in weighted estimates for pseudo-differential operators, we first mention the work of Miller  \cite{Miller-pseudodifferential-TAMS1982} where the author established weighted $L^p$-estimates for pseudo-differential operators belonging to the symbol class $S^{0}_{1, 0}(\mathbb{R}^n)$. Also, recall the following result of Chanillo and Torchinsky \cite{Chanillo-Torchinsky-weighted-pseudodifferential} where the authors obtained weighted estimates for pseudo-differential operators with symbols belonging to $S^{-na/2}_{1-a, \delta} (\mathbb{R}^n)$ with $0\leq \delta < 1-a < 1$ using suitable estimates for the Fefferman--Stein sharp maximal function $\mathcal{M}^{\sharp}$ acting on $m(x, D)$. 

Let $\mathcal{M}$ stand for the uncentered Hardy--Littlewood maximal function on $\mathbb{R}^n$ and for $1<r<\infty$, $\mathcal{M}_{r}f : = (\mathcal{M}(|f|^r))^{1/r}$.

\begin{theorem}[Chanillo--Torchinsky \cite{Chanillo-Torchinsky-weighted-pseudodifferential}] \label{thm:Chanillo-Torchinsky}
For $m \in S^{-na/2}_{1-a, \delta} \left( \mathbb{R}^{n} \right)$ with $0 \leq \delta < 1-a < 1$, 
$$\mathcal{M}^{\sharp}( m(\cdot, D) f) (x) \lesssim \mathcal{M}_2 f (x), $$
for all $f \in C_c^\infty \left( \mathbb{R}^{n} \right)$. Consequently, for any $2 < p < \infty$ and $w \in A_{p/2}$, the operator $ m(x, D)$ maps $L^p \left( \mathbb{R}^{n}, w \right)$ to itself.
\end{theorem}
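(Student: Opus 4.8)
The plan is to first establish the pointwise estimate $\mathcal{M}^{\sharp}(m(\cdot,D)f)(x)\lesssim\mathcal{M}_2 f(x)$ and then combine it with the Fefferman--Stein inequality and the weighted bounds for $\mathcal{M}_2$. For the pointwise estimate, recall that $\mathcal{M}^{\sharp}h(x)\lesssim\sup_{Q\ni x}\inf_{c\in\mathbb{C}}\frac{1}{|Q|}\int_Q|h-c|$, so it suffices to exhibit, for every cube $Q$ centred at $x_Q$ with side $\ell$ and every $x\in Q$, a constant $c_Q$ with $\frac{1}{|Q|}\int_Q|m(\cdot,D)f-c_Q|\lesssim\mathcal{M}_2 f(x)$. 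I would decompose the symbol by a Littlewood--Paley partition in the frequency variable: fix $\psi\in C_c^\infty(\mathbb{R}^n)$ supported in $\{1/2\le|\xi|\le 2\}$ with $\sum_{j\ge 1}\psi(2^{-j}\xi)=1$ for $|\xi|\ge 1$, set $m_0:=m\bigl(1-\sum_{j\ge 1}\psi(2^{-j}\cdot)\bigr)$, $m_j(x,\xi):=m(x,\xi)\psi(2^{-j}\xi)$ for $j\ge 1$, and write $m(\cdot,D)=\sum_{j\ge 0}T_j$ with $T_j:=m_j(\cdot,D)$. Integrating by parts in $\xi$ yields, for the Schwartz kernel $\mathcal{K}_j$ of $T_j$ and every $N$,
\[
\bigl|\partial_x^\beta\partial_z^\gamma\mathcal{K}_j(x,z)\bigr|\lesssim_{\beta,\gamma,N} 2^{j(n+\sigma+\delta|\beta|+|\gamma|)}\bigl(1+2^{j(1-a)}|x-z|\bigr)^{-N},\qquad \sigma=-\tfrac{na}{2},
\]
so $\mathcal{K}_j$ is essentially concentrated in $\{|x-z|\lesssim 2^{-j(1-a)}\}$; moreover, since $\sigma\le 0$ and $\delta<1-a<1$, the classical $L^2$-theory (Calder\'on--Vaillancourt) gives $\|m(\cdot,D)\|_{L^2\to L^2}<\infty$, uniformly over symbols with the same seminorm bounds, together with the natural bounds $\|T_j\|_{L^2\to L^2}\lesssim 2^{j\sigma}$.

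Next, fix a cube $Q$ as above and split $f=f_1+f_2$ with $f_1:=f\chi_{3\sqrt{n}\,Q}$ and $f_2:=f-f_1$; take $c_Q:=m(\cdot,D)f_2(x_Q)$, which is well defined since $f_2$ vanishes near $Q$ and $\mathcal{K}(x_Q,\cdot)$ is smooth there. For the local piece, H\"older's inequality and the $L^2$-boundedness of $m(\cdot,D)$ give
\[
\frac{1}{|Q|}\int_Q|m(\cdot,D)f_1|\le\Bigl(\frac{1}{|Q|}\int_{\mathbb{R}^n}|m(\cdot,D)f_1|^2\Bigr)^{1/2}\lesssim\Bigl(\frac{1}{|Q|}\int_{3\sqrt{n}\,Q}|f|^2\Bigr)^{1/2}\lesssim\mathcal{M}_2 f(x),
\]
using $3\sqrt{n}\,Q\subset B(x,C\ell)$ for $x\in Q$. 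This is precisely where $\mathcal{M}_2$, rather than $\mathcal{M}$, becomes unavoidable: symbols in this class need not be of weak type $(1,1)$, so no $L^1$-average of $f$ can appear on the right.

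The heart of the matter is the far piece $\frac{1}{|Q|}\int_Q\bigl|m(\cdot,D)f_2(y)-c_Q\bigr|\,dy$. I would decompose $f_2=\sum_{k\ge 1}f^{(k)}$, with $f^{(k)}:=f_2\chi_{R_k}$ and $R_k:=\{2^k\ell\le|z-x_Q|<2^{k+1}\ell\}$, and for each $k$ split the sum $\sum_{j\ge 0}T_j f^{(k)}$ into the frequencies for which $2^j$ is small, comparable, and large relative to $\ell^{-1}$ (equivalently, $2^{-j(1-a)}\gtrsim 2^k\ell$, $\approx 2^k\ell$, or $\ll 2^k\ell$). Subtracting $c_Q$ produces the gain carried by the difference kernel estimate
\[
\bigl|\mathcal{K}_j(y,z)-\mathcal{K}_j(x_Q,z)\bigr|\lesssim_{N}\min\bigl(1,2^{j}\ell\bigr)\,2^{j(n+\sigma)}\bigl(1+2^{j(1-a)}|y-z|\bigr)^{-N}\qquad(y\in Q),
\]
together with its higher-order refinements; on the frequencies where $\mathcal{K}_j$ is too spread out for this to suffice, one instead appeals to $L^2$-orthogonality, grouping the relevant $T_j$'s into a single operator that still lies in $S^{-na/2}_{1-a,\delta}$ with uniformly controlled seminorms (hence is $L^2$-bounded with uniform constant), and applying the Cauchy--Schwarz inequality on each annulus via $\|f^{(k)}\|_{L^2}\le(C2^k\ell)^{n/2}\mathcal{M}_2 f(x)$. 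The exponent $\sigma=-na/2$ is exactly calibrated so that, after these estimates, the resulting double series in $j$ and $k$ is geometric and sums to $\lesssim\mathcal{M}_2 f(x)$. I expect this step to be the main obstacle: the bare triangle inequality $\sum_j|T_j f|$ diverges (since $\|T_j\|_{L^1\to L^1}$ grows in $j$), so the off-diagonal decay of the $\mathcal{K}_j$, the oscillation gain from the $\mathrm{BMO}$ constant, and the $L^2$-orthogonality must be interwoven with the correct powers of $\ell$ and summation ranges in $j$.

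Finally, with $\mathcal{M}^{\sharp}(m(\cdot,D)f)\lesssim\mathcal{M}_2 f$ established, let $2<p<\infty$ and $w\in A_{p/2}$. For $f\in C_c^\infty(\mathbb{R}^n)$ the function $m(\cdot,D)f$ is smooth and rapidly decaying, hence lies in $L^p(w)$ (note $A_{p/2}\subset A_\infty$), so the Fefferman--Stein inequality $\|h\|_{L^p(w)}\lesssim\|\mathcal{M}^{\sharp}h\|_{L^p(w)}$, valid for $w\in A_\infty$, applies to $h=m(\cdot,D)f$ and gives $\|m(\cdot,D)f\|_{L^p(w)}\lesssim\|\mathcal{M}_2 f\|_{L^p(w)}$. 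Since $\|\mathcal{M}_2 f\|_{L^p(w)}^p=\|\mathcal{M}(|f|^2)\|_{L^{p/2}(w)}^{p/2}$ and $p/2>1$, the boundedness of $\mathcal{M}$ on $L^{p/2}(w)$ (which holds as $w\in A_{p/2}$) yields $\|\mathcal{M}_2 f\|_{L^p(w)}\lesssim\|f\|_{L^p(w)}$; a density argument then extends the bound to all $f\in L^p(w)$.
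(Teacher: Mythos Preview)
This statement is not proved in the paper; it is quoted from \cite{Chanillo-Torchinsky-weighted-pseudodifferential} as background and motivation. There is therefore no proof in the paper to compare your proposal against directly.

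That said, the paper does prove a Grushin analogue (Theorem~\ref{thm:Chanilloy-type}), and it is worth contrasting your sketch with that argument, since the latter follows the Chanillo--Torchinsky strategy. The key structural difference lies in the local/far split when the cube is small. You cut $f$ at the scale of $Q$ itself ($f_1=f\chi_{3\sqrt n\,Q}$), which makes the local piece trivial by $L^2$-boundedness but pushes all the difficulty into the far piece. The paper instead cuts at the \emph{dilated} scale $r^{1-a}$ when $r<1$: the local piece then needs more, namely the $L^2$-boundedness of $(I+G)^{Qa/4}T$ (in the Euclidean case, that $(1+|\xi|^2)^{na/4}m(x,\xi)\in S^0_{1-a,\delta}$) combined with a Hardy--Littlewood--Sobolev inequality, but in return the far piece is handled purely by the weighted kernel estimates for each $T_j$, summed as a geometric series in $j$ and over dyadic annuli, with no orthogonality argument required. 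The gap between $r$ and $r^{1-a}$ is exactly what makes the kernel sums converge.

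With your cut at scale $r$, the step you flag as ``the main obstacle'' is genuinely unresolved. For the annulus $R_k$ and the frequencies $j$ with $2^{-j(1-a)}\approx 2^k\ell$, neither the mean-value gain $\min(1,2^j\ell)$ nor the off-diagonal decay $(1+2^{j(1-a)}2^k\ell)^{-N}$ is small, and a triangle inequality over such $j$ gives a contribution of size $\sim 2^{j(n+\sigma)}\cdot(2^k\ell)^{n/2}\,\mathcal{M}_2f(x)$ per $j$ without a damping factor. Your remedy of ``grouping the relevant $T_j$'s into a single $S^{-na/2}_{1-a,\delta}$ operator and applying $L^2$-boundedness'' does not, as stated, produce a bound summable in $k$: the resulting $L^2$-norm over $R_k$ is $\lesssim(2^k\ell)^{n/2}\mathcal{M}_2f(x)$, and you still have to divide by something that decays in $k$, which you have not supplied. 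The $r^{1-a}$ cut-off is precisely the device that sidesteps this issue; if you prefer to keep the naive cut, you would need to make the orthogonality argument quantitative (e.g.\ via almost-orthogonality of the $T_j$ together with support considerations), and that is not a detail one can wave through.
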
 

Operators $m(x, D)$ with $m \in S^{-na/2}_{1-a, \delta} \left( \mathbb{R}^{n} \right)$ are more singular in nature and have close connections with strongly singular integral operators studied by Fefferman \cite{Fefferman-Strongly-Acta-Math-1970}. Motivated by the above result, Michalowski, Rule and Staubach \cite{Michalowski-Rule-Staubach-Canad2012} studied $ m(x, D)$ corresponding to $m \in S^{-na}_{1-a, \delta} \left( \mathbb{R}^{n} \right)$. Increasing the decay from $-na/2$ to $-na$, it was proved that $m(x, D)$ is bounded from $L^p \left( \mathbb{R}^{n}, w \right)$ to itself for any $1< p < \infty$ and $w \in A_{p}$. In fact, they made use of some sharp kernel estimates and proved their result by proving the following estimate. 
\begin{theorem}[Michalowski--Rule--Staubach \cite{Michalowski-Rule-Staubach-Canad2012}] \label{thm:Michalowski-Rule-Staubach}
Let $m \in S^{-na}_{1-a, \delta} \left( \mathbb{R}^{n} \right)$ for $0< a < 1$ and $0 \leq \delta \leq 1-a$. Then, for any $1 < p < \infty$ we have 
$$ \mathcal{M}^{\sharp}( m(\cdot, D) f) (x) \lesssim \mathcal{M}_p f (x), $$
for all $f \in C_c^\infty \left( \mathbb{R}^{n} \right)$. Consequently, for any $1 < p < \infty$ and $w \in A_p$, the operator $ m(x, D)$ maps $L^p \left( \mathbb{R}^{n}, w \right)$ to itself.
\end{theorem}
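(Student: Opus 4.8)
The plan is to reduce everything to a pointwise estimate on the Fefferman–Stein sharp maximal function $\mathcal{M}^\sharp$, exactly as in the Chanillo–Torchinsky scheme, but exploiting the extra decay $-na$ (rather than $-na/2$) to replace the $L^2$-averaging $\mathcal{M}_2$ by $L^p$-averaging $\mathcal{M}_p$ for every $p>1$. Fix $f\in C_c^\infty(\mathbb{R}^n)$ and a ball $B=B(x_0,R)$; we must bound the oscillation $\frac{1}{|B|}\int_B |m(\cdot,D)f - c_B|$ by $\inf_{y\in B}\mathcal{M}_p f(y)$ for a suitable constant $c_B$. First I would perform a Littlewood–Paley-type decomposition of the symbol in the $\xi$ variable, $m = \sum_{j\ge 0} m_j$ with $m_j$ supported in $\{|\xi|\sim 2^j\}$ (and $m_0$ near the origin), so that the operator splits as $m(x,D) = \sum_j m_j(x,D)$ where each $m_j(x,D)$ has kernel $K_j(x,z)$ obeying, by the symbol estimates in $S^{-na}_{1-a,\delta}$ and integration by parts, the sharp bounds
\begin{equation*}
|K_j(x,x-z)| \lesssim_N 2^{jn(1-a)}\, 2^{-jna}\,\bigl(1+2^{j(1-a)}|z|\bigr)^{-N} = 2^{jn(1-2a)}\bigl(1+2^{j(1-a)}|z|\bigr)^{-N},
\end{equation*}
together with the corresponding Hölder/gradient estimates in $x$ picking up a factor $2^{j\delta}$ per derivative. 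These are precisely the "sharp kernel estimates" referenced; the gain over the $-na/2$ case is that the total kernel has better integrability against $L^p$ for $p$ close to $1$.

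Next, for the fixed ball $B$ of radius $R$, split the sum at the scale $2^{-j(1-a)}\sim R$, i.e. let $j_0$ be determined by $2^{-j_0(1-a)}\sim R$. For the "low-frequency" part $\sum_{j\le j_0} m_j(x,D)f$, I would split $f = f\mathbf{1}_{2B} + f\mathbf{1}_{(2B)^c}$: the local piece is controlled directly on $B$ in $L^p$-average using the $L^p$-boundedness of each $m_j(x,D)$ (which follows from Fefferman's theorem, the second bullet, since $-na \le 0$ gives $L^p$ boundedness for all $p$) summed against a geometric series in $j$; the far piece is handled by the rapid off-diagonal decay of $K_j$, using that $|z|\gtrsim R \gtrsim 2^{-j(1-a)}$ forces the kernel factor $(1+2^{j(1-a)}|z|)^{-N}$ to be summable and to produce the tail average $\mathcal{M}_p f$. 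For the "high-frequency" part $\sum_{j> j_0} m_j(x,D)f$, I would subtract the constant $c_B = \sum_{j>j_0} m_j(x,D)f(x_0)$ (or an appropriate average) and use the $x$-regularity of the kernel: the Hölder estimate contributes $(R\,2^{j(1-a)})^{\epsilon}2^{j\delta \epsilon'}$-type gains, but since $R\,2^{j(1-a)} \ge 1$ here one instead uses cancellation from the difference $K_j(x,\cdot)-K_j(x_0,\cdot)$ combined with the mean value theorem in the first variable and the decay $2^{jn(1-2a)}$; because $\delta<1-a$ the resulting series $\sum_{j>j_0}$ converges and is again dominated by $\mathcal{M}_p f$ on $B$. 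Summing the two ranges yields $\mathcal{M}^\sharp(m(\cdot,D)f)(x)\lesssim \mathcal{M}_p f(x)$.

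Finally, the weighted consequence is standard: given $w\in A_p$, choose $r>1$ with $w\in A_{p/r}$ (possible by the openness of the $A_p$ classes and by taking the exponent in the sharp maximal estimate close enough to $1$), so that $\mathcal{M}_r$ is bounded on $L^p(w)$; combining this with the Fefferman–Stein inequality $\|g\|_{L^p(w)}\lesssim \|\mathcal{M}^\sharp g\|_{L^p(w)}$ (valid for $w\in A_\infty$, with the a priori finiteness of $\|m(\cdot,D)f\|_{L^p(w)}$ for $f\in C_c^\infty$ coming from the kernel bounds) gives $\|m(\cdot,D)f\|_{L^p(w)}\lesssim \|\mathcal{M}_r f\|_{L^p(w)}\lesssim \|f\|_{L^p(w)}$, and density finishes the argument. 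The main obstacle is the high-frequency range: one must extract enough cancellation from $K_j(x,\cdot)-K_j(x_0,\cdot)$ — where the naive Hölder gain goes the wrong way since $R\,2^{j(1-a)}\ge 1$ — to beat the growth $2^{jn(1-2a)}$ and the derivative loss $2^{j\delta}$, and it is exactly here that the hypotheses $\sigma=-na$ (not $-na/2$) and $\delta<1-a$ are used in an essential way; getting the bookkeeping of these competing exponents right, uniformly in $B$, is the technical heart of the proof.
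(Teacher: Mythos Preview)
This theorem is quoted from \cite{Michalowski-Rule-Staubach-Canad2012} as motivation; the paper does not give its own proof, though it does prove direct Grushin analogues (Lemma~\ref{lem:staubach-Intermediate}, Theorem~\ref{thm:Chanilloy-type}) whose structure one can compare against.

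Your sketch contains genuine errors. First, the kernel bound double-counts the decay: with $m_j$ supported where $|\xi|\sim 2^j$ one has $|m_j|\lesssim 2^{-jna}$ on a set of volume $\sim 2^{jn}$, giving $|K_j(x,z)|\lesssim 2^{jn(1-a)}(1+2^{j(1-a)}|z|)^{-N}$, not $2^{jn(1-2a)}(\cdots)$; the spurious extra factor $2^{-jna}$ is exactly what you later invoke to make the high-frequency series converge. Second, the organisation of the frequency ranges is backwards and the claimed gains are not there. For the low-frequency far piece you write ``$|z|\gtrsim R\gtrsim 2^{-j(1-a)}$'', but for $j\le j_0$ the inequality goes the other way, $2^{-j(1-a)}\ge R$, so the kernel is \emph{not} small on $(2B)^c$ and there is nothing to sum without first subtracting a constant and using a mean-value estimate; conversely, for $j>j_0$ the mean-value factor $R\cdot 2^{j(1-a)}\ge 1$ is no gain at all, as you yourself note, and no alternative ``cancellation from the difference'' rescues this. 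In the actual argument (and in the paper's Grushin version, see the proofs of Lemma~\ref{lem: kernel-difference-p} and Theorem~\ref{thm:Chanilloy-type}) one splits $f$ once at radius $\sim R^{1-a}$, controls the near part via a Sobolev-type argument exploiting the order-$na$ smoothing of the full operator, and for the far part uses the mean-value estimate on $K_j(y,\cdot)-K_j(x_0,\cdot)$ precisely for the \emph{low} frequencies while bounding $|K_j(y,v)|+|K_j(x_0,v)|$ directly, with no subtraction, for the \emph{high} frequencies.
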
 

Recently, Beltran and Cladek \cite{Beltran-Cladek-sparse-pseudodifferential} established sparse domination for pseudo-differential operators on $\mathbb{R}^{n}$. Their work recover not only the previously known weighted boundedness results for classes $S^{- na/2}_{1-a, \delta} \left( \mathbb{R}^{n} \right)$ and $S^{-na}_{1-a, \delta} \left( \mathbb{R}^{n} \right)$, but it also establishes quantitative weighted estimates for operators corresponding to symbols coming from the intermediate classes. Namely, it follows from their results that for $m \in S^{\sigma}_{1-a, \delta}$ with $-na \leq \sigma \leq -na/2$, the operators $m(x, D)$ are bounded on $L^p  \left( \mathbb{R}^{n}, w \right)$ for $r_{e} < p < \infty$ and $w \in A_{p/r_{e}}$ where $r_{e} = - na /  \sigma $. Moreover, they could also prove weighted estimates for symbol classes with lesser decay, namely, for $S^{\sigma}_{1-a, \delta}  \left( \mathbb{R}^{n} \right)$ with $ -na/2 < \sigma < 0$. But, we do not concern our self with those results, and for interested readers we refer Corollary $4.2$ in \cite{Beltran-Cladek-sparse-pseudodifferential}.   

The main contribution of the present article is in establishing some analogous results in the context of Grushin pseudo-multipliers. Our approach also relies on the sparse domination technique. Note that in the context of the Grushin operator, we need not have the adjoint operator $m(x, G_{\varkappa})^*$ to be a pseudo-multiplier operator, a fact crucially used in \cite{Beltran-Cladek-sparse-pseudodifferential}. Therefore, our focus in the present work is to obtain linear sparse forms, which in fact provide a stronger pointwise control. In \cite{Beltran-Cladek-sparse-pseudodifferential}, linear sparse forms were shown only in the case of $\sigma = -na$. We succeed in establishing linear sparse forms in the range $-na \leq \sigma < -na/2$. We also have some weighted boundedness result for $\sigma = -na/2$, which we shall explain in Sections \ref{subsec:about-chanillo-type-class} and \ref{Subsec:intro-joint-functional}. 

In order to present our results in detail, let us first recall some preliminaries.

%%%%%%%%%%%%%%%%%%%%%%%%%%%%%%%%%%
%%%%%%%%%%%%%%%%%%%%%%%%%%%%%%%%%%
%%%%%%%%%%%%%%%%%%%%%%%%%%%%%%%%%%
%%%%%%%%%%%%%%%%%%%%%%%%%%%%%%%%%%
%%%%%%%%%%%%%%%%%%%%%%%%%%%%%%%%%%
%%%%%%%%%%%%%%%%%%%%%%%%%%%%%%%%%%
%%%%%%%%%%%%%%%%%%%%%%%%%%%%%%%%%%

\subsection{Grushin pseudo-multipliers} \label{subsec:intro-symb-class}
We start with recalling some preliminaries. For more details, we refer \cite{BBGG-1}. Let us denote the points in $\mathbb{R}^{n_1 + n_2}$ as $x = (x^{\prime}, x^{\prime \prime}) \in \mathbb{R}^{n_1} \times \mathbb{R}^{n_2}$. For each $\varkappa \in \mathbb{N}_+$, we consider Grushin operator $G_{\varkappa} = - \Delta_{x^{\prime}} - V_{\varkappa} \left(x^{\prime} \right) \Delta_{x^{\prime \prime}}$, 
where $V_{\varkappa} \left(x^{\prime} \right)$ is either $|x^{\prime}|^{2 \varkappa}$ or $\sum_{j = 1}^{n_1} {x_j^{\prime}}^{2 \varkappa}$. 

Using inverse Fourier transform in the last variable, for a dense class of functions, we can write 
$$ G_{\varkappa} f (x) = \int_{\mathbb{R}^{n_2}} e^{-i \lambda \cdot x^{\prime \prime}} \left( H_{\varkappa} (\lambda) f^{\lambda} \right) (x^{\prime}) \, d\lambda, $$
where, for $\lambda \neq 0$, $H_{\varkappa}(\lambda) = - \Delta_{x^{\prime}} + |\lambda|^2 |x^{\prime}|^{2 \varkappa}$ or $H_{\varkappa}(\lambda) = - \Delta_{x^{\prime}} + |\lambda|^2 \sum_{j = 1}^{n_1} {x_j^{\prime}}^{2 \varkappa} $, depending on the choice of $V_{\varkappa} (x^{\prime})$ and $f^\lambda(x^{\prime}) = (2 \pi)^{- n_2} \int_{\mathbb{R}^{n_2}} f (x^{\prime}, x^{\prime \prime}) e^{i \lambda \cdot x^{\prime \prime}} \, d x^{\prime \prime}$. 

In particular, when $\varkappa = 1$, for each $\lambda \neq 0$, operators $H_{1}(\lambda) = H(\lambda) = - \Delta_{x^{\prime}} + |\lambda|^2 |x^{\prime}|^2$ are the scaled Hermite operators on $\mathbb{R}^{n_1}$ and we simply denote $G_{1}$ by $G$ itself.

Using spectral decomposition of $H_{\varkappa}(\lambda)$ we have the following representation 
$$G_{\varkappa} f (x) = \int_{\mathbb{R}^{n_2}} e^{-i \lambda \cdot x^{\prime \prime}} \sum_{k \in \mathbb{N}} |\lambda|^{\frac{2}{\varkappa + 1}} \nu_{\varkappa, k} \left( f^{\lambda}, h^{\lambda}_{\varkappa, k} \right) h^{\lambda}_{\varkappa, k} (x^\prime) \, d\lambda,$$ 
where $ \{ h_{\varkappa, k} : k \in \mathbb{N} \}$ is a complete orthonormal basis of $L^2 \left( \mathbb{R}^{n_1} \right)$ such that $H_{\varkappa} (1) h_{\varkappa, k} = \nu_{\varkappa, k} h_{\varkappa, k}$ with $0 < \nu_{\varkappa, 1} \leq \nu_{\varkappa, 2} \leq \nu_{\varkappa, 3} \leq \ldots$ and $\lim_{k \to \infty} \nu_{\varkappa, k} = \infty$, and for each $k \in \mathbb{N}$ and $\lambda \neq 0$, $$ h^{\lambda}_{\varkappa, k} (x^\prime):= |\lambda|^{\frac{n_1}{2 (\varkappa + 1)}} h_{\varkappa, k} \left( |\lambda|^{\frac{1}{\varkappa + 1}} x^\prime \right).$$
For $\varkappa = 1 $, the eigen functions $h_{\varkappa, k}$ are the well-known scaled Hermite functions. 

Let us now define pseudo-multipliers associated to Grushin operators.

\begin{definition}
Given $m \in L^\infty \left( \mathbb{R}^{n_1 + n_2} \times \mathbb{R}_+ \right) $, the Grushin pseudo-multiplier $m(x, G_{\varkappa})$ is defined densely on $L^2 \left( \mathbb{R}^{n_1 + n_2} \right)$ by 
\begin{equation} \label{Gru-pseudo}
m(x, G_{\varkappa}) f(x) := \int_{\mathbb{R}^{n_2}} e^{-i \lambda \cdot x^{\prime \prime}} \sum_{k \in \mathbb{N}} m \left( x, |\lambda|^{\frac{2}{\varkappa + 1}} \nu_{\varkappa, k} \right) \left( f^{\lambda}, h^{\lambda}_{\varkappa, k} \right) h^{\lambda}_{\varkappa, k} (x^\prime) \, d\lambda. 
\end{equation}
\end{definition}
Throughout this article, $K_{m (x, G_{\varkappa})}$ will denote the integral kernel of the Grushin pseudo-multiplier operator $m (x, G_{\varkappa})$. Also, $m(G_{\varkappa})$ will simply denote the Grushin multiplier corresponding to a bounded function $m$ defined on $\mathbb{R}_{+}$. 
%%%%%%%%%%%%%%%%%%%%%%%%%%%%%%%%%%
%%%%%%%%%%%%%%%%%%%%%%%%%%%%%%%%%%
%%%%%%%%%%%%%%%%%%%%%%%%%%%%%%%%%%
%%%%%%%%%%%%%%%%%%%%%%%%%%%%%%%%%%
%%%%%%%%%%%%%%%%%%%%%%%%%%%%%%%%%%
%%%%%%%%%%%%%%%%%%%%%%%%%%%%%%%%%%
%%%%%%%%%%%%%%%%%%%%%%%%%%%%%%%%%%

Recall that Grushin operator $G_{\varkappa}$ can be expressed as a negative sum of $X^2_{j}$'s and $X^2_{\alpha, k}$'s where $X_{j}$ and $X_{\alpha, k}$ are first order gradient vector fields defined as follows
\begin{align} \label{first-order-grad}
X_{j} = \frac{\partial}{\partial x_j^{\prime}} \quad \textup{and} \quad X_{\alpha, k} = {x^{\prime}}^{\alpha} \frac{\partial}{\partial x_k^{\prime \prime}},
\end{align}
for $1 \leq j \leq n_1$, $1 \leq k \leq n_2$, and $\alpha \in \mathbb{N}^{n_1}$ with $|\alpha| = \varkappa$. 

Let us denote by $X$ the first order gradient vector field 
\begin{align} \label{first-order-grad-vector}
X := (X_j, X_{\alpha,k})_{1 \leq j \leq n_1, \, 1 \leq k \leq n_2, \, |\alpha| = \varkappa}. 
\end{align}

We consider symbol classes $\mathscr{S}^{\sigma}_{\rho, \delta}(G_{\varkappa})$, defined as follows.
\begin{definition} \label{grushin-symb-old-def1}
For any $\sigma \in \mathbb{R}$ and $\rho, \delta \geq 0$, we  say that a function $m \in C^\infty \left(\mathbb{R}^{n_1 + n_2} \times \mathbb{R}_+ \right)$, belongs to the symbol class $\mathscr{S}^{\sigma}_{\rho, \delta}(G_\varkappa)$ if it satisfies the following estimate: 
\begin{align} \label{def-grushin-symb-old}
\left|X^\Gamma \partial_{\eta}^l m(x, \eta) \right| \lesssim_{\Gamma, l} (1+\eta)^{\frac{\sigma}{2}- (1 + \rho) \frac{l}{2} + \delta \frac{|\Gamma|}{2}} 
\end{align} 
for all $\Gamma \in \mathbb{N}^{n_{0}}$ and $l \in \mathbb{N}$, where $n_0 = n_1 + n_2 \binom{\varkappa + n_1 - 1}{n_1 - 1}$. \end{definition}

For $N \in \mathbb{N}$, we define the seminorm for the above symbol classes as follows:
\begin{align} \label{def-grushin-symb-seminorm}
\| m \|_{\mathscr{S}^{\sigma, N}_{\rho,\delta}} = \sup_{|\Gamma| + l \leq N} \sup_{x, \eta}  (1+\eta)^{-\frac{\sigma}{2} + (1 + \rho) \frac{l}{2} - \delta \frac{|\Gamma|}{2}} \left| X^\Gamma \partial_{\eta}^l m(x, \eta) \right|. 
\end{align} 
For convenience, we will use a shortened notation $\| m \|_{\mathscr{S}^\sigma_{\rho,\delta}}$ where the number of derivatives $N$ should be clear from the context where it is used. 

In \cite{Bagchi-Garg-1}, an analogue of the Calder\'on-Vaillancourt theorem for the Grushin operator $G = G_1$ was shown to be true for symbol classes $\mathscr{S}^{0}_{\rho, \delta}(G)$ with $0 \leq \delta < \rho \leq 1$. In \cite{BBGG-1}, we were primarily concerned with symbol classes $\mathscr{S}^{0}_{1, \delta}(G_\varkappa), \, \delta<1$, and we established appropriate sparse bounds with an emphasis on the number of derivatives of symbol functions. 

The present article is dedicated to study sparse bounds and quantitative weighted estimates for the symbol classes $\mathscr{S}^{-Qa/q}_{1-a, \delta}(G_\varkappa)$ for $0 < a  < 1$, $\delta \leq 1-a$, and $1 \leq q < 2$. 

We also prove weighted estimates for pseudo-multipliers corresponding to the symbol class $\mathscr{S}^{-Qa/2}_{1-a, \delta}(\boldsymbol{L}, \boldsymbol{U})$, with $0 < a  < 1$, $\delta \leq 1-a$, associated with the joint functional calculus of the Grushin operator $G$. 

In the next few subsections we describe our main results.
%%%%%%%%%%%%%%%%%%%%%%%%%%%%%%%%%%
%%%%%%%%%%%%%%%%%%%%%%%%%%%%%%%%%%
%%%%%%%%%%%%%%%%%%%%%%%%%%%%%%%%%%
%%%%%%%%%%%%%%%%%%%%%%%%%%%%%%%%%%
%%%%%%%%%%%%%%%%%%%%%%%%%%%%%%%%%%
%%%%%%%%%%%%%%%%%%%%%%%%%%%%%%%%%%
%%%%%%%%%%%%%%%%%%%%%%%%%%%%%%%%%%

\subsection{Main results for classes \texorpdfstring{$\mathscr{S}^{ - Qa/q  }_{1-a, \delta}(G_{\varkappa})$}{} with \texorpdfstring{$1\leq q< 2$}{}} \label{subsec:intro-main-results}

Before discussing our results, let us record here that unless otherwise stated, throughout this article we work under the following convention on parameters. 

\begin{convention}
We always take $0 < a < 1, \, 0 \leq \delta \leq 1-a $, and $\, 0 \leq \delta \leq \rho < 1$. 
\end{convention} 

We also have the following standing assumption, which plays an important role in our analysis. 
\begin{assumption} \label{assumption-weighted-L2}
We assume that for each $m \in \mathscr{S}^{0}_{\rho, \delta}(G_\varkappa)$, the operator 
\begin{align*}
m ( \cdot, G_\varkappa): L^2 ( |B(\cdot, 1)|^{ \mathfrak{b} } ) \to L^2 (|B(\cdot, 1)|^{\mathfrak{b}}) %\tag{Weighted-$L^2$}
\end{align*}
is bounded for all $0 \leq \mathfrak{b} < 1$, with the operator norm bound depending only on $\mathfrak{b}$ and the symbol seminorm $\| m \|_{\mathscr{S}^0_{\rho, \delta}}$. 
\end{assumption}

The following remark is in order.

\begin{remark} \label{rem:justifying-assumption-weighted-L2} 
Standing assumption \ref{assumption-weighted-L2} may at first look artificial but we will show in Proposition \ref{prop:weighted-CV} that given any $m \in \mathscr{S}^{0}_{\rho, \delta}(G)$, with $\delta < \rho$, the operator $m(x, G)$ maps $L^2(|B(\cdot, 1)|^{\mathfrak{b}})$ to itself for all $\, \mathfrak{b} \geq 0$. 

In general, since even an analogue of the Calder\'on-Vaillancourt type theorem is not known, we have to assume throughout this article that for any symbol $ m \in \mathscr{S}^{0}_{\rho, \delta} (G_\varkappa)$, the pseudo-multiplier operator $m(x, G_\varkappa)$ is bounded on $ L^2 ( |B(\cdot,  1)|^{\mathfrak{b}})$ for all $ 0 \leq \mathfrak{b} < 1 $.  
\end{remark}

We now present the following theorem regarding the end-point boundedness of pseudo-multipliers associated to the symbol class $\mathscr{S}^{-Qa}_{1 - a, \delta}(G_\varkappa)$. This result is not only quintessential in our subsequent proofs of sparse domination, it is also very important in its own right. Let $H^1_{G_\varkappa} \left( \mathbb{R}^{n_1 + n_2} \right)$ denote the Hardy space associated to the Grushin operator $G_\varkappa$ which we will explain in detail in Subsection~\ref{Hardy-interpolation}.
\begin{theorem} \label{thm:unweightedLp:pseudo}
Given $m \in \mathscr{S}^{-Qa}_{1 - a, \delta}(G_\varkappa)$, the operator $m(x, G_\varkappa)$ is bounded from $H^1_{G_\varkappa} \left( \mathbb{R}^{n_1 + n_2} \right)$ to $L^1 \left( \mathbb{R}^{n_1 + n_2} \right)$. Consequently, $m(x, G_\varkappa)$ is $L^p$-bounded for all $1 < p < 2$. 
\end{theorem}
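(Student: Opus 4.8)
The plan is to prove the Hardy-to-$L^1$ bound by establishing a suitable molecular/atomic decomposition estimate: it suffices to show that $m(x, G_\varkappa)$ maps $H^1_{G_\varkappa}$-atoms into $L^1$-functions with norm bounded uniformly over all atoms (and then extend by density). So first I would recall the definition of an $H^1_{G_\varkappa}$-atom $b$ associated to a Grushin ball $B = B(x_0, r)$ in the control distance: $\supp b \subseteq B$, $\|b\|_{L^2} \le |B|^{-1/2}$, and the cancellation condition $b = G_\varkappa u$ for some $u$ with appropriate size control (or, in the heat-semigroup formulation, $b = (I - e^{-r^2 G_\varkappa})^M \tilde b$); the precise form will be fixed in Subsection~\ref{Hardy-interpolation}. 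Writing the $L^1$-norm of $m(x, G_\varkappa) b$ as the sum of a local part $\int_{2B} |m(x, G_\varkappa) b|\, dx$ and a tail part $\int_{(2B)^c} |m(x, G_\varkappa) b|\, dx$, the local part is handled by Cauchy--Schwarz together with the $L^2$-boundedness of $m(x, G_\varkappa)$ (which follows from Standing Assumption~\ref{assumption-weighted-L2} with $\mathfrak b = 0$, or directly since $\sigma = -Qa < 0$) and the size bound on $b$: $\int_{2B}|m(x, G_\varkappa)b| \le |2B|^{1/2}\|m(x, G_\varkappa)b\|_{L^2} \lesssim |B|^{1/2}\|b\|_{L^2} \lesssim 1$.

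The substantial part is the tail estimate, and this is where the symbol condition $m \in \mathscr{S}^{-Qa}_{1-a,\delta}(G_\varkappa)$ enters decisively. The strategy is to decompose $m$ in the spectral variable à la Fefferman: write $m(x,\eta) = \sum_{j \ge 0} m_j(x, \eta)$ where $m_j$ is localized to $\eta \sim 2^j$ (i.e. $m_j(x,\eta) = m(x,\eta)\phi(2^{-j}\eta)$ for a Littlewood--Paley bump $\phi$), so that $m_j \in \mathscr{S}^{-Qa}_{1-a,\delta}$ with $\|m_j\|_{\mathscr{S}^{-Qa}_{1-a,\delta}} \lesssim 1$ uniformly and the symbol estimates give $|X^\Gamma \partial_\eta^l m_j(x,\eta)| \lesssim 2^{-jQa/2} 2^{-j(1-a)l/2}2^{j\delta|\Gamma|/2}$ on $\eta \sim 2^j$. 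For each dyadic piece one derives kernel bounds for $K_{m_j(x,G_\varkappa)}$: using the finite-propagation-speed / integration-by-parts machinery for $G_\varkappa$ (the same kind of estimates underlying the heat kernel Gaussian bounds and the Mihlin--Hörmander theory used in \cite{BBGG-1}), the kernel of $m_j(x, G_\varkappa)$ is essentially supported, in the $y$-variable, in a Grushin ball of radius $\sim 2^{-j(1-a)/2}$ around $x$ and decays rapidly off it, with an $L^1_y$-type bound of size $2^{-jQa/2} \cdot (\text{volume factor})$. The key point is that for an atom at scale $r$, only the frequencies $2^j \gtrsim r^{-2}$ contribute to the region far from $B$ (at distance $\gtrsim r$), because lower frequencies have kernel support of radius $\lesssim 2^{-j(1-a)/2}$ which, combined with cancellation of the atom, produces extra smallness; and for $2^j \gtrsim r^{-2}$ the decay $2^{-jQa/2}$ is summable against the geometric series. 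One then carries out the standard splitting of $(2B)^c$ into dyadic annuli $2^{k+1}B \setminus 2^k B$ and sums: the gain $2^{-jQa/2}$ balances against the growth in annulus volume and the loss from the cancellation estimate, precisely because the decay order is the critical $-Qa$ (not merely $-Qa/2$), which is exactly why this theorem is stated at the endpoint exponent.

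The precise technical engine will be: (i) for atoms given through the semigroup, use $b = (I - e^{-r^2 G_\varkappa})^M \tilde b$ with $M$ large, move the operator $(I - e^{-r^2 G_\varkappa})^M$ onto the kernel of $m_j(x,G_\varkappa)$, and exploit that $m_j(x,\eta)(1 - e^{-r^2\eta})^M$ is $O((r^2 2^j)^M)$ small when $r^2 2^j \ll 1$, giving summability over the low frequencies $2^j \ll r^{-2}$; (ii) for the high frequencies $2^j \gtrsim r^{-2}$, use the off-diagonal kernel estimates to get $\int_{(2B)^c}|m_j(x,G_\varkappa)b|\,dx \lesssim 2^{-jQa/2}(2^{j}r^2)^{-\epsilon'}$ or similar for some $\epsilon' > 0$ after Cauchy--Schwarz against weighted $L^2$-bounds of the kernel, again summable. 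Combining the local and tail estimates yields $\|m(x, G_\varkappa) b\|_{L^1} \lesssim 1$ uniformly in the atom, hence the $H^1_{G_\varkappa} \to L^1$ boundedness. The consequence for $1 < p < 2$ then follows by interpolation: $m(x, G_\varkappa)$ is bounded on $L^2$ (Assumption~\ref{assumption-weighted-L2}) and from $H^1_{G_\varkappa}$ to $L^1$, and interpolating between $H^1_{G_\varkappa}$ and $L^2$ (as will be explained in Subsection~\ref{Hardy-interpolation}, the real-interpolation scale between $H^1_{G_\varkappa}$ and $L^2$ recovers $L^p$ for $1 < p < 2$) gives $L^p$-boundedness.

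I expect the main obstacle to be item (ii): obtaining quantitatively sharp off-diagonal (weighted $L^2$ or $L^1$) kernel estimates for the dyadic pieces $m_j(x, G_\varkappa)$ that are uniform in $j$ and have the correct dependence on both the frequency-localization scale $2^{-j(1-a)/2}$ and the decay $2^{-jQa/2}$, while carefully tracking how the $x$-derivatives in the symbol (the $X^\Gamma$ with the $\delta$-loss) interact with the Grushin geometry and the doubling/homogeneous-dimension constant $Q$. This is the technical heart, and it is precisely the Grushin analogue of the kernel estimates Fefferman and, later, Michalowski--Rule--Staubach used in the Euclidean setting; the non-self-adjointness issue flagged in the introduction does not obstruct this particular argument since we only need bounds on $m(x,G_\varkappa)$ itself, not on its adjoint.
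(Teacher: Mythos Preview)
Your overall architecture is right --- atomic reduction, local/tail split, Littlewood--Paley decomposition in the spectral variable --- but there is a genuine gap in the tail estimate for small balls, and it is exactly where the symbol class $\mathscr{S}^{-Qa}_{1-a,\delta}$ differs from $\mathscr{S}^0_{1,0}$.

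The issue is your choice of splitting at $2B$. For a ball $B=B(\mathfrak z,r)$ with $r<1$, the dyadic piece $m_j(x,G_\varkappa)$ with $2^j\sim r^{-2}$ has a kernel whose effective support scale is $2^{-j(1-a)/2}\sim r^{1-a}$, \emph{not} $r$. If you integrate the kernel over $\{d(x,\mathfrak z)>2r\}$ and sum over $j>j_0\sim -2\log_2 r$, you pick up a factor $r^{-a/2}$ (try it with the pointwise kernel bound and the annular decomposition: the gain from summing $2^{-j(1-a_0)/4}$ over $j>j_0$ is $r^{(1-a_0)/2}$, but the loss from the annuli starting at radius $r$ rather than $r^{1-a_0}$ is $r^{-1/2}$). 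So the high-frequency tail does not sum with your split. The semigroup-cancellation mechanism you propose for low frequencies does nothing here, since for $2^j\gtrsim r^{-2}$ the factor $(1-e^{-r^2\eta})^M$ is $\sim 1$.

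The paper's remedy is to split at the enlarged ball $B'=B(\mathfrak z, Cr^{1-a_0})$ instead of $2B$. This makes the tail summable (Lemma~\ref{lem:unweightedLp-kernel}), but now the local part $\int_{B'}|m(x,G_\varkappa)h|\,dx$ is no longer controlled by plain Cauchy--Schwarz, because $|B'|/|B|$ blows up as $r\to 0$. This is where the Standing Assumption \ref{assumption-weighted-L2} (weighted $L^2$-boundedness with weight $|B(\cdot,1)|^{\mathfrak b}$) and the Hardy--Littlewood--Sobolev inequality (Theorem~\ref{theorem:Sobolev-embedding}) enter: one writes $m(x,G_\varkappa)=\bigl(m(x,G_\varkappa)(I+G_\varkappa)^{Qa_0/4}\bigr)(I+G_\varkappa)^{-Qa_0/4}$, uses the weighted $L^2$-bound on the first factor, and HLS on the second to pass from $L^{2/(1+a_0)}$ to $L^2$, which exactly compensates the ratio $|B'|^{1/2}|B|^{-(1-a_0)/2}$. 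This is the step you are missing, and it is the technical heart of the argument.

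Two smaller points: first, the paper works with \emph{classical} $(1,2)$-atoms (support in $B$, mean zero, $\|h\|_2\le |B|^{-1/2}$), which is legitimate because the Grushin heat semigroup is conservative with two-sided Gaussian bounds, so $H^1_{G_\varkappa}=H^1_{at,2}$; your proposal mixes these with operator-adapted atoms of the form $(I-e^{-r^2G_\varkappa})^M\tilde b$, which are a different object and would require a different framework. Second, for the low-frequency tail ($j\le j_0$) the paper uses the mean-value estimate (Lemma~\ref{lem:general-grushin-Mean-value}) on $K_j(x,y)-K_j(x,\mathfrak z)$ together with gradient kernel bounds, not a semigroup cancellation trick.
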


Since $( \mathbb{R}^{n_1 + n_2}, d_{G_\varkappa}, |\cdot|)$ is a homogeneous space, in order to develop sparse domination results we rely on the dyadic structure provided by Christ's dyadic grid $\mathcal{S}$. For more relevant details, we refer to Subsection $2.5$ of \cite{BBGG-1}. We say a collection of measurable sets $S \subset \mathcal{S}$ to be a $\eta$-sparse family (for some $0 < \eta < 1$) if for every member $\mathcal{Q}\in {S}$ there exists a set $E_{\mathcal{Q}} \subseteq \mathcal{Q}$ such that $|E_{Q}| \geq \eta |\mathcal{Q}|$. Corresponding to a sparse family ${S}$ and $1 \leq r < \infty$, we define the sparse operator as follows: 
\begin{align} \label{def:Sparse-operator} 
\mathcal{A}_{r, S} f(x) = \sum_{\mathcal{Q} \in S} \left( \frac{1}{|\mathcal{Q}|} \int_{\mathcal{Q}}|f|^r \right)^{1/r} \chi_{\mathcal{Q}}(x).
\end{align} 
We simply write $\mathcal{A}_{S}$ for $\mathcal{A}_{1, S}$. The precise quantitative estimates for sparse operators in terms of the $A_p$ characteristic follow from Proposition $4.1$ in \cite{Lorist-pointwaise-sparse2021}. 
\begin{theorem}
\label{thm:Main-Sparse-Result}
Let $ \sigma = - Qa/q $ for some $ 1 \leq q < 2$. Given $ m \in \mathscr{S}^{\sigma}_{1 - a, \delta}(G_{\varkappa}) $, for every compactly supported function and every $ q < r < \infty $, there exists a sparse family ${S}$ such that
\begin{align}
|m(x, G_\varkappa)f(x)| \lesssim \mathcal{A}_{r, S} f(x)
\end{align}
holds true a.e. $x \in \mathbb{R}^{n_1 + n_2}$.
\end{theorem}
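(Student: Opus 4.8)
The plan is to derive the pointwise sparse domination by combining three ingredients: a Calder\'on--Zygmund type decomposition of the symbol in the spectral variable, the endpoint $H^1_{G_\varkappa} \to L^1$ estimate of Theorem \ref{thm:unweightedLp:pseudo}, and the standard recursive stopping-time argument that produces the sparse family. Fix $\sigma = -Qa/q$ with $1 \le q < 2$ and pick $q < r < \infty$. First I would reduce matters to a local estimate: using Christ's dyadic grid $\mathcal{S}$, it suffices to show that for every cube $\mathcal{Q} \in \mathcal{S}$ one has a ``good-$\lambda$''/recursive inequality of the form
\begin{align*}
\Big( \int_{\mathcal{Q}} |m(x, G_\varkappa)(f \chi_{3\mathcal{Q}})(x)|^{r_0} \, dx \Big)^{1/r_0} \lesssim |\mathcal{Q}|^{1/r_0} \Big( \frac{1}{|\mathcal{Q}|} \int_{3\mathcal{Q}} |f|^r \Big)^{1/r} + \sum_{\mathcal{Q}' } (\cdots)
\end{align*}
for a suitable $q < r_0 \le r$, where the sum runs over a disjoint collection of dyadic subcubes $\mathcal{Q}'$ whose union has measure at most (say) $\tfrac12 |\mathcal{Q}|$; iterating this bound and collecting the principal terms yields the sparse family $S$ with the $\mathcal{A}_{r,S}$ bound. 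This is the by-now routine part of the machinery (cf.\ \cite{Lorist-pointwaise-sparse2021} and the first paper \cite{BBGG-1}), so the real content is verifying the two quantitative inputs that feed the recursion.

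The first input is a \emph{weak-type / restricted-strong-type estimate at the level $r$}: I would decompose the symbol $m \in \mathscr{S}^{-Qa/q}_{1-a,\delta}(G_\varkappa)$ dyadically in $\eta$, writing $m = \sum_{j \ge 0} m_j$ with $m_j$ supported where $\eta \sim 2^{2j}$ (i.e.\ at ``frequency'' $2^j$). Each piece $m_j$, after rescaling, behaves like a symbol of order $0$ with the relevant spatial localization at scale $2^{-j(1-a)}$, and the decay $(1+\eta)^{\sigma/2} = (1+\eta)^{-Qa/(2q)}$ translates into an operator-norm gain of $2^{-jQa/q}$. Interpolating between the $H^1_{G_\varkappa}\to L^1$ bound for the full operator (Theorem \ref{thm:unweightedLp:pseudo}, available because $-Qa \le \sigma$ is the limiting case and the intermediate $\sigma$ is handled by real interpolation with the $L^2$-type bound from Standing Assumption \ref{assumption-weighted-L2}) and the $L^2$ boundedness, one obtains $L^{p_\sigma}$-boundedness of $m(x, G_\varkappa)$ where $\tfrac{1}{p_\sigma} - \tfrac12 = \tfrac{-\sigma}{Qa} \cdot \tfrac12 = \tfrac{1}{2q}$, i.e.\ $p_\sigma = \tfrac{2q}{q+1} \in (q, 2)$; more importantly, kernel estimates for each $m_j$ off a ball of radius $\sim 2^{-j(1-a)}$ (summable because of the $2^{-jQa/q}$ gain) give the tail/Cotlar-type control needed to run the stopping time. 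The second input is the \emph{off-diagonal kernel decay}: for $x$ outside a fixed multiple of a cube $\mathcal{Q}'$ of side $\ell$, $|K_{m(x,G_\varkappa)}(x,y)|$ integrated over $\mathcal{Q}'$ is bounded, after summing the $m_j$ contributions, by $C \big(\tfrac{1}{|\mathcal{Q}'|}\int_{\mathcal{Q}'} |f|^r\big)^{1/r}$ uniformly — this is exactly where the choice $r > q$ is forced, since the kernel of $m_j$ is only in $L^{r'}$-type spaces locally with the summation in $j$ converging precisely when $r > q$ (the exponent $r = q$ being the endpoint, just excluded).

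Concretely, the steps in order are: (i) dyadic decomposition $m = \sum_j m_j$ in the spectral variable and rescaling each $m_j$ to order-zero form; (ii) establish $L^{p_\sigma}$ boundedness of $m(x,G_\varkappa)$ (hence weak $(p_\sigma, p_\sigma)$, hence a localized $L^r$-to-$L^r$ control for $r > q \ge p_\sigma'$-adjacent range) by interpolating Theorem \ref{thm:unweightedLp:pseudo} with the weighted-$L^2$ Assumption \ref{assumption-weighted-L2} and Marcinkiewicz; (iii) prove the summable-in-$j$ off-diagonal kernel bounds for $m_j$, using the finite-propagation-speed / heat-kernel Gaussian estimates for $G_\varkappa$ as in \cite{BBGG-1} together with the gain $2^{-jQa/q}$; (iv) assemble (ii) and (iii) into the single-scale recursive inequality; (v) iterate over Christ's grid to extract the $\eta$-sparse family $S$ and conclude $|m(x,G_\varkappa)f(x)| \lesssim \mathcal{A}_{r,S} f(x)$ a.e. The main obstacle I expect is step (iii) combined with the fact that, as the authors note, $m(x,G_\varkappa)^*$ need not be a pseudo-multiplier: one cannot symmetrize the argument as in the Euclidean case of \cite{Beltran-Cladek-sparse-pseudodifferential}, so the off-diagonal estimates and the local $L^r$ control must be arranged to hold in the \emph{linear} (one-sided) form, which requires carefully tracking the spatial localization scale $2^{-j(1-a)}$ of each $m_j$ against the dyadic cube scale and exploiting that $\delta \le 1-a$ keeps the spatial-derivative losses subordinate to the frequency-localization gains. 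The borderline summation in $j$ at $r = q$ is the precise reason the sparse exponent must satisfy $r > q$ and cannot be improved to $r = q$ by this method.
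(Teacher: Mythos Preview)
Your overall architecture is in the right spirit, but the paper organizes the argument differently and more cleanly, and a couple of your specific claims are off.

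The paper does \emph{not} run a direct recursive stopping-time argument with off-diagonal kernel bounds as you describe. Instead it appeals to the abstract sparse domination principle of Lorist \cite{Lorist-pointwaise-sparse2021} (Theorem~1.1 there), which reduces everything to two inputs: (a) a weak-type $(p,p)$ bound for $T = m(x,G_\varkappa)$ at some $p$, and (b) a weak-type bound for the \emph{grand maximal truncated operator} $\mathcal{M}^{\sharp}_{T,s}$ defined in \eqref{def:grand-maximal-truncated-operator}. Input (a) is Theorem~\ref{thm:unweightedLp:pseudo} when $q=1$ and Theorem~\ref{thm:unweighted-Lq-intermediate-class} (complex interpolation, not real/Marcinkiewicz as you suggest) when $1<q<2$, giving $L^q$-boundedness directly---not your $L^{p_\sigma}$ with $p_\sigma = 2q/(q+1)$. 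Input (b) is the pointwise estimate $\mathcal{M}^{\sharp}_{T,s} f \lesssim \mathcal{M}_p f$ for every $p>q$ (Lemma~\ref{lem:staubach-Intermediate}), which is the technical heart of the proof. That lemma is driven by $L^{p'}$ bounds on the \emph{kernel differences} $K_j(y,v)-K_j(\mathfrak z,v)$ over dyadic annuli (Lemma~\ref{lem: kernel-difference-p}), obtained by interpolating the $L^2$ and $L^\infty$ weighted Plancherel estimates of Corollary~\ref{cor:wighted-plancherel-estimates-for-symbol-kernel} and splitting the $j$-sum at $2^{j_0}\sim r^{-2}$ with a mean-value argument for $j\le j_0$. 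Your step (iii) speaks of off-diagonal decay of $K_{m(x,G_\varkappa)}(x,y)$ itself, but the argument genuinely needs the \emph{oscillation} $K_j(y,\cdot)-K_j(z,\cdot)$ for $y,z$ in a common ball; without that cancellation the low-frequency sum $j\le j_0$ does not close. Likewise, the reason $r>q$ is forced is not a local $L^{r'}$-integrability threshold for the kernel, but the simultaneous constraints on the auxiliary exponent $L$ in Lemma~\ref{lem: kernel-difference-p} (see \eqref{est1:lem:staubach-Intermediate-r0<1}--\eqref{est2:lem:staubach-Intermediate-r0<1}), which require $Q/p < L < Q/q$ and its $(n_1+n_2)$-analogue, hence $p>q$.
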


We employ the following grand maximal truncated operator in our proof of Theorem \ref{thm:Main-Sparse-Result}: For any linear operator $T$, and any $s > 0$, we consider the grand maximal truncated operator defined by 
\begin{align} \label{def:grand-maximal-truncated-operator}
\mathcal{M}^{\#}_{T, s} f(x) = \sup_{B: B \ni x} \esssup \limits_{y,z \in B} \left| T(f \chi_{\mathbb{R}^{n_1 + n_2}\setminus s B})(y)-T(f \chi_{\mathbb{R}^{n_1 + n_2}\setminus s B})(z) \right|, 
\end{align}
where the supremum is taken over all balls $B$ containing the point $x$.

It is known that the sparse domination results for $T$ follow once we have an appropriate end-point boundedness of $\mathcal{M}^{\#}_{T, s}$. More precisely, for a sublinear operator $T$, if we have 
\begin{itemize}
\item $T$ is of weak type $(p, p)$ for some $1 \leq p < \infty$, 

\item $\mathcal{M}^{\#}_{T, s}$ is weak type $(q, q)$ for some $1 \leq q < \infty$, 
\end{itemize}
and $s \geq \frac{3 C_{0}^{2}}{\delta}$, then there is an $0 < \eta < 1$ such that for every compactly supported bounded measurable function $f$, there exist an $\eta$-sparse family $S \subset \mathcal{S}$ such that for almost every $x \in \mathbb{R}^{n_1 + n_2}$, we have 
\begin{align*}
|T f(x)| \lesssim_{S, s} C_T \mathcal{A}_{r, S} f(x), 
\end{align*} 
where $r=\max\{p, q\}$ and $C_T = \|T\|_{L^p \to L^{p, \infty}} + \| \mathcal{M}^{\#}_{T, s} \|_{L^q \to L^{q, \infty}}$. 

The above methodology was first shown by Lerner and Ombrosi on Euclidean spaces in \cite{Lerner-Ombrosi-pointwaise-sparse2020}, and the same was extended to spaces of homogeneous type by Lorist in \cite{Lorist-pointwaise-sparse2021}.

Concluding weighted estimates from sparse operator bounds is standard by now and hence we do not provide details and rather conclude the following weighted estimates. We refer Proposition $4.1$ in \cite{Lorist-pointwaise-sparse2021} for more details. 

\begin{theorem} \label{thm:Main-results}
Let $\sigma = - Qa/q$ for some $1 \leq q < 2$. Given $m \in \mathscr{S}^{\sigma}_{1-a, \delta}(G_{\varkappa})$, the operator $m(x, G_\varkappa)$ is bounded on $L^p(w)$ to itself for $w \in A_{p/q}(\mathbb{R}^{n_1 + n_2})$ and for all $q < p < \infty$.
\end{theorem}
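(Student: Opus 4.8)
\textbf{Proof plan for Theorem \ref{thm:Main-results}.}
The plan is to deduce the weighted bounds directly from the pointwise sparse domination in Theorem \ref{thm:Main-Sparse-Result} together with the known quantitative weighted bounds for sparse operators. First I would fix $\sigma = -Qa/q$ with $1 \le q < 2$ and a symbol $m \in \mathscr{S}^{\sigma}_{1-a,\delta}(G_\varkappa)$, and fix $q < p < \infty$ and $w \in A_{p/q}(\mathbb{R}^{n_1+n_2})$. Since $p/q > 1$, we may choose an exponent $r$ with $q < r < p$ close enough to $q$ that $w \in A_{p/r}$ as well; this is possible because $A_{p/q} \subset A_{p/r}$ for $r \le q$—wait, one must be careful with the direction—so instead I would invoke the openness property of Muckenhoupt classes: $w \in A_{p/q}$ implies $w \in A_{(p/q) - \varepsilon}$ for some $\varepsilon > 0$, equivalently $w \in A_{p/r}$ for some $r$ slightly larger than $q$, and we may assume in addition $r < p$. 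For this $r$, Theorem \ref{thm:Main-Sparse-Result} gives, for each compactly supported $f$, a sparse family $S$ (depending on $f$) with $|m(x,G_\varkappa)f(x)| \lesssim \mathcal{A}_{r,S}f(x)$ a.e.

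Next I would recall the standard weighted estimate for the sparse operator $\mathcal{A}_{r,S}$ in the space of homogeneous type $(\mathbb{R}^{n_1+n_2}, d_{G_\varkappa}, |\cdot|)$ equipped with Christ's dyadic grid: for $r < p < \infty$ and $w \in A_{p/r}$,
\begin{align*}
\| \mathcal{A}_{r,S} f \|_{L^p(w)} \lesssim [w]_{A_{p/r}}^{\max\{1, \frac{1}{p/r - 1}\}} \, \| f \|_{L^p(w)},
\end{align*}
with the implicit constant independent of the sparse family $S$. This is exactly the content of Proposition $4.1$ in \cite{Lorist-pointwaise-sparse2021} (applied with the rescaled exponents, i.e. to $|f|^r$ and the linear sparse operator $\mathcal{A}_{S}$), as already cited in the excerpt. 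Combining this with the pointwise bound yields $\| m(x,G_\varkappa)f \|_{L^p(w)} \lesssim \| f \|_{L^p(w)}$ for all compactly supported $f$, with a constant independent of $f$.

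Finally I would upgrade from compactly supported $f$ to all of $L^p(w)$ by a density argument: compactly supported bounded functions are dense in $L^p(w)$ (using that $w \in A_{p/q} \subset A_\infty$ is locally integrable and finite a.e., so $L^p(w)$-approximation by truncations and mollifications is routine), and the uniform bound on the dense subclass extends $m(x,G_\varkappa)$ to a bounded operator on $L^p(w)$ agreeing with the original densely-defined operator. The only genuinely delicate point is the choice of $r$: one needs $q < r < p$ with $w \in A_{p/r}$ simultaneously, and the fact that Theorem \ref{thm:Main-Sparse-Result} allows any $r > q$ (arbitrarily close to $q$) is precisely what makes the self-improvement (openness) of $A_{p/q}$ usable here; the rest is a routine assembly of citable ingredients, so I do not expect any serious obstacle beyond bookkeeping the dependence of constants.
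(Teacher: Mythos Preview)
Your proposal is correct and follows exactly the route the paper indicates: the paper does not give a standalone proof of Theorem \ref{thm:Main-results} but simply states that the weighted estimates are standard consequences of the sparse domination in Theorem \ref{thm:Main-Sparse-Result} via Proposition 4.1 of \cite{Lorist-pointwaise-sparse2021}. You have in fact supplied more detail than the paper does---in particular the use of the openness of $A_{p/q}$ to find $r$ with $q<r<p$ and $w\in A_{p/r}$, which is the one small (but standard) point needed to match the sparse operator $\mathcal{A}_{r,S}$ to the weight class $A_{p/q}$.
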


%%%%%%%%%%%%%%%%%%%%%%%%%%%%%%%%%%
%%%%%%%%%%%%%%%%%%%%%%%%%%%%%%%%%%
%%%%%%%%%%%%%%%%%%%%%%%%%%%%%%%%%%
%%%%%%%%%%%%%%%%%%%%%%%%%%%%%%%%%%
%%%%%%%%%%%%%%%%%%%%%%%%%%%%%%%%%%
%%%%%%%%%%%%%%%%%%%%%%%%%%%%%%%%%%
%%%%%%%%%%%%%%%%%%%%%%%%%%%%%%%%%%

\subsection{About classes \texorpdfstring{$ \mathscr{S}^{-Qa/2}_{1-a, \delta}(G_{\varkappa})$}{}} 
\label{subsec:about-chanillo-type-class}
In the previous subsection, we explained sparse domination and weighted estimates for pseudo-multipliers with symbols coming from $\mathscr{S}^{-{Qa}/{q}}_{1-a, \delta}(G_{\varkappa})$ classes with $1 \leq q < 2$. It turns out that our approach is not well suited for the class $\mathscr{S}^{-{Qa}/{2}}_{1-a, \delta}(G_{\varkappa})$. More precisely, in the proof of Theorem \ref{thm:Main-Sparse-Result}, we make a crucial use of the weighted Plancherel estimates of the integral kernels of the following form
\begin{align} \label{Motivation-Plancherel} 
|B(x,R^{-1})| \int_{\mathbb{R}^{n_1 + n_2}} (1+Rd(x,y))^{2\mathfrak{r}} \left| K_{m (x, G_{\varkappa})}(x, y) \right|^2  dy \quad \lesssim_{\mathfrak{r}, \epsilon} \sup_{x_{0}}  \| m(x_0, R^2 \cdot) \|^2_{W^{\infty}_{\mathfrak{r} + \epsilon}},
\end{align}
for every every $\mathfrak{r}, \epsilon > 0$ and any bounded Borel function $m : \mathbb{R}^{n_1 + n_2} \times \mathbb{R} \rightarrow \mathbb{C}$ whose support in the last variable is in $[0, R^2]$ for any $R > 0$. For a detailed proof of this estimate, we refer Lemma $4.3$ in \cite{BBGG-1}. 

We would like to note the fact that in the Euclidean setting it is possible to replace the norm ${W^{\infty}_{\mathfrak{r} + \epsilon}}$ by ${W^{\infty}_{\mathfrak{r}}}$ in right hand side of \eqref{Motivation-Plancherel} using Hausdorff--Young theorem. While working with symbols $m\in \mathscr{S}^{-{Qa}/{q}}_{1-a, \delta}(G_{\varkappa})$ with $1 \leq q < 2$, we are able to establish sparse domination even with the presence of the norm ${W^{\infty}_{\mathfrak{r} + \epsilon}}$, but difficulties arising from Grushin metric prohibit us to do the same for the class $\mathscr{S}^{-{Qa}/{2}}_{1-a, \delta}(G_{\varkappa})$ even if we assume ${W^{\infty}_{\mathfrak{r}}}$ norm in the right hand side of \eqref{Motivation-Plancherel}. For more details, we refer to Remark \ref{rem:about-chanillo-type-class}. 

However, we are able to employ the machinery of the Fefferman--Stein sharp maximal function and good-$\lambda$-inequalities to conclude weighted boundedness for pseudo-multiplier operator for the symbol class $\mathscr{S}^{-Qa/2}_{1-a, \delta}(\boldsymbol{L}, \boldsymbol{U})$ associated to the joint functional calculus of $G$, under some more assumptions on the symbol function. We discuss this set-up and state our weighted boundedness result in that context in the next subsection. 

%%%%%%%%%%%%%%%%%%%%%%%%%%%%%%%%%%
%%%%%%%%%%%%%%%%%%%%%%%%%%%%%%%%%%
%%%%%%%%%%%%%%%%%%%%%%%%%%%%%%%%%%
%%%%%%%%%%%%%%%%%%%%%%%%%%%%%%%%%%
%%%%%%%%%%%%%%%%%%%%%%%%%%%%%%%%%%
%%%%%%%%%%%%%%%%%%%%%%%%%%%%%%%%%%
%%%%%%%%%%%%%%%%%%%%%%%%%%%%%%%%%%

\subsection{Joint functional calculus and results for classes \texorpdfstring{$\mathscr{S}^{-Qa/2}_{1-a, \delta}(\boldsymbol{L}, \boldsymbol{U})$}{}} 
\label{Subsec:intro-joint-functional}

Once again, we follow the terminology of \cite{BBGG-1} (see Subsection $2.2$ in \cite{BBGG-1}). 

Let us consider the following family of operators: 
\begin{align} \label{eq:operatorsLandU}
L_{j} = (-i \partial_{x_j'})^2 + {x_j^\prime}^{2} \sum_{k=1}^{n_2} (-i\partial_{x_k''})^2 \quad \textup{and} \quad U_k = - i \partial_{x_k''},
\end{align} 
for $j= 1, 2, \ldots, n_1$ and $k = 1, 2, \ldots, n_2$. The operators $L_1, L_2 \ldots L_{n_1}, U_1, U_2, \ldots , U_{n_2}$ are essentially self adjoint on $C_c^\infty (\mathbb{R}^{n_1 + n_2})$ and their spectral resolutions commute. The same is true for all polynomials in $L_1, L_2 \ldots L_{n_1}, U_1, U_2, \ldots , U_{n_2}$. Hence, they admit a joint functional calculus on $L^2(\mathbb{R}^{n_1 + n_2})$ in the sense of the spectral theorem. 

Let us write $\boldsymbol{L} = (L_1, L_2,\ldots, L_{n_1})$, $\boldsymbol{U} = (U_1, U_2, \ldots, U_{n_2})$ and $ \tilde{1} = (1, 1, \ldots, 1) \in \mathbb{R}^{n_1}$. Now, given a function $m \in L^\infty \left( \mathbb{R}^{n_1 + n_2} \times (\mathbb{R}_+)^{n_1} \times (\mathbb{R}^{n_2} \setminus \{0\}) \right)$, the pseudo-multiplier operator $m(x, \boldsymbol{L}, \boldsymbol{U})$ is (densely) defined on $L^2(\mathbb{R}^{n_1 + n_2})$ by 
\begin{align} \label{def:joint-Gru-pseudo}
m(x, \boldsymbol{L}, \boldsymbol{U}) f(x) : = \int_{\mathbb{R}^{n_2}} e^{-i \lambda \cdot x^{\prime \prime}} \sum_{\mu \in \mathbb{N}^{n_1}} m \left( x, (2 \mu + \tilde{1}) |\lambda|, \lambda \right) \left( f^{\lambda}, \Phi^{\lambda}_{\mu} \right) \Phi^{\lambda}_{\mu} (x^\prime) \, d\lambda, 
\end{align} 
where $\Phi^{\lambda}_{\mu}$ are the scaled Hermite functions. 

\begin{definition} \label{def:joint-symb-def1}
For any $\sigma \in \mathbb{R}$ and $\rho, \delta \geq 0$, we define the symbol class $\mathscr{S}^{\sigma}_{\rho, \delta}(\boldsymbol{L}, \boldsymbol{U})$ to be the collection of all $m \in C^\infty \left(\mathbb{R}^{n_1 + n_2} \times (\mathbb{R}_+)^{n_1} \times (\mathbb{R}^{n_2} \setminus \{0\}) \right)$ which satisfy the following estimate: 
\begin{align} \label{def:grushin-symb}
\left|X^\Gamma \partial_{\tau}^{\theta} \partial_{\kappa}^{\beta} m(x, \tau, \kappa) \right| \lesssim_{\Gamma, \theta, \beta} (1 + |\tau| + |\kappa|)^{\frac{\sigma}{2} - \frac{(1 + \rho)}{2}(|\theta| + |\beta|) +  \frac{\delta}{2} |\Gamma|} 
\end{align} 
for all $\Gamma \in \mathbb{N}^{n_1 + n_1 n_2}$, $\theta \in \mathbb{N}^{n_1}$ and $\beta \in \mathbb{N}^{n_2}$. 
\end{definition}

Similar to \eqref{def-grushin-symb-seminorm}, for $N \in \mathbb{N}$, we define the seminorm for the above symbol classes as follows:
\begin{align} \label{def-grushin-joint-symb-seminorm}
\| m \|_{\mathscr{S}^{\sigma, N}_{\rho,\delta}} = \sup_{|\Gamma| + |\theta| + |\beta| \leq N} \sup_{x, \tau, \kappa} (1 + |\tau| + |\kappa|)^{- \frac{\sigma}{2} + \frac{(1 + \rho)}{2}(|\theta| + |\beta|) -  \frac{\delta}{2} |\Gamma|} \left|X^\Gamma \partial_{\tau}^{\theta} \partial_{\kappa}^{\beta} m(x, \tau, \kappa) \right|, 
\end{align} 
and here also for convenience we will use a shortened notation $\| m \|_{\mathscr{S}^\sigma_{\rho,\delta}}$ without specifying the number of involved derivatives. 

In \cite{Bagchi-Garg-1}, first and third authors proved the following $L^2$-boundedness result. 
\begin{theorem}[\cite{Bagchi-Garg-1}] \label{thm:L2-joint-calc}
Let $m \in \mathscr{S}^{0}_{\rho, \delta}(\boldsymbol{L}, \boldsymbol{U})$. 
\begin{enumerate}
\item If $\delta < \rho$, then $m(x, \boldsymbol{L}, \boldsymbol{U})$ extends to a bounded operator on $L^2(\mathbb{R}^{n_1 + n_2})$. 

\item For $m \in \mathscr{S}^0_{\rho, \rho} (\boldsymbol{L}, \boldsymbol{U})$, with $0 \leq \rho < 1$, if we further assume that 
\begin{align} \label{def:grushin-symb-vanishing-0-condition}
\lim_{\kappa \to 0} \partial_{\kappa}^{\beta} m(x, \tau, \kappa) = 0, 
\tag{CancelCond} 
\end{align} 
for all $\beta \in \mathbb{N}^{n_2}$ with $|\beta| \leq 4 N_0 = 4 \left( \lfloor\frac{Q}{4}\rfloor + 1 \right)$, then $m(x, \boldsymbol{L}, \boldsymbol{U})$ extends to a bounded operator on $L^2(\mathbb{R}^{n_1 + n_2})$.
\end{enumerate}
\end{theorem}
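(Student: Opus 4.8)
\textbf{Proof proposal for Theorem \ref{thm:L2-joint-calc}.}

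The plan is to reduce the $L^2$-boundedness of $m(x,\boldsymbol{L},\boldsymbol{U})$ to a Calder\'on--Vaillancourt type statement via a suitable dyadic decomposition of the symbol in the spectral variables $(\tau,\kappa)$, combined with an almost-orthogonality (Cotlar--Stein) argument. First I would fix a smooth Littlewood--Paley partition of unity adapted to the parameter $(1+|\tau|+|\kappa|)$, writing $m=\sum_{\ell\ge 0} m_\ell$ where $m_\ell$ is supported where $(1+|\tau|+|\kappa|)\sim 2^\ell$. Because $L_1,\dots,L_{n_1},U_1,\dots,U_{n_2}$ have commuting spectral resolutions, each piece $m_\ell(x,\boldsymbol{L},\boldsymbol{U})$ can be analysed through the joint spectral expansion in \eqref{def:joint-Gru-pseudo}: the eigenvalue of $L_j$ on $\Phi_\mu^\lambda$ is $(2\mu_j+1)|\lambda|$ and that of $U_k$ is $\lambda_k$. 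For part (1), in the regime $\delta<\rho$, the estimates \eqref{def:grushin-symb} say that each frequency-localised symbol, after rescaling the spectral variables to unit scale, lies in a fixed (x-dependent but uniformly controlled) Calder\'on--Vaillancourt class, and the $x$-derivatives cost only $2^{\delta\ell|\Gamma|/2}$ while the spectral localisation gives a gain; one then runs the standard Cotlar--Stein lemma to sum the pieces, with the off-diagonal decay coming from integration by parts in $x$ against the oscillation $e^{-i\lambda\cdot x''}$ and the sub-unit-scale smoothness of the Hermite functions $\Phi_\mu^\lambda$. The key scaling identity here is the parabolic dilation structure of the Grushin geometry: the vector fields $X$ in \eqref{first-order-grad} rescale consistently with $(\tau,\kappa)\mapsto(2^\ell\tau,2^\ell\kappa)$, so the symbol bound \eqref{def:grushin-symb} is exactly the condition that makes the rescaled pieces uniformly bounded operators.

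For part (2), where $\rho=\delta$ and the Cotlar--Stein off-diagonal decay from the previous paragraph degenerates (since $x$-derivatives now cost as much as the spectral gain), the extra cancellation hypothesis \eqref{def:grushin-symb-vanishing-0-condition} is what rescues the argument. I would exploit it by writing, for the low-$|\kappa|$ part of each dyadic piece, a Taylor expansion (or fundamental-theorem-of-calculus representation) in the $\kappa$ variable: since $\partial_\kappa^\beta m$ vanishes as $\kappa\to 0$ for all $|\beta|\le 4N_0$, one gains factors of $|\kappa|/(1+|\tau|+|\kappa|)$ up to order $4N_0$, and since $U_k=-i\partial_{x_k''}$ and $|\kappa|\sim|\lambda|$ on the spectral side, each such factor converts into a genuine decay in the $x''$-frequency that can be integrated against $e^{-i\lambda\cdot x''}$ to produce the missing off-diagonal decay in the $x''$-directions. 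The threshold $4N_0=4(\lfloor Q/4\rfloor+1)>Q$ is precisely the number of derivatives needed to make the resulting kernel estimate summable against the volume growth $|B(x,r)|\sim r^Q$ of the Grushin balls, via a Sobolev-type embedding / Schur test.

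The main obstacle I anticipate is the non-isotropic and $x'$-dependent nature of the Grushin metric: the ``ball'' $B(x,r)$ has volume behaving like $r^{n_1+n_2}\max(r,|x'|)^{n_2\varkappa}$, so the off-diagonal kernel bounds one extracts from integration by parts do not decay uniformly but only relative to the correct Grushin distance $d_{G}(x,y)$, and one must carefully track how the Hermite rescaling $\Phi_\mu^\lambda(x')=|\lambda|^{n_1/4}\Phi_\mu(|\lambda|^{1/2}x')$ interacts with this. Concretely, the step where I would expect to spend the most care is verifying that, after the dyadic and Taylor decompositions, the resulting operators $T_{\ell,\ell'}$ satisfy $\|T_{\ell,\ell'}\|_{L^2\to L^2}\lesssim 2^{-\epsilon|\ell-\ell'|}$ uniformly; this requires a twofold integration by parts (in $x'$ using the $X_j$ and in $x''$ using the $X_{\alpha,k}$, i.e.\ the full vector field $X$ of \eqref{first-order-grad-vector}), and the bookkeeping of the mixed weights $(1+|\tau|+|\kappa|)^{\delta|\Gamma|/2}$ against the spectral localisation $2^\ell$ is exactly where the hypothesis $\delta=\rho<1$ (rather than $\delta<1$ alone) is used in part (2). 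Once the uniform almost-orthogonality estimate is in hand, Cotlar--Stein closes the proof.
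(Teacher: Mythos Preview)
The present paper does not contain a proof of Theorem~\ref{thm:L2-joint-calc}: the result is quoted from \cite{Bagchi-Garg-1} and used as a black box (see the sentence ``In \cite{Bagchi-Garg-1}, first and third authors proved the following $L^2$-boundedness result''). So there is no proof here against which to compare your proposal line by line.

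That said, your outline is broadly consistent with the structure one can infer from the way the paper invokes \cite{Bagchi-Garg-1}. In particular, the proof of Lemma~\ref{lem:shifted-CV} in this paper splits the symbol by a cutoff $\phi(\kappa)$ near $\kappa=0$, handles the piece supported away from $\kappa=0$ directly via ``Remark~1.8 of \cite{Bagchi-Garg-1}'' (which plays the role of your Cotlar--Stein/Calder\'on--Vaillancourt step), and treats the low-$|\kappa|$ piece by a Taylor expansion in $\kappa$ to manufacture enough vanishing to fall back into the same framework. This matches your plan for part~(2) quite closely: the cancellation condition \eqref{def:grushin-symb-vanishing-0-condition} is indeed exploited through a Taylor expansion in $\kappa$, and the order $4N_0$ is exactly what is needed to push the expanded symbol back into a class to which the $\delta<\rho$ (or $\delta=\rho$ with enough vanishing) argument applies.

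One caution: your sketch treats the almost-orthogonality as coming from ``integration by parts in $x$ against the oscillation $e^{-i\lambda\cdot x''}$'', but in the Grushin setting the relevant mechanism is more algebraic, passing through the creation/annihilation identities for the scaled Hermite functions $\Phi_\mu^\lambda$ (cf.\ the manipulations in the proofs of Proposition~\ref{prop:weighted-CV} and Theorem~\ref{thm:Compo-operator-L2-boundedness} in this paper, which repeatedly convert powers of $x'$ and derivatives into shifts $\mu\mapsto\mu\pm e_j$ together with factors of $((2\mu_j+1)|\lambda|)^{1/2}$). If you try to carry out your plan in detail, that is where the Grushin-specific bookkeeping lives, and a purely Fourier-analytic integration-by-parts picture will not suffice; you should expect to work through Hermite-shift identities as in Lemma~4.5 of \cite{Bagchi-Garg-1} (referenced in the proof of Proposition~\ref{prop:weighted-CV}).
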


In \cite{BBGG-1}, we studied sparse bound result for symbol classes $\mathscr{S}^{0}_{1, \delta}(\boldsymbol{L}, \boldsymbol{U})$, under the same cancellation condition \eqref{def:grushin-symb-vanishing-0-condition} on symbols. In this paper, as a continuation, we have the following weighted bounded results for symbol classes $\mathscr{S}^{-Qa/2}_{1-a, \delta}(\boldsymbol{L}, \boldsymbol{U})$. 

\begin{theorem} \label{thm:joint-fun-Channilo-type}
Let $m \in \mathscr{S}^{-Qa/2}_{1-a, \delta}(\boldsymbol{L}, \boldsymbol{U})$ be such that it satisfies the cancellation condition \eqref{def:grushin-symb-vanishing-0-condition} for all $\beta \in \mathbb{N}^{n_2}$ with $|\beta| \leq \max \left\{ 4 \left( \lfloor\frac{Q}{4}\rfloor + 1 \right), \lfloor \frac{Q}{2} + \frac{1}{1-a} \rfloor + 1 \right\} $. Then, the operator $m( x,\boldsymbol{L}, \boldsymbol{U})$ is bounded on $L^p(w)$ for all $w \in A_{p/2}(\mathbb{R}^{n_1 + n_2})$ and $2 < p < \infty$.
\end{theorem}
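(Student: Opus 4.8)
The plan is to follow the Chanillo--Torchinsky template: establish a pointwise bound for the Fefferman--Stein sharp maximal function $\mathcal{M}^{\sharp}$ applied to $m(x,\boldsymbol{L},\boldsymbol{U})f$ by $\mathcal{M}_2 f$ on the homogeneous space $(\mathbb{R}^{n_1+n_2}, d_{G}, |\cdot|)$, and then invoke the standard good-$\lambda$ machinery to pass to weighted $L^p(w)$ bounds for $w\in A_{p/2}$ and $2<p<\infty$. The first step is a Littlewood--Paley style decomposition of the symbol in the spectral variable: writing $m = \sum_{j\geq 0} m_j$ where each $m_j$ is localized (in the joint spectrum of $\boldsymbol{L},\boldsymbol{U}$) to the dyadic shell $|\tau|+|\kappa| \sim 2^j$, so that $m_j(x,\boldsymbol{L},\boldsymbol{U})$ has kernel essentially concentrated, in the Grushin metric, on balls of radius $R_j^{-1}$ with $R_j = 2^{j/2}$. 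The key input is the weighted Plancherel estimate of the type \eqref{Motivation-Plancherel} (Lemma 4.3 of \cite{BBGG-1}, which must first be adapted to the joint functional calculus setting — this is where the cancellation condition \eqref{def:grushin-symb-vanishing-0-condition} on $\partial_\kappa^\beta m$ enters, since the $\kappa$-variable ranges over $\mathbb{R}^{n_2}\setminus\{0\}$ and one needs enough vanishing at $\kappa=0$ to control the relevant Sobolev norms, hence the derivative count $\lfloor Q/2 + 1/(1-a)\rfloor + 1$).

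Next I would estimate, for a fixed ball $B = B(x_0, t)$ containing $x$, the oscillation $\frac{1}{|B|}\int_B |m(y,\boldsymbol{L},\boldsymbol{U})f(y) - c_B|\,dy$ for a suitable constant $c_B$, splitting $f = f\chi_{2B} + f\chi_{(2B)^c}$. For the local part $f\chi_{2B}$, I would use the $L^2$-boundedness of $m(x,\boldsymbol{L},\boldsymbol{U})$ from Theorem \ref{thm:L2-joint-calc}(2) (applicable precisely because of the $4(\lfloor Q/4\rfloor+1)$ cancellation hypothesis, and since $\mathscr{S}^{-Qa/2}_{1-a,\delta} \subset \mathscr{S}^{0}_{1-a,1-a}$), together with Cauchy--Schwarz, to dominate the local contribution by $\mathcal{M}_2 f(x)$. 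For the far part, I would split $f\chi_{(2B)^c} = \sum_{l\geq 1} f\chi_{2^{l+1}B\setminus 2^l B}$ and, on each annulus, decompose $m = \sum_j m_j$ as above; for each pair $(j,l)$ one estimates $\|m_j(y,\boldsymbol{L},\boldsymbol{U})(f\chi_{A_l})\|$ via the weighted Plancherel bound, gaining a factor $(1 + R_j\, 2^l t)^{-\mathfrak{r}}$ from the off-diagonal decay against the normalization $|B(y,R_j^{-1})|^{-1/2}$. The decay exponent $-Qa/2$ is calibrated so that summing in $j$ converges: the loss $R_j^{Qa/2}$ coming from $\|m(x_0, R_j^2\,\cdot)\|_{W^\infty_{\mathfrak{r}+\epsilon}} \lesssim R_j^{-Qa}\cdot R_j^{?}$ is exactly balanced against the $L^2$-to-$L^\infty$ normalization factor that produces $\mathcal{M}_2$ rather than $\mathcal{M}_1$; summing in $l$ then converges by the off-diagonal polynomial decay once $\mathfrak{r} > Q$. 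The difference $m(y,\ldots)g(y) - m(z,\ldots)g(z)$ for $y,z\in B$ is handled by also exploiting the $X^\Gamma$-derivative bounds in the $x$-variable (with the $\delta \le 1-a$ loss), producing an extra factor $(R_j t)^{\delta}$ which is absorbed since $\delta(1/2) - Qa/2 \cdot (\text{gain}) < 0$ in the relevant range — this requires the parameter $\delta \leq 1-a$ to keep the geometric series summable.

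The main obstacle I anticipate is \textbf{twofold}. First, transferring the weighted Plancherel estimate \eqref{Motivation-Plancherel} from the single Grushin operator $G_\varkappa$ (where it is Lemma 4.3 of \cite{BBGG-1}) to the joint functional calculus of $(\boldsymbol{L},\boldsymbol{U})$: here the spectral variable is genuinely multidimensional and the $U_k$'s are first-order operators (not bounded below), so the kernel estimates and the scaling structure are more delicate; the cancellation condition is the device that compensates, and getting the exact Sobolev order right is the technical heart. Second, and more subtly, controlling the $x$-oscillation term: unlike the Euclidean case in \cite{Michalowski-Rule-Staubach-Canad2012, Chanillo-Torchinsky-weighted-pseudodifferential}, here one cannot freely differentiate the kernel in $x$ because $X^\Gamma$ acts through the horizontal vector fields with the Grushin geometry's anisotropic scaling; one must instead use a mean value inequality along $d_G$-geodesics combined with the symbol bound on $X^\Gamma \partial_\tau^\theta \partial_\kappa^\beta m$, and check that the resulting metric factors $(R_j \cdot d_G(y,z))^{\delta}$ close the estimate. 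Once the pointwise sharp-maximal bound $\mathcal{M}^{\sharp}(m(\cdot,\boldsymbol{L},\boldsymbol{U})f) \lesssim \mathcal{M}_2 f$ is in hand, the weighted conclusion for $w\in A_{p/2}$, $2<p<\infty$, follows from the standard Fefferman--Stein inequality and the good-$\lambda$ argument on spaces of homogeneous type, exactly as in the proof of Theorem \ref{thm:Chanillo-Torchinsky}.
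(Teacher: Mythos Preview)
Your overall architecture---sharp maximal function bound followed by good-$\lambda$---is correct, and you correctly identify that the joint-calculus weighted Plancherel estimates and the $x$-oscillation via mean value are the technical inputs. However, there is a genuine gap in the decomposition you propose, and it is precisely the point where the Chanillo--Torchinsky argument becomes nontrivial.

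You split $f = f\chi_{2B} + f\chi_{(2B)^c}$ regardless of the radius $t$ of $B$. For the local piece this is harmless: Cauchy--Schwarz and Theorem~\ref{thm:L2-joint-calc} give $\frac{1}{|B|}\int_B |Tf_1| \lesssim \mathcal{M}_2 f$. The problem is the far piece when $t<1$. If you run the kernel difference estimate on annuli at radii $\sim 2^l t$ (that is, $\theta=1$ in the notation of Lemma~\ref{lem: kernel-difference-p}), you obtain, after summing in $j$ and multiplying by $|B(\mathfrak{z},2^l t)|^{1/2}$, a term of order $(2^l)^{Q/2-L}\, t^{\,a(Q/2-L)}$ in the regime $2^l t \gtrsim |\mathfrak{z}'|$. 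Convergence of $\sum_l$ forces $L>Q/2$, but then the exponent of $t$ is negative and the bound blows up as $t\to 0$. The borderline decay $\sigma=-Qa/2$ is exactly what makes this splitting fail; this is why the paper (and Chanillo--Torchinsky in the Euclidean case) instead splits at the \emph{rescaled} ball $B'=B(\mathfrak{z},\,s\,t^{\,1-a})$ for $t<1$. With annuli at radii $\sim 2^l t^{1-a}$ the $t$-dependence in the far estimate cancels completely (see the computation of $\widetilde{II}_{2,1}$, $\widetilde{II}_{2,2}$ in the proof of Theorem~\ref{thm:Chanilloy-type}).

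This rescaling, however, makes the local piece nontrivial: now $|B'|/|B|\sim t^{-aQ}$ is large, so the naive $L^2$ bound on $Tf_1$ is insufficient. The paper closes this by proving separately (Theorem~\ref{thm:Compo-operator-L2-boundedness}) that $(I+G)^{Qa/4}\,m(x,\boldsymbol{L},\boldsymbol{U})$ is bounded on $L^2$, and then using the Hardy--Littlewood--Sobolev inequality (Theorem~\ref{theorem:Sobolev-embedding}) with $q=\frac{2}{1-a}$ to pass from $L^2$ to $L^q$ control of $Tf_1$; the resulting factor $|B|^{-1/q}|B(\mathfrak{z},1)|^{-a/2}|B'|^{1/2}$ is then checked to be $O(1)$. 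This composition-with-Bessel-potential step is the genuine replacement for the Euclidean fact that the adjoint of a pseudo-differential operator stays in the same symbol class, and your proposal does not supply it.
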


%%%%%%%%%%%%%%%%%%%%%%%%%%%%%%%%%%
%%%%%%%%%%%%%%%%%%%%%%%%%%%%%%%%%%
%%%%%%%%%%%%%%%%%%%%%%%%%%%%%%%%%%
%%%%%%%%%%%%%%%%%%%%%%%%%%%%%%%%%%
%%%%%%%%%%%%%%%%%%%%%%%%%%%%%%%%%%
%%%%%%%%%%%%%%%%%%%%%%%%%%%%%%%%%%
%%%%%%%%%%%%%%%%%%%%%%%%%%%%%%%%%%

\subsection{Organisation of the paper} \label{subsec:intro-organisation}
Concluding weighted $L^p$ estimates has always been an important theme of research in the study of pseudo-differential operators. In this article, employing modern tools like sparse operators and others, we prove series of weighted estimates for Grushin pseudo-multipliers. 

We have organised the article as follows.
\begin{itemize}
\item In Section \ref{sec:prelim}, we recall the relevant preliminary details. Here, we prove some basic results which are not only essential to establish our main results but are also important in their own rights. In Subsection \ref{Hardy-interpolation}, we list out an interpolation result using the Hardy space as an end-point. We also need a suitable Hardy--Littlewood--Sobolev inequality for $G_\varkappa$, and we prove the same (see Theorem \ref{theorem:Sobolev-embedding}) in Subsection \ref{subsec:HLS-inequality}. Finally, in Proposition \ref{prop:weighted-CV}, we prove a weighted analogue of Calder\'on-Vaillancourt type theorem for Grushin pseudo-multipliers. 

\item Section~\ref{sec:Kernel-estimates} is dedicated to recalling or proving a number of kernel estimates for pseudo-multipliers associated with $G_{\varkappa}$ as well as for $(\boldsymbol{L}, \boldsymbol{U})$ assuming the cancellation condition \eqref{def:grushin-symb-vanishing-0-condition}. Most of the results in this section are a consequence of kernel estimates of \cite{BBGG-1}.

\item We develop results concerning unweighted boundedness of pseudo-multipliers corresponding to various symbol classes in Section \ref{sec:unweighted-boundedness}. In particular, the end-point $H^1$-$L^1$ boundedness for $\mathscr{S}^{-Qa}_{1-a, \delta}(G_{\varkappa})$ class symbols is addressed in Subsection \ref{subsec:H1-L1 boundedness}. In establishing this result, we make use of suitable kernel estimates, which we prove in Lemma \ref{lem:unweightedLp-kernel}. Let us mention that our proofs in Subsection \ref{subsec:H1-L1 boundedness} are inspired by the work of \'Alvarez--Hounie \cite{Alvarez-Hounie}, however, due to the non-Euclidean nature of the Grushin metric we had to overcome many difficulties. Subsequently, in Subsection \ref{subsec:Lp-boundedness-intermediatary-classes}, as an application of the interpolation between classes $\mathscr{S}^{-Qa}_{1-a, \delta}(G_{\varkappa})$ and $\mathscr{S}^{-Qa/2}_{1-a, \delta}(G_{\varkappa})$, we obtain unweighted $L^p$-boundedness for symbol classes  $\mathscr{S}^{-Qa/q}_{1-a, \delta}(G_{\varkappa})$ with $1 < q < 2$. 

\item We prove our sparse domination results in Section \ref{sec:sparse-domination-results}. In Subsection \ref{subsec: estimates-grand-maximal}, we prove an end-point boundedness for the grand maximal truncated operator $\mathcal{M}^{\sharp}_{T, s}$ which is among the key ingredients in our proof of sparse domination, using which we conclude the proof of Theorem \ref{thm:Main-Sparse-Result} in Subsection \ref{subsec:Proof-thm:Main-Sparse-Result}.

\item Section \ref{sec:joint-funct-calculus-weighted-boundedness} deals with the analysis related to the symbol class $\mathscr{S}^{-Qa/2}_{1-a, \delta}(\boldsymbol{L}, \boldsymbol{U})$. We prove a pointwise domination of the Fefferman--Stein maximal function in Subsection \ref{subsec:Proof-thm:Chanilloy-type}, and the article culminates with the proof of Theorem \ref{thm:joint-fun-Channilo-type} in Subsection \ref{subsec:Proof-thm:joint-fun-Channilo-type}. 
\end{itemize}

\noindent \textbf{Notation:} For any pair of positive real numbers $A_1$ and $A_2$, by the expression $A_1 \lesssim A_2$ we mean $A_1 \leq C A_2$ for some $C > 0$. We write $A_1 \lesssim_{\epsilon} A_2 $ whenever the implicit constant $C$ may depend on a parameter $\epsilon$. The notation $A_1 \sim A_2$ stands for $A_1 \lesssim A_2$ and $A_2 \lesssim A_1$. For a general vector $\tau = ( \tau_1, \ldots, \tau_{n_1}) \in \mathbb{R}^{n_1}$, we write $|\tau|_1 = \sum_{j=1}^{n_1} |\tau_j|$ and $|\tau| = \left( \sum_{j=1}^{n_1} |\tau_j|^2 \right)^{1/2}$. Whenever it is obvious that $\mu \in \mathbb{N}^{n_1}$, by abuse of notation, we write $|\mu|$ in pace of $|\mu|_1$. 

%%%%%%%%%%%%%%%%%%%%%%%%%%%%%%%%%%%%%%%%%%%%%%%%%%%%%%
%%%%%%%%%%%%%%%%%%%%%%%%%%%%%%%%%%%%%%%%%%%%%%%%%%%%%%
%%%%%%%%%%%%%%%%%%%%%%%%%%%%%%%%%%%%%%%%%%%%%%%%%%%%%%
%%%%%%%%%%%%%%%%%%%%%%%%%%%%%%%%%%%%%%%%%%%%%%%%%%%%%%
%%%%%%%%%%%%%%%%%%%%%%%%%%%%%%%%%%%%%%%%%%%%%%%%%%%%%%
%%%%%%%%%%%%%%%%%%%%%%%%%%%%%%%%%%%%%%%%%%%%%%%%%%%%%%
%%%%%%%%%%%%%%%%%%%%%%%%%%%%%%%%%%%%%%%%%%%%%%%%%%%%%%
%%%%%%%%%%%%%%%%%%%%%%%%%%%%%%%%%%%%%%%%%%%%%%%%%%%%%%
%%%%%%%%%%%%%%%%%%%%%%%%%%%%%%%%%%%%%%%%%%%%%%%%%%%%%%
%%%%%%%%%%%%%%%%%%%%%%%%%%%%%%%%%%%%%%%%%%%%%%%%%%%%%%

\section{Preliminaries and basic results} \label{sec:prelim}

We start with recalling the control distance function $\tilde{d}$ associated with the sub-Riemannian structure of the Grushin operator $G_\varkappa$. For details, we refer to Section $2.1$ of \cite{BBGG-1}. The asymptotic description of $\tilde{d}$ is known to be:
\begin{align*} %\label{def:grushin-metric-asymp}
\tilde{d} (x,y) \sim d(x,y) := \left|x^{\prime} - y^\prime \right| + 
\begin{cases}
\frac{\left|x^{\prime \prime} - y^{\prime \prime}\right|}{\left(\left|x^{\prime} \right| + \left|y^\prime \right|\right)^{\varkappa}} &\textup{ if } \left|x^{\prime \prime} - y^{\prime \prime}\right|^{1/(1+\varkappa)} \leq \left|x^{\prime} \right| + \left|y^\prime \right| \\
\left|x^{\prime \prime} - y^{\prime \prime}\right|^{1/(1+\varkappa)} &\textup{ if } \left|x^{\prime \prime} - y^{\prime \prime}\right|^{1/(1+\varkappa)} \geq \left|x^{\prime} \right| + \left|y^\prime \right|. 
\end{cases}
\end{align*}

Since $\tilde{d}$ is a metric, it follows that $d$ is a quasi-metric, that is, there exists a constant $C_0 > 1$ such that for all $x, y, z \in \mathbb{R}^{n_1 + n_2}$, 
\begin{align} \label{def:quasi-constant}
d(x,y) \leq C_0 \left( d(x,z) + d(z,y) \right)    
\end{align}

Equipped with the Lebesgue measure $|\cdot|$, it is known that $(\mathbb{R}^{n_1 + n_2}, d)$ is a space of homogeneous type with homogeneous dimension $Q:=n_1+(1 + \varkappa) n_2$. We refer Proposition $5.1$ of \cite{RobinsonSikoraDegenerateEllipticOperatorsGrushinTypeMathZ2008} for more details. Let us now recall the following mean-value estimate that we need in our subsequent kernel estimates. 

\begin{lemma}[Lemma 2.2, \cite{BBGG-1}]  \label{lem:general-grushin-Mean-value}
There exist constants $C_{1, \varkappa}, C_{2, \varkappa} > 0$ (depending also on $n_1$ and $n_2$) such that for any ball $B(x_0, r)$ and points $x, y \in B(x_0, r)$, there exists a $\tilde{d}$-length minimising curve $\gamma_0 : [0, 1] \to B(x_0, C_{1, \varkappa} \, r)$ joining $x$ to $y$, and 
\begin{align*} 
\left| f(x) - f(y) \right| \leq & C_{2, \varkappa} \, d(x, y) \int_{0}^{1} \left|Xf(\gamma_0(t)) \right| \, dt 
\end{align*}
for any $f \in C^1 \left( B(x_0, C_{1, \varkappa} \, r) \right)$.
\end{lemma}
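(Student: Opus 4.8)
The plan is to realise $f(x)-f(y)$ as the integral of the derivative of $f$ along a horizontal geodesic and to estimate that derivative via the sub-Riemannian structure underlying $G_\varkappa$. First I would recall that $\tilde d$ is the Carnot--Carath\'eodory distance attached to the H\"ormander system $X = (X_j, X_{\alpha,k})$, i.e. $\tilde d(x,y)$ is the infimum of the lengths $\int_0^1 \big( \sum_j |a_j(t)|^2 + \sum_{\alpha,k} |b_{\alpha,k}(t)|^2 \big)^{1/2}\, dt$ over all absolutely continuous curves $\gamma:[0,1]\to\mathbb{R}^{n_1+n_2}$ with $\gamma(0)=x$, $\gamma(1)=y$ and $\gamma'(t) = \sum_j a_j(t) X_j(\gamma(t)) + \sum_{\alpha,k} b_{\alpha,k}(t) X_{\alpha,k}(\gamma(t))$ for a.e. $t$. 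Since the $X_j, X_{\alpha,k}$ are smooth and bracket-generating and $(\mathbb{R}^{n_1+n_2}, \tilde d)$ is complete (indeed proper), a length-minimising such curve between $x$ and $y$ exists; I reparametrise it with constant speed on $[0,1]$ to get $\gamma_0$ with $\gamma_0(0)=x$, $\gamma_0(1)=y$ whose controls satisfy $\big( \sum_j |a_j(t)|^2 + \sum_{\alpha,k} |b_{\alpha,k}(t)|^2 \big)^{1/2} = \tilde d(x,y)$ for a.e. $t$.

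Next I would check that $\gamma_0$ does not escape a fixed dilate of $B(x_0,r)$. For each $t$, the restriction $\gamma_0|_{[0,t]}$ is an admissible curve from $x$ to $\gamma_0(t)$, so $\tilde d(x,\gamma_0(t)) \le \mathrm{length}(\gamma_0) = \tilde d(x,y)$; combining this with $\tilde d(x,y) \le \tilde d(x,x_0) + \tilde d(x_0,y)$ and the triangle inequality gives $\tilde d(x_0, \gamma_0(t)) \le 2\tilde d(x_0,x) + \tilde d(x_0,y)$. Using the comparability $\tilde d \sim d$ together with $x,y \in B(x_0,r)$, this yields $d(x_0, \gamma_0(t)) \le C_{1,\varkappa}\, r$ for a constant $C_{1,\varkappa}$ depending only on the comparability constants, hence only on $n_1,n_2,\varkappa$. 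Thus $\gamma_0$ takes values in $B(x_0, C_{1,\varkappa}\, r)$, where by hypothesis $f \in C^1$.

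Finally, since $\gamma_0$ is locally Lipschitz in the Euclidean sense (the vector fields $X_j, X_{\alpha,k}$ are bounded on the bounded set $B(x_0, C_{1,\varkappa}\, r)$) and $f \in C^1$ there, the map $t \mapsto f(\gamma_0(t))$ is absolutely continuous with $\frac{d}{dt} f(\gamma_0(t)) = \sum_j a_j(t) (X_j f)(\gamma_0(t)) + \sum_{\alpha,k} b_{\alpha,k}(t) (X_{\alpha,k}f)(\gamma_0(t))$ a.e. By Cauchy--Schwarz in the index set $(j;\alpha,k)$ and the constant-speed normalisation,
\[
\Big| \frac{d}{dt} f(\gamma_0(t)) \Big| \le \Big( \sum_j |a_j(t)|^2 + \sum_{\alpha,k}|b_{\alpha,k}(t)|^2 \Big)^{1/2} \big|Xf(\gamma_0(t))\big| = \tilde d(x,y)\, \big|Xf(\gamma_0(t))\big|.
\]
Integrating $f(x) - f(y) = -\int_0^1 \frac{d}{dt} f(\gamma_0(t))\, dt$ and then invoking $\tilde d(x,y) \lesssim d(x,y)$ once more produces $|f(x) - f(y)| \le C_{2,\varkappa}\, d(x,y) \int_0^1 |Xf(\gamma_0(t))|\, dt$, as claimed. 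The only genuinely delicate points are the appeal to the existence of a length-minimising geodesic and the verification (done above) that it stays inside the controlled ball; the remaining steps are a direct differentiation along the curve.
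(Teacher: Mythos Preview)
The paper does not prove this lemma; it is simply recalled from \cite{BBGG-1} (their Lemma~2.2) as a known input, so there is no in-paper argument to compare against. Your proof is the standard and correct one for sub-Riemannian mean-value estimates: existence of a constant-speed minimising horizontal curve (using properness of $(\mathbb{R}^{n_1+n_2},\tilde d)$), the triangle-inequality computation showing the geodesic remains in a fixed $\tilde d$-dilate of the ball together with $\tilde d\sim d$ to pass to the $d$-ball $B(x_0,C_{1,\varkappa}r)$, and then differentiation along the curve with Cauchy--Schwarz to produce the factor $\tilde d(x,y)\lesssim d(x,y)$. This is exactly the argument one would expect in \cite{BBGG-1}, and there is nothing to correct.
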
 

Let us state another convention that we shall follow throughout the article. 

\begin{convention} 
For a given symbol function $m (x, \eta)$ defined on $\mathbb{R}^{n_1 + n_2} \times \mathbb{R}_+$, we shall use the same notation $m (x, \eta)$ for its extension to $\mathbb{R}^{n_1 + n_2} \times \mathbb{R}$ with $m (x, \eta) = 0$ whenever $\eta \notin \mathbb{R}_+$. A similar remark holds for symbol functions $m (x, \tau)$ and $m (x, \tau, \kappa)$ defined on $\mathbb{R}^{n_1 + n_2} \times (\mathbb{R}_+)^{n_1}$ and $\mathbb{R}^{n_1 + n_2} \times (\mathbb{R}_+)^{n_1} \times \left( \mathbb{R}^{n_2} \setminus \{0\} \right)$ respectively. 
\end{convention} 

Before moving on, let us mention that inn our analysis we mostly decompose the spectrum of pseudo-multipliers into dyadic pieces. In that direction, we choose and fix $\psi_0 \in C_c^\infty((-2,2))$ and $\psi_1 \in C_c^\infty((1/2,2))$ such that $0 \leq \psi_0 (\eta), \psi_1 (\eta) \leq 1$, and 
\begin{align} \label{decompose-dyadic-spectral-symbol-frequency}
\sum_{j=0}^\infty \psi_j(\eta) = 1
\end{align}
for all $\eta \geq 0$, where $\psi_j (\eta) = \psi_1 \left( 2^{-(j-1)} \eta \right)$ for $j \geq 2.$ 

%%%%%%%%%%%%%%%%%%%%%%%%%%%%%%%%%%%%%
%%%%%%%%%%%%%%%%%%%%%%%%%%%%%%%%%%%%%
%%%%%%%%%%%%%%%%%%%%%%%%%%%%%%%%%%%%%
%%%%%%%%%%%%%%%%%%%%%%%%%%%%%%%%%%%%%
%%%%%%%%%%%%%%%%%%%%%%%%%%%%%%%%%%%%%
%%%%%%%%%%%%%%%%%%%%%%%%%%%%%%%%%%%%%
%%%%%%%%%%%%%%%%%%%%%%%%%%%%%%%%%%%%%
%%%%%%%%%%%%%%%%%%%%%%%%%%%%%%%%%%%%%
%%%%%%%%%%%%%%%%%%%%%%%%%%%%%%%%%%%%%

\subsection{Hardy spaces and interpolation}
\label{Hardy-interpolation}
The theory of Hardy spaces corresponding to Grushin operators is well developed and we recall it according to our requirement. For $\varkappa=1$, the Hardy space for the Grushin operator $G$ was studied in \cite{Dziubanski-Jotsaroop-Hardy-BMO-Grushin} and subsequently more general results were established in  \cite{Preisner-Sikora-Yan-Hardy-BMO-harmonic-weights-2019}. 

Let $H^1_{G_\varkappa}$ denote the Hardy space associated to the Grushin operator $G_\varkappa$, defined as follows: 
$$ H^1_{G_\varkappa}:=\{f\in L^1(\mathbb{R}^{n_1 + n_2}): \mathscr{M}_{G_\varkappa}f\in L^1(\mathbb{R}^{n_1 + n_2})\}, $$
where $\mathscr{M}_{G_\varkappa}f(x)=\sup_{t>0}|e^{-tG_{\varkappa}}f(x)|$ is the maximal function associated to the heat semigroup and the norm is defined as $\|f\|_{H^1_{G_\varkappa}}:=\|\mathscr{M}_{G_\varkappa}f\|_{L^1(\mathbb{R}^{n_1 + n_2})}$. 

Recall that the heat kernel $e^{-t G_{\varkappa}}(x, y)$ satisfy the following two-sided Gaussian bounds (see, for example, Theorem $2.1$ in \cite{DziubanskiSikoraLieApproach}): There exist constants $c, c' > 0$ such that 
\begin{align} \label{Both-sided-bounds}
\left| B(x, \sqrt{t}) \right|^{-1} \exp \left( \frac{-c'}{t} d(x,y)^2 \right) \lesssim \left|e^{-t G_{\varkappa}}(x, y)\right| \lesssim \left| B(x, \sqrt{t}) \right|^{-1} \exp \left( \frac{-c}{t} d(x,y)^2 \right),    
\end{align} 

In practice, for $1 \leq q < \infty$, in the spirit of \cite{Duong-Yan-Hardy-BMO-Interpolation-CPAM-2005, Preisner-Sikora-Yan-Hardy-BMO-harmonic-weights-2019}, we consider the atomic Hardy space ${H^1_{at, q}}$ as well as BMO spaces on the homogeneous space $\left( \mathbb{R}^{n_1 + n_2}, d, | \cdot | \right)$ as follows. 

\begin{definition} \label{def:atoms}
We call a function $a$ to be a $(1, q)$-atom if there exists a ball $B$ such that $supp(a) \subseteq B$, $\|a\|_{L^q} \leq |B|^{-1/{q'}}$ and the cancellation condition $\int_{B} a(x) \ dx = 0$ holds. 
\end{definition}

\begin{definition} \label{def:atomic-Hardy}
The space $H^1_{at, q} \left( \mathbb{R}^{n_1 + n_2} \right)$ consists of all functions $f \in L^1(\mathbb{R}^{n_1 + n_2})$ such that $f=\sum_{k}\lambda_{k} a_{k}$, where $a_{k}$'s are $(1, q)$-atoms and $\lambda_k$'s are complex numbers with $\sum_{k}|\lambda_{k}|<\infty$. Furthermore, we define 
\begin{align} \label{def:atomic-hardy-norm}
\|f\|_{H^1_{at, q}} := \inf \sum_{k}|\lambda_{k}| < \infty, 
\end{align} 
where the infimum is taken over all such representations of $f$.
\end{definition}

Since the heat semigroup $\{e^{-tG_{\varkappa}}\}_{t>0}$ is \textit{conservative}, that is, $\int_{\mathbb{R}^{n_1 + n_2}}e^{-tG_{\varkappa}}(x, y)\ dy=1 $ for all $t>0$ and $x \in \mathbb{R}^{n_1 + n_2} $, invoking Theorem A in \cite{Preisner-Sikora-Yan-Hardy-BMO-harmonic-weights-2019} we have that the spaces $H^1_{G_\varkappa}$ and $H^1_{at,2}\left( \mathbb{R}^{n_1 + n_2} \right)$ coincide with their norms $\|\cdot\|_{H^1_{G_\varkappa}}$ and $\|\cdot\|_{H^1_{at, 2}}$ being equivalent. 

Next, we say that a function $f$ (upto constant differences) belongs to $BMO_{q} \left( \mathbb{R}^{n_1 + n_2} \right)$ if 
\begin{align} \label{def:BMO-norm}
\|f\|_{BMO_q} & := \sup_{B} \inf_{c \in \mathbb{C}} \left( \frac{1}{|B|} \int_{B} \left| f(x) - c \right|^q dx \right)^{1/q} \\ 
\nonumber & \sim \sup_{B} \left( \frac{1}{|B|} \int_{B} \left| f(x) -  \frac{1}{|B|} \int_{B} f \right|^q dx \right)^{1/q} < \infty, 
\end{align} 
where the supremum is taken over all balls $B$. 

Further, following the work of \cite{Duong-Yan-Hardy-BMO-Interpolation-CPAM-2005}, we define the space $BMO_{G_{\varkappa}, q} \left( \mathbb{R}^{n_1 + n_2} \right)$ to be the equivalence classes of functions $f$ (upto constant differences) for which 
\begin{align} \label{def:BMO-norm-approximate-identities}
\| f \|_{BMO_{G_{\varkappa}, q}} & := \sup_{B} \left( \frac{1}{|B|} \int_{B} \left| f(x) - e^{- r_B^2 G_{\varkappa}} f(x) \right|^q dx \right)^{1/q} < \infty, 
\end{align} 
where $r_B$ denotes the radius of the ball $B$, and the supremum is taken over all balls $B$. 

Combining various results from \cite{Duong-Yan-Hardy-BMO-Interpolation-CPAM-2005, Preisner-Sikora-Yan-Hardy-BMO-harmonic-weights-2019}, we can establish the following interpolation result. 

\begin{lemma} \label{lem:Hardy-L2-space-interpolation}
Let $T$ be a bounded sublinear operator from $H^1_{at, 2} \left( \mathbb{R}^{n_1 + n_2} \right)$ to $L^1 \left( \mathbb{R}^{n_1 + n_2} \right)$. If $T$ is also bounded on $L^2 \left( \mathbb{R}^{n_1 + n_2} \right)$, then $T$ is bounded on $L^p \left( \mathbb{R}^{n_1 + n_2} \right)$ for all $1 < p < 2$. 
\end{lemma}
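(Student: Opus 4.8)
The plan is to obtain the $L^p$-boundedness for $1 < p < 2$ by interpolating the $H^1$-$L^1$ endpoint estimate against the $L^2$-boundedness, using the atomic description $H^1_{at,2}$ of the Grushin Hardy space together with the standard interpolation theorem between the Hardy space and $L^2$ in the homogeneous-space setting. Since the paper already quotes that $H^1_{G_\varkappa}$ coincides with $H^1_{at,2}(\mathbb{R}^{n_1+n_2})$ with equivalent norms, it suffices to prove the interpolation statement with $H^1_{at,2}$ as the left endpoint.

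\medskip

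\emph{Step 1: reduce to an off-diagonal decay estimate for the "bad part" of the operator.} Following Duong--Yan \cite{Duong-Yan-Hardy-BMO-Interpolation-CPAM-2005}, I would first use the heat semigroup $\{e^{-tG_\varkappa}\}_{t>0}$ as an approximation to the identity and write, for a suitable ball-dependent time $t_B = r_B^2$, the decomposition $T = T e^{-t_B G_\varkappa} + T(I - e^{-t_B G_\varkappa})$ acting locally. The Gaussian bounds \eqref{Both-sided-bounds} guarantee that $\{e^{-tG_\varkappa}\}$ has the off-diagonal decay and the conservation property needed to run the Duong--Yan machinery; this is exactly the setting in which their interpolation theorem between $H^1$ (defined via such an approximation to the identity) and $L^2$ applies, yielding boundedness of $T$ on $L^p$ for $1 < p < 2$.

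\medskip

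\emph{Step 2: invoke the abstract interpolation theorem.} The cleanest route is to cite the general interpolation result of Duong--Yan: if $T$ is bounded on $L^2$ and bounded from the atomic-type Hardy space $H^1$ (associated with an operator whose heat kernel satisfies Gaussian, or even just suitable off-diagonal, bounds) into $L^1$, then $T$ is bounded on $L^p$ for all $1 < p < 2$ by complex (or real) interpolation of the analytic family, after identifying the relevant $BMO_{G_\varkappa,q}$ space as the dual endpoint. Concretely: $T^*$ maps $L^\infty \to BMO_{G_\varkappa,2}$ (dual to the $H^1$-$L^1$ bound), $T^*$ is bounded on $L^2$, hence by the Fefferman--Stein-type good-$\lambda$ / interpolation argument of Duong--Yan $T^*$ is bounded on $L^{p'}$ for $2 < p' < \infty$, and dualizing gives the claim for $T$ on $L^p$, $1 < p < 2$. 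Alternatively, one interpolates directly the pair $(H^1_{at,2}, L^2)$ which, by the Preisner--Sikora--Yan \cite{Preisner-Sikora-Yan-Hardy-BMO-harmonic-weights-2019} and Duong--Yan theory, has complex interpolation spaces $L^p$ for the appropriate intermediate exponents.

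\medskip

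\emph{Main obstacle.} The only genuine point requiring care is that $T$ is merely \emph{sublinear}, not linear, so one cannot literally pass to an adjoint; instead one must run the real-variable argument directly, i.e. prove a weak-type $(1,1)$-against-$H^1$ Calder\'on--Zygmund decomposition estimate and then interpolate with $L^2$ via Marcinkiewicz. This is handled in the homogeneous-space framework of $(\mathbb{R}^{n_1+n_2}, d, |\cdot|)$ exactly as in \cite{Duong-Yan-Hardy-BMO-Interpolation-CPAM-2005}: decompose $f \in L^p$ at height $\lambda$ into a good part (controlled in $L^2$) and a sum of atoms (controlled via the $H^1_{at,2} \to L^1$ bound), using that the space is of homogeneous type so that a Calder\'on--Zygmund decomposition and a Whitney covering are available. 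No properties of $G_\varkappa$ beyond the Gaussian heat kernel bounds \eqref{Both-sided-bounds}, the conservation property, and the homogeneous-space structure are needed, so the proof is a direct transcription of the Euclidean/abstract argument; I would simply cite \cite{Duong-Yan-Hardy-BMO-Interpolation-CPAM-2005} and \cite{Preisner-Sikora-Yan-Hardy-BMO-harmonic-weights-2019} for the details and note that all their hypotheses are met in our setting.
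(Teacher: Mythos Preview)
Your Step~2(a) duality route is exactly the paper's argument, only the paper spells out the chain of inclusions a bit more carefully: it first invokes Theorem~B of \cite{Preisner-Sikora-Yan-Hardy-BMO-harmonic-weights-2019} to identify $(H^1_{at,2})^* = BMO_2$, then passes through $BMO_2 \hookrightarrow BMO_1$ (H\"older) and $BMO_1 \hookrightarrow BMO_{G_\varkappa,1}$ (Proposition~2.5 of \cite{Duong-Yan-Hardy-BMO-Interpolation-CPAM-2005}), and finally applies Theorem~5.2 of \cite{Duong-Yan-Hardy-BMO-Interpolation-CPAM-2005} to get $T^*$ bounded on $L^{p'}$ for $2<p'<\infty$, then dualizes. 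Your Step~1 heat-semigroup decomposition is not actually used in the paper's proof and is somewhat beside the point; the Duong--Yan machinery you want is their $L^\infty$-to-$BMO_L$ / $L^2$ interpolation theorem, not a splitting of $T$ itself.

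You are right to flag the sublinearity issue: the paper simply writes ``By duality, it follows that $T^*: L^\infty \to (H^1_{at,2})^*$ is bounded'' and proceeds, which tacitly assumes $T$ is linear. In every application in the paper $T = m(x,G_\varkappa)$ \emph{is} linear, so the gap is purely cosmetic there, and the word ``sublinear'' in the statement is stronger than what the given proof actually establishes. Your proposed fix via a Calder\'on--Zygmund decomposition plus Marcinkiewicz is the standard remedy for genuinely sublinear $T$, but note that to make it rigorous you need to control the bad part in the $H^1_{at,2}$ norm (i.e.\ verify that the Calder\'on--Zygmund pieces $b_j$ are, after normalization, $(1,2)$-atoms with $\sum_j |\lambda_j| \lesssim \|f\|_{L^1}$), which requires the $L^2$-version of the decomposition rather than the $L^1$-version; this is classical on homogeneous spaces but is not literally contained in \cite{Duong-Yan-Hardy-BMO-Interpolation-CPAM-2005}, so a more precise citation (e.g.\ to Coifman--Weiss or Meda--Sj\"ogren--Vallarino type atomic interpolation results) would be needed.
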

\begin{proof}
By duality, it follows that $T^* : L^\infty \left( \mathbb{R}^{n_1 + n_2} \right) \to \left( H^1_{at, 2} \left( \mathbb{R}^{n_1 + n_2} \right) \right)^*$ is bounded. 

We begin with invoking Theorem B of \cite{Preisner-Sikora-Yan-Hardy-BMO-harmonic-weights-2019} to get that the dual space of $ H^1_{at, 2} \left( \mathbb{R}^{n_1 + n_2} \right)$ is $BMO_{2} \left( \mathbb{R}^{n_1 + n_2} \right)$. Here we have used the fact that the heat semigroup $\{e^{-tG_{\varkappa}}\}_{t>0}$ is \textit{conservative}, that is, $e^{-tG} (1) = \int_{\mathbb{R}^{n_1 + n_2}}e^{-tG_{\varkappa}} (x, y) \, dy=1 $, for all $t>0$, and therefore one can do the analysis of \cite{Preisner-Sikora-Yan-Hardy-BMO-harmonic-weights-2019} with $h = 1$ itself. 

Next, we have by Proposition $2.5$ of \cite{Duong-Yan-Hardy-BMO-Interpolation-CPAM-2005} (which, in fact, is a consequence of Proposition $3.1$ of \cite{Martell-Sharp-Maximal-Studia-Math-2004}), the continuity of the inclusion map $BMO_1 \xhookrightarrow{i} BMO_{G_{\varkappa}, 1}$. Also, the continuity of the inclusion map $BMO_{p_2} \left( \mathbb{R}^{n_1 + n_2} \right) \xhookrightarrow{i} BMO_{p_1} \left( \mathbb{R}^{n_1 + n_2} \right)$, for any $1 \leq p_1 \leq p_2 < \infty$, follows trivially from H\"older's inequality. 

Putting all the above results together, we get the following chain of bounded operators: 
$$ L^\infty \left( \mathbb{R}^{n_1 + n_2} \right) \xhookrightarrow{T^*} BMO_{2} \left( \mathbb{R}^{n_1 + n_2} \right) \xhookrightarrow{i} BMO_{1} \xhookrightarrow{i} BMO_{G_{\varkappa}, 1} \left( \mathbb{R}^{n_1 + n_2} \right).$$

As a consequence, we have that $T^* : L^\infty \left( \mathbb{R}^{n_1 + n_2} \right) \to BMO_{G_{\varkappa}, 1} \left( \mathbb{R}^{n_1 + n_2} \right)$ is bounded. With that we can invoke Theorem $5.2$ of \cite{Duong-Yan-Hardy-BMO-Interpolation-CPAM-2005} to conclude that $T^*$ is bounded on $L^p \left( \mathbb{R}^{n_1 + n_2} \right)$ for all $2 < p < \infty$. By duality, we get that $T$ is bounded on $L^p \left( \mathbb{R}^{n_1 + n_2} \right)$ for all $1 < p < 2$. 

This completes the proof of the Lemma \ref{lem:Hardy-L2-space-interpolation}. 
\end{proof}

%%%%%%%%%%%%%%%%%%%%%%%%%%%%%%%%%%
%%%%%%%%%%%%%%%%%%%%%%%%%%%%%%%%%%
%%%%%%%%%%%%%%%%%%%%%%%%%%%%%%%%%%
%%%%%%%%%%%%%%%%%%%%%%%%%%%%%%%%%%
%%%%%%%%%%%%%%%%%%%%%%%%%%%%%%%%%%
%%%%%%%%%%%%%%%%%%%%%%%%%%%%%%%%%%
%%%%%%%%%%%%%%%%%%%%%%%%%%%%%%%%%%

\subsection{Hardy--Littlewood--Sobolev inequality} \label{subsec:HLS-inequality}

As earlier, let $\mathcal{M}_{G_\varkappa}$ denote the maximal operator corresponding to the heat semigroup $\{e^{-t G_\varkappa}\}_{t>0}$. It is well-known that $\mathcal{M}_{G_\varkappa}$ is bounded on $L^p(\mathbb{R}^{n_1 + n_2})$ for all $1 < p \leq \infty$. We essentially repeat the steps of the proof of Theorem $3$ of \cite{Applebaum-Banuelos-Probabilistic-hardy-ineq}, with minor modifications, in establishing the following version of the Hardy--Littlewood--Sobolev inequality.

\begin{theorem}[Hardy--Littlewood--Sobolev inequality] \label{theorem:Sobolev-embedding} 
For $1 < p < q < \infty$, $0 < b < Q/p$ and $ \frac{1}{p} - \frac{1}{q} = \frac{b}{Q}$, we have 
\begin{equation} \label{eq:Sobolev-ineq}
\| \left| B(\cdot, 1) \right|^{b/Q} (I + G_\varkappa)^{-b/2} f \|_q \lesssim_{b,p,q} \| f \|_p.
\end{equation}
\end{theorem}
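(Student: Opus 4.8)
The plan is to follow the subordination-and-maximal-function strategy of Applebaum--Bañuelos, exploiting the Gaussian heat kernel bounds \eqref{Both-sided-bounds} in place of the Euclidean ones. First I would write the negative power of $I+G_\varkappa$ as an integral against the heat semigroup: using the formula $(I+G_\varkappa)^{-b/2} = \frac{1}{\Gamma(b/2)} \int_0^\infty t^{b/2 - 1} e^{-t} e^{-t G_\varkappa}\, dt$, valid by the spectral theorem since $G_\varkappa \geq 0$. One then splits the $t$-integral at a threshold $t = R$ (to be optimised), writing $(I+G_\varkappa)^{-b/2} f = A_R f + B_R f$ where $A_R$ is the piece with $t \in (0,R)$ and $B_R$ the piece with $t \in (R,\infty)$.

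For the small-time piece $A_R$, I would use the upper Gaussian bound in \eqref{Both-sided-bounds} together with the fact that $|B(x,\sqrt t)|^{-1} e^{-c d(x,y)^2/t}$ is (up to constants) an approximate identity dominated by the \HLM\ $\mathcal{M}_{G_\varkappa}$; hence $|e^{-tG_\varkappa} f(x)| \lesssim \mathcal{M}_{G_\varkappa} f(x)$ uniformly in $t$, giving $|A_R f(x)| \lesssim R^{b/2}\, \mathcal{M}_{G_\varkappa} f(x)$ after integrating $t^{b/2-1}$ over $(0,R)$. For the large-time piece $B_R$, I would instead estimate pointwise using the $L^p$–$L^\infty$ mapping of the heat semigroup that follows from the on-diagonal bound $\|e^{-tG_\varkappa}\|_{p\to\infty} \lesssim \inf_x |B(x,\sqrt t)|^{-1/p}$ — more precisely, since we want a bound weighted by $|B(x,1)|^{b/Q}$, I would control $|B_R f(x)|$ by $\int_R^\infty t^{b/2-1} e^{-t} \|e^{-tG_\varkappa} f\|_\infty \, dt$ and use that $\|e^{-tG_\varkappa}f\|_\infty \lesssim t^{-Q/(2p)}\,(\text{local volume factor})^{-1/p}\|f\|_p$ for $t \gtrsim 1$; the homogeneous-dimension growth $|B(x,\sqrt t)| \sim t^{Q/2}|B(x,1)|$ for $t \geq 1$ converts this into $|B(x,1)|^{-1/p} t^{-Q/(2p)}\|f\|_p$. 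Choosing $R = R(x) \sim (\mathcal{M}_{G_\varkappa}f(x)^{-1}\,|B(x,1)|^{-1/p}\|f\|_p)^{2/(Q/p)}$ to balance the two contributions — exactly as in the classical proof that $I_\alpha f \lesssim (\mathcal{M} f)^{1-\theta}\|f\|_p^{\theta}$ — yields the pointwise inequality
\begin{equation*}
|B(x,1)|^{b/Q}\,\bigl|(I+G_\varkappa)^{-b/2} f(x)\bigr| \lesssim \bigl(\mathcal{M}_{G_\varkappa} f(x)\bigr)^{1 - bp/Q}\, \|f\|_p^{\,bp/Q},
\end{equation*}
using $\frac1p - \frac1q = \frac bQ$ so that $1 - bp/Q = p/q$. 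Raising to the $q$-th power, integrating, and applying the $L^{p}$-boundedness of $\mathcal{M}_{G_\varkappa}$ (valid since $p>1$) gives $\|\,|B(\cdot,1)|^{b/Q}(I+G_\varkappa)^{-b/2}f\|_q \lesssim \|\mathcal{M}_{G_\varkappa}f\|_p^{p/q}\|f\|_p^{1-p/q} \lesssim \|f\|_p$, which is \eqref{eq:Sobolev-ineq}.

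The main obstacle I anticipate is handling the local volume factor $|B(x,1)|^{b/Q}$ correctly in the large-time estimate: unlike in a space with exactly polynomial volume growth, here $|B(x,r)|$ has a genuinely $x$-dependent behaviour for small radii (the Grushin balls degenerate near $x'=0$), so one must be careful that the bound $|B(x,\sqrt t)| \sim t^{Q/2}|B(x,1)|$ for $t \ge 1$ is used correctly and uniformly — this is where the doubling property and the explicit ball-volume asymptotics for $G_\varkappa$ from \cite{RobinsonSikoraDegenerateEllipticOperatorsGrushinTypeMathZ2008} enter. A secondary technical point is justifying the pointwise splitting and the interchange of integrals for $f$ in a suitable dense class (say $C_c^\infty$ or $L^p \cap L^2$) and then extending by density, which is routine given the $L^p$ bounds at each stage.
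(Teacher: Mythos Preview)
Your proposal is correct and follows essentially the same approach as the paper: the subordination formula, the split of the $t$-integral at an $x$-dependent threshold, the maximal-function bound for the small-time piece, the $|B(x,\sqrt t)|^{-1/p}\|f\|_p$ pointwise bound for the large-time piece, optimisation, and the resulting pointwise inequality $|B(x,1)|^{b/Q}|(I+G_\varkappa)^{-b/2}f(x)| \lesssim (\mathcal{M}_{G_\varkappa}f(x))^{p/q}\|f\|_p^{bp/Q}$ are all exactly what the paper does. The one place where your sketch is slightly off is the large-time volume estimate: the two-sided relation $|B(x,\sqrt t)| \sim t^{Q/2}|B(x,1)|$ for $t\ge 1$ fails in the Grushin setting (the ratio can be as small as $t^{(n_1+n_2)/2}$ when $|x'|$ is large), but the paper sidesteps this cleanly by keeping the $e^{-t}$ factor and using only doubling to get $e^{-t}\,|B(x,1)|^{1/p}/|B(x,\sqrt t)|^{1/p} \lesssim e^{-t}(1+t^{-1/2})^{Q/p} \lesssim t^{-Q/(2p)}$ for \emph{all} $t>0$, which is precisely the fix for the obstacle you flagged.
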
 
\begin{proof} Write $T_t=e^{-t G_\varkappa}$ for $t>0$, and denote by $p_t$ the kernel of the operator $T_t$. Fix $p$ and $q$ satisfying the conditions of the statement of the theorem. Using H\"older's inequality we get 
\begin{align*}
\left| T_t f (x) \right| & = \left| \int_{\mathbb{R}^{n_1 + n_2}}  f(y) p_t (x,y) \, dy \right| \\ 
& \leq \left( \int_{\mathbb{R}^{n_1 + n_2}} \left| f(y) \right|^p \left| p_t (x,y) \right| \, dy \right)^{1/p} \left( \int_{\mathbb{R}^{n_1 + n_2}} \left| p_t (x,y) \right| \, dy \right)^{1/{p^{\prime}}} \\ 
& \lesssim \left| B(x, \sqrt{t}) \right|^{-1/p} \|f\|_p. 
\end{align*}

Next, recall that 
$$ (I+G_\varkappa)^{-b/2} f(x) = \frac{1}{\Gamma(b/2)} \int_0^\infty e^{-t} t^{\frac{b}{2} - 1} T_t f(x) \, dt, $$
with the integral converging absolutely, a fact that is well known, and the same can also be verified from the analysis that we are now going to perform. 

We break the integration in $t$-variable in two parts, on $(0 , {\delta_0}]$ and $({\delta_0}, \infty)$ where the exact value of ${\delta_0} > 0$ would be prescribed later. Let us write 
\begin{align*}
J_b f(x) = \frac{1}{\Gamma(b/2)} \int_0^{\delta_0} e^{-t} t^{\frac{b}{2} - 1} T_t f(x) \, dt, \quad \text{and} \quad K_b f(x) = \frac{1}{\Gamma(b/2)} \int_{\delta_0}^\infty e^{-t} t^{\frac{b}{2} - 1} T_t f(x) \, dt, 
\end{align*}
so that $ (I+G_\varkappa)^{-b/2} f(x) = J_b f(x) + K_b f(x) $.

Note first that 
$$ \left| J_b f(x) \right| \leq \frac{1}{\Gamma(b/2)} \mathcal{M}_{G_\varkappa}f(x) \int_0^{\delta_0} t^{\frac{b}{2} - 1} \, dt \, = C_b \mathcal{M}_{G_\varkappa}f(x) {\delta_0}^{\frac{b}{2}}. $$ 

On the other hand, making use of the following trivial estimate 
\begin{equation*} %\label{eq:claim-ball-behaviour}
e^{-t} \frac{\left| B(x, 1) \right|^{1/p}}{\left| B(x, \sqrt{t}) \right|^{1/p}} \lesssim e^{-t} \left( 1 + \frac{1}{\sqrt{t}} \right)^{Q/p} \lesssim_p \left( \frac{1}{\sqrt{t}} \right)^{Q/p}, 
\end{equation*} 
where the first inequality follows from the doubling measure property, we get  
\begin{align*}
\left| K_b f(x) \right| & \leq \frac{1}{\Gamma(a/2)} \int_{\delta_0}^\infty e^{-t} t^{\frac{b}{2} - 1} \left| T_t f(x) \right| \, dt \\ 
& \lesssim_b \int_{\delta_0}^\infty e^{-t} t^{\frac{b}{2} - 1} \left| B(x, \sqrt{t}) \right|^{-1/p} \|f\|_p \, dt \\ 
& = \left| B(x, 1) \right|^{-1/p} \|f\|_p \int_{\delta_0}^\infty t^{\frac{b}{2} - 1} \left( e^{-t} \frac{\left| B(x, 1) \right|^{1/p}}{\left| B(x, \sqrt{t}) \right|^{1/p}} \right) \, dt \\ 
& \lesssim_p \left| B(x, 1) \right|^{-1/p} \|f\|_p \int_{\delta_0}^\infty t^{\frac{b}{2} - \frac{Q}{2p} - 1} \, dt \\ 
& \sim_{b,p,q} \left| B(x, 1) \right|^{-1/p} \|f\|_p \, {\delta_0}^{\frac{b}{2} - \frac{Q}{2p}}. 
\end{align*}

Combining the above estimates of $ \left| J_b f(x) \right| $ and $ \left| K_b f(x) \right| $, we get 
$$ \left| (I+G_\varkappa)^{-b/2} f(x) \right| \lesssim_{b,p,q} \mathcal{M}_{G_\varkappa}f(x) {\delta_0}^{\frac{b}{2}} + \left| B(x, 1) \right|^{-1/p} \|f\|_p \, {\delta_0}^{\frac{b}{2} - \frac{Q}{2p}}. $$

Now, in order to optimize in ${\delta_0} > 0$, we choose ${\delta_0} = \left( \frac{\left| B(x, 1) \right|^{-1/p} \|f\|_p}{\mathcal{M}_{G_\varkappa}f(x)} \right)^{2p/Q},$ and then 
\begin{align*}
\left| (I+G_\varkappa)^{-b/2} f(x) \right| & \lesssim_{b,p,q} \left| B(x, 1) \right|^{-b/Q} \|f\|_p^{bp/Q} \left( \mathcal{M}_{G_\varkappa}f(x) \right)^{1- bp/Q} \\ 
& = \left| B(x, 1) \right|^{-b/Q} \|f\|_p^{bp/Q} \left(\mathcal{M}_{G_\varkappa}f(x) \right)^{p/q}, 
\end{align*}

implying that  
\begin{align*}
\int_{\mathbb{R}^{n_1 + n_2}} \left( \left| B(x, 1) \right|^{b/Q} \left| (I+G_\varkappa)^{-b/2} f(x) \right| \right)^q dx 
& \lesssim_{b,p,q} \|f\|_p^{bpq/Q} \int_{\mathbb{R}^{n_1 + n_2}} \left( \mathcal{M}_{G_\varkappa}f(x) \right)^{p} dx \\ 
& \lesssim_{p, \varkappa} \|f\|_p^{bpq/Q} \|f\|_p^p \\ 
& = \|f\|_p^q. 
\end{align*}

This completes the proof of Theorem \ref{theorem:Sobolev-embedding}. 
\end{proof}

%%%%%%%%%%%%%%%%%%%%%%%%%%%%%%%%%%%%%
%%%%%%%%%%%%%%%%%%%%%%%%%%%%%%%%%%%%%
%%%%%%%%%%%%%%%%%%%%%%%%%%%%%%%%%%%%%
%%%%%%%%%%%%%%%%%%%%%%%%%%%%%%%%%%%%%
%%%%%%%%%%%%%%%%%%%%%%%%%%%%%%%%%%%%%
%%%%%%%%%%%%%%%%%%%%%%%%%%%%%%%%%%%%%
%%%%%%%%%%%%%%%%%%%%%%%%%%%%%%%%%%%%%
%%%%%%%%%%%%%%%%%%%%%%%%%%%%%%%%%%%%%
%%%%%%%%%%%%%%%%%%%%%%%%%%%%%%%%%%%%%

\subsection{Weighted Calder\'on-Vaillancourt results} \label{subsec:weighted-Calderon-Villaincourt}
In this section, we shall prove a result which can be thought of as a weighted analogue of Calder\'on-Vaillancourt type theorem for Grushin pseudo-multipliers.

In order to do so, we first show that the ideas of \cite{Bagchi-Garg-1} allow one to have a Calder\'on-Vaillancourt type theorem even in the presence of Hermite-type shifts. More precisely, given $m \in \mathscr{S}^0_{\rho, \delta}(G_{\varkappa})$ and $\vec{c} \in \mathbb{R}^{n_1}$, let us consider symbol functions $\widetilde{m} : \mathbb{R}^{n_1 + n_2} \times (\mathbb{R}_+)^{n_1} \to \mathbb{C}$ and $M_{\vec{c}} : \mathbb{R}^{n_1 + n_2} \times (\mathbb{R}_+)^{n_1} \times (\mathbb{R}^{n_2} \setminus \{0\}) \to \mathbb{C}$ defined by 
\begin{align} \label{define:shifts-CV-multi-symb}
\widetilde{m} (x, \tau) = m (x, |\tau|_1) \quad \text{and} \quad M_{\vec{c}} (x, \tau, \kappa) = \widetilde{m} \left( x, \tau + |\kappa| \vec{c} \right). 
\end{align}

%%%%%%%%%%%%%%%%%
%%%%%%%%%%%%%%%%%
We have the following $L^2$-boundedness result. 

\begin{lemma} \label{lem:shifted-CV} 
Given a compact set $\mathcal{K} \subset \mathbb{R}^{n_1}$, there exists a constant $C = C_{\mathcal{K}}$ such that the following holds true. For any $m \in \mathscr{S}^0_{\rho, \delta}(G_{\varkappa})$, define $\widetilde{m} (x, \tau)$ and $M_{\vec{c}}$ as in \eqref{define:shifts-CV-multi-symb}. Then, 
\begin{align} \label{ineq:shifts-and-CV}
\| M_{\vec{c}} (x, \boldsymbol{L}, \boldsymbol{U}) \|_{op} \leq C_{\mathcal{K}} \| m \|_{\mathscr{S}^0_{\rho, \delta}}, 
\end{align}
for all $\vec{c} \in \mathcal{K}$. 
\end{lemma}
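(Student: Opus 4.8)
The plan is to reduce the statement to the known Calder\'on--Vaillancourt result for $(\boldsymbol{L}, \boldsymbol{U})$-pseudo-multipliers, namely Theorem~\ref{thm:L2-joint-calc}~(1), by showing that the shifted symbol $M_{\vec{c}}$ lies in $\mathscr{S}^{0}_{\rho, \delta}(\boldsymbol{L}, \boldsymbol{U})$ with a seminorm bound that is uniform for $\vec{c}$ ranging over the compact set $\mathcal{K}$, and controlled by $\| m \|_{\mathscr{S}^{0}_{\rho, \delta}}$. First I would unwind the definitions: $\widetilde{m}(x, \tau) = m(x, |\tau|_1)$ simply composes $m$ with the linear functional $\tau \mapsto |\tau|_1 = \sum_j \tau_j$ (on the positive octant, where $|\tau|_1$ is smooth), and then $M_{\vec{c}}(x, \tau, \kappa) = \widetilde{m}(x, \tau + |\kappa| \vec{c}) = m\left(x, \sum_j \tau_j + |\kappa|\, (\vec{c})_{\text{sum}}\right)$ where I write $(\vec{c})_{\text{sum}} = \sum_j c_j$. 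So $M_{\vec{c}}$ is, up to an affine change of the spectral variables, still just $m$ composed with a scalar-valued affine function of $(\tau, \kappa)$.

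The key computation is then the chain rule / Fa\`a di Bruno bookkeeping for the mixed derivatives $X^{\Gamma} \partial_\tau^\theta \partial_\kappa^\beta M_{\vec{c}}$. Since $X$ acts only in the $x$-variable, $X^\Gamma$ passes through to $X^\Gamma m$. The $\partial_\tau$ derivatives each bring down a factor of $1$ (the derivative of $\sum_j \tau_j$) so $\partial_\tau^\theta$ produces $\partial_\eta^{|\theta|} m$ evaluated at the shifted argument. The $\partial_\kappa^\beta$ derivatives are the only nontrivial part: $\partial_{\kappa_k}$ of $m(x, \dots + |\kappa| (\vec{c})_{\text{sum}})$ produces $\partial_\eta m \cdot (\vec{c})_{\text{sum}} \cdot \partial_{\kappa_k} |\kappa|$, and higher $\kappa$-derivatives generate sums of terms of the form $(\partial_\eta^r m) \cdot \big(\text{product of derivatives of } |\kappa|\big)$. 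Here one uses that $|\kappa|$ is smooth away from $\kappa = 0$ with $|\partial_\kappa^\gamma |\kappa|| \lesssim_\gamma |\kappa|^{1 - |\gamma|}$, and that $|(\vec{c})_{\text{sum}}| \leq C_{\mathcal{K}}$. Each such term is then estimated using the symbol inequality \eqref{def:grushin-symb} for $m$: $|X^\Gamma \partial_\eta^r m(x, \eta)| \lesssim \| m\|_{\mathscr{S}^0_{\rho,\delta}} (1 + \eta)^{- (1+\rho) r/2 + \delta |\Gamma|/2}$ at the shifted point $\eta = \sum_j \tau_j + |\kappa| (\vec{c})_{\text{sum}}$.

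The remaining point — and the main obstacle — is to verify that the resulting bound has the correct weight $(1 + |\tau| + |\kappa|)^{-(1+\rho)(|\theta| + |\beta|)/2 + \delta |\Gamma|/2}$ demanded by Definition~\ref{def:joint-symb-def1}. This requires comparing $1 + \eta = 1 + \sum_j \tau_j + |\kappa| (\vec{c})_{\text{sum}}$ with $1 + |\tau| + |\kappa|$. The bound $\eta \lesssim_{\mathcal{K}} |\tau| + |\kappa|$ is immediate; the delicate direction is that $1 + \eta \gtrsim_{\mathcal{K}} 1 + |\tau| + |\kappa|$ can \emph{fail} when $\sum_j \tau_j + |\kappa|(\vec{c})_{\text{sum}}$ is small while $|\tau| + |\kappa|$ is large (e.g. when $(\vec{c})_{\text{sum}} < 0$ and $|\kappa| \approx \sum_j \tau_j / |(\vec{c})_{\text{sum}}|$), so negative powers of $(1+\eta)$ are \emph{not} in general dominated by the corresponding powers of $(1 + |\tau| + |\kappa|)$. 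I would handle this exactly as in \cite{Bagchi-Garg-1}: rather than proving a pointwise symbol estimate, one splits the spectral multiplier dyadically using the partition $\{\psi_j\}$ from \eqref{decompose-dyadic-spectral-symbol-frequency}, writing $M_{\vec{c}} = \sum_j \psi_j(\eta) M_{\vec{c}}$, and on each dyadic piece the argument is localized so that $\eta \sim 2^j$ forces $\sum_j \tau_j + |\kappa|(\vec{c})_{\text{sum}} \sim 2^j$, and one then sums the operator norms of the dyadic pieces (which decay suitably because of the $\delta < \rho$ gap, via an almost-orthogonality / Cotlar--Stein argument as in the proof of Theorem~\ref{thm:L2-joint-calc}). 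The uniformity in $\vec{c} \in \mathcal{K}$ is automatic since every constant produced depends on $\vec{c}$ only through $|(\vec{c})_{\text{sum}}| \leq C_{\mathcal{K}}$ and finitely many derivative bounds, all uniformly controlled on the compact set $\mathcal{K}$.
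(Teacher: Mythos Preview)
There is a genuine gap. Your chain-rule computation for $\partial_\kappa^\beta M_{\vec{c}}$ produces terms of size $(1+\eta)^{-(1+\rho)r/2}\,|\kappa|^{\,r-|\beta|}$ with $1\le r\le |\beta|$, and you record the bound $|\partial_\kappa^\gamma|\kappa||\lesssim |\kappa|^{1-|\gamma|}$ but then proceed as though it were harmless. It is not: for fixed large $|\tau|$ and $|\kappa|\to 0$ the factor $|\kappa|^{\,r-|\beta|}$ blows up (already for $|\beta|=2$, $r=1$), while the target weight $(1+|\tau|+|\kappa|)^{-(1+\rho)|\beta|/2}$ stays bounded. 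So $M_{\vec{c}}\notin \mathscr{S}^0_{\rho,\delta}(\boldsymbol{L},\boldsymbol{U})$ in general, and Theorem~\ref{thm:L2-joint-calc}(1) cannot be invoked. The obstacle you \emph{do} flag (failure of $1+\eta\gtrsim 1+|\tau|+|\kappa|$) is real but secondary; the $\kappa\to 0$ singularity of $|\kappa|$ is the primary obstruction. Your proposed fix, a dyadic decomposition in $\eta$, does not touch this: the constraint $\eta\sim 2^j$ leaves $|\kappa|$ free to approach $0$ (take $|\tau|_1\sim 2^j$, $|\kappa|$ tiny), so every dyadic piece inherits the same singularity.

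The paper's argument is organized around exactly this difficulty and proceeds by localizing in $|\kappa|$, not in $\eta$. One fixes a cutoff $\phi\in C_c^\infty(\mathbb{R}^{n_2})$ with $\phi\equiv 1$ near $0$ and writes $M_{\vec{c}}=\phi(\kappa)M_{\vec{c}}+(1-\phi(\kappa))M_{\vec{c}}$. On the support of $1-\phi$ one has $|\kappa|\gtrsim 1$, so the factors $|\kappa|^{1-|\gamma|}$ are harmless and the cancellation condition \eqref{def:grushin-symb-vanishing-0-condition} holds trivially; this piece then falls under a version of the Calder\'on--Vaillancourt theorem requiring only the weaker estimate \eqref{ineq:joint-symb-lesser-assumption-2} together with \eqref{def:grushin-symb-vanishing-0-condition} (Remark~1.8 of \cite{Bagchi-Garg-1}), which also covers the case $\delta=\rho$ that your appeal to part~(1) would miss. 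On the support of $\phi$ one Taylor-expands $\widetilde{m}(x,\tau+|\kappa|\vec{c})$ in powers of $|\kappa|\vec{c}$ about $\tau$: each polynomial term factors as a bounded $(\boldsymbol{L},\boldsymbol{U})$-multiplier $|\kappa|^{|\alpha|}\phi(\kappa)$ composed with a genuine $\mathscr{S}^0_{\rho,\delta}(\boldsymbol{L})$ pseudo-multiplier $\partial_\tau^\alpha\widetilde{m}(x,\tau)$, while the remainder of order $N+1$ carries an explicit factor $|\kappa|^{N+1}$ which, for $N$ large, simultaneously kills the $\kappa\to 0$ singularity and forces enough vanishing at $\kappa=0$ to satisfy \eqref{def:grushin-symb-vanishing-0-condition}. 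This decomposition is the missing idea.
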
 
\begin{proof}
Choose and fix $\phi \in C_c^\infty (\mathbb{R}^{n_2})$ such that $\phi (\kappa) = 1$ for all $|\kappa| \leq 1/2$ and $\phi (\kappa) = 0$ for all $|\kappa| \geq 1$, and decompose $M_{\vec{c}} = M_{1, \vec{c}} + M_{2, \vec{c}}$, where 
$$ M_{1, \vec{c}} (x, \tau, \kappa) = M_{\vec{c}} (x, \tau, \kappa) \, \phi (\kappa) \quad \text{and} \quad M_{2, \vec{c}} (x, \tau, \kappa) = M_{\vec{c}} (x, \tau, \kappa) \, ( 1 - \phi (\kappa) ). $$

Let us first analyse the operator $M_{2, \vec{c}} (x, \boldsymbol{L}, \boldsymbol{U})$ corresponding to the symbol function $M_{2, \vec{c}} (x, \tau, \kappa)$. It follows directly from the definition of $M_{2, \vec{c}}$ and the support condition on $\phi$ that $M_{2, \vec{c}}$
satisfies the symbol condition 
\begin{align} \label{ineq:joint-symb-lesser-assumption-2}
\left| X^\Gamma \partial_{\tau}^{\theta} \partial_{\kappa}^{\beta} M_{2, \vec{c}} (x, \tau, \kappa) \right| \leq_{\Gamma, \theta, \beta, \mathcal{K}} (1 + |\tau|)^{- (1 + \rho) \frac{|\theta|}{2} + \delta \frac{|\Gamma|}{2}}, 
\end{align}
and the cancellation condition \eqref{def:grushin-symb-vanishing-0-condition} for all $\Gamma \in \mathbb{N}^{n_1 + n_1 n_2}$, $\theta \in \mathbb{N}^{n_1}$ and $\beta \in \mathbb{N}^{n_2}$. 

It then follows from Remark $1.8$ of \cite{Bagchi-Garg-1} that $M_{\vec{c}}$ satisfies \eqref{ineq:shifts-and-CV}, that is, for all $\vec{c} \in \mathcal{K}$, 
\begin{align} \label{ineq:shifts-and-CV-part1}
\| M_{2, \vec{c}} (x, \boldsymbol{L}, \boldsymbol{U}) \|_{op} \leq C_{\mathcal{K}} \| m \|_{\mathscr{S}^0_{\rho, \delta}}. 
\end{align}

Next, we analyse the operator $M_{1, \vec{c}} (x, \boldsymbol{L}, \boldsymbol{U})$. Using the Taylor series expansion (see also the discussion around $(9.7)$ of \cite{Bagchi-Garg-1}), for every $N \in \mathbb{N}$ we can write 
\begin{align*}
M_{1, \vec{c}} (x, \tau, \kappa) & = \phi (\kappa) \, \widetilde{m} \left( x, \tau + |\kappa| \vec{c} \right) \\ 
& = \phi (\kappa) \sum_{|\alpha| \leq N} \frac{\vec{c}^{\alpha}}{ \, \alpha!} \, |\kappa|^{|\alpha|} \partial^{\alpha}_{\tau} \widetilde{m} \left( x, \tau \right) \\ 
& \quad + \phi (\kappa) \sum_{|\alpha| = N+1} \frac{N+1}{\alpha!} \int_0^1 (1-t)^{N} |\kappa|^{|\alpha|} \, \partial^{\alpha}_{\tau}  \widetilde{m} \left( x, \tau + t |\kappa| \vec{c} \right) \, dt \\ 
& = M_{1, \vec{c}}^{1, N} (x, \tau, \kappa) + M_{1, \vec{c}}^{2, N} (x, \tau, \kappa). 
\end{align*}

We shall fix $N$ in just a while. But, let us first show that $M_{1, \vec{c}}^{1, N} (x, \tau, \kappa)$ corresponds to an $L^2$-bounded operator with operator norm satisfying \eqref{ineq:shifts-and-CV}. For this, 
note that since $(\tau, \kappa) \mapsto  |\kappa|^{|\alpha|} \phi (\kappa)$ is bounded, it follows from Plancherel's theorem that it corresponds to an $L^2$-bounded operator, with operator bound depending only on $N$ and the function $\phi$. On the other hand, we have that each of $\partial^{\alpha}_{\tau} \widetilde{m} \left( x, \tau \right) \in \mathscr{S}^0_{\rho, \delta}(\boldsymbol{L}).$ It then follows from the work of \cite{Bagchi-Garg-1} that $\partial^{\alpha}_{\tau} \widetilde{m} \left( x, \boldsymbol{L} \right)$, and hence $M_{1, \vec{c}}^{1, N} (x, \boldsymbol{L}, \boldsymbol{U})$, satisfies \eqref{ineq:shifts-and-CV} with the implicit constant may also be depending on $N$ and the function $\phi$. 

Finally, in order to argue for $M_{1, \vec{c}}^{2, N} (x, \tau, \kappa)$, let us take one piece of the finite sum in its definition, say, $\phi (\kappa) |\kappa|^{|\alpha|} \partial^{\alpha}_{\tau} \widetilde{m} \left( x, \tau + |\kappa| \vec{c} \right)$ with $|\alpha| = N+1$. Taking $N$ to be sufficiently large, one can ensure that not only $(x, \tau, \kappa) \mapsto \phi (\kappa) |\kappa|^{|\alpha|} \partial^{\alpha}_{\tau} \widetilde{m} \left( x, \tau + t |\kappa| \vec{c} \right)$ satisfies the symbol condition \eqref{ineq:joint-symb-lesser-assumption-2}, but it also satisfies the cancellation condition \eqref{def:grushin-symb-vanishing-0-condition} for all $|\beta| \leq 4 \left( \lfloor\frac{Q}{4}\rfloor + 1 \right)$. With these conditions at hand, the result follows from Remark $1.8$ of \cite{Bagchi-Garg-1}. 
\end{proof}

We have the following weighted analogue of Calder\'on-Vaillancourt type theorem. 

\begin{proposition}
\label{prop:weighted-CV} 
Let $m \in \mathscr{S}^{0}_{\rho, \delta}(G)$ with $\delta < \rho$. Then, for every $\mathfrak{b} \geq 0$, we have 
\begin{align*} %\label{eq:weighted-L2-result}
\int_{\mathbb{R}^{n_1 + n_2}} |m(x, G)f(x)|^{2} |B(x, 1)|^{\mathfrak{b}} \; dx \lesssim_{\mathfrak{b}} \| m \|^2_{\mathscr{S}^0_{\rho,\delta}} \int_{\mathbb{R}^{n_1 + n_2}}  |f(x)|^{2} |B(x, 1)|^{\mathfrak{b}} \; dx.
\end{align*}
\end{proposition}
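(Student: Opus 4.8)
The strategy is to reduce the weighted $L^2$ bound for $m(x,G)$ to the \emph{unweighted} $L^2$-boundedness of a related family of pseudo-multipliers for the joint functional calculus of $(\boldsymbol{L},\boldsymbol{U})$, and then invoke Lemma~\ref{lem:shifted-CV}. The point is that the weight $|B(x,1)|^{\mathfrak b}$ is, up to constants, comparable to a function of the Grushin geometry at unit scale: by the explicit formula for the ball volume, $|B(x,1)| \sim (1+|x'|)^{\varkappa n_2}\wedge 1$-type expressions, and more usefully $|B(x,1)|^{\mathfrak b}$ is controlled by a polynomial weight in $x'$ whose conjugation effect on the Hermite-type operators can be tracked. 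Concretely, I would first observe that it suffices to prove the estimate for $\mathfrak b$ an arbitrarily large but fixed number, and that by a standard density/limiting argument we may take $f$ Schwartz.

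\textbf{Main steps.} First, I would record the volume asymptotics $|B(x,r)| \sim r^{n_1}\big(r^{n_2}\prod\text{-factors}\big)$ and in particular that $|B(x,1)|^{\mathfrak b} \sim \big(1+|x'|\big)^{\varkappa n_2 \mathfrak b}$ uniformly, so that the weighted $L^2$ norm is equivalent to $\|(1+|x'|)^{s} m(x,G) f\|_2$ with $s = \tfrac12 \varkappa n_2 \mathfrak b$. Second, since the weight depends only on $x'$ and is a polynomial, I would conjugate: writing $w(x') = (1+|x'|^2)^{s/2}$, the operator $w\, m(x,G)\, w^{-1}$ should again be (essentially) a Grushin pseudo-multiplier, because commuting $w$ past the spectral resolution of $H_\varkappa(\lambda)$ produces Hermite-type shifts in the spectral variable together with lower-order error terms — precisely the kind of shifted symbols $M_{\vec c}$ treated in Lemma~\ref{lem:shifted-CV}. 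Here the Fourier-integral representation \eqref{Gru-pseudo} is the right tool: one expands $w(x')$ in the Hermite basis $h^\lambda_{\varkappa,k}$ and uses that multiplication by $|x'|^2$ corresponds, after rescaling, to a bounded-range shift on $\nu_{\varkappa,k}$ (for $\varkappa=1$ this is literally the Hermite recursion; for general $\varkappa$ one uses that $|x'|^{2\varkappa}$ together with the Laplacian generates $H_\varkappa(\lambda)$). Third, having expressed $w\, m(x,G)\, w^{-1}$ as a finite combination of operators of the form $M_{\vec c}(x,\boldsymbol{L},\boldsymbol{U})$ (plus genuinely lower-order remainders in $\mathscr{S}^{-\epsilon}_{\rho,\delta}$, which are even better behaved) with symbol seminorms controlled by $\|m\|_{\mathscr{S}^0_{\rho,\delta}}$, I would apply Lemma~\ref{lem:shifted-CV} to obtain the unweighted $L^2$ bound $\|w\, m(x,G)\, w^{-1}\|_{op} \lesssim_{\mathfrak b} \|m\|_{\mathscr{S}^0_{\rho,\delta}}$, which unwinds to the claimed weighted inequality.

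\textbf{Alternative / fallback.} If the conjugation bookkeeping for general $\varkappa$ proves unwieldy, an alternative is a more hands-on approach: decompose $m = \sum_j m\psi_j$ dyadically in the spectral variable as in \eqref{decompose-dyadic-spectral-symbol-frequency}, use the off-diagonal (weighted Plancherel) kernel estimates of the form \eqref{Motivation-Plancherel} together with the finite-propagation-speed/heat-kernel Gaussian bounds \eqref{Both-sided-bounds} to show that each dyadic piece is bounded on $L^2(|B(\cdot,1)|^{\mathfrak b})$ with a bound summable in $j$ (this is where $\delta<\rho$ is used, to get a geometric gain $2^{-j(\rho-\delta)/2}$), and sum.

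\textbf{Main obstacle.} The crux is Step two: making precise the claim that conjugating a Grushin pseudo-multiplier by the polynomial weight $w(x')$ returns, modulo acceptable errors, a pseudo-multiplier in the \emph{joint} calculus to which Lemma~\ref{lem:shifted-CV} applies — in particular controlling the new symbol's derivatives in \emph{all} the vector fields $X$ and verifying the shift structure $\tau \mapsto \tau + |\kappa|\vec c$ emerges with $\vec c$ ranging over a fixed compact set. The restriction $\delta<\rho$ enters exactly because the conjugation costs derivatives that must be absorbed by the $\rho$-gain, and the hypothesis that this is stated only for $\varkappa=1$ (the operator $G$, not $G_\varkappa$) reflects that for $\varkappa=1$ the weight–spectral-shift dictionary is exactly the classical Hermite creation/annihilation relation.
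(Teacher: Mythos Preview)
Your high-level strategy --- reduce to the shifted-symbol Calder\'on--Vaillancourt bound of Lemma~\ref{lem:shifted-CV} --- matches the paper exactly, and the identification $|B(x,1)|^{\mathfrak b}\sim(1+|x'|)^{\varkappa n_2\mathfrak b}$ is the right first move. The execution, however, has a real gap. Framing the problem as $L^2$-boundedness of $w\,m(x,G)\,w^{-1}$ forces you to push $w^{-1}$ through the spectral resolution, and $w^{-1}$ is never polynomial; even if you restrict to polynomial $w$, multiplying the kernel by $x'^{\alpha}$ alone (hitting $\Phi^\lambda_\mu(x')$) produces Hermite shifts with coefficients of size $\sim(\mu/|\lambda|)^{|\alpha|/2}$, which are unbounded --- so the conjugated operator is \emph{not} a finite combination of $M_{\vec c}$'s plus genuinely lower-order terms as you assert. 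The paper's device is different and sidesteps this. It first reduces by interpolation to the integer weights $|x'|^{4k}$, $k\in\mathbb N$, and then at the kernel level writes
\[
x'^{\alpha} \;=\; \sum_{\gamma\le\alpha}\binom{\alpha}{\gamma}(x'-y')^{\gamma}\,y'^{\alpha-\gamma}.
\]
The factor $y'^{\alpha-\gamma}$ is absorbed into $f(y)$ and becomes the weight $(1+|y'|)^{4k}$ on the right-hand side; the factor $(x'-y')^{\gamma}$ acting on the kernel \emph{does} produce controlled shifted symbols --- Lemma~4.5 of \cite{Bagchi-Garg-1} shows it yields $C_{\mu,\vec c}\,\tau^{\gamma_2/2}\partial_\tau^{\gamma_1}\widetilde m_j$ evaluated at $(2\mu+\tilde 1+\vec c(s))|\lambda|$ with bounded $C_{\mu,\vec c}$ and $\vec c$ in a fixed compact set --- precisely because the creation/annihilation contributions from the $x'$ and $y'$ sides cancel at leading order. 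Lemma~\ref{lem:shifted-CV} then applies directly. So the weight is \emph{transferred} from $x$ to $y$, not removed by conjugation.

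Your fallback also fails as stated: for $m\in\mathscr{S}^{0}_{\rho,\delta}$ the dyadic pieces $m\psi_j$ have symbol seminorm uniformly $\sim 1$, and the weighted Plancherel estimate \eqref{Motivation-Plancherel} gives no decay in $j$; there is no mechanism producing a gain $2^{-j(\rho-\delta)/2}$. The condition $\delta<\rho$ enters the argument only through the unweighted $L^2$-boundedness invoked from Theorem~\ref{thm:L2-joint-calc} and Lemma~\ref{lem:shifted-CV}, not through any summability of individual pieces.
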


\begin{proof}
Note that 
\begin{align*}
& \int_{\mathbb{R}^{n_1 + n_2}} |m(x, G) f(x)|^{2} |B(x, 1)|^{\mathfrak{b}} \, dx \\
& \lesssim \int_{|x'| \leq 1} |m(x, G) f(x)|^{2}  \, dx + \int_{|x'|> 1} |m(x, G)f(x)|^{2} |x'|^{\mathfrak{b} \varkappa n_2} \, dx \\
& := I + II.
\end{align*}

For term $I$, since $m \in \mathscr{S}^{0}_{\rho, \delta}(G)$, we can invoke Theorem \ref{thm:L2-joint-calc} to conclude that 
\begin{align*}
I \leq \int_{\mathbb{R}^{n_1 + n_2}} |m(x, G)f(x)|^{2} \, dx & \lesssim  \| m \|^2_{\mathscr{S}^0_{\rho, \delta}} \int_{\mathbb{R}^{n_1 + n_2}} |f(x)|^{2} \, dx \\ 
& \lesssim_{\mathfrak{b}} \| m \|^2_{\mathscr{S}^0_{\rho, \delta}} \int_{\mathbb{R}^{n_1 + n_2}} |f(x)|^{2} |B(x,1)|^{\mathfrak{b}} \, dx.
\end{align*}

Now, for term $II$, if we could prove that 
\begin{align}\label{Global-weighted-L2}
\int_{|x'|> 1} |m(x, G)f(x)|^{2} |x'|^{4k} \, dx \lesssim_{k} \| m \|^2_{\mathscr{S}^0_{\rho, \delta}} \int_{\mathbb{R}^{n_1 + n_2}} |f(x)|^{2} (1 + |x'|)^{4k} \, dx \end{align}
for all $k \in \mathbb{N}$, then we will have by interpolation that \eqref{Global-weighted-L2} holds true with $k$ replaced by arbitrary $b>0$. In particular, that would imply  
\begin{align*}
\int_{|x'|> 1} |m(x, G) f(x)|^{2} |x'|^{\mathfrak{b} \varkappa n_2} \, dx \lesssim_{\mathfrak{b}} \| m \|^2_{\mathscr{S}^0_{\rho, \delta}} \int_{\mathbb{R}^{n_1 + n_2}}  |f(x)|^{2} |B(x, 1)|^{\mathfrak{b}} \, dx .
\end{align*}

Now, to establish \eqref{Global-weighted-L2}, it suffices to prove that 
\begin{align} \label{Global-L2-revised}
\int_{|x'|> 1} |x'^{\alpha} m(x, G) f(x)|^{2} \, dx \lesssim_{k} \| m \|^2_{\mathscr{S}^0_{\rho, \delta}} \int_{\mathbb{R}^{n_1 + n_2}} |f(x)|^{2} (1 + |x'|)^{4k} \, dx \, ,
\end{align}
where  $|\alpha|=2k$.

With $\psi_j$'s as in \eqref{decompose-dyadic-spectral-symbol-frequency}, let us decompose $m(x, G) = \sum_{j\geq 0} m_j(x, G)$, where $m_j(x, \eta) = m(x, \eta) \psi_j (\eta).$ Also, as earlier, let us write $\widetilde{m}_j (x, \tau) = m_j (x, |\tau|_1)$. Then, the kernel of the operator $m_j(x, G)$ is given by 
\begin{align*}
m_j(x, G)(x,y) = \widetilde{m}_j(x, \boldsymbol{L})(x,y) = \int_{\mathbb{R}^{n_2}} \sum_{\mu} \widetilde{m}_j(x, (2 \mu + \tilde{1})|\lambda|) \Phi_{\mu}^{\lambda}(x') \Phi_{\mu}^{\lambda}(y') e^{-i\lambda \cdot (x''-y'')} \, d\lambda, 
\end{align*}
and writing $x'=(x'-y')+y'$, it suffices to prove that 
\begin{align} \label{Global-L2-revised-2}
& \sup_{N \in \mathbb{N}} \int_{|x'|> 1}  \left| \int_{\mathbb{R}^{n_1 + n_2}} y'^{(\alpha - \gamma)} (x'-y')^{\gamma} \sum_{j = 0}^N \widetilde{m}_j(x, \boldsymbol{L}) (x,y) f(y) \, dy \right|^{2} \, dx \\ 
\nonumber & \quad \lesssim_{k} \| m \|^2_{\mathscr{S}^0_{\rho, \delta}} \int_{\mathbb{R}^{n_1 + n_2}} |f(x)|^{2} (1 + |x'|)^{4k} \, dx.
\end{align}

So, in the rest of the proof, we shall establish the claimed estimate \eqref{Global-L2-revised-2}. Making use of Lemma $4.5$ of \cite{Bagchi-Garg-1}, we can express the integral kernel $y'^{(\alpha - \gamma)}(x'-y')^{\gamma} \widetilde{m}_j(x, \boldsymbol{L})(x, y)$ as a finite linear combination of terms of the form
\begin{align*} %\label{eq:kernel-expression-weighted}
& y'^{(\alpha - \gamma)} \int_{\mathbb{R}^{n_2}}\int_{[0,1]^{|\gamma|}} \sum_{\mu} C_{\mu, \vec{c}} \left(\tau^{\frac{1}{2} \gamma_2} \partial_{\tau}^{\gamma_1} \sum_{j = 0}^N \widetilde{m}_j \right) (x,(2\mu  + \tilde{1} + \vec{c}(s))|\lambda|) \Phi_{\mu}^{\lambda}(x^{\prime}) \Phi_{\mu + \tilde{\mu}}^{\lambda}(y^{\prime}) \\ 
& \nonumber \quad \quad \quad \quad \quad \quad e^{-i \lambda \cdot (x''-y'')} \, ds \, d\lambda ,  
\end{align*} 
where $|\gamma_2| \leq |\gamma_1| \leq |\gamma|$, $|\gamma_1| - \frac{1}{2} |\gamma_2| = \frac{|\gamma|}{2}$, $|\tilde{\mu}| \leq |\gamma|$,  $C_{\mu, \vec{c}}$ is a bounded function of $\mu$ and $\vec{c}$, and $|\vec{c}(s)| \leq 4 |\gamma|$. 

Therefore, it boils down to estimating the norm of the operator having integral kernel 
$$ \int_{ \mathbb{R}^{n_2}} \sum_{\mu} C_{\mu, \vec{c}} \left( \tau^{ \frac{1}{2} \gamma_2} \partial_{\tau}^{ \gamma_1 } \sum_{j = 0}^N \widetilde{m}_j \right) (x, ( 2\mu  + \tilde{1} + \vec{c}(s)) |\lambda|) \Phi_{\mu}^{\lambda} (x^{\prime}) \Phi_{\mu + \tilde{\mu}}^{\lambda}(y^{\prime}) e^{-i \lambda \cdot (x''-y'')} \, d\lambda. $$ 

But, if we define the operator $\mathcal{T}$ on $L^2(\mathbb{R}^{n_1 + n_2})$ by $ \left( (\mathcal{T} g)^{\lambda}, \Phi^{\lambda}_{\mu} \right) = C_{\mu, \vec{c}} \, \left( g^{\lambda}, \Phi^{\lambda}_{\mu + \tilde{\mu}} \right)$, then it follows from the Plancherel theorem that the operator $\mathcal{T}$ is bounded on $L^2(\mathbb{R}^{n_1 + n_2})$ with the operator norm depending on $\gamma$ (and hence on $k$). Therefore, we are led to establishing $L^2$-boundedness of operators with kernels 
$$\int_{ \mathbb{R}^{n_2} } \sum_{\mu} \left( \tau^{\frac{1}{2} \gamma_2} \partial_{\tau}^{ \gamma_1 } \sum_{j = 0}^N \widetilde{m}_j \right) (x, ( 2\mu  + \tilde{1} + \vec{c}(s)) |\lambda|) \Phi_{\mu}^{\lambda} (x^{\prime}) \Phi_{\mu}^{\lambda}(y^{\prime}) e^{-i \lambda \cdot (x''-y'')} \, d\lambda. $$ 
But, the same holds true by Lemma \ref{lem:shifted-CV}, duly keeping in mind that 
$$\left\| \tau^{\frac{1}{2} \gamma_2} \partial_{\tau}^{\gamma_1} \sum_{j = 0}^N \widetilde{m}_j \right\|_{\mathscr{S}^0_{\rho, \delta}} \lesssim_{\gamma} \| m \|_{\mathscr{S}^0_{\rho, \delta}}.$$

This completes the proof of Proposition \ref{prop:weighted-CV}. 
\end{proof}

\begin{remark} \label{rem:shifted-and-weighted-CV}
Analogues of Lemma \ref{lem:shifted-CV} and Proposition \ref{prop:weighted-CV} also hold true for the joint functional calculus, for $m \in \mathscr{S}^{0}_{\rho, \delta}(\boldsymbol{L}, \boldsymbol{U})$, in any of the following situations: 
\begin{itemize}
\item if $\delta < \rho$, or 

\item if $\delta = \rho$ and $m$ satisfies the cancellation condition \eqref{def:grushin-symb-vanishing-0-condition}
for all $\beta \in \mathbb{N}^{n_2}$ with $|\beta| \leq 4 \left( \lfloor\frac{Q}{4}\rfloor + 1 \right)$.
\end{itemize}
\end{remark}

%%%%%%%%%%%%%%%%%%%%%%%%%%%%%%%%%%%%%%%%%%%%%%%%%%%%%%
%%%%%%%%%%%%%%%%%%%%%%%%%%%%%%%%%%%%%%%%%%%%%%%%%%%%%%
%%%%%%%%%%%%%%%%%%%%%%%%%%%%%%%%%%%%%%%%%%%%%%%%%%%%%%
%%%%%%%%%%%%%%%%%%%%%%%%%%%%%%%%%%%%%%%%%%%%%%%%%%%%%%
%%%%%%%%%%%%%%%%%%%%%%%%%%%%%%%%%%%%%%%%%%%%%%%%%%%%%%
%%%%%%%%%%%%%%%%%%%%%%%%%%%%%%%%%%%%%%%%%%%%%%%%%%%%%%
%%%%%%%%%%%%%%%%%%%%%%%%%%%%%%%%%%%%%%%%%%%%%%%%%%%%%%
%%%%%%%%%%%%%%%%%%%%%%%%%%%%%%%%%%%%%%%%%%%%%%%%%%%%%%
%%%%%%%%%%%%%%%%%%%%%%%%%%%%%%%%%%%%%%%%%%%%%%%%%%%%%%
%%%%%%%%%%%%%%%%%%%%%%%%%%%%%%%%%%%%%%%%%%%%%%%%%%%%%%

\section{Kernel estimates}
\label{sec:Kernel-estimates}
In this section we shall prove kernel estimates for Grushin pseudo-multiplier operators which are among the essential tools to prove our main results in the subsequent sections. These kernel estimates follow from the weighted Plancherel estimates proved in \cite{BBGG-1} and \cite{Bagchi-Garg-1}. We recall them here for our ready reference. As usual, let $K_{m(x, G_{\varkappa})}$ and $K_{m(x,\boldsymbol{L}, \boldsymbol{U} )}$ denote the kernels of pseudo-multipliers $m(x, G_\varkappa)$ and $m(x,\boldsymbol{L}, \boldsymbol{U} )$ respectively. 

%%%%%%%%%%%%%%%%%%%%%%%%%%%%%%%%%%
%%%%%%%%%%%%%%%%%%%%%%%%%%%%%%%%%%
%%%%%%%%%%%%%%%%%%%%%%%%%%%%%%%%%%
%%%%%%%%%%%%%%%%%%%%%%%%%%%%%%%%%%
%%%%%%%%%%%%%%%%%%%%%%%%%%%%%%%%%%
%%%%%%%%%%%%%%%%%%%%%%%%%%%%%%%%%%
%%%%%%%%%%%%%%%%%%%%%%%%%%%%%%%%%%

\subsection{For pseudo-multipliers associated with \texorpdfstring{$G_{\varkappa}$}{}} \label{subsec:kernel-est-Grushin}

We begin with recalling the following estimates from \cite{BBGG-1}. 
\begin{lemma}[Corollaries 4.4, 4.5, \cite{BBGG-1}] \label{lem:grad-weighted-L-p-estimate}
For $2 \leq p \leq \infty$, $\mathfrak{r} \geq 0$ and $\epsilon > 0$, we have 
\begin{align*} 
& |B(x,R^{-1})|^{1/2} \left\| |B(\cdot, R^{-1})|^{1/2 - 1/p} \, (1+Rd(x,\cdot))^{\mathfrak{r}} X_{x}^{\Gamma} K_{m(x, G_{\varkappa})} (x,\cdot) \right\|_p \\ 
& \quad \lesssim_{\Gamma, p, \mathfrak{r} , \epsilon} \sup_{x_{0}} \sum_{\Gamma_1 + \Gamma_2 = \Gamma} R^{|\Gamma_1|} \|X^{\Gamma_2}_{x} m(x_0,R^2 \cdot) \|_{W_{\mathfrak{r} + \epsilon}^{\infty}}, \\ 
& |B(x,R^{-1})|^{1/2} \left\| |B(\cdot, R^{-1})|^{1/2 - 1/p} \, (1+Rd(x,\cdot))^{\mathfrak{r}} X_{y}^{\Gamma} K_{m(x, G_{\varkappa})} (x,\cdot) \right\|_p \\ 
& \quad \lesssim_{\Gamma, p, \mathfrak{r} , \epsilon} \sup_{x_{0}} R^{|\Gamma|} \| m(x_0, R^2\cdot) \|_{W_{\mathfrak{r} + \epsilon}^{\infty}}, 
\end{align*} 
for all $\Gamma \in \mathbb{N}^{n_0}$ and for every bounded Borel function $m : \mathbb{R}^{n_1 + n_2} \times \mathbb{R} \rightarrow \mathbb{C}$ whose support in the last variable is in $[0, R^2]$ for any $R>0$. 
\end{lemma}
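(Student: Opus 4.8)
The plan is to reduce both estimates to the weighted Plancherel estimate for $G_\varkappa$ of \cite{BBGG-1} (in the spirit of \eqref{Motivation-Plancherel}, but with the full range $2\le p\le\infty$), once one understands how the first-order vector fields act on the kernel. First I would expand $K_{m(x,G_\varkappa)}(x,y)$ via the spectral decomposition of $H_\varkappa(\lambda)$ as in \eqref{Gru-pseudo}, apply $X_x^\Gamma$, and use the Leibniz rule. Since $X_j=\partial_{x_j'}$ and $X_{\alpha,k}=x'^\alpha\partial_{x_k''}$ act both on the symbol $m(x,\cdot)$ and on the wave packet $h^\lambda_{\varkappa,k}(x')e^{-i\lambda\cdot x''}$, writing $\Gamma=\Gamma_1+\Gamma_2$ isolates the term in which $\Gamma_2$ derivatives fall on the symbol (producing the factor $X_x^{\Gamma_2}m$) from the one in which $\Gamma_1$ derivatives fall on the wave packet. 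On the spectral window $\{\,|\lambda|^{2/(\varkappa+1)}\nu_{\varkappa,k}\le R^2\,\}$ — where $m$ is supported by hypothesis — each derivative hitting the wave packet costs a factor $\lesssim R$, morally because $G_\varkappa=-\sum_j X_j^2-\sum_{\alpha,k}X_{\alpha,k}^2$ has norm $\le R^2$ on that window, so that the $\Gamma_1$-contribution carries $R^{|\Gamma_1|}$. For the second estimate only the wave packet $h^\lambda_{\varkappa,k}(y')e^{i\lambda\cdot y''}$ depends on $y$, so differentiating in $y$ produces no symbol-derivative term and leaves the single factor $R^{|\Gamma|}\|m(x_0,R^2\cdot)\|_{W^\infty_{\mathfrak{r}+\epsilon}}$.

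It then remains to feed these pieces into the weighted $L^p$ Plancherel estimate for spectrally localized multipliers. I would obtain that estimate in two stages. The $p=2$ stage is the weighted Plancherel bound of \cite{BBGG-1,Bagchi-Garg-1}, proved via Plancherel for the scaled Hermite expansion together with integration by parts in the spectral variable against the weight $(1+Rd(x,\cdot))^{2\mathfrak r}$; the $p=\infty$ stage is a pointwise kernel bound deduced from the $p=2$ one by exploiting that the kernel is spectrally localized at scale $R^2$ and hence reproduced by $\Phi(R^{-2}G_\varkappa)$ acting in the second variable for a fixed bump $\Phi$, whose Gaussian-type kernel bounds (see \eqref{Both-sided-bounds}) convert weighted $L^2$ control into weighted $L^\infty$ control, the $W^\infty_{\mathfrak{r}+\epsilon}$ norm on the right being what the method yields over a space of homogeneous type. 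One then interpolates: writing $w:=|B(\cdot,R^{-1})|$ and $g:=(1+Rd(x,\cdot))^{\mathfrak r}X^\Gamma K_{m(x,G_\varkappa)}(x,\cdot)$ for fixed $x$, the exponent of $w$ inside $\int w^{p/2-1}|g|^p=\int(w^{1/2}|g|)^{p-2}|g|^2$ is identically zero, whence
\begin{align*}
\|w^{1/2-1/p}g\|_p\le\|w^{1/2}g\|_\infty^{1-2/p}\,\|g\|_2^{2/p}\qquad(2\le p\le\infty),
\end{align*}
and multiplying through by $|B(x,R^{-1})|^{1/2}=\big(|B(x,R^{-1})|^{1/2}\big)^{1-2/p}\big(|B(x,R^{-1})|^{1/2}\big)^{2/p}$ and inserting the $p=2$ and $p=\infty$ bounds gives the claimed inequalities for the whole range, both for $X_x^\Gamma$ and for $X_y^\Gamma$.

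The hard part will be the first stage of the second paragraph combined with the Leibniz reduction: verifying rigorously that the twisted vector fields $X_{\alpha,k}$ of \eqref{first-order-grad}, which mix powers of $x'$ with $x''$-derivatives, generate only the advertised powers of $R$ when commuted through the spectral multiplier, and controlling their interaction with the weight $(1+Rd(x,\cdot))^{\mathfrak r}$ (here the quasi-triangle inequality \eqref{def:quasi-constant} and the doubling of $x\mapsto|B(x,R^{-1})|$ are used to trade $|B(x,R^{-1})|$ against $|B(y,R^{-1})|$ at the cost of powers of $1+Rd(x,y)$). This is precisely the content of the weighted Plancherel machinery of \cite{BBGG-1,Bagchi-Garg-1}, and it is also the point where $W^\infty_{\mathfrak r}$ must be replaced by $W^\infty_{\mathfrak r+\epsilon}$, the $\epsilon$-loss being intrinsic to summing and integrating the dyadic spectral pieces in the Grushin setting where no Hausdorff--Young-type gain is available.
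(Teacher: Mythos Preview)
The paper does not prove this lemma here; it is simply recalled as Corollaries~4.4 and~4.5 of \cite{BBGG-1}. Your sketch is a correct reconstruction of the argument one expects there: Leibniz-splitting the $X_x^\Gamma$-derivatives between symbol and wave packet (with each wave-packet derivative costing a factor $R$ on the spectral window $[0,R^2]$), invoking the weighted Plancherel bound at $p=2$, upgrading to $p=\infty$ by writing $K_{m(x,G_\varkappa)}(x,\cdot)=\Phi(R^{-2}G_\varkappa)K_{m(x,G_\varkappa)}(x,\cdot)$ in the second variable and using the Gaussian-type bounds for the reproducing kernel, and then interpolating via $\|w^{1/2-1/p}g\|_p\le\|w^{1/2}g\|_\infty^{1-2/p}\|g\|_2^{2/p}$. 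This is exactly the route taken in \cite{BBGG-1}, so your proposal matches the paper's (cited) proof.
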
 

In order to apply Lemma \ref{lem:grad-weighted-L-p-estimate}, we decompose the operator $m(x,G_{\varkappa})$ with the help of $\psi_j$'s (given by \eqref{decompose-dyadic-spectral-symbol-frequency}) as follows: 
$$m(x,G_{\varkappa})=\sum_{j\geq 0} m_j(x,G_{\varkappa}), ~ where~m_j(x,G_{\varkappa})=m(x,G_{\varkappa})\psi_{j}(G_{\varkappa}).$$
If $K_j(x,y)$ denotes the integral kernel of $m_j(x, G_{\varkappa})$, then we have the following estimates.

\begin{corollary} \label{cor:wighted-plancherel-estimates-for-symbol-kernel}
Let $m\in \mathscr{S}^{\sigma}_{1-a, \delta}(G_{\varkappa})$. Then, for all $j, \mathfrak{r} \geq 0$ and $\epsilon > 0$, we have 
\begin{align} 
\sup_{x \in \mathbb{R}^{n_1 + n_2}} |B(x, 2^{-j/2})| \int_{\mathbb{R}^{n_1 + n_2}} d(x, y)^{2 \mathfrak{r}} |K_{j}(x, y)|^2 \, dy & \lesssim_{ \mathfrak{r}, \epsilon} 2^{j \sigma} 2^{-j\mathfrak{r}(1-a)} 2^{ja \epsilon},  \label{cond:General-hypo} \\ 
\sup_{x \in \mathbb{R}^{n_1 + n_2}} |B(x, 2^{-j/2})| \int_{\mathbb{R}^{n_1 + n_2}} d(x, y)^{2 \mathfrak{r}} |X_{x} K_{j}(x, y)|^2 \, dy & \lesssim_{ \mathfrak{r}, \epsilon} 2^{j \sigma} 2^{-j \mathfrak{r}(1-a)} 2^{ja \epsilon} 2^{j} \label{cond:General-hypo-grad} \\
\sup_{x,y \in \mathbb{R}^{n_1 + n_2}} |B(x, 2^{-j/2})|^{1/2} |B(y, 2^{-j/2})|^{1/2} d(x, y)^{\mathfrak{r}} |K_{j} (x, y)| & \lesssim_{ \mathfrak{r}, \epsilon} 2^{j \sigma/2} 2^{-j \mathfrak{r}(1-a)/2} 2^{ja \epsilon/2}, \label{cond:General-hypo-sup} \\ 
\sup_{x,y \in \mathbb{R}^{n_1 + n_2}} |B(x, 2^{-j/2})|^{1/2} |B(y, 2^{-j/2})|^{1/2} d(x, y)^{\mathfrak{r}} |X_{x} K_{j}(x, y)| & \lesssim_{ \mathfrak{r}, \epsilon} 2^{j \sigma/2} 2^{-j \mathfrak{r}(1-a)/2} 2^{j/2} 2^{ja \epsilon/2}, \label{cond:General-hypo-grad-sup} \\ 
\sup_{x,y \in \mathbb{R}^{n_1 + n_2}} |B(x, 2^{-j/2})|^{1/2} |B(y, 2^{-j/2})|^{1/2} d(x, y)^{\mathfrak{r}} |X_{y} K_{j}(x, y)| & \lesssim_{ \mathfrak{r}, \epsilon} 2^{j/2} 2^{-j \mathfrak{r}(1-a)/2} 2^{j/2} 2^{ja \epsilon/2}. \label{cond:General-hypo-y-grad-sup} 
\end{align}
\end{corollary}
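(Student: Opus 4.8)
The plan is to derive Corollary \ref{cor:wighted-plancherel-estimates-for-symbol-kernel} by applying Lemma \ref{lem:grad-weighted-L-p-estimate} piece-by-piece to the dyadic components $m_j(x, G_\varkappa) = m(x, G_\varkappa)\psi_j(G_\varkappa)$. The first step is a rescaling bookkeeping: for $j \geq 1$ the symbol $\eta \mapsto m_j(x,\eta)$ is supported (in the last variable) in a dyadic annulus $\eta \sim 2^j$, hence in $[0, R^2]$ with $R = 2^{j/2}$, and similarly $j=0$ is the trivial (bounded support) base case. With this choice of $R$, I would unpack the $W^\infty_{\mathfrak{r}+\epsilon}$ norm of the rescaled symbol $m_j(x_0, R^2\cdot) = m_j(x_0, 2^j \cdot)$: differentiating in $\eta$ picks up factors of $2^j$, so by the symbol estimate \eqref{def-grushin-symb-old} for $m \in \mathscr{S}^\sigma_{1-a,\delta}(G_\varkappa)$ together with the usual Leibniz expansion against $\psi_j$ (whose derivatives on the annulus are $O(2^{-j l})$), one gets $\|m_j(x_0, 2^j\cdot)\|_{W^\infty_{\mathfrak{r}+\epsilon}} \lesssim 2^{j\sigma/2}$, uniformly in $x_0$; more generally $\|X^{\Gamma_2}_x m_j(x_0, 2^j \cdot)\|_{W^\infty_{\mathfrak{r}+\epsilon}} \lesssim 2^{j\sigma/2} 2^{j\delta|\Gamma_2|/2}$. (Here one uses $\delta \leq 1-a$ so that the $\delta$-loss is harmless; in fact in the stated corollary the $X_x$-derivatives are absent from the symbol side of \eqref{cond:General-hypo}--\eqref{cond:General-hypo-grad-sup}, so only $\Gamma_2 = 0$ is needed and the $\delta$-factor never appears — the extra $2^j$ in \eqref{cond:General-hypo-grad} comes purely from the $R^{|\Gamma_1|} = 2^{j/2}$ in Lemma \ref{lem:grad-weighted-L-p-estimate} with $|\Gamma_1| = 1$, squared.)

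The second step is to translate the conclusions of Lemma \ref{lem:grad-weighted-L-p-estimate} into the five displayed inequalities. For \eqref{cond:General-hypo} one takes $p = 2$ and $\Gamma = 0$: the left side of Lemma \ref{lem:grad-weighted-L-p-estimate} becomes $|B(x,2^{-j/2})|^{1/2}\big\||B(\cdot,2^{-j/2})|^{0}(1+2^{j/2}d(x,\cdot))^{\mathfrak{r}}K_j(x,\cdot)\big\|_2$, whose square is comparable to $|B(x,2^{-j/2})| \int (1 + 2^{j/2} d(x,y))^{2\mathfrak{r}} |K_j(x,y)|^2\,dy$; bounding $d(x,y)^{2\mathfrak{r}} \leq 2^{-j\mathfrak{r}} (1+2^{j/2}d(x,y))^{2\mathfrak{r}}$ (i.e. extracting the scale) and squaring the symbol bound $2^{j\sigma/2}2^{ja\epsilon'/2}$ gives exactly $2^{j\sigma}2^{-j\mathfrak{r}(1-a)}2^{ja\epsilon}$ after renaming $\epsilon$ — wait, I should be careful: the factor is $2^{-j\mathfrak{r}}$ from the rescaling but the target has $2^{-j\mathfrak{r}(1-a)}$. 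The discrepancy $2^{j\mathfrak{r}a}$ must be absorbed into the $2^{ja\epsilon}$ slack, which is legitimate since $\mathfrak{r}$ is fixed and we may enlarge $\epsilon$ — actually no, that would make the exponent grow linearly in $\mathfrak{r}$, not in $\epsilon$. The correct reading is that Lemma \ref{lem:grad-weighted-L-p-estimate} applied with the weight exponent $\mathfrak{r}$ replaced by $\mathfrak{r}(1-a)$ (allowed, since the lemma holds for every nonnegative exponent), combined with $d(x,y)^{2\mathfrak{r}(1-a)}$ against the scale $R = 2^{j/2}$ giving $2^{-j\mathfrak{r}(1-a)}$, yields \eqref{cond:General-hypo}; I would present it this way. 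Estimate \eqref{cond:General-hypo-grad} is identical with the second inequality of Lemma \ref{lem:grad-weighted-L-p-estimate} replaced by the first, $\Gamma = $ one $X_x$-derivative, contributing an extra $(R^{|\Gamma_1|})^2 = 2^j$. The sup-estimates \eqref{cond:General-hypo-sup}, \eqref{cond:General-hypo-grad-sup}, \eqref{cond:General-hypo-y-grad-sup} follow the same way with $p = \infty$, taking square roots of the volume factors and of the right-hand side; \eqref{cond:General-hypo-y-grad-sup} uses the second inequality of Lemma \ref{lem:grad-weighted-L-p-estimate} with $X_y^\Gamma$ and the extra $R^{|\Gamma|} = 2^{j/2}$, which accounts for the leading $2^{j/2}$ there in place of $2^{j\sigma/2}$ — note $\sigma$ drops out of that bound because differentiating in $y$ is controlled by the plain sup norm of the (rescaled) symbol, which is $O(1)$ on the annulus, not $O(2^{j\sigma/2})$... actually $m_j$ itself is $O(2^{j\sigma/2})$ in sup norm too, so the stated $2^{j/2}$ without $\sigma$ in \eqref{cond:General-hypo-y-grad-sup} is a (valid) overestimate when $\sigma \leq 0$; I would just record it as is.

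The main obstacle, and the point requiring the most care, is the bookkeeping of the three competing exponents of $2^j$: the spectral-scale factor ($2^{j\sigma/2}$ from the symbol decay), the weight-extraction factor ($2^{-j\mathfrak{r}(1-a)/2}$ from converting $d(x,y)^{\mathfrak r}$ at scale $2^{-j/2}$ combined with the $(1-a)$-twist built into $\mathscr{S}^\sigma_{1-a,\delta}$), and the derivative factors ($2^{j/2}$ per $X$-derivative). Getting the interplay between $a$, $\mathfrak{r}$, and the arbitrarily small $\epsilon$ exactly right — in particular justifying why the $(1-a)$ appears multiplying $\mathfrak{r}$ rather than $\mathfrak{r}$ appearing bare — is the one genuinely substantive check; everything else is a direct quotation of Lemma \ref{lem:grad-weighted-L-p-estimate} together with the elementary Leibniz estimate for $\psi_j$ derivatives. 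I expect the write-up to be short: state the choice $R = 2^{j/2}$, record the symbol-norm bound $\sup_{x_0}\|m_j(x_0,2^j\cdot)\|_{W^\infty_{\mathfrak r+\epsilon}} \lesssim 2^{j\sigma/2}$, and then read off each of the five inequalities from the corresponding case ($p=2$ or $p=\infty$, $X_x$ or $X_y$ or none) of Lemma \ref{lem:grad-weighted-L-p-estimate}.
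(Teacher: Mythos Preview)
Your overall plan --- apply Lemma \ref{lem:grad-weighted-L-p-estimate} with $R = 2^{j/2}$ and the appropriate choices of $p$ and $\Gamma$ --- is exactly what the paper does. The gap is in your computation of the Sobolev norm of the rescaled symbol, and your proposed repair does not close it.

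You claim $\|m_j(x_0, 2^j\cdot)\|_{W^\infty_{\mathfrak{r}+\epsilon}} \lesssim 2^{j\sigma/2}$. This is incorrect. By the chain rule, $\partial_\eta^l\big[m(x_0,2^j\eta)\big] = 2^{jl}(\partial_\eta^l m)(x_0,2^j\eta)$, and the symbol estimate \eqref{def-grushin-symb-old} with $\rho=1-a$ gives $|(\partial_\eta^l m)(x_0,2^j\eta)| \lesssim (2^j)^{\sigma/2 - (2-a)l/2}$ on the unit annulus. Hence each $\eta$-derivative of the rescaled symbol costs $2^{j}\cdot 2^{-j(2-a)/2} = 2^{ja/2}$, \emph{not} $1$. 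The correct bound is
\[
\|m_j(x_0, 2^j\cdot)\|_{W^\infty_{\mathfrak{r}+\epsilon}} \lesssim 2^{j\sigma/2}\, 2^{j(\mathfrak{r}+\epsilon)a/2}.
\]
Squaring this and combining with the weight-extraction factor $2^{-j\mathfrak{r}}$ (from $d(x,y)^{2\mathfrak{r}} \leq 2^{-j\mathfrak{r}}(1+2^{j/2}d(x,y))^{2\mathfrak{r}}$) gives
\[
2^{-j\mathfrak{r}}\cdot 2^{j\sigma}\cdot 2^{j(\mathfrak{r}+\epsilon)a} = 2^{j\sigma}\, 2^{-j\mathfrak{r}(1-a)}\, 2^{ja\epsilon},
\]
which is exactly \eqref{cond:General-hypo}. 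So the mysterious $(1-a)$ multiplying $\mathfrak{r}$ is not a rescaling artifact at all: it is the cancellation $-\mathfrak{r} + \mathfrak{r}a = -\mathfrak{r}(1-a)$ between the weight extraction and the $a$-dependent growth of the Sobolev norm.

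Your attempted fix --- apply Lemma \ref{lem:grad-weighted-L-p-estimate} with weight exponent $\mathfrak{r}(1-a)$ --- does not work: that gives control of $\int d(x,y)^{2\mathfrak{r}(1-a)}|K_j|^2$, not of $\int d(x,y)^{2\mathfrak{r}}|K_j|^2$, which is the quantity in \eqref{cond:General-hypo}. Once the Sobolev-norm bound is corrected as above, the remaining four estimates follow exactly as you outline (and as the paper records): same choice $R=2^{j/2}$, take $p=2$ or $p=\infty$, $|\Gamma|=0$ or $1$, first or second inequality of the lemma.
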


\begin{proof}
Let us show how to estimate \eqref{cond:General-hypo} by Lemma \ref{lem:grad-weighted-L-p-estimate}. To see this, let us apply Lemma \ref{lem:grad-weighted-L-p-estimate} with $p=2$, $R=2^{j/2}$ and $\Gamma = 0$, to get that 
\begin{align*}
\int_{\mathbb{R}^{n_1 + n_2}} d(x,y)^{2\mathfrak{r}} |K_{j}(x,y)|^2 \, dy & \lesssim_{r, \epsilon} |B(x, 2^{-j/2})|^{-1} 2^{-j \mathfrak{r}} \sup_{x_{0}} \left\| m_{j} (x_0, 2^{j} \cdot ) \right\|^2_{W_{\mathfrak{r}  + \epsilon}^{\infty}}\\
& \lesssim_{\mathfrak{r}, \epsilon} 2^{j \sigma} 2^{-j\mathfrak{r}(1-a)} 2^{ja \epsilon}, 
\end{align*}
where the last inequality follows from the fact that $m \in \mathscr{S}^{\sigma}_{1-a, \delta}(G_\varkappa)$. This completes the proof of estimate \eqref{cond:General-hypo}. 

Similarly, estimate \eqref{cond:General-hypo-grad} follows from the first estimate of Lemma \ref{lem:grad-weighted-L-p-estimate} with $p = 2$, $R = 2^{j/2}$ and $|\Gamma| = 1$, estimate \eqref{cond:General-hypo-sup} follows from Lemma \ref{lem:grad-weighted-L-p-estimate} with 
$p = \infty$, $R = 2^{j/2}$ and $|\Gamma| = 0$, estimate \eqref{cond:General-hypo-grad-sup} follows from the first inequality of Lemma \ref{lem:grad-weighted-L-p-estimate} with 
$p = \infty$, $R = 2^{j/2}$ and $|\Gamma| = 1$, and estimate \eqref{cond:General-hypo-y-grad-sup} follows from the second inequality of Lemma \ref{lem:grad-weighted-L-p-estimate} with $p = \infty$, $R=2^{j/2}$ and $|\Gamma| = 1$.
\end{proof}

%%%%%%%%%%%%%%%%%%%%%%%%%%%%%%%%%%
%%%%%%%%%%%%%%%%%%%%%%%%%%%%%%%%%%
%%%%%%%%%%%%%%%%%%%%%%%%%%%%%%%%%%
%%%%%%%%%%%%%%%%%%%%%%%%%%%%%%%%%%
%%%%%%%%%%%%%%%%%%%%%%%%%%%%%%%%%%
%%%%%%%%%%%%%%%%%%%%%%%%%%%%%%%%%%
%%%%%%%%%%%%%%%%%%%%%%%%%%%%%%%%%%

\subsection{For pseudo-multipliers associated with joint functional calculus}  \label{subsec:kernel-est-joint-functional}

In this subsection we shall prove weighted Plancherel estimates for the joint functional calculus of $\boldsymbol{L}$ and $\boldsymbol{U}$. In fact, assuming the extra condition \eqref{def:grushin-symb-vanishing-0-condition}, we get conditions of types $\eqref{cond:General-hypo}$ to $\eqref{cond:General-hypo-y-grad-sup}$ without the extra growth of $2^{j a \epsilon}$. More precisely, 
\begin{lemma}[Theorem 1.12, \cite{Bagchi-Garg-1}] \label{lem:weighted-Plancherel-Lp} 
Let $2 \leq p \leq \infty$ and $r_0 \in \mathbb{N}$. For all $R > 0$ and $0 \leq \mathfrak{r} \leq 4 \lfloor{ \frac{r_0}{4} \rfloor}$ we have 
\begin{align*} 
\left| B(x, R^{-1}) \right|^{1/2} \left\| \left| B(\cdot, R^{-1}) \right|^{\frac{1}{2} - \frac{1}{p}} \left( 1 + R d(x, \cdot) \right)^{\mathfrak{r}} K_{m( \boldsymbol{L}, \boldsymbol{U} )} (x, \cdot) \right\|_p \lesssim_{p,\mathfrak{r}} \left\| m \left(R^2 \cdot \right) \right\|_{ W^{\mathfrak{r} }_{\infty}}.
\end{align*} 
for any bounded Borel function $m : \mathbb{R}^{n_1} \times \mathbb{R}^{n_2} \to \mathbb{C}$ such that $\supp m \subseteq [-R^2,  R^2]^{n_1 + n_2},$ and $\lim_{\kappa \to 0} \partial_\kappa^{\beta} m(\tau, \kappa) = 0$ for all $|\beta| \leq r_0.$ 
\end{lemma}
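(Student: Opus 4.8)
The plan is to deduce Lemma \ref{lem:weighted-Plancherel-Lp} from the scaled weighted Plancherel estimates of \cite{Bagchi-Garg-1} for the joint functional calculus, exactly as Corollary \ref{cor:wighted-plancherel-estimates-for-symbol-kernel} was deduced from Lemma \ref{lem:grad-weighted-L-p-estimate}, but now keeping track of the extra gain that the cancellation condition \eqref{def:grushin-symb-vanishing-0-condition} provides. First I would recall from \cite{Bagchi-Garg-1} (Theorem 1.12 there, or the unscaled version of it) the estimate at the unit scale $R=1$: for a bounded Borel $m$ supported in $[-1,1]^{n_1+n_2}$ with $\lim_{\kappa\to0}\partial_\kappa^\beta m(\tau,\kappa)=0$ for all $|\beta|\le r_0$, one has
\begin{align*}
\left| B(x,1) \right|^{1/2} \left\| \left| B(\cdot,1) \right|^{\frac12-\frac1p} \left(1+d(x,\cdot)\right)^{\mathfrak{r}} K_{m(\boldsymbol{L},\boldsymbol{U})}(x,\cdot) \right\|_p \lesssim_{p,\mathfrak{r}} \left\| m \right\|_{W^{\mathfrak{r}}_\infty}
\end{align*}
for $0\le \mathfrak{r}\le 4\lfloor r_0/4\rfloor$; the restriction on $\mathfrak{r}$ is precisely the range in which the weighted Plancherel machinery of \cite{Bagchi-Garg-1} closes, because the cancellation hypothesis is consumed in blocks of four derivatives (this is the same $N_0 = \lfloor Q/4\rfloor + 1$ phenomenon appearing in Theorem \ref{thm:L2-joint-calc}).

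The second step is the scaling argument. Given $m$ supported in $[-R^2,R^2]^{n_1+n_2}$, set $\widetilde{m}(\tau,\kappa) = m(R^2\tau,R^2\kappa)$, which is supported in $[-1,1]^{n_1+n_2}$ and still satisfies the vanishing condition at $\kappa=0$ to order $r_0$. The Grushin dilations $\delta_t(x',x'') = (tx', t^{1+\varkappa}x'')$ intertwine $\boldsymbol{L},\boldsymbol{U}$ with their rescalings in the standard way, so the kernel of $\widetilde{m}(\boldsymbol{L},\boldsymbol{U})$ is obtained from that of $m(\boldsymbol{L},\boldsymbol{U})$ by the corresponding change of variables; tracking the Jacobian $|B(x,R^{-1})|$, the $L^p$-normalisation factor $|B(\cdot,R^{-1})|^{\frac12-\frac1p}$, and the fact that $d$ is homogeneous of degree one under $\delta_t$ so that $d(\delta_R x,\delta_R y) = R\,d(x,y)$ gives the weight $(1+Rd(x,\cdot))^{\mathfrak{r}}$, one converts the unit-scale estimate into the stated scaled estimate with $\|\widetilde{m}\|_{W^{\mathfrak{r}}_\infty} = \|m(R^2\cdot)\|_{W^{\mathfrak{r}}_\infty}$ on the right. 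This is the routine part and I would only indicate it.

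The main obstacle—and the only genuinely substantive point—is establishing the unit-scale estimate itself, i.e. citing/adapting the weighted Plancherel estimate of \cite{Bagchi-Garg-1} in the $L^p$ rather than merely $L^2$ form, and checking that the cancellation condition \eqref{def:grushin-symb-vanishing-0-condition} to order $r_0$ is exactly what removes the $2^{ja\epsilon}$-type loss present in Corollary \ref{cor:wighted-plancherel-estimates-for-symbol-kernel}. Concretely, one expands the joint spectral multiplier over the Hermite basis $\Phi_\mu^\lambda$, uses the vanishing of $\partial_\kappa^\beta m$ at $\kappa=0$ to write $m(\tau,\kappa)$ as $|\kappa|^{r_0}$ times a bounded symbol (via Taylor expansion in $\kappa$), and then invokes the weighted Plancherel estimates for $\boldsymbol{L}$ together with the pointwise Hermite estimates to absorb the polynomial weight $(1+d(x,\cdot))^{\mathfrak{r}}$ — the interplay here is that $d(x,y)$ in the $x''$-direction scales like $|x''-y''|^{1/(1+\varkappa)}$, whose powers are controlled by $\kappa$-derivatives hitting the oscillatory factor $e^{-i\lambda\cdot(x''-y'')}$, and these are precisely the derivatives for which the cancellation hypothesis is assumed. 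I expect that for the purposes of this paper it suffices to invoke Theorem 1.12 of \cite{Bagchi-Garg-1} essentially verbatim, so the proof reduces to the bookkeeping of the scaling in the previous paragraph; if a self-contained argument is wanted, the Taylor-in-$\kappa$ reduction followed by the $\boldsymbol{L}$-weighted Plancherel bound (Lemma \ref{lem:shifted-CV}-type input) is the route.
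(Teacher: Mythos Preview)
The paper does not prove this lemma at all: it is quoted verbatim as Theorem~1.12 of \cite{Bagchi-Garg-1}, with no argument supplied. Your own conclusion that ``it suffices to invoke Theorem~1.12 of \cite{Bagchi-Garg-1} essentially verbatim'' is therefore exactly what the paper does; the scaling-plus-unit-scale outline you sketch is a reasonable description of how such estimates are established in \cite{Bagchi-Garg-1}, but none of that is reproduced here.
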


One can use the methodology of the proof of Corollary 4.2 of \cite{BBGG-1} to prove a result analogous to Lemma \ref{lem:weighted-Plancherel-Lp} for pseudo-multipliers $m(x, \boldsymbol{L}, \boldsymbol{U})$. 
\begin{corollary} \label{cor:pseudo-weighted-Plancherel-Lp}
Let $2 \leq p \leq \infty$ and $r_0 \in \mathbb{N}$. For all $R > 0$ and $0 \leq \mathfrak{r} \leq 4 \lfloor{ \frac{r_0}{4} \rfloor}$ we have 
\begin{align*} 
\left| B(x, R^{-1}) \right|^{1/2} \left\| \left| B(\cdot, R^{-1}) \right|^{\frac{1}{2} - \frac{1}{p}} \left( 1 + R d(x, \cdot) \right)^{\mathfrak{r}} K_{m(x,\boldsymbol{L}, \boldsymbol{U} )} (x, \cdot) \right\|_p \lesssim_{p,\mathfrak{r}}  \sup_{x_0} \left\| m \left(x_0, R^2 \cdot \right) \right\|_{W^{\mathfrak{r}}_{\infty}}.
\end{align*} 
for any bounded Borel function $m : \mathbb{R}^{n_1 + n_2} \times \mathbb{R}^{n_1} \times \mathbb{R}^{n_2} \to \mathbb{C}$ whose support in the last two variables is in $[-R^2, R^2]^{n_1 + n_2}$  and $\lim_{\kappa \to 0} \partial_\kappa^{\beta} m( x, \tau, \kappa) = 0$ for all $|\beta| \leq r_0.$ 
\end{corollary}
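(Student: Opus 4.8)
The plan is to reduce Corollary~\ref{cor:pseudo-weighted-Plancherel-Lp} to the ``multiplier'' statement of Lemma~\ref{lem:weighted-Plancherel-Lp} by freezing the spatial variable $x$ inside the symbol, exactly as is done in the proof of Corollary~$4.2$ of \cite{BBGG-1} (which is the analogue for $G_\varkappa$ of the present statement). First I would fix $R>0$ and a point $x$, and regard $m(x,\cdot,\cdot)$ --- the symbol with its first slot frozen at the value $x$ --- as a bounded Borel function $m_x:\mathbb{R}^{n_1}\times\mathbb{R}^{n_2}\to\mathbb{C}$ supported in $[-R^2,R^2]^{n_1+n_2}$ and still satisfying $\lim_{\kappa\to 0}\partial_\kappa^\beta m_x(\tau,\kappa)=0$ for $|\beta|\le r_0$. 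The point is that the integral kernel $K_{m(x,\boldsymbol L,\boldsymbol U)}(x,y)$, evaluated at the \emph{same} $x$ that was frozen, coincides with the kernel $K_{m_x(\boldsymbol L,\boldsymbol U)}(x,y)$ of the genuine (non-pseudo) joint multiplier $m_x(\boldsymbol L,\boldsymbol U)$; this is immediate from the defining series \eqref{def:joint-Gru-pseudo}, since in \eqref{def:joint-Gru-pseudo} the first argument of $m$ appears only as the fixed point $x$ at which the kernel is being evaluated.

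With that identification in hand, applying Lemma~\ref{lem:weighted-Plancherel-Lp} to $m_x$ gives, for each fixed $x$,
\begin{align*}
\left| B(x, R^{-1}) \right|^{1/2} \left\| \left| B(\cdot, R^{-1}) \right|^{\frac{1}{2} - \frac{1}{p}} \left( 1 + R d(x, \cdot) \right)^{\mathfrak{r}} K_{m(x,\boldsymbol{L}, \boldsymbol{U})} (x, \cdot) \right\|_p \lesssim_{p,\mathfrak{r}} \left\| m_x \left(R^2 \cdot \right) \right\|_{W^{\mathfrak{r}}_{\infty}}.
\end{align*}
Since $\|m_x(R^2\cdot)\|_{W^{\mathfrak r}_\infty}\le \sup_{x_0}\|m(x_0,R^2\cdot)\|_{W^{\mathfrak r}_\infty}$ trivially, and the right-hand side is now independent of $x$, taking the supremum over $x$ yields exactly the claimed inequality. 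The hypothesis $0\le\mathfrak r\le 4\lfloor r_0/4\rfloor$ is inherited verbatim from Lemma~\ref{lem:weighted-Plancherel-Lp}, and the cancellation hypothesis passes through because it is imposed on $m(x,\tau,\kappa)$ uniformly in $x$, hence holds for each slice $m_x$.

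The only genuine subtlety --- and the step I would write out most carefully --- is the kernel identification $K_{m(x,\boldsymbol L,\boldsymbol U)}(x,\cdot)=K_{m_x(\boldsymbol L,\boldsymbol U)}(x,\cdot)$, together with the measurability/integrability bookkeeping needed to legitimately take the $L^p_y$ norm and then the pointwise-in-$x$ supremum. In \cite{BBGG-1} this is handled for $m(x,G_\varkappa)$ by noting that the pseudo-multiplier kernel is obtained from the multiplier kernel by the substitution of the frozen variable, and one must check that the $W^{\mathfrak r}_\infty$-norm control is uniform over the freezing parameter --- which is precisely what the $\sup_{x_0}$ on the right-hand side encodes. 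I expect no serious obstacle beyond transcribing that argument with $G$ in place of $G_\varkappa$ and $(\boldsymbol L,\boldsymbol U)$ in place of $\boldsymbol L$; one should just be mindful that here the symbol's dependence is on $(\tau,\kappa)\in\mathbb{R}^{n_1}\times\mathbb{R}^{n_2}$ rather than a single scalar $\eta$, so the support condition and the Sobolev norm $W^{\mathfrak r}_\infty$ are taken on $\mathbb{R}^{n_1+n_2}$, matching the hypotheses of Lemma~\ref{lem:weighted-Plancherel-Lp} exactly.
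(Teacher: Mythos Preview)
Your proposal is correct and matches the approach the paper intends: the paper does not write out a proof but simply refers to ``the methodology of the proof of Corollary~4.2 of \cite{BBGG-1},'' which is exactly the freezing-of-the-spatial-variable argument you describe. The kernel identification $K_{m(x,\boldsymbol L,\boldsymbol U)}(x,\cdot)=K_{m_x(\boldsymbol L,\boldsymbol U)}(x,\cdot)$ together with a direct application of Lemma~\ref{lem:weighted-Plancherel-Lp} and the trivial bound $\|m_x(R^2\cdot)\|_{W^{\mathfrak r}_\infty}\le \sup_{x_0}\|m(x_0,R^2\cdot)\|_{W^{\mathfrak r}_\infty}$ is precisely what that methodology amounts to.
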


One can then modify the proof of Lemma \ref{lem:weighted-Plancherel-Lp} and prove the following weighted Plancherel estimate for the gradient of the integral kernel. 
\begin{lemma} \label{lem:weighted-Plancherel-Lp-gradient} 
Let $2 \leq p \leq \infty$ and $r_0 \in \mathbb{N}$. For all $R > 0$ and $0 \leq \mathfrak{r} \leq 4 \lfloor{ \frac{r_0}{4} \rfloor}$ we have 
\begin{align*} %\label{ineq:weighted-Plancherel-Lp-gradient} 
\left| B(x, R^{-1}) \right|^{1/2} \left\| \left| B(\cdot, R^{-1}) \right|^{\frac{1}{2} - \frac{1}{p}} \left( 1 + R d(x, \cdot) \right)^{\mathfrak{r}} X_x K_{m( \boldsymbol{L}, \boldsymbol{U} )} (x, \cdot) \right\|_p \lesssim_{p,\mathfrak{r}} R \left\| m \left(R^2 \cdot \right) \right\|_{W^{\mathfrak{r}}_{\infty}}. 
\end{align*} 
for any bounded Borel function $m : \mathbb{R}^{n_1} \times \mathbb{R}^{n_2} \to \mathbb{C}$ such that $\supp m \subseteq [-R^2,  R^2]^{n_1 + n_2},$ and $\lim_{\kappa \to 0} \partial_\kappa^{\beta} m(\tau, \kappa) = 0$ for all $|\beta| \leq r_0.$ 
\end{lemma}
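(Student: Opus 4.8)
\textbf{Proof plan for Lemma \ref{lem:weighted-Plancherel-Lp-gradient}.}

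The plan is to reduce the estimate for $X_x K_{m(\boldsymbol{L}, \boldsymbol{U})}(x, \cdot)$ to the already-established Lemma \ref{lem:weighted-Plancherel-Lp} by commuting the gradient vector fields $X = (X_j, X_{\alpha,k})$ through the joint functional calculus. Recall from \eqref{first-order-grad} that the relevant first-order fields are $X_j = \partial_{x_j'}$ and $X_{\alpha,k} = {x'}^{\alpha}\partial_{x_k''}$ with $|\alpha| = \varkappa$; applying each of these to the spectral expansion $K_{m(\boldsymbol{L}, \boldsymbol{U})}(x,y) = \int_{\mathbb{R}^{n_2}} \sum_{\mu} m((2\mu + \tilde 1)|\lambda|, \lambda)\, \Phi_\mu^\lambda(x') \Phi_\mu^\lambda(y')\, e^{-i\lambda\cdot(x''-y'')}\, d\lambda$ produces, after using the standard creation/annihilation (ladder) relations for the scaled Hermite functions $\Phi_\mu^\lambda$, a new kernel of the same structural type but with $\mu$ shifted by a bounded amount and an extra factor that is either $|\lambda|^{1/2}$ (from $\partial_{x_j'}$ acting on a Hermite function, which brings down $|\lambda|^{1/2}$ times functions at indices $\mu \pm e_j$) or, for $X_{\alpha,k}$, a factor $|\lambda|$ (the $\partial_{x_k''}$ pulls down a $\lambda_k$, i.e.\ size $|\lambda|$, while the polynomial ${x'}^\alpha$ is absorbed into another bounded shift of the Hermite indices as in Lemma 4.5 of \cite{Bagchi-Garg-1}). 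On the spectral side, multiplication by $|\lambda|^{1/2}$ or $|\lambda|$ on the support $\{|\lambda| \lesssim R^2\}$ costs at most a factor $R$, so this is consistent with the claimed $R$ on the right-hand side.

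\textbf{Key steps, in order.} First I would write $X_x K_{m(\boldsymbol{L}, \boldsymbol{U})}(x,y)$ explicitly using the ladder relations, exactly as in the proof of Proposition \ref{prop:weighted-CV}: each component of $X_x$ applied to the kernel becomes a finite linear combination of kernels of the form $\int_{\mathbb{R}^{n_2}} \sum_{\mu} C_{\mu} \, \widetilde{m}(x, (2\mu + \tilde 1 + \vec{c})|\lambda|, \lambda)\, \Phi_{\mu}^\lambda(x') \Phi_{\mu + \tilde\mu}^\lambda(y')\, e^{-i\lambda\cdot(x''-y'')}\, d\lambda$, where $\widetilde{m}$ is $m$ times a monomial of degree $1$ or $2$ in $(|\tau|,|\kappa|)$-type variables (accounting for the $|\lambda|^{1/2}$ or $|\lambda|$ gain — note $|\lambda|^{1/2}$ corresponds to $\tau^{1/2}$ in the $L_j$ variables), $|\tilde\mu|$ and $|\vec c|$ are bounded, and $C_\mu$ is bounded in $\mu$. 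Second, I would absorb the index shift $\Phi_{\mu+\tilde\mu}^\lambda(y')$ versus $\Phi_\mu^\lambda(x')$ by introducing the auxiliary operator $\mathcal{T}$ with $((\mathcal{T}g)^\lambda, \Phi_\mu^\lambda) = C_\mu (g^\lambda, \Phi_{\mu+\tilde\mu}^\lambda)$, which is $L^2$-bounded by Plancherel; composing with $\mathcal{T}$ reduces us to kernels with matched indices $\Phi_\mu^\lambda(x')\Phi_\mu^\lambda(y')$. Third, I would apply Lemma \ref{lem:weighted-Plancherel-Lp} to the new symbol $\widetilde{m}$, checking that (i) $\supp \widetilde m$ is still in $[-R^2, R^2]^{n_1+n_2}$ (unchanged), (ii) the cancellation condition $\lim_{\kappa\to 0}\partial_\kappa^\beta \widetilde m(\tau,\kappa) = 0$ for $|\beta| \le r_0$ still holds (the monomial factor and the index shift do not destroy it — the $\tau$-monomials are harmless, and a shift in the $\kappa$-argument by $\vec c |\kappa|$ preserves vanishing at $\kappa = 0$, cf.\ the last paragraph of the proof of Lemma \ref{lem:shifted-CV}), and (iii) $\|\widetilde m(R^2 \cdot)\|_{W^{\mathfrak r}_\infty} \lesssim R \|m(R^2\cdot)\|_{W^{\mathfrak r}_\infty}$, which is exactly where the extra factor $R$ enters, since on $\{|\lambda|\lesssim R^2\}$ the monomial of degree $1$ in $|\lambda|$ (or degree $1/2$ twice) contributes a factor $R^2 \cdot R^{-2} \cdot$(something)\,---\,more carefully, after the rescaling $\tau \mapsto R^2\tau$ the monomial $|\lambda|$ becomes $R^2|\lambda'|$ on the unit-scale support, but one of those $R$'s is what we want on the right and the other is absorbed because $X_x$ genuinely scales like $R$ under the parabolic dilation; I would track the bookkeeping here explicitly.

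\textbf{Main obstacle.} The delicate point is the homogeneity/scaling bookkeeping in step three: one must verify that applying a first-order vector field contributes precisely one factor of $R$ (and not $R^2$), which requires using the correct anisotropic scaling under which $X_j$ has degree $1$ and $X_{\alpha,k}$ with $|\alpha|=\varkappa$ also has degree $1$ in the Grushin sense, together with the fact that on the spectral support $|\lambda| \lesssim R^2$ one has $|\lambda|^{1/2} \lesssim R$. A secondary technical nuisance is confirming that the ladder-relation manipulation for $X_{\alpha,k} = {x'}^\alpha \partial_{x_k''}$ — which mixes a polynomial multiplication in $x'$ with a $x''$-derivative — can indeed be rewritten in the required form with bounded index shifts; this is precisely the content of Lemma 4.5 of \cite{Bagchi-Garg-1}, so I would cite it and spell out only the resulting structure rather than rederiving it. Once these are in place, the estimate follows by summing the finitely many terms and applying Lemma \ref{lem:weighted-Plancherel-Lp} to each, which completes the proof.
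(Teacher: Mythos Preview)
Your overall strategy---apply the ladder relations to rewrite $X_x K_{m(\boldsymbol{L},\boldsymbol{U})}$ as a finite sum of kernels with bounded Hermite-index shifts and an extra factor of size $R$, and then invoke the weighted Plancherel machinery---is exactly what the paper has in mind; the paper itself gives no details beyond ``modify the proof of Lemma~\ref{lem:weighted-Plancherel-Lp}.''

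There is, however, a genuine gap in your step~2. You propose to absorb the index mismatch $\Phi_\mu^\lambda(x')\Phi_{\mu+\tilde\mu}^\lambda(y')$ by composing with an auxiliary operator $\mathcal{T}$ that is $L^2$-bounded by Plancherel, and then apply Lemma~\ref{lem:weighted-Plancherel-Lp} as a black box to the resulting matched-index kernel. This works in the setting of Proposition~\ref{prop:weighted-CV} because there the target is an \emph{operator-norm} bound on $L^2$. But Lemma~\ref{lem:weighted-Plancherel-Lp} is a \emph{pointwise-in-$x$} weighted $L^p$ estimate on $K(x,\cdot)$: to pass from $K_{M(\boldsymbol{L},\boldsymbol{U})}(x,\cdot)$ to $K_{M(\boldsymbol{L},\boldsymbol{U})\circ\mathcal{T}}(x,\cdot)=\mathcal{T}^t\bigl[K_{M(\boldsymbol{L},\boldsymbol{U})}(x,\cdot)\bigr]$ you would need $\mathcal{T}^t$ to be bounded on the weighted space with weight $|B(\cdot,R^{-1})|^{1-2/p}(1+Rd(x,\cdot))^{2\mathfrak{r}}$, and Plancherel gives you nothing of the sort. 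Establishing that weighted boundedness for the shift operator is essentially as hard as the lemma itself.

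The fix is not to reduce to Lemma~\ref{lem:weighted-Plancherel-Lp} as a statement, but to rerun its \emph{proof} on the shifted kernel. The argument behind Lemma~\ref{lem:weighted-Plancherel-Lp} (Theorem~1.12 of \cite{Bagchi-Garg-1}) produces the weight $(1+Rd(x,\cdot))^{\mathfrak r}$ by multiplying the kernel by polynomials in $x'-y'$ and $x''-y''$ and rewriting via Lemma~4.5 of \cite{Bagchi-Garg-1}; that algebraic step is indifferent to a fixed bounded offset between the $x'$- and $y'$-Hermite indices, so the same computation goes through with the extra $\sqrt{(2\mu_j+2)|\lambda|}$ or $\lambda_k$ factor riding along and contributing the claimed $R$. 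This is precisely what the paper's one-line instruction ``modify the proof'' means; your steps~1 and~3 are then correct as written, and step~2 should simply be dropped.
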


\begin{corollary}
\label{cor:pseudo-weighted-Plancherel-Lp-gradient} 
Let $2 \leq p \leq \infty$ and $r_0 \in \mathbb{N}$. For all $R > 0$ and $0 \leq \mathfrak{r} \leq 4 \lfloor{ \frac{r_0}{4} \rfloor}$ we have 
\begin{align*} %\label{ineq:pseudo-weighted-Plancherel-Lp-gradient} 
& \left| B(x, R^{-1}) \right|^{1/2} \left\| \left| B(\cdot, R^{-1}) \right|^{\frac{1}{2} - \frac{1}{p}} \left( 1 + R d(x, \cdot) \right)^{\mathfrak{r}} X_x K_{m( x,\boldsymbol{L}, \boldsymbol{U} )} (x, \cdot) \right\|_p \\ 
\nonumber & \quad \lesssim_{p,\mathfrak{r}} \sup_{x_0} \left( R  \left\| m \left( x_0, R^2 \cdot \right) \right\|_{W^{\mathfrak{r}}_{\infty}} + \|X_{x} m(x_0,R^2 \cdot) \|_{W_{\mathfrak{r}}^{\infty}} \right). 
\end{align*} 
for any bounded  Borel function $m : \mathbb{R}^{n_1 + n_2} \times \mathbb{R}^{n_1} \times \mathbb{R}^{n_2} \to \mathbb{C}$ whose support in the last two variables is in $[-R^2, R^2]^{n_1 + n_2}$ and $\lim_{\kappa \to 0} \partial_\kappa^{\beta} m( x, \tau, \kappa) = 0$ for all $|\beta| \leq r_0.$ 
\end{corollary}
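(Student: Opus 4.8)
The plan is to deduce Corollary~\ref{cor:pseudo-weighted-Plancherel-Lp-gradient} from Lemma~\ref{lem:weighted-Plancherel-Lp-gradient} by the same freezing-the-$x$-variable argument that turns Lemma~\ref{lem:weighted-Plancherel-Lp} into Corollary~\ref{cor:pseudo-weighted-Plancherel-Lp} (i.e.\ the methodology of Corollary~4.2 in \cite{BBGG-1}), being careful to keep track of the extra term that arises when the vector field $X_x$ hits the $x$-dependence of the symbol. First I would fix $x$ and write $K_{m(x, \boldsymbol{L}, \boldsymbol{U})}(x, \cdot)$ as the kernel of the \emph{multiplier} operator $m(x, \boldsymbol{L}, \boldsymbol{U})$ obtained by freezing the first variable, say $m_x(\tau, \kappa) := m(x, \tau, \kappa)$; by the very definition \eqref{def:joint-Gru-pseudo} the kernel of the pseudo-multiplier evaluated at $(x, y)$ coincides with the kernel of the frozen multiplier $m_x(\boldsymbol{L}, \boldsymbol{U})$ evaluated at $(x, y)$. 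Since $m_x$ inherits the support condition $\supp m_x \subseteq [-R^2, R^2]^{n_1 + n_2}$ and the vanishing condition $\lim_{\kappa \to 0} \partial_\kappa^\beta m_x(\tau, \kappa) = 0$ for $|\beta| \le r_0$ uniformly in $x$, Lemma~\ref{lem:weighted-Plancherel-Lp} applied to $m_x$ already gives the stated bound for $K_{m(x,\boldsymbol{L},\boldsymbol{U})}$ itself (this is Corollary~\ref{cor:pseudo-weighted-Plancherel-Lp}), with the right-hand side $\sup_{x_0} \| m(x_0, R^2 \cdot) \|_{W^{\mathfrak{r}}_\infty}$ coming from taking the supremum over the frozen point.

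For the gradient version, the key point is to commute $X_x$ past the freezing. Writing $X_x K_{m(x,\boldsymbol{L},\boldsymbol{U})}(x, y)$ and using the product structure, the derivative in $x$ either lands on the kernel of the frozen multiplier (the ``geometric'' part, where $x$ enters only as a base point of the vector field $X$) or on the symbol itself through its explicit $x$-dependence. Concretely, I would differentiate under the spectral sum/integral in \eqref{def:joint-Gru-pseudo}: one group of terms is $[X_x K_{m_x(\boldsymbol{L}, \boldsymbol{U})}](x, y)$ with $m_x = m(x, \cdot, \cdot)$ frozen, to which Lemma~\ref{lem:weighted-Plancherel-Lp-gradient} applies directly and yields the contribution $R \sup_{x_0} \| m(x_0, R^2 \cdot)\|_{W^{\mathfrak{r}}_\infty}$; the other group is the kernel of the \emph{new} pseudo-multiplier whose symbol is $X_x m(x, \tau, \kappa)$ (differentiation of the symbol in the $x$-slot), which, by Corollary~\ref{cor:pseudo-weighted-Plancherel-Lp} applied with the symbol $X_x m$ in place of $m$, contributes $\sup_{x_0} \| X_x m(x_0, R^2 \cdot)\|_{W^{\mathfrak{r}}_\infty}$. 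Adding the two groups and taking the supremum over $x_0$ gives exactly the claimed bound
\[
\sup_{x_0} \left( R \, \| m(x_0, R^2 \cdot)\|_{W^{\mathfrak{r}}_\infty} + \| X_x m(x_0, R^2 \cdot)\|_{W^{\mathfrak{r}}_\infty} \right).
\]

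The main obstacle, and the step requiring genuine care rather than bookkeeping, is justifying the decomposition $X_x K_{m(x,\boldsymbol{L},\boldsymbol{U})}(x,y) = [X_x K_{m_x(\boldsymbol{L},\boldsymbol{U})}](x,y) + K_{(X_x m)(x,\boldsymbol{L},\boldsymbol{U})}(x,y)$ rigorously: one must differentiate under the sum over $\mu \in \mathbb{N}^{n_1}$ and the $\lambda$-integral in \eqref{def:joint-Gru-pseudo}, check that the differentiated series still converges (which uses the support condition in $(\tau,\kappa)$ to make the sum effectively finite in $\mu$ for each dyadic block, exactly as in the proof of Lemma~\ref{lem:weighted-Plancherel-Lp}), and verify that the vector fields $X = (X_j, X_{\alpha,k})$ from \eqref{first-order-grad}, which are \emph{not} translation-invariant (the $X_{\alpha,k}$ have polynomial coefficients in $x'$), indeed split cleanly into a part acting on the ``geometric'' Hermite kernel factors $\Phi_\mu^\lambda(x')$ and a part acting on the symbol. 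Once this Leibniz-type splitting is established, everything else is a direct appeal to the two already-proven corollaries/lemmas, with the extra $W^{\mathfrak{r}}_\infty$-norm of $X_x m$ recording the symbol-derivative contribution; no new $2^{ja\epsilon}$-type losses appear because, under the cancellation hypothesis \eqref{def:grushin-symb-vanishing-0-condition}, the sharp estimates of \cite{Bagchi-Garg-1} avoid the $\epsilon$-room that was needed in Corollary~\ref{cor:wighted-plancherel-estimates-for-symbol-kernel}.
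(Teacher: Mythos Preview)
Your proposal is correct and follows essentially the same approach that the paper intends: the paper does not write out a proof of Corollary~\ref{cor:pseudo-weighted-Plancherel-Lp-gradient} but relies on exactly the freezing-and-Leibniz methodology you describe (the ``methodology of Corollary~4.2 of \cite{BBGG-1}'' referenced before Corollary~\ref{cor:pseudo-weighted-Plancherel-Lp}, combined with Lemma~\ref{lem:weighted-Plancherel-Lp-gradient}), which is also how the analogous $G_\varkappa$ estimate in Lemma~\ref{lem:grad-weighted-L-p-estimate} with $|\Gamma|=1$ acquires its two-term right-hand side $R\|m\| + \|X_x m\|$. The splitting $X_x K_{m(x,\boldsymbol{L},\boldsymbol{U})} = X_x K_{m_x(\boldsymbol{L},\boldsymbol{U})} + K_{(X_x m)(x,\boldsymbol{L},\boldsymbol{U})}$ and the subsequent application of Lemma~\ref{lem:weighted-Plancherel-Lp-gradient} and Corollary~\ref{cor:pseudo-weighted-Plancherel-Lp} to the two pieces is precisely the intended argument.
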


Choose $\psi_j$ as in \eqref{decompose-dyadic-spectral-symbol-frequency} and for $j\geq 0$, we define $m_{j}(x, \tau, \kappa) := m(x, \tau, \kappa) \psi_j(|(\tau, \kappa)|_1)$. Then we decompose the pseudo-multiplier operator $T = m(x, \boldsymbol{L}, \boldsymbol{U})= \sum_{j=0}^{\infty} T_j $ where $T_j= m(x, \boldsymbol{L}, \boldsymbol{U}) \psi_j(|(\tau, \kappa)|_1)=  m_j(x, \boldsymbol{L}, \boldsymbol{U} )$. For convenience, let us denote the kernel of the operator $T_j$ by $K_j(x,y)$ itself. 

Using Corollaries \ref{cor:pseudo-weighted-Plancherel-Lp} and \ref{cor:pseudo-weighted-Plancherel-Lp-gradient}, one can essentially repeat the proof of Corollary \ref{cor:wighted-plancherel-estimates-for-symbol-kernel} to obtain the following analogous result for integral kernels $K_j$. 

\begin{corollary}\label{cor:joint-funct-wighted-plancherel-esti}
Let $m \in \mathscr{S}^{-Qa/2}_{1-a, \delta}(\boldsymbol{L}, \boldsymbol{U}) $ satisfy condition \eqref{def:grushin-symb-vanishing-0-condition} for all $\beta \in \mathbb{N}^{n_2}$ with $|\beta| \leq r_0$. Then for all $j \geq 0$ and $0 \leq \mathfrak{r} \leq r_0$, we have 
\begin{align} 
\sup_{x\in \mathbb{R}^{n_1 + n_2}} |B(x, 2^{-j/2})| \int_{\mathbb{R}^{n_1 + n_2}} d(x,y)^{2\mathfrak{r}} |K_{j}(x,y)|^2 \, dy & \lesssim_{r_0} 2^{-jQa/2} 2^{-j\mathfrak{r}(1-a)} ,  \label{cond:joint-functional-kernel-hypo} \\ 
\sup_{x\in \mathbb{R}^{n_1 + n_2}} |B(x, 2^{-j/2})| \int_{\mathbb{R}^{n_1 + n_2}} d(x,y)^{2\mathfrak{r}} |X_{x} K_{j}(x,y)|^2 \, dy & \lesssim_{r_0} 2^{-jQa/2} 2^{-j \mathfrak{r}(1-a)} \label{cond:joint-functional-kernel-hypo-grad} 
\end{align}
\end{corollary}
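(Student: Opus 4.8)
The plan is to imitate, essentially line by line, the proof of Corollary~\ref{cor:wighted-plancherel-estimates-for-symbol-kernel}, with Corollary~\ref{cor:pseudo-weighted-Plancherel-Lp} and Corollary~\ref{cor:pseudo-weighted-Plancherel-Lp-gradient} playing the roles that the two parts of Lemma~\ref{lem:grad-weighted-L-p-estimate} played there. The reason for routing the argument through those corollaries, rather than through Lemma~\ref{lem:grad-weighted-L-p-estimate}, is that under the cancellation condition \eqref{def:grushin-symb-vanishing-0-condition} their right-hand sides carry the Sobolev norm $W^{\mathfrak{r}}_{\infty}$ instead of $W^{\infty}_{\mathfrak{r}+\epsilon}$, and this is exactly what removes the parasitic factor $2^{ja\epsilon}$ appearing in \eqref{cond:General-hypo}.

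First I would record the properties of the dyadic piece $m_j(x,\tau,\kappa)=m(x,\tau,\kappa)\,\psi_j(|(\tau,\kappa)|_1)$ that are needed to invoke those corollaries. Since $\psi_j$ is supported in $(2^{j-2},2^j)$ for $j\ge 2$ (and in a bounded set for $j=0,1$), the support of $m_j$ in the last two variables lies in $[-R^2,R^2]^{n_1+n_2}$ with $R=2^{j/2}$, and $1+|\tau|+|\kappa|\sim 2^j$ there. Moreover $m_j$ still satisfies \eqref{def:grushin-symb-vanishing-0-condition} up to order $r_0$: Leibniz' rule gives $\partial_\kappa^{\beta}m_j=\sum_{\beta'\le\beta}\binom{\beta}{\beta'}(\partial_\kappa^{\beta'}m)\,\partial_\kappa^{\beta-\beta'}\big[\psi_j(|(\tau,\kappa)|_1)\big]$, and each $\partial_\kappa^{\beta'}m$ with $|\beta'|\le r_0$ — in particular $\beta'=0$ — tends to $0$ as $\kappa\to 0$ by hypothesis on $m$, so every summand does; the mild non-smoothness of $|(\tau,\kappa)|_1$ at $\kappa=0$ can be sidestepped by replacing it by a smooth comparable function of $(\tau,\kappa)$.

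Then, for \eqref{cond:joint-functional-kernel-hypo}, I would apply Corollary~\ref{cor:pseudo-weighted-Plancherel-Lp} to $m_j$ with $p=2$ and $R=2^{j/2}$ and square, getting
\begin{align*}
|B(x,2^{-j/2})|\int_{\mathbb{R}^{n_1+n_2}}\big(1+2^{j/2}d(x,y)\big)^{2\mathfrak{r}}|K_j(x,y)|^2\,dy\ \lesssim_{r_0}\ \sup_{x_0}\big\|m_j\big(x_0,2^j\,\cdot\,\big)\big\|_{W^{\mathfrak{r}}_{\infty}}^{2}.
\end{align*}
A direct computation from the symbol estimate \eqref{def:grushin-symb} with $\sigma=-Qa/2$ and $\rho=1-a$ yields $\big\|m_j(x_0,2^j\cdot)\big\|_{W^{\mathfrak{r}}_{\infty}}\lesssim 2^{-jQa/4}\,2^{j\mathfrak{r}a/2}$ (each of the $\le\mathfrak{r}$ derivatives in $(\tau,\kappa)$ of the rescaled symbol brings a factor $2^{j}$ but costs $2^{j(1+\rho)/2}=2^{j(2-a)/2}$ from \eqref{def:grushin-symb}, while the rescaled cutoff is a fixed bump; for non-integer $\mathfrak{r}$ one interpolates between consecutive integer orders). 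Combining this with $d(x,y)^{2\mathfrak{r}}=2^{-j\mathfrak{r}}\big(2^{j/2}d(x,y)\big)^{2\mathfrak{r}}\le 2^{-j\mathfrak{r}}\big(1+2^{j/2}d(x,y)\big)^{2\mathfrak{r}}$ gives
\[
|B(x,2^{-j/2})|\int d(x,y)^{2\mathfrak{r}}|K_j(x,y)|^2\,dy\ \lesssim\ 2^{-j\mathfrak{r}}\cdot 2^{-jQa/2}\,2^{j\mathfrak{r}a}\ =\ 2^{-jQa/2}\,2^{-j\mathfrak{r}(1-a)},
\]
which is \eqref{cond:joint-functional-kernel-hypo}. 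Estimate \eqref{cond:joint-functional-kernel-hypo-grad} is proved in exactly the same way, now with Corollary~\ref{cor:pseudo-weighted-Plancherel-Lp-gradient} in place of Corollary~\ref{cor:pseudo-weighted-Plancherel-Lp}; its right-hand side $\sup_{x_0}\big(R\|m_j(x_0,R^2\cdot)\|_{W^{\mathfrak{r}}_{\infty}}+\|X_x m_j(x_0,R^2\cdot)\|_{W^{\mathfrak{r}}_{\infty}}\big)$ is controlled by noting that one $X_x$-derivative costs at most $(1+|\tau|+|\kappa|)^{\delta/2}$ in \eqref{def:grushin-symb} and $\delta\le 1-a<1$, so the second summand is dominated by the first; keeping track of the factor $R=2^{j/2}$ there leads to the analogue of \eqref{cond:General-hypo-grad}.

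I do not expect a genuine obstacle: the statement is a scaling-and-bookkeeping consequence of the cited weighted Plancherel estimates. The two points that require a little care are the stability of the cancellation condition under the dyadic cutoff $\psi_j$ and the rescaling of the $W^{\mathfrak{r}}_{\infty}$-norm when $\mathfrak{r}\notin\mathbb{N}$; if one wants the hypotheses of Corollaries~\ref{cor:pseudo-weighted-Plancherel-Lp} and \ref{cor:pseudo-weighted-Plancherel-Lp-gradient} to be met literally, one may also shrink $r_0$ to $4\lfloor r_0/4\rfloor$ at the outset.
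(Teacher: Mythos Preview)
Your approach is exactly what the paper does: it simply says to apply Corollaries~\ref{cor:pseudo-weighted-Plancherel-Lp} and~\ref{cor:pseudo-weighted-Plancherel-Lp-gradient} and repeat the proof of Corollary~\ref{cor:wighted-plancherel-estimates-for-symbol-kernel}, and you have filled in precisely those details. Your closing remark that tracking the factor $R=2^{j/2}$ in the gradient case yields the analogue of \eqref{cond:General-hypo-grad} (with an extra $2^{j}$ on the right) is correct---the stated \eqref{cond:joint-functional-kernel-hypo-grad} appears to be missing that factor, and indeed the $2^{j}$ is needed when the estimate is used in Lemma~\ref{lem:kernel-estimate-channilo-type-joint-funct}.
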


%%%%%%%%%%%%%%%%%%%%%%%%%%%%%%%%%%%%%%%%%%%%%%%%%%%%%%
%%%%%%%%%%%%%%%%%%%%%%%%%%%%%%%%%%%%%%%%%%%%%%%%%%%%%%
%%%%%%%%%%%%%%%%%%%%%%%%%%%%%%%%%%%%%%%%%%%%%%%%%%%%%%
%%%%%%%%%%%%%%%%%%%%%%%%%%%%%%%%%%%%%%%%%%%%%%%%%%%%%%
%%%%%%%%%%%%%%%%%%%%%%%%%%%%%%%%%%%%%%%%%%%%%%%%%%%%%%
%%%%%%%%%%%%%%%%%%%%%%%%%%%%%%%%%%%%%%%%%%%%%%%%%%%%%%
%%%%%%%%%%%%%%%%%%%%%%%%%%%%%%%%%%%%%%%%%%%%%%%%%%%%%%
%%%%%%%%%%%%%%%%%%%%%%%%%%%%%%%%%%%%%%%%%%%%%%%%%%%%%%
%%%%%%%%%%%%%%%%%%%%%%%%%%%%%%%%%%%%%%%%%%%%%%%%%%%%%%
%%%%%%%%%%%%%%%%%%%%%%%%%%%%%%%%%%%%%%%%%%%%%%%%%%%%%%

\section{Unweighted boundedness for \texorpdfstring{$\mathscr{S}^{ - Qa/q  }_{1-a, \delta}(G_{\varkappa})$}{} with \texorpdfstring{$1 \leq q < 2$}{}} \label{sec:unweighted-boundedness}

This section is dedicated to prove Theorem \ref{thm:unweightedLp:pseudo}, using which we shall obtain unweighted boundedness for pseudo-multipliers $m(x, G_\varkappa)$ with symbols coming from $\mathscr{S}^{ - Qa/q  }_{1-a, \delta}(G_{\varkappa})$ with $1< q < 2$ as an application of Fefferman--Stein interpolation theorem between symbol classes $\mathscr{S}^{ - Qa }_{1-a, \delta}(G_{\varkappa})$ and $\mathscr{S}^{ - Qa/2 }_{1-a, \delta}(G_{\varkappa})$.

%%%%%%%%%%%%%%%%%%%%%%%%%%%%%%%%%%
%%%%%%%%%%%%%%%%%%%%%%%%%%%%%%%%%%
%%%%%%%%%%%%%%%%%%%%%%%%%%%%%%%%%%
%%%%%%%%%%%%%%%%%%%%%%%%%%%%%%%%%%
%%%%%%%%%%%%%%%%%%%%%%%%%%%%%%%%%%
%%%%%%%%%%%%%%%%%%%%%%%%%%%%%%%%%%
%%%%%%%%%%%%%%%%%%%%%%%%%%%%%%%%%%

\subsection{The case of \texorpdfstring{$\mathscr{S}^{- Qa}_{1-a, \delta}(G_{\varkappa})$}{}} \label{subsec:H1-L1 boundedness}

Recall that $C_0$ is the constant appearing in the triangle inequality \eqref{def:quasi-constant} of the quasi-distance and $C_{1,\varkappa}$ is the constant appearing in Lemma \ref{lem:general-grushin-Mean-value}. Let us start with the following lemma which will be useful in our purpose.

\begin{lemma} \label{lem:unweightedLp-kernel} 
Let $m \in \mathscr{S}^{-Qa}_{1-a, \delta}(G_{\varkappa})$. Fix any ball $B = B(\mathfrak{z}, r)$ with $r < 1 $. Then for any arbitrarily small $\epsilon > 0$ there exists a constant $C := C_{\epsilon, n_1, n_2, \varkappa, m} > 0$ such that 
\begin{align} 
\sup\limits_{y \in B} \int_{\mathbb{R}^{n_1 + n_2} \setminus B'} | K_j(x, y) - K_j(x, \mathfrak{z}) | \, dx \leq C r^{- (1 - a_{0})/2}2^{- j (1 - a_{0})/4}, ~ \text{if} ~j > j_0, \label{Staubachlemma1} \\ 
\sup\limits_{y \in B} \int_{\mathbb{R}^{n_1 + n_2} \setminus B'} |K_j(x, y) - K_j(x, \mathfrak{z})| \, dx \leq C r^{(1 + a_{0})/2}2^{j (1 + a_{0})/4}, ~ \text{if} ~j \leq j_0 \label{Staubachlemma2}, 
\end{align} 
where $a_{0} = a (1 + 2 \epsilon)$, $B' = B(\mathfrak{z}, 2C_0 C_{1,\varkappa}r^{1 - a_{0}})$, and $j_0$ is the integer such that $2^{j_0} \leq r^{-2} < 2^{j_0 + 1}$. As a consequence, we have
\begin{align*}
\sum_{j} \sup\limits_{y \in B} \int_{\mathbb{R}^{n_1 + n_2} \setminus B'} | K_j(x, y) - K_j(x, \mathfrak{z}) | \, dx \leq C.    
\end{align*}
\end{lemma}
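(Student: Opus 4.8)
The statement has two parts: the pair of quantitative bounds \eqref{Staubachlemma1}--\eqref{Staubachlemma2} for each dyadic piece $K_j$, followed by the summability consequence. The summability is immediate granted the two estimates: for $j > j_0$ one has $r^{-(1-a_0)/2} 2^{-j(1-a_0)/4} = \left( r^{-2} 2^{-j} \right)^{(1-a_0)/4}$, and since $2^{j_0} \sim r^{-2}$ these terms form a geometric series in $j - j_0$ with ratio $2^{-(1-a_0)/4} < 1$ (using $a_0 < 1$, which holds for $\epsilon$ small because $a < 1$); similarly for $j \le j_0$ the bound is $\left( r^2 2^j \right)^{(1+a_0)/4}$, a geometric series in $j_0 - j$ with ratio $2^{-(1+a_0)/4} < 1$. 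Summing both gives a bound independent of $r$ and $\mathfrak{z}$, which is the asserted conclusion. So the whole work is in the two per-scale estimates, and I would present them as the body of the proof.

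\textbf{The high-frequency range $j > j_0$.} Here I would not use the mean value inequality but rather estimate $|K_j(x,y)|$ and $|K_j(x,\mathfrak z)|$ separately, exploiting that $B'$ has radius $\sim r^{1-a_0}$ which is much larger than $2^{-j/2}$ when $j > j_0$ (up to the $\epsilon$-loss built into $a_0$). Split the region $\mathbb{R}^{n_1+n_2} \setminus B'$ into dyadic annuli $A_\ell = \{ x : 2^\ell R^{-1} \le d(x, \mathfrak z) < 2^{\ell+1} R^{-1}\}$ with $R = 2^{j/2}$, starting from the annulus that meets $\partial B'$. On each annulus use Cauchy--Schwarz together with the weighted Plancherel bound \eqref{cond:General-hypo} from Corollary~\ref{cor:wighted-plancherel-estimates-for-symbol-kernel} with $\sigma = -Qa$: the factor $d(x,y)^{2\mathfrak r}$ converts to a gain of $2^{-\ell \mathfrak r (1-a)} \cdot (\text{something})$, and balancing the volume factor $|A_\ell|^{1/2} \sim |B(x, 2^\ell R^{-1})|^{1/2}$ against $|B(x, R^{-1})|^{-1/2}$ produces $|A_\ell|^{1/2}/|B(x,R^{-1})|^{1/2} \sim (2^{\ell})^{Q/2}$ in the worst Grushin case (one has to be careful that the doubling is with the homogeneous dimension $Q$). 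Choosing $\mathfrak r$ large enough that $\mathfrak r(1-a) > Q/2$ makes the $\ell$-sum converge, and the leading term, coming from $\ell \sim \ell_0$ where $2^{\ell_0} R^{-1} \sim r^{1-a_0}$, is of size $2^{j\sigma/2} \cdot (\text{powers})$, which after inserting $\sigma = -Qa$ and the relation $2^{\ell_0} \sim r^{1-a_0} 2^{j/2}$ should collapse to exactly $r^{-(1-a_0)/2} 2^{-j(1-a_0)/4}$. The identical argument applied to $K_j(x,\mathfrak z)$ alone gives the same, and the triangle inequality finishes this range.

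\textbf{The low-frequency range $j \le j_0$.} Here $2^{-j/2} \ge r$, so $y, \mathfrak z \in B$ are close on the scale of $K_j$ and I would use the mean value estimate Lemma~\ref{lem:general-grushin-Mean-value}: $|K_j(x,y) - K_j(x,\mathfrak z)| \le C d(y,\mathfrak z) \int_0^1 |X_y K_j(x, \gamma_0(t))|\, dt \lesssim r \sup_{w \in B(\mathfrak z, C_{1,\varkappa} r)} |X_y K_j(x, w)|$. Then integrate in $x$ over $\mathbb{R}^{n_1+n_2}\setminus B'$ via the same dyadic-annulus-plus-Cauchy--Schwarz scheme, now using the gradient weighted Plancherel bound \eqref{cond:General-hypo-y-grad-sup} (or \eqref{cond:General-hypo-grad} in $L^2$ form) with an extra factor $2^{j}$ from the derivative and $\sigma = -Qa$. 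The outer integral over the complement of $B'$ — whose radius is $\sim r^{1-a_0}$, now comparable to or larger than $2^{-j/2}$ — again is controlled by choosing $\mathfrak r$ with $\mathfrak r(1-a)$ exceeding the relevant power of $Q$, and the resulting bound assembles to $r \cdot 2^{j/2} \cdot (\text{correction})$. Matching powers with the target $r^{(1+a_0)/2} 2^{j(1+a_0)/4}$ again forces using the $\epsilon$-room in $a_0$ to absorb the $2^{ja\epsilon}$ losses in Corollary~\ref{cor:wighted-plancherel-estimates-for-symbol-kernel}.

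\textbf{Main obstacle.} The delicate point is not the dyadic decomposition itself but the bookkeeping of the Grushin volume factors $|B(x, t)|$, which are not a pure power of $t$ (they interpolate between $t^{n_1+n_2}$ and $t^{Q}$ depending on the base point), so the annular ratios $|B(x, 2^\ell R^{-1})| / |B(x, R^{-1})|$ must be estimated using the homogeneous dimension $Q$ uniformly, and one must verify the tail sums genuinely converge with the available decay $\mathfrak r(1-a)$ — i.e., one must check $\mathfrak r$ can be taken as large as needed, which is fine since Corollary~\ref{cor:wighted-plancherel-estimates-for-symbol-kernel} holds for all $\mathfrak r \ge 0$ (at the cost of more derivatives of $m$, harmless here). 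Second, the choice of the enlargement exponent $1-a_0$ for $B'$ together with the threshold $2^{j_0}\sim r^{-2}$ is exactly what makes the two ranges' geometric series close up to a constant; getting the exponents in \eqref{Staubachlemma1}--\eqref{Staubachlemma2} to come out as stated requires tracking the $2^{ja\epsilon}$ factor carefully and confirming that defining $a_0 = a(1+2\epsilon)$ (rather than $a(1+\epsilon)$) provides exactly the slack needed to swallow it in both regimes.
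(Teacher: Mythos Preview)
Your overall architecture matches the paper's: separate the high-frequency block $j>j_0$ (triangle inequality, no cancellation) from the low-frequency block $j\le j_0$ (mean-value estimate Lemma~\ref{lem:general-grushin-Mean-value}), and in both cases decompose the exterior of $B'$ into dyadic annuli around $\mathfrak z$. The summability consequence is, as you say, a geometric-series computation. Two concrete points, however, would block the argument as written.

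\medskip
\textbf{Wrong variable in the Plancherel bound.} The estimate \eqref{cond:General-hypo} you plan to pair with Cauchy--Schwarz controls $\int d(x,y)^{2\mathfrak r}|K_j(x,y)|^2\,dy$, i.e.\ it is an $L^2$-bound in the \emph{second} variable with the first variable frozen. In this lemma the integration is $\int\ldots\,dx$, and since $K_j(x,y)=K_{m_j(x,G_\varkappa)}(x,y)$ carries the symbol dependence in the first slot, there is no stated dual estimate with the roles of $x$ and $y$ reversed. The paper avoids this asymmetry by using the \emph{pointwise} bound \eqref{cond:General-hypo-sup} (and \eqref{cond:General-hypo-y-grad-sup} for the $y$-gradient in the low-frequency part): write $|K_j(x,y)|=d(x,y)^{-\mathfrak r}\cdot d(x,y)^{\mathfrak r}|K_j(x,y)|$, bound the second factor pointwise, and integrate directly. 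Your parenthetical ``or \eqref{cond:General-hypo-grad} in $L^2$ form'' has the same defect: that estimate is for $X_x K_j$ integrated in $y$, whereas the mean-value step produces $X_y K_j$ to be integrated in $x$.

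\medskip
\textbf{Volume bookkeeping cannot be done with $Q$ alone.} Your plan to control annular volume ratios ``uniformly with the homogeneous dimension $Q$'' via doubling does not close. After inserting the pointwise bound one must control $\displaystyle \int_{\mathcal A_l}\frac{dx}{d(x,y)^{\mathfrak r}|B(x,2^{-j/2})|^{1/2}|B(y,2^{-j/2})|^{1/2}}$, and the three balls involved are centered at \emph{different} points. If, say, $2C_0C_{1,\varkappa}2^{l+1}r^{1-a_0}<\tfrac12|\mathfrak z'|$, then $|\mathcal A_l|\sim (2^l r^{1-a_0})^{n_1+n_2}|\mathfrak z'|^{\varkappa n_2}$ while the crude bound $|B(x,2^{-j/2})|\gtrsim 2^{-jQ/2}$ leaves an uncontrolled factor $|\mathfrak z'|^{\varkappa n_2}$; the result would not be uniform in $\mathfrak z$. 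The paper's fix is to split each annular sum into the two regimes $2C_0C_{1,\varkappa}2^{l+1}r^{1-a_0}\gtrless\tfrac12|\mathfrak z'|$: in the first it takes $\mathfrak r=Q+\tfrac12$ and uses $|\mathcal A_l|\lesssim (2^l r^{1-a_0})^Q$; in the second it takes $\mathfrak r=n_1+n_2+\tfrac12$ and uses that $|x'|\sim|y'|\sim|\mathfrak z'|$ on $\mathcal A_l$, so that the factors $|\mathfrak z'|^{\varkappa n_2}$ from $|\mathcal A_l|$ and from $|B(x,2^{-j/2})|^{1/2}|B(y,2^{-j/2})|^{1/2}$ cancel exactly. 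This two-case split is the mechanism you flagged as the ``main obstacle'' but did not resolve.
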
	
\begin{proof}
For each $l \in \mathbb{N}$, let us write 
$$\mathcal{A}_{l} = \left\{x: 2C_0 C_{1,\varkappa} 2^l r^{1 - a_{0}} \leq d(x, \mathfrak{z}) \leq 2C_0 C_{1,\varkappa} 2^{l + 1} r^{1 - a_{0}} \right\}.$$

We shall first prove estimate \eqref{Staubachlemma1} and then estimate \eqref{Staubachlemma2}. 

\medskip \noindent \textbf{\underline{Proof of estimate \eqref{Staubachlemma1}}:} Fix $y \in B$ and let $j$ be an integer such that $j > j_{0}$. Now, 
$$\int_{\mathbb{R}^{n_1 + n_2} \setminus B'} | K_j(x, y) - K_j(x, \mathfrak{z}) | \, dx \leq \int_{\mathbb{R}^{n_1 + n_2} \setminus B'} | K_j(x, y) | \, dx + \int_{\mathbb{R}^{n_1 + n_2} \setminus B'} |K_j(x, \mathfrak{z}) \, dx ,$$
and since the estimation of both of the the above terms are similar, we shall only pursue the first one. 

Let us decompose the integral $\int_{\mathbb{R}^{n_1 + n_2} \setminus B'} |K_{j} (x, y)| \, dx $ as $\sum_{l = 0}^{\infty} \int_{\mathcal{A}_{l}} | K_{j} (x, y)| \, dx$. Now, there are two possibilities to consider, for two different ranges of $l$. 

First, if $l$ is such that $2C_0 C_{1,\varkappa} 2^{l + 1} r^{1 - a_{0}} \geq \frac{1}{2} |\mathfrak{z}'|$, then using \eqref{cond:General-hypo-sup} we have 
\begin{align*}
\int_{\mathcal{A}_{l}} | K_{j} (x, y) | \, dx & = \int_{\mathcal {A}_{l}} d(x, {y})^{- ( Q + \frac{1}{2} )} d(x, {y})^{ Q + \frac{1}{2} } | K_{j}(x, {y}) | \, dx \\
& \lesssim \int_{\mathcal{A}_{l}} d(x, \mathfrak{z})^{- ( Q + \frac{1}{2} )}  \frac{2^{-j Q a/2} 2^{-j (\frac{Q}{2} + \frac{1}{4})(1-a)} 2^{ja \epsilon /2}}{|B(x, 2^{-j/2})|^{1/2} |B(y, 2^{-j/2})|^{1/2} }  \, dx \\
& \lesssim 2^{jQ/2} 2^{-jQa/2} 2^{-j(\frac{Q}{2} + \frac{1}{4})(1-a)} 2^{ja\epsilon /2}\frac{ |B(\mathfrak{z}, 2C_0 C_{1,\varkappa} 2^{l + 1} r^{1-a_{0}})|}{(2^{l}r^{1-a_0})^{Q + \frac{1}{2}}} \\
& \lesssim \frac{1}{2^{l/2}}r^{-(1 - a_0)/2} 2^{-j (1 - a_0)/4},
\end{align*}
where in the second inequality we have used the fact that $d(x, y) \sim d(x, \mathfrak{z})$ for all $x \in \mathcal{A}_{l}, y \in B(\mathfrak{z}, r) $ and in the fourth inequality we used the fact $|B(\mathfrak{z}, 2C_0 C_{1,\varkappa} 2^{l + 1} r^{1-a_{0}})| \lesssim (2C_0 C_{1,\varkappa} 2^{l + 1} r^{1-a_{0}})^{Q}$ because $2C_0 C_{1,\varkappa} 2^{l + 1} r^{1-a_{0}} \geq \frac{1}{2} |\mathfrak{
z}'|$.

On the other hand, when $l$ is such that $2C_0 C_{1,\varkappa} 2^{l + 1} r^{1-a_{0}} < \frac{1}{2} |\mathfrak{
z}'|$, then $d(x,\mathfrak{z})< \frac{1}{2} |\mathfrak{
z}'|$, implying that $|\mathfrak{
z}'| \leq 2 |x'|$ for all $x \in \mathcal{A}_{l}$. Similarly, we can show that  in this case we have $|\mathfrak{z}'| \leq 2 |y'|$. Now, using condition  \eqref{cond:General-hypo-sup} we obtain
\begin{align*}
& \int_{\mathcal{A}_{l}} |K_{j}(x, y)| \, dx \\ 
& = \int_{\mathcal {A}_{l}} d(x,  {y})^{-(n_1 + n_2 + \frac{1}{2})} d(x, {y})^{n_1 + n_2 + \frac{1}{2}} |K_{j}(x, {y})| \, dx \\
& \lesssim \int_{\mathcal {A}_{l}}  \frac{1}{(2^l r^{1 - a_0})^{n_1 + n_2 + 1/2}} \frac{2^{-jQa/2} 2^{-j(\frac{n_1 + n_2}{2} + \frac{1}{4})(1-a)} 2^{ja \epsilon /2}}{|B(x, 2^{-j/2})|^{1/2}|B(y, 2^{-j/2})|^{1/2}} \, dx\\
& \lesssim 2^{-j(n_1 + n_2)a/2} 2^{-j(\frac{n_1 + n_2}{2} + \frac{1}{4})(1-a)} 2^{ja\epsilon /2} \frac{1}{(2^l r^{1-a_0})^{n_1 + n_2+1/2}} \frac{|B(\mathfrak{z}, 2C_0 C_{1,\varkappa} 2^{l+1}r^{1-a_{0}})|}{|B(\mathfrak{z}, 2^{-j/2})|}\\
& \lesssim 2^{-j(n_1 + n_2)a/2} 2^{-j(\frac{n_1 + n_2}{2} + \frac{1}{4})(1-a)} 2^{ja\epsilon /2} \frac{1}{(2^l r^{1-a_0})^{n_1 + n_2 + 1/2}} \frac{(2^{l + 1} r^{1-a_{0}})^{n_1 + n_2} |\mathfrak{z}'|^{\varkappa n_{2}}}{2^{-j(n_1 + n_2)/2}|\mathfrak{z}'|^{\varkappa n_{2}}}\\
&\lesssim \frac{1}{2^{l/2}} r^{-(1-a_0)/2} 2^{-j(1-a_0)/4},
\end{align*}
where in the third inequality we have used the condition $2C_0 C_{1,\varkappa} 2^{l + 1} r^{1-a_{0}} < \frac{1}{2} |\mathfrak{
z}'|$ implying that $|B(\mathfrak{z}, 2C_0 C_{1,\varkappa} 2^{l+1}r^{1-a_{0}})| \lesssim$ $( 2^{l + 1} r^{1-a_{0}})^{n_1 + n_2} |\mathfrak{z}'|^{\varkappa n_{2}}$.

Considering both cases in $l$, and summing over $l \geq 0$, one gets the claimed estimate \eqref{Staubachlemma1}. 

\medskip \noindent \textbf{\underline{Proof of estimate \eqref{Staubachlemma2}}:} Fix $y \in B$ and let $j$ be such that $j \leq j_{0}$ and fix $y\in B$. This time, we make use of the mean-value estimate from Lemma \ref{lem:general-grushin-Mean-value} to get 
\begin{align*}
|K_{j}(x, y) - K_j(x, \mathfrak{z})| \lesssim \, d(\mathfrak{z}, y) \int_{0}^{1} \left|X_y K_j(x, \gamma_0(t)) \right| \, dt \leq r \int_{0}^{1} \left|X_y K_j(x, \gamma_0(t)) \right| \, dt,
\end{align*}
where $\gamma_0(t) \in B(\mathfrak{z}, C_{1, \varkappa} \, r)$, and therefore it suffices to estimate, the integral 
$$ r \int_{ \mathbb{R}^{n_1 + n_2} \setminus B'} \left|X_y K_j(x, \gamma_0(t)) \right| \, dx, $$
uniformly in $t \in [0,1]$. 

In order to do this, let us fix $ \gamma_0(t) \in B(\mathfrak{z}, C_{1,\varkappa}r)$, and as earlier, analyse in two different ranges of $l$. 

First, if $l$ is such that $2C_0 C_{1,\varkappa} 2^{l + 1} r^{1 - a_{0}} \geq \frac{1}{2} |\mathfrak{z}'|$, then using estimate 
\eqref{cond:General-hypo-y-grad-sup}, we get 
\begin{align*}
r \int_{\mathcal{A}_l} \left|X_y K_j(x, \gamma_0(t)) \right| \, dx & = r \int_{\mathcal{A}_{l}} d(x, \gamma_0(t))^{-(Q + \frac{1}{2})} d(x, \gamma_0(t))^{Q + \frac{1}{2}} |X_{y} K_j(x, \gamma_0(t))| \, dx \\
& \lesssim r  \int_{\mathcal{A}_{l}} d(x, \mathfrak{z})^{-(Q + \frac{1}{2})}  \frac{2^{-jQa/2} 2^{-j (\frac{Q}{2}  + \frac{1}{4})(1 - a)} 2^{ja\epsilon /2} 2^{j/2}}{ |B(x, 2^{-j/2})|^{1/2} |B(\gamma_0(t), 2^{-j/2})|^{1/2}} \, dx \\
& \lesssim r 2^{jQ/2} 2^{-jQa/2} 2^{-j (\frac{Q}{2} + \frac{1}{4})( 1 - a)} 2^{ja\epsilon /2} 2^{j/2}  \frac{B(\mathfrak{z}, 2C_0 C_{1,\varkappa} 2^{l + 1} r^{1 - a_{0} })}{ (2^{l} r^{1-a_0})^{Q + \frac{1}{2}}} \\
& \lesssim  \frac{1}{2^{l/2}} r^{(1 + a_0)/2} 2^{j (1 + a_0)/4},
\end{align*}
where in the first inequality we have used the fact $ d(x, \gamma_0(t)) \sim d(x, \mathfrak{z})$ whenever $x \in \mathcal{A}_{l}$ and  $\gamma_0(t) \in B(\mathfrak{z}, C_{1, \varkappa})$.

Finally, if $l$ is such that $2C_0 C_{1,\varkappa} 2^{l + 1} r^{1 - a_{0}} < \frac{1}{2} |\mathfrak{z}'|$ then we have $|\mathfrak{z}'|< 2 |x'|$  and $|\mathfrak{z}'| < 2 |\gamma_0(t)'|$ for all $x \in \mathcal{A}_{l}$. Then, using estimate \eqref{cond:General-hypo-y-grad-sup}, we obtain
\begin{align*}
& r \int_{\mathcal{A}_l} \left|X_y K_j(x, \gamma_0(t)) \right| \, dx \\
& = r \int_{\mathcal {A}_{l}} d(x,  \gamma_0(t))^{-(n_1 + n_2 + \frac{1}{2})} d(x, \gamma_0(t) )^{n_1+  n_2 + \frac{1}{2}} |X_{y} K_j(x, \gamma_0(t))| \, dx \\
& \lesssim r \int_{\mathcal {A}_{l}} 
 \frac{1}{(2^k r^{1-a_0})^{n_1 + n_2+1/2}} \frac{2^{-jQa/2} 2^{-j(\frac{n_1 + n_2}{2} + \frac{1}{4})(1-a)} 2^{ja\epsilon /2} 2^{j/2}}{|B(x, 2^{-j/2})|^{1/2}|B(\gamma_0(t), 2^{-j/2})|^{1/2}} \, dx \\
& \lesssim r 2^{-j(n_1 + n_2)a/2} 2^{-j(\frac{n_1 + n_2}{2}+\frac{1}{4})(1-a)} 2^{ja\epsilon /2} 2^{j/2} \frac{1}{(2^l r^{1 - a_0})^{n_1 + n_2 + 1/2}} \frac{|B( \mathfrak{z}, 2C_0 C_{1,\varkappa} 2^{l + 1} r^{1 - a_{0}})|}{|B( \mathfrak{z}, 2^{-j/2})|}\\
& \lesssim 2^{-j(n_1 + n_2)a/2} 2^{-j(\frac{n_1 + n_2}{2} + \frac{1}{4})(1-a)} 2^{ja\epsilon /2} 2^{j/2} \frac{1}{(2^l r^{1-a_0})^{n_1 + n_2 + 1/2}} \frac{(2^{l+1} r^{1-a_{0}})^{n_1 + n_2} |\mathfrak{z}'|^{\varkappa n_{2}}}{2^{-j(n_1 + n_2)/2}|\mathfrak{z}'|^{\varkappa n_{2}}}\\
& \lesssim \frac{1}{2^{l/2}} r^{(1+a_0)/2} 2^{j(1+a_0)/4}. 
\end{align*}

As earlier, the claimed estimate \eqref{Staubachlemma2} follows upon summing over $l$, duly using the estimates in the two different regions in $l$ as above. This completes the proof of Lemma \ref{lem:unweightedLp-kernel}. 
\end{proof}

%%%%%%%%%%%%%%%%%%%
%%%%%%%%%%%%%%%%%%%
%%%%%%%%%%%%%%%%%%%
%%%%%%%%%%%%%%%%%%%

We are now in a position to prove Theorem \ref{thm:unweightedLp:pseudo}.

\begin{proof}[Proof of Theorem \ref{thm:unweightedLp:pseudo}]
We shall show that under the assumptions of Theorem \ref{thm:unweightedLp:pseudo}, the operator $T=m(x,G_{\varkappa})$ extends as a bounded linear operator, say $\tilde{T}$, from $H^1 \left( \mathbb{R}^{n_1 + n_2} \right)$ to $L^1 \left( \mathbb{R}^{n_1 + n_2} \right)$. But then in view of Proposition $4.2$ of \cite{Meda-Sjogen-Vallarino-H^1-L^1-bound-PAMS-2008}, we will also have that $\tilde{T}$ coincides with $T$ on $H^1_{G_\varkappa} \left( \mathbb{R}^{n_1 + n_2} \right) \cap L^2 \left( \mathbb{R}^{n_1 + n_2} \right)$. Therefore, we can use the notation $T$ itself in place of $\tilde{T}$, and the boundedness of $T$ on $L^p \left( \mathbb{R}^{n_1 + n_2} \right)$, for $1 < p < 2$, would follow from Lemma \ref{lem:Hardy-L2-space-interpolation}.  

Thanks to Theorem $4.1$ of \cite{Meda-Sjogen-Vallarino-H^1-L^1-bound-PAMS-2008}, in order to show that $m(x,G_{\varkappa})$ is $(H^1, L^1)$, it suffices to show that there exists some $C>0$ such that $\|m(\cdot,G_{\varkappa})h\|_{L^{1}}\leq C$ for all $(1, 2)$-atoms $h$. 

So, let us take an arbitrary $(1, 2)$-atom $h$ such that $\supp h \subseteq B = B(\mathfrak{z}, r)$, $\int_{B} h = 0$, and $\|a\|_{L^2} \leq |B|^{-1/{2}}$. We shall analyse the following two cases. 

\medskip \noindent \textbf{\underline{Case 1 $(r \geq 1)$:}} Let $\widetilde{B}$ denote the ball $B(\mathfrak{z}, 2C_0 C_{1,\varkappa} r)$. Then, 
\begin{align*}
\int_{\mathbb{R}^{n_1 + n_2}}| m(x, G_{\varkappa}) h(x) | \, dx & = \int_{\widetilde{B}} | m(x, G_{\varkappa}) h(x) | \, dx + \int_{\mathbb{R}^{n_1 + n_2} \setminus \widetilde{B}} |m(x, G_{\varkappa}) h(x)| \, dx =: J_1 + J_{2},
\end{align*}
and 
\begin{align*}
J_{1} & \leq |\widetilde{B}|^{\frac{1}{2}} \left(\int_{\widetilde{B}} |m(x,G_{\varkappa})h(x)|^{2} \, dx \right)^{\frac{1}{2}} \lesssim_{m} |\widetilde{B}|^{\frac{1}{2}} \left( \int_{B} |h(x)|^{2} \, dx \right)^{\frac{1}{2}} \leq |\widetilde{B}|^{\frac{1}{2}}  |B|^{-\frac{1}{2}} \leq C.
\end{align*}

Recall that $m(x, G_{\varkappa}) = \sum_{j} m_j(x, G_{\varkappa})$. Using $\int_B h = 0$, and H\"older's inequality, we get 
\begin{align*}
	J_{2} & \leq \sum_{j\geq 0} \int_{\mathbb{R}^{n_1 + n_2} \setminus \widetilde{B}} \int_{B} |K_{j}(x, y) - K_{j}(x, \mathfrak{z})| |h(y)| \, dy \, dx \\
	& \leq \sum_{j\geq 0} \left(\sup\limits_{ y \in B} \int_{\mathbb{R}^{n_1 + n_2}\setminus \widetilde{B}} |K_j(x, y) - K_j (x, \mathfrak{z})| \, dx \right) \int_{B} |h(y)| \, dy\\
	& \leq \sum_{j \geq 0} \left( \sup\limits_{ y \in B} \int_{\mathbb{R}^{n_1 + n_2} \setminus \widetilde{B}} |K_j(x, y)-K_j(x, \mathfrak{z})| \, dx \right) |B|^{\frac{1}{2}} \left( \int_{B} |h(y)|^2 \, dy \right)^{\frac{1}{2}}\\
	& \lesssim \sum_{j \geq 0} \left( \sup\limits_{ y \in B} \int_{ \mathbb{R}^{n_1 + n_2} \setminus \widetilde{B}} |K_j(x, y) - K_j(x, \mathfrak{z})| \, dx \right).
\end{align*}

Now the proof will follow if we can establish 
\begin{align} \label{Global-kerenel-difference}
\sup\limits_{y \in B} \int_{\mathbb{R}^{n_1 + n_2} \setminus \widetilde{B}} |K_{j}(x, y) - K_{j}(x, \mathfrak{z})| \, dx \leq 2^{-j(1-a_0)/4},
\end{align}
where $a_0 = a (1 + 2 \epsilon)$ with $\epsilon>0$ being very small such that $a_0 < 1$.

The rest of the proof is devoted to prove the claimed estimate \eqref{Global-kerenel-difference}. For the same, let us fix $y \in B$, then 
$$ \int_{\mathbb{R}^{n_1 + n_2} \setminus \widetilde{B}} |K_j(x, y) - K_j(x, \mathfrak{z})| \, dx \leq \int_{\mathbb{R}^{n_1 + n_2}\setminus \widetilde{B}} |K_j(x, y)|  dx + \int_{\mathbb{R}^{n_1 + n_2} \setminus \widetilde{B}} |K_j(x, \mathfrak{z})| dx .$$

The estimation of both of the above terms is similar and therefore it is sufficient to estimate the first one. For each $l \in \mathbb{N}$, let us write 
\begin{align*}
\mathcal{A}_{l} & = \left\{x: 2 C_0 C_{1,\varkappa} 2^l r \leq d(x, \mathfrak{z}) \leq 2 C_0 C_{1,\varkappa} 2^{l + 1} r \right\},
\end{align*}
and decompose the integral $ \int_{\mathbb{R}^{n_1 + n_2} \setminus \widetilde{B}} |K_j(x, y)|\ dx $ as $\sum_{l = 0}^{ \infty} \int_{\mathcal{A}_{l}} |K_j(x, y)| \, dx.$ 

As earlier, there are two possibilities to consider. If $l$ is such that $ 2 C_0 C_{1,\varkappa} 2^{l + 1} r > \frac{|\mathfrak{z}'|}{2}$, then using \eqref{cond:General-hypo-sup}, with $\epsilon$ very small so that $a_0 < 1$, we get
\begin{align*}
\int_{\mathcal{A}_{l}} |K_j(x, y)| \, dx & = \int_{\mathcal {A}_{l}} d(x, {y})^{-(Q + \frac{1}{2})} d(x, {y})^{Q + \frac{1}{2}} |K_j(x, {y})| \, dx\\
& \lesssim \int_{\mathcal{A}_{l}} \frac{1}{(2^{l} r)^{Q + \frac{1}{2}}} \frac{2^{-jQa/2} 2^{-j(\frac{Q}{2} + \frac{1}{4})(1 - a)} 2^{j a \epsilon /2}}{|B(x, 2^{-j/2})|^{1/2} |B(y, 2^{-j/2})|^{1/2}}  \, dx\\
& \lesssim 2^{jQ/2} 2^{-jQa/2} 2^{-j (\frac{Q}{2} +\frac{1}{4})(1 - a)} 2^{j a \epsilon /2} \frac{|B(\mathfrak{z}, 2 C_0 C_{1,\varkappa} 2^{l + 1}r)|}{(2^{l} r)^{Q + \frac{1}{2}}} \lesssim \frac{1}{2^{l/2}} 2^{-j(1-a_0)/4},
\end{align*}
where we have used the fact that $ r \geq 1$. 

Next, for $l$ such that $ 2 C_0 C_{1,\varkappa} 2^{l + 1}r \leq \frac{|\mathfrak{z}'|}{2}$, we have $|\mathfrak{z}'|< 2 |x'|$ for all $x \in \mathcal{A}_{l}$. Therefore, using again condition \eqref{cond:General-hypo-sup}, with $\epsilon$ very small so that $a_0 < 1$, we obtain
\begin{align*}
 \int_{\mathcal{A}_{l}} |K_j(x, y)| \, dx & = \int_{\mathcal {A}_{l}} d(x, y)^{-(n_1 + n_2 + \frac{1}{2})} d(x, y)^{n_1 + n_2 + \frac{1}{2}} |K_j(x, {y})| \, dx \\
& \lesssim \int_{\mathcal {A}_{l}}  \frac{1}{(2^l r)^{n_1 + n_2 + 1/2}} \frac{2^{-j (n_1 + n_2) a/2} 2^{-j (\frac{n_1 + n_2}{2} + \frac{1}{4}) (1 - a)} 2^{j a \epsilon /2}}{|B(x, 2^{-j/2})|^{1/2} |B(y, 2^{-j/2})|^{1/2}} \, dx \\
& \lesssim 2^{-j (n_1 + n_2) a/2} 2^{-j(\frac{n_1 + n_2}{2} + \frac{1}{4})(1 - a)} 2^{j a \epsilon /2} \frac{1}{(2^l r)^{n_1 + n_2 + 1/2}} \frac{|B(\mathfrak{z}, 2C_0 C_{1,\varkappa} 2^{l+1} r)|}{|B(\mathfrak{z}, 2^{-j/2})|} \\
& \lesssim 2^{-j (n_1 + n_2) a/2} 2^{-j (\frac{n_1 + n_2}{2} + \frac{1}{4}) (1 - a)} 2^{j a \epsilon /2} \frac{1}{(2^l r)^{n_1 + n_2 + 1/2}} \frac{(2^{l + 1} r)^{n_1 + n_2} |\mathfrak{z}'|^{\varkappa n_{2}}}{2^{-j (n_1 + n_2)/2} |\mathfrak{z}'|^{\varkappa n_{2}}} \\
&\lesssim \frac{1}{2^{l/2}}  2^{-j (1 - a_0)/4}, 
\end{align*}
where again we have used the fact that $r \geq 1$. 

Using the above two estimates in different regimes of $l$, we get the claimed estimate \eqref{Global-kerenel-difference}. 

\medskip \noindent \textbf{\underline{Case 2 $(r < 1)$:}} Let us write $B' = B( \mathfrak{z}, 2 C_0 C_{1,\varkappa}  r^{1 - a_{0}} ) $ with $ a_{0} = a ( 1 + 2 \epsilon) $ be such that $ a_{0} < min \{ 1, 2a \}$. We decompose
\begin{align*}
\int_{\mathbb{R}^{n_1 + n_2}} | m(x, G_{\varkappa}) h(x) | \, dx & = \int_{B'} | m(x, G_{\varkappa}) h(x)| \, dx + \int_{\mathbb{R}^{n_1 + n_2} \setminus B'} | m(x, G_{\varkappa}) h(x) |  \, dx =: I + II.
\end{align*}

Let us first estimate $ I $. For $b \in \mathbb{R}$, denote by $ J_{b} $ the potential operator $(I + G)^{b/2}$. Since $a_0 \leq 2 a $ we have $ m(x, \eta) (1 + \eta)^{Q a_0/4} \in \mathscr{S}^{0}_{\rho, \delta}( G_{\varkappa} )$ and hence assumption \eqref{assumption-weighted-L2} implies that $ m(x, G_{\varkappa}) \circ J_{Q a_0/2} \in \mathcal{B} \left( L^{2}(\mathbb{R}^{n_1 + n_2}, \omega_{\mathfrak{b}}) \right)$, where $\omega_{\mathfrak{b}} = |B(x, 1)|^{\mathfrak{b}}$ for $0 \leq \mathfrak{b} < 1$. Therefore,  
\begin{align*}
I & = \int_{B'} | m(x, G_{\varkappa}) h(x) | \, dx \\ 
& \leq |B'|^{1/2} \left( \int_{B'} |m(x, G_{\varkappa}) h(x) |^2 \, dx \right)^{1/2} \\
& = |B'|^{1/2} |B(\mathfrak{z}, 1)|^{-a_{0}/2} \left( \int_{B'} |B(x, 1)|^{a_{0}} |m(x,G_{\varkappa}) J_{Q a_{0}/2} J_{-Q a_{0}/2} h(x) |^2 \, dx \right)^{1/2}\\
& \lesssim |B'|^{1/2} |B(\mathfrak{z}, 1)|^{-a_{0}/2} \left( \int_{B'} |B(x, 1)|^{a_{0}} |J_{-Q a_{0}/2} h(x) |^2 \, dx \right)^{1/2},
\end{align*}	
and then using the Hardy--Littlewood--Sobolev inequality \eqref{eq:Sobolev-ineq} with $ p = \frac{2}{1 + a_{0}}, q = 2$ and $b = \frac{Q a_0}{2}$, the above estimate implies 
\begin{align*}
I \lesssim |B'|^{1/2} |B(\mathfrak{z}, 1)|^{-a_{0}/2} \|h\|_{ L^{\frac{2}{1 + a_{0} }}} \lesssim |B'|^{1/2} |B(\mathfrak{z}, 1)|^{-a_{0}/2}  |B|^{-(1 - a_{0})/2}.
\end{align*}

Now, we estimate the quantity $|B'|^{1/2} |B(\mathfrak{z}, 1)|^{-a_{0}/2}  |B|^{-(1 - a_{0})/2}$ as follows. Note that if $r^{1 - a_{0}} > |\mathfrak{z}'|$, then 
$$ |B'|^{1/2} |B(\mathfrak{z}, 1)|^{-a_{0}/2} |B|^{-(1 - a_0)/2} \lesssim |B'|^{1/2} |B|^{-(1 - a_0)/2} \lesssim \left(r^{1 - a_{0}} \right)^{Q/2} \left(r^{Q} \right)^{-(1 - a_0)/2} = 1,$$
whereas, if $ r^{1 - a_{0}} \leq |\mathfrak{z}'|$, then
\begin{align*}
 |B'|^{1/2} |B(\mathfrak{z}, 1)|^{-a_{0}/2} |B|^{-(1 - a_0)/2} & \lesssim \left\{ \left(r^{1 - a_{0}} \right)^{n_1 + n_2} |\mathfrak{z}'|^{\varkappa n_{2}} \right\}^{1/2}  |\mathfrak{z}'|^{- a_0 \varkappa n_{2}/2} \left\{ r^{n_1 + n_2} |\mathfrak{z}'|^{\varkappa n_{2}} \right\}^{-(1 - a_0)/2}\\
& = 1.   
\end{align*}
Combining the above estimates, we get that $I \leq C.$

To estimate $II$, we also make use of the cancellation of $h$ to have 
\begin{align*}
II & \leq \sum_{j} \int_{B} \int_{\mathbb{R}^{n_1 + n_2} \setminus B'} |K_j(x, y) - K_j(x,  \mathfrak{z})| |h(y)| \, dy  \, dx \\ 
& \leq \sum_{j} \left( \sup \limits_{ y \in B} \int_{\mathbb{R}^{n_1 + n_2} \setminus B'} | K_j (x, y) - K_j (x, \mathfrak{z}) | \, dx \right) \int_{B} |h(y)| \, dy \\ 
& \lesssim \sum_{j} \sup\limits_{y \in B} \int_{\mathbb{R}^{n_1 + n_2} \setminus B'} |K_j(x, y) - K_j(x, \mathfrak{z})| \, dx \leq C, 
\end{align*} 
where the last inequality is true in view of Lemma \ref{lem:unweightedLp-kernel}. 

This completes the proof of Theorem \ref{thm:unweightedLp:pseudo}.
\end{proof}

%%%%%%%%%%%%%%%%%%%%%%%%%%%%%%%%%%
%%%%%%%%%%%%%%%%%%%%%%%%%%%%%%%%%%
%%%%%%%%%%%%%%%%%%%%%%%%%%%%%%%%%%
%%%%%%%%%%%%%%%%%%%%%%%%%%%%%%%%%%
%%%%%%%%%%%%%%%%%%%%%%%%%%%%%%%%%%
%%%%%%%%%%%%%%%%%%%%%%%%%%%%%%%%%%
%%%%%%%%%%%%%%%%%%%%%%%%%%%%%%%%%%

\subsection{The case of \texorpdfstring{$\mathscr{S}^{-Qa/q}_{1-a, \delta}(G_{\varkappa})$}{} with \texorpdfstring{$1 < q < 2$}{}} \label{subsec:Lp-boundedness-intermediatary-classes}

\begin{theorem} \label{thm:unweighted-Lq-intermediate-class}
Let $m \in \mathscr{S}^{-Qa/q}_{1-a, \delta}(G_{\varkappa})$ with $\sigma = - Qa/q$ and $1 < q < 2$. Then, the operator $m(x, G_{\varkappa})$ is bounded on $ L^q \left( \mathbb{R}^{n_1 + n_2} \right)$. 
\end{theorem}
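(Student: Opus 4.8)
The plan is to obtain Theorem~\ref{thm:unweighted-Lq-intermediate-class} by analytic interpolation between the two endpoint classes $\mathscr{S}^{-Qa}_{1-a,\delta}(G_\varkappa)$ and $\mathscr{S}^{-Qa/2}_{1-a,\delta}(G_\varkappa)$, using an analytic family of pseudo-multipliers built from the given symbol $m$. More precisely, given $m \in \mathscr{S}^{-Qa/q}_{1-a,\delta}(G_\varkappa)$ with $1<q<2$, I would fix a complex parameter $z$ in the strip $0 \le \operatorname{Re} z \le 1$ and set
\begin{align*}
m_z(x,\eta) := m(x,\eta)\,(1+\eta)^{\frac{Qa}{2}\left(\frac{2}{q} - 1\right)(1-z)} \cdot (1+\eta)^{0\cdot z},
\end{align*}
chosen so that $m_z \in \mathscr{S}^{-Qa}_{1-a,\delta}(G_\varkappa)$ when $\operatorname{Re} z = 0$ and $m_z \in \mathscr{S}^{-Qa/2}_{1-a,\delta}(G_\varkappa)$ when $\operatorname{Re} z = 1$, while $m_{z_0} = m$ for the value $z_0 = 2 - 2/q \in (0,1)$ that recovers $\sigma = -Qa/q$. (One checks directly from Definition~\ref{grushin-symb-old-def1} and the product rule that multiplying by $(1+\eta)^{s}$ with $s \le 0$ maps $\mathscr{S}^{\sigma}_{1-a,\delta}$ into $\mathscr{S}^{\sigma+2s}_{1-a,\delta}$, with seminorm control depending polynomially on $|s|$; along the vertical lines the extra imaginary exponent only contributes factors of polynomial growth in $|\operatorname{Im} z|$.) Then $z \mapsto \langle m_z(x,G_\varkappa) f, g\rangle$ is analytic and of admissible growth for $f,g$ simple functions, so Stein's interpolation theorem applies once I have the two endpoint bounds.

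The endpoint on the line $\operatorname{Re} z = 0$ is supplied by Theorem~\ref{thm:unweightedLp:pseudo}: for $m_z \in \mathscr{S}^{-Qa}_{1-a,\delta}(G_\varkappa)$ the operator $m_z(x,G_\varkappa)$ is $H^1_{G_\varkappa}\to L^1$ bounded and hence, as noted there, $L^p$-bounded for all $1<p<2$; in particular it is bounded on $L^{p_0}$ for some $p_0$ slightly above $1$, with operator norm controlled uniformly in $\operatorname{Im} z$ through the seminorm bound $\|m_z\|_{\mathscr{S}^{-Qa}_{1-a,\delta}} \lesssim (1+|\operatorname{Im} z|)^{N}$. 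The endpoint on the line $\operatorname{Re} z = 1$ requires $L^2$-boundedness for $m_z \in \mathscr{S}^{-Qa/2}_{1-a,\delta}(G_\varkappa)$, again with polynomial control in $\operatorname{Im} z$; since $-Qa/2 \le 0$ we have $m_z \in \mathscr{S}^{0}_{1-a,\delta}(G_\varkappa)$, and under our standing assumption (Standing assumption~\ref{assumption-weighted-L2}, or for $\varkappa=1$ directly Proposition~\ref{prop:weighted-CV} / the Calder\'on--Vaillancourt-type results of \cite{Bagchi-Garg-1}) such operators are bounded on $L^2$, with the bound given in terms of $\|m_z\|_{\mathscr{S}^0_{1-a,\delta}}$, which again grows only polynomially in $|\operatorname{Im} z|$. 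Interpolating the pair $(L^{p_0},L^{p_0})$ and $(L^2,L^2)$ at the parameter $z_0 = 2-2/q$ yields boundedness of $m(x,G_\varkappa)$ on $L^{p_{z_0}}$; a short computation with the interpolation relation $\frac{1}{p_{z_0}} = \frac{1-z_0}{p_0} + \frac{z_0}{2}$, upon letting $p_0 \downarrow 1$, shows $p_{z_0}$ can be taken to be exactly $q$ (indeed $\frac1q = (1-z_0)\cdot 1 + z_0\cdot \frac12 = \frac{z_0}{2} + 1 - z_0$ gives $z_0 = 2 - 2/q$, consistent), which is the claim.

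The step I expect to be most delicate is the bookkeeping of seminorm growth along the vertical lines of the strip, i.e. verifying that $\|m_z\|_{\mathscr{S}^{\sigma,N}_{1-a,\delta}}$ grows at most polynomially in $|\operatorname{Im} z|$ for each fixed finite $N$; this is what makes Stein interpolation applicable, and it needs the explicit dependence in Lemma~\ref{lem:shifted-CV}, Proposition~\ref{prop:weighted-CV}, and Theorem~\ref{thm:unweightedLp:pseudo} to be linear (or at least polynomial) in the relevant symbol seminorm rather than merely qualitative. One must be careful that differentiating $(1+\eta)^{w}$, $w = \alpha(1-z)+i\beta z$, in $\eta$ produces factors of $w, w-1, \dots$ — each bounded by $(1+|z|)$ — and at most $N$ of them, so the total growth is $O((1+|\operatorname{Im} z|)^N)$; combined with $|(1+\eta)^{w}| = (1+\eta)^{\operatorname{Re} w}$ this keeps $m_z$ in the correct class with the stated control. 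A secondary technical point is checking that the family $m_z(x,G_\varkappa)$ is genuinely analytic in $z$ in the sense required (weak analyticity of $z\mapsto \langle m_z(x,G_\varkappa)f,g\rangle$ on a dense class), which follows routinely from the spectral representation \eqref{Gru-pseudo} and dominated convergence, together with the uniform-in-$z$ $L^2$ bounds already in hand. Once these are settled, the conclusion is immediate from Stein's theorem.
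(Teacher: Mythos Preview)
Your overall plan---complex interpolation between the endpoint classes $\mathscr{S}^{-Qa}_{1-a,\delta}$ and $\mathscr{S}^{-Qa/2}_{1-a,\delta}$ via an analytic family $m_z$---is exactly the paper's strategy. But the execution has a genuine gap at the final step. Stein interpolation between $L^{p_0}\to L^{p_0}$ (at $\operatorname{Re} z=0$) and $L^2\to L^2$ (at $\operatorname{Re} z=1$) yields boundedness of $T_{z_0}$ on $L^{p_{z_0}}$ with $\tfrac{1}{p_{z_0}}=\tfrac{1-z_0}{p_0}+\tfrac{z_0}{2}$, and for every $p_0>1$ this gives $p_{z_0}>q$ strictly. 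The sentence ``upon letting $p_0\downarrow 1$'' is where the argument fails: you obtain a \emph{family} of bounds on exponents $p_{z_0}(p_0)\searrow q$, but the operator norms supplied by interpolation are controlled by $\|T_{iu}\|_{L^{p_0}\to L^{p_0}}$, which (coming from Lemma~\ref{lem:Hardy-L2-space-interpolation} applied to a symbol in $\mathscr{S}^{-Qa}_{1-a,\delta}$) is not shown to stay bounded as $p_0\to 1$. Even if those norms were bounded, having $L^p$-boundedness for a sequence $p\searrow q$ does not by itself imply $L^q$-boundedness.

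The remedy---and this is precisely what the paper does---is to use the $H^1_{G_\varkappa}\to L^1$ bound from Theorem~\ref{thm:unweightedLp:pseudo} \emph{directly} as the endpoint, rather than the derived $L^{p_0}$ bound. One then invokes the Fefferman--Stein analytic interpolation theorem (with a Hardy-space endpoint), which places the operator at $\operatorname{Re} z=1$ in $H^1\to L^1$ and at $\operatorname{Re} z=0$ in $L^2\to L^2$; interpolating at $t=\tfrac{2}{q}-1$ gives $L^{p_t}$ with $p_t=\tfrac{2}{1+t}=q$ on the nose, with no limiting argument. Two smaller points: (i) your explicit exponent $\tfrac{Qa}{2}(\tfrac{2}{q}-1)(1-z)$ does not do what you claim---at $z_0=2-\tfrac{2}{q}$ it does not vanish, so $m_{z_0}\neq m$, and the resulting orders at $z=0,1$ are not $-Qa$ and $-Qa/2$; the paper's choice $m_z(x,\eta)=e^{z^2}m(x,\eta)(1+\eta)^{\frac{Qa}{2q}-\frac{Qa}{4}(1+z)}$ is the correct calibration. (ii) Your remark that polynomial growth in $|\operatorname{Im} z|$ suffices is correct for Stein interpolation, but the paper's insertion of the damping factor $e^{z^2}$ kills even that growth and makes the seminorm bounds uniform, which streamlines the appeal to Theorem~\ref{thm:unweightedLp:pseudo}.
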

\begin{proof}
We shall prove the theorem using Fefferman--Stein complex interpolation method. For $T = m(x, G_{\varkappa})$, we shall also make use of the fact that $T \circ \left( I + G_{\varkappa} \right)^{\frac{Qa}{2q}} \in \mathcal{B} (L^2 (\mathbb{R}^{n_1 + n_2}))$. 

Let us write $S = \{ z \in \mathbb{C} : 0 < Re z < 1 \}$, and for each $z \in \bar{S}$, consider the symbol function 
\begin{align*}
m_{z}(x, \eta) = e^{z^2} m(x, \eta)(1 + \eta)^{\frac{Qa}{2q} - \frac{Qa}{4}(1 + z)}, 
\end{align*}
and denote by $T_z$ the associated operator $T_z = m_z(x, G_{\varkappa})$. 

Observe that 
\begin{align*}
\|T_z\|_{L^2 \rightarrow L^2} & = \|e^{z^2} T \circ \left( I + G_{\varkappa} \right)^{\frac{Qa}{2q} -\frac{Qa}{4} (1 + z)} \|_{L^2 \rightarrow L^2} \\
& = \left|e^{z^2} \right| \| T \circ \left( I + G_{\varkappa} \right)^{\frac{Qa}{2q}} \circ \left( I + G_{\varkappa} \right)^{ -\frac{Qa}{4} (1 + z)} \|_{L^2 \rightarrow L^2} \\ 
& \leq \left|e^{z^2} \right| \| T \circ \left( I + G_{\varkappa} \right)^{\frac{Qa}{2q}} \|_{L^2 \rightarrow L^2} \| \left( I + G_{\varkappa} \right)^{ -\frac{Qa}{4} (1 + z)} \|_{L^2 \rightarrow L^2} \\ 
& \lesssim_{T} \left|e^{z^2} \right| \| \left( I + G_{\varkappa} \right)^{ -\frac{Qa}{4} (1 + z)} \|_{L^2 \rightarrow L^2}. 
\end{align*}

But, Plancherel's theorem implies that 
$$\| \left( I + G_{\varkappa} \right)^{ -\frac{Qa}{4} (1 + z)} \|_{L^2 \rightarrow L^2} = \sup_{\eta > 0} \left| \left( 1 + \eta \right)^{ -\frac{Qa}{4} (1 + z)} \right| = \sup_{\eta > 0} \left| \left( 1 + \eta \right)^{ -\frac{Qa}{4} (1 + Re (z))} \right|,$$ 
and therefore we get that $\sup_{z \in \bar{S}} \|T_z\|_{L^2 \rightarrow L^2} < \infty. $

Next, define the following set which is which is dense in $L^2(\mathbb{R}^{n_1 + n_2})$: 
\begin{align*}
\mathcal{V}_{\varkappa} = \left\{ f \in L^2(\mathbb{R}^{n_1 + n_2}) : f(x) = \int_{\mathbb{R}^{n_2}} \right. & e^{-i \lambda \cdot x^{\prime \prime}} \sum_{k \text{-finite}} C(\lambda, \nu_{\varkappa, k}) h^{\lambda}_{\varkappa, k}(x') \, d\lambda, \text{ where } C(\lambda, k) \text{ is} \\ 
& \text{compactly supported in } \lambda \text{-variable for each } k \left. \right\}. 
\end{align*}

It can be easily verified that for each $f, g \in \mathcal{V}_{\varkappa}$, the map $z \mapsto \int_{\mathbb{R}^{n_1 + n_2}} (T_z \, f) \, g$, is holomorphic in the open strip $S$ and continuous in $\bar{S}$, and we leave the details. 

Let us claim that 
\begin{align} \label{ineq:H1-L1-interpolation-step}
\sup_{\{z ~ : ~ Re(z) = 1\}} \|T_z\|_{H^1 \rightarrow L^1} < \infty. 
\end{align}

If the claimed estimate \eqref{ineq:H1-L1-interpolation-step} holds true, then it shall follow from the Fefferman--Stein interpolation theorem that for every for $t \in (0, 1)$, the operator $T_{t}$ is bounded on $L^{p_t}(\mathbb{R}^{n_1 + n_2})$ where $p_t = \frac{2}{1+t}$. In particular, choosing $t = \frac{2}{q} - 1$, one would get that $T = T_{\frac{2}{q} - 1}$ is bounded on $L^q (\mathbb{R}^{n_1 + n_2})$. So, let us prove estimate \eqref{ineq:H1-L1-interpolation-step}. Note that for any $z = 1 + i u_2$, we have 
\begin{align*}
\left|X_x^\Gamma \partial_{\eta}^l m_{z}(x, \eta)\right| \lesssim e^{-u_2^2} \, P(|z|) \, (1+\eta)^{-\frac{Qa}{2} - \frac{l(2-a)}{2} + \delta \frac{|\Gamma|}{2}} \lesssim_l (1+\eta)^{-\frac{Qa}{2} - \frac{l(2-a)}{2} + \delta \frac{|\Gamma|}{2}}, 
\end{align*}
where $P$ is a polynomial in one complex variable, of degree at most $l$, and in the last inequality we have used the simple fact that $\sup_{u_2 \in \mathbb{R}} e^{-u_2^2}P(|z|) = C_l < \infty.$ 

We have thus shown that $m_{1 + i u_2} (x,\eta) \in \mathscr{S}^{-Qa}_{1-a,\delta}(G_{\varkappa})$ with seminorm of symbols $m_{1 + i u_2}$ being uniform in $u_2$. One can then invoke Theorem \ref{thm:unweightedLp:pseudo} to conclude the bound as claimed in \eqref{ineq:H1-L1-interpolation-step}. This completes the proof of the Theorem \ref{thm:unweighted-Lq-intermediate-class}.
\end{proof}

%%%%%%%%%%%%%%%%%%%%%%%%%%%%%%%%%%%%%%%%%%%%%%%%%%%%%%
%%%%%%%%%%%%%%%%%%%%%%%%%%%%%%%%%%%%%%%%%%%%%%%%%%%%%%
%%%%%%%%%%%%%%%%%%%%%%%%%%%%%%%%%%%%%%%%%%%%%%%%%%%%%%
%%%%%%%%%%%%%%%%%%%%%%%%%%%%%%%%%%%%%%%%%%%%%%%%%%%%%%
%%%%%%%%%%%%%%%%%%%%%%%%%%%%%%%%%%%%%%%%%%%%%%%%%%%%%%
%%%%%%%%%%%%%%%%%%%%%%%%%%%%%%%%%%%%%%%%%%%%%%%%%%%%%%
%%%%%%%%%%%%%%%%%%%%%%%%%%%%%%%%%%%%%%%%%%%%%%%%%%%%%%
%%%%%%%%%%%%%%%%%%%%%%%%%%%%%%%%%%%%%%%%%%%%%%%%%%%%%%
%%%%%%%%%%%%%%%%%%%%%%%%%%%%%%%%%%%%%%%%%%%%%%%%%%%%%%
%%%%%%%%%%%%%%%%%%%%%%%%%%%%%%%%%%%%%%%%%%%%%%%%%%%%%%

\section{Sparse domination results for \texorpdfstring{$\mathscr{S}^{ - Qa/q  }_{1-a, \delta}(G_{\varkappa})$}{} with \texorpdfstring{$1\leq q< 2$}{}} \label{sec:sparse-domination-results}

In this section we shall prove our sparse domination result that is Theorem \ref{thm:Main-Sparse-Result}. 
The proof of Theorem \ref{thm:Main-Sparse-Result} requires the sparse domination principle Theorem~1.1 of \cite{Lorist-pointwaise-sparse2021} (we also refer Theorem 2.11 of \cite{BBGG-1}, where we have stated this result particularly for the homogeneous space associated to the Grushin metric). The mentioned principle depends on two intermediate conditions. First is an appropriate unweighted boundedness for the pseudo-multiplier operators associated with the symbol classes $\mathscr{S}^{-Qa/q}_{1-a, \delta}(G_{\varkappa})$ with $1 \leq q < 2$. For $q=1$, the unweighted boundedness was addressed in Theorem \ref{thm:unweightedLp:pseudo}, and in the case of $1<q<2$ the same was addressed in Theorem \ref{thm:unweighted-Lq-intermediate-class}. Secondly, we need end-point bounds for the grand maximal truncated operator, and this will be established in Lemma \ref{lem:staubach-Intermediate}. 

In the following lemma we establish some kernel estimates which are essential for the proof of the sparse domination. In the Euclidean setting similar estimates were obtained in \cite{Michalowski-Rule-Staubach-Canad2012}. In \cite{Michalowski-Rule-Staubach-Canad2012} the authors have heavily relied on the Hausdorff--Young theorem. However, in our context of the Grushin operator, in the absence an exact Hausdorff--Young theorem, we perform very delicate modifications of ideas of \cite{Michalowski-Rule-Staubach-Canad2012}. 
 
\begin{lemma} \label{lem: kernel-difference-p}
Let $m \in \mathscr{S}^{-Qa/q}_{1-a, \delta}(G_{\varkappa})$ with $\sigma = - Qa/q$ and $1 \leq q < 2$. Let $B=B(\mathfrak{z}, r)$ be any ball with $0< r \leq 1$. Then for $ 1\leq p \leq  2$, $l \geq 1$, $s = 3C_0^2 C_{1,\varkappa}$, $0\leq \theta \leq 1$ and for sufficiently small $\epsilon>0$, the following estimates hold:
\begin{align} \label{ms1-HD} 
\sup_{y \in B} \sum_{j=0}^{\infty} \left(\int_{2^{l} sr^{\theta} \leq d(v,\mathfrak{z}) \leq  2^{l+1} sr^{\theta} }| K_j (y,v)-K_j(\mathfrak{z},v)|^{p'} dv \right)^{1/p'} \lesssim_{L, \theta, p, q} (2^{l})^{- L} r^{L (1 - a - \theta) + \frac{Qa}{q} - \frac{Q}{p}- a \epsilon}, 
\end{align}
whenever $-\frac{Qa}{q} + \frac{Q}{p} < L \, (1-a) < -\frac{Qa}{q} + \frac{Q}{p} + (1 + a\epsilon)$, and 
\begin{align} \label{ms2-HD} 
\sup_{y \in B} & \sum_{j = 0}^{\infty} \left(\int_{ 2^{l} sr^{\theta} \leq d(v,\mathfrak{z}) \leq  2^{l+1} sr^{\theta}}| K_j (y,v)- K_j(\mathfrak{z},v)|^{p'} dv \right)^{1/p'} \quad\quad\quad\quad\quad\quad\quad\quad\quad\quad\quad\quad \\
\nonumber & \lesssim_{L, \theta, p, q} (2^{l})^{-L} r^{L(1-a-\theta) + \frac{(n_1 + n_2)a}{q} - \frac{n_1 + n_2}{p} - a \epsilon} 
 |\mathfrak{z}'|^{-\varkappa n_2/p}, 
\end{align} 
whenever $-\frac{(n_1 + n_2)a}{q} + \frac{n_1 + n_2}{p} < L \, (1-a) < -\frac{(n_1 + n_2)a}{q} + \frac{n_{1}+ n_{2}}{p} + (1+ a\epsilon)$ and $\frac{1}{2}|\mathfrak{z}'| \geq  2^{l+1}sr^{\theta}$.
\end{lemma}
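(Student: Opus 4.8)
\textbf{Proof plan for Lemma \ref{lem: kernel-difference-p}.}

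The plan is to dyadically decompose the sum over $j$ according to whether $2^{-j/2}$ is large or small compared to the annular scale $2^{l}sr^{\theta}$, and in each regime use the appropriate weighted Plancherel estimate from Corollary \ref{cor:wighted-plancherel-estimates-for-symbol-kernel}, together with H\"older's inequality to pass from the $L^2$-weighted bound \eqref{cond:General-hypo} (and its gradient version \eqref{cond:General-hypo-grad}) to the $L^{p'}$-bound we want. Concretely, for each annulus $\mathcal{A}_{l} = \{v : 2^{l}sr^{\theta} \le d(v,\mathfrak{z}) \le 2^{l+1}sr^{\theta}\}$ and a fixed $y \in B$, note $d(y,v)\sim d(\mathfrak{z},v)\sim 2^{l}sr^{\theta}$ since $r\le 1$ and $s=3C_0^2C_{1,\varkappa}$. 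I would first handle \eqref{ms1-HD}: estimate $\|K_j(y,\cdot)-K_j(\mathfrak{z},\cdot)\|_{L^{p'}(\mathcal{A}_l)}$ in two ways. When $2^{-j/2} \lesssim 2^{l}sr^{\theta}$ (the ``low frequency'' range), use the mean value inequality of Lemma \ref{lem:general-grushin-Mean-value} to replace the difference by $d(y,\mathfrak{z})\le r$ times $\sup_t |X_y K_j(\cdot,\gamma_0(t))|$ along a curve in $B(\mathfrak{z},C_{1,\varkappa}r)$, then apply H\"older on $\mathcal{A}_l$ with exponents $2/p'$ and its conjugate, using \eqref{cond:General-hypo-grad} with a large power $\mathfrak{r}$ of $d(\cdot,\gamma_0(t))\sim 2^{l}sr^{\theta}$ to gain decay in $l$; the volume factor $|\mathcal{A}_l|^{1/p'-1/2}$ combined with $|B(x,2^{-j/2})|$ from \eqref{cond:General-hypo-grad} produces the powers of $r$ and $2^{l}$ claimed. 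When $2^{-j/2} \gtrsim 2^{l}sr^{\theta}$ (the ``high frequency'' range), simply bound the difference by the sum $\|K_j(y,\cdot)\|_{L^{p'}(\mathcal{A}_l)} + \|K_j(\mathfrak{z},\cdot)\|_{L^{p'}(\mathcal{A}_l)}$ and use \eqref{cond:General-hypo} directly with a suitable $\mathfrak{r}$. In both regimes the resulting bound on the $j$-th term is a geometric-type power of $2^{-j/2}$ relative to $2^{l}sr^{\theta}$, so summing the geometric series in $j$ — split at $2^{j_0/2}\sim (2^{l}sr^{\theta})^{-1}$ — leaves the stated power of $2^{l}$ and $r$, with the constraint $-\tfrac{Qa}{q}+\tfrac{Q}{p} < L(1-a) < -\tfrac{Qa}{q}+\tfrac{Q}{p}+(1+a\epsilon)$ being exactly the condition that both one-sided geometric sums converge; the $2^{ja\epsilon}$ loss in \eqref{cond:General-hypo}--\eqref{cond:General-hypo-grad} is what forces the $-a\epsilon$ in the exponent and the $(1+a\epsilon)$ on the right of the range.

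For \eqref{ms2-HD} the argument is parallel, but now we are in the regime $\tfrac{1}{2}|\mathfrak{z}'|\ge 2^{l+1}sr^{\theta}$, so every point $v\in\mathcal{A}_l$ (and the curve points $\gamma_0(t)$) satisfy $|v'|\sim|\mathfrak{z}'|$ and similarly $|y'|\sim|\mathfrak{z}'|$; consequently the Grushin balls appearing are comparable to Euclidean balls of the same radius times $|\mathfrak{z}'|^{\varkappa n_2}$, i.e. $|B(v,\rho)|\sim \rho^{n_1+n_2}|\mathfrak{z}'|^{\varkappa n_2}$ for $\rho\le 2^{-j/2}$ in the relevant range, and $|\mathcal{A}_l|\sim (2^{l}sr^{\theta})^{n_1+n_2}|\mathfrak{z}'|^{\varkappa n_2}$. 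Carrying these refined volume estimates through the same H\"older-plus-Plancherel computation replaces $Q$ by $n_1+n_2$ everywhere in the exponents and extracts exactly one factor $|\mathfrak{z}'|^{-\varkappa n_2/p}$ (coming from $|\mathcal{A}_l|^{1/p'-1/2}$ and the $|B|^{1/2}$ normalisations), which is the extra factor on the right of \eqref{ms2-HD}; the convergence of the geometric sums in $j$ then gives the modified range $-\tfrac{(n_1+n_2)a}{q}+\tfrac{n_1+n_2}{p} < L(1-a) < -\tfrac{(n_1+n_2)a}{q}+\tfrac{n_1+n_2}{p}+(1+a\epsilon)$.

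I expect the main obstacle to be the bookkeeping in the H\"older step: one must choose the auxiliary exponent $\mathfrak{r}$ in \eqref{cond:General-hypo}/\eqref{cond:General-hypo-grad} large enough that, after pairing the weight $d(x,y)^{2\mathfrak{r}}\sim (2^{l}sr^{\theta})^{2\mathfrak{r}}$ against $d(x,y)^{-2\mathfrak{r}}$ and integrating $d(x,y)^{-2\mathfrak{r}\cdot(\text{something})}$ over $\mathcal{A}_l$ with the correct H\"older exponents, the $l$-decay comes out as a clean power $(2^l)^{-L}$ and the $j$-sums genuinely converge from both sides — this is where the two-sided restriction on $L(1-a)$ is pinned down, and it is delicate because the ``low'' and ``high'' frequency estimates have opposite monotonicity in $j$, so the window of admissible $L$ is narrow (width $1+a\epsilon$) and one has to verify that the stated hypotheses land strictly inside it. A secondary technical point is ensuring the mean value curve $\gamma_0$ stays in a region where $|\gamma_0(t)'|\sim|\mathfrak{z}'|$ for \eqref{ms2-HD}, which follows from $r\le 1$, $\theta\le 1$, and $\tfrac{1}{2}|\mathfrak{z}'|\ge 2^{l+1}sr^{\theta}\ge 2sr^{\theta}\ge 2sr\ge C_{1,\varkappa}r$ after adjusting constants. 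Once these are set up, each displayed bound reduces to summing two geometric series, exactly as in the proof of Lemma \ref{lem:unweightedLp-kernel}.
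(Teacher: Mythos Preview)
Your overall architecture is close to the paper's, but two of the load-bearing choices are reversed or incomplete, and either one breaks the argument.

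\textbf{(1) The roles of ``mean value'' and ``triangle inequality'' are swapped, and the split point is wrong.} The gain from Lemma~\ref{lem:general-grushin-Mean-value} is a factor $d(y,\mathfrak{z})\le r$ at the price of replacing $K_j$ by $X_xK_j$, which by \eqref{cond:General-hypo-grad}/\eqref{cond:General-hypo-grad-sup} costs an extra $2^{j/2}$. So the mean-value route is beneficial exactly when $r\,2^{j/2}\le 1$, i.e.\ for \emph{small} $j$ (say $j\le j_0$ with $2^{j_0}\sim r^{-2}$), and the crude triangle-inequality route should be used for \emph{large} $j$. You do the opposite: you apply the mean value when $2^{-j/2}\lesssim 2^lsr^\theta$ (large $j$) and the triangle inequality when $2^{-j/2}\gtrsim 2^lsr^\theta$ (small $j$), with split at $2^{j_0'/2}\sim (2^lsr^\theta)^{-1}$. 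With your choice, the large-$j$ sum coming from the mean-value branch has $j$-th term of size $r(2^lsr^\theta)^{-L}\,2^{j(-Qa/(2q)-L(1-a)/2+Q/(2p)+1/2+a\epsilon/2)}$, which converges only if $L(1-a)>-\tfrac{Qa}{q}+\tfrac{Q}{p}+1+a\epsilon$---precisely the \emph{opposite} of the stated upper bound on $L(1-a)$. So your large-$j$ sum diverges. Symmetrically, your small-$j$ branch (triangle inequality alone) will not produce the required positive power of $r$. The paper fixes this by taking $j_0$ with $2^{j_0}r^2\simeq 1$ (independent of $l$ and $\theta$), using mean value for $j\le j_0$ and the direct kernel bound for $j>j_0$; the two-sided constraint on $L(1-a)$ is then exactly what makes each one-sided geometric series converge to the common value $(2^l)^{-L}r^{L(1-a-\theta)+Qa/q-Q/p-a\epsilon}$.

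\textbf{(2) You cannot pass from the $L^2$ kernel bound to $L^{p'}$ by H\"older alone.} Since $1\le p\le 2$ one has $p'\ge 2$, so H\"older on $\mathcal{A}_l$ goes the wrong way: it controls $\|g\|_{L^2(\mathcal{A}_l)}$ by $\|g\|_{L^{p'}(\mathcal{A}_l)}$, not conversely, and there is no legitimate factor $|\mathcal{A}_l|^{1/p'-1/2}$ with negative exponent. The paper instead interpolates $L^{p'}$ between $L^2$ and $L^\infty$ via the elementary identity $\int_{\mathcal{A}_l}|g|^{p'}\le \big(\sup_{\mathcal{A}_l}|g|\big)^{p'-2}\int_{\mathcal{A}_l}|g|^2$, and this is where the pointwise estimates \eqref{cond:General-hypo-sup} and \eqref{cond:General-hypo-grad-sup} (which you never invoke) enter. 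Those sup-bounds also carry a second ball factor $|B(v,2^{-j/2})|^{-1/2}$, and in the proof of \eqref{ms2-HD} it is the combination of the $L^2$ factor $|B(\gamma_0(t),2^{-j/2})|^{-1}$ and the sup factor $|B(v,2^{-j/2})|^{-1/2}|B(\gamma_0(t),2^{-j/2})|^{-1/2}$, each evaluated as $\sim 2^{j(n_1+n_2)/2}|\mathfrak{z}'|^{-\varkappa n_2}$, that after raising to the appropriate powers yields exactly $|\mathfrak{z}'|^{-\varkappa n_2/p}$. Without the $L^\infty$ ingredient the bookkeeping cannot close.
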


\begin{proof} 
We shall first prove estimate \eqref{ms1-HD} and then estimate \eqref{ms2-HD}. 

\medskip \noindent \textbf{\underline{Proof of estimate \eqref{ms1-HD}}:} Fix $y\in B$, and let $j_0$ be the integer such that $2^{j_0}r^{2}\simeq 1$. Write $\mathcal{A}_{l} := \{v\in \mathbb{R}^{n_1 + n_2} : 2^{l}sr^{\theta} \leq d(v,\mathfrak{z}) \leq  2^{l+1}sr^{\theta}\}$, and denote
\begin{align*}
I_{1} & :=\sum_{j\leq j_{0}} \left(\int_{\mathcal{A}_{l}} | K_j(y,v) - K_j(\mathfrak{z},v)|^{p'} \, dv \right)^{1 / p'},\\
I_{2} & := \sum_{j> j_{0}} \left(\int_{\mathcal{A}_{l}}| K_j(y,v) |^{p'} \, dv \right)^{\frac{1}{p'}}, \quad \text{and} \quad I_{3} := \sum_{j> j_{0}} \left(\int_{\mathcal{A}_{l}}| K_j(\mathfrak{z}, v) |^{p'} \, dv \right)^{\frac{1}{p'}}.
\end{align*}

\medskip \textbf{\underline{Estimate of $I_1$}:} Using mean-value estimate from Lemma \ref{lem:general-grushin-Mean-value} we obtain 
\begin{align*} %\label{Kernel-Mean-value}
|K_{j}(y,v) - K_{j}(\mathfrak{z},v)| & \lesssim r \int_0^1 \left| \left( X_x K_j \right) \left( \gamma_0(t), v \right) \right| dt, 
\end{align*}
with $\gamma_0(t) \in B(\mathfrak{z}, C_{1, \varkappa} \, r)$. Therefore, using \eqref{cond:General-hypo-grad} and \eqref{cond:General-hypo-grad-sup} and subsequently the estimate $|B(y, 2^{-j/2})| \gtrsim 2^{-jQ/2}$ for all $y\in \mathbb{R}^{n_1+n_2}$, we obtain
\begin{align*}
I_1 & \lesssim r \sum_{j\leq j_{0}} \left( \int_{\mathcal{A}_{l}} \left(\int_{0}^{1}|X_x K_j(\gamma_{0}(t), v)| \, dt \right)^{p'} \, dv \right)^{\frac{1}{p'}} \\
& \lesssim r \sum_{j \leq j_{0}}  \int_{0}^{1} \left(\int_{\mathcal{A}_{l}} |X_x K_j(\gamma_{0}(t), v)|^{p'} \, dv \right)^{\frac{1}{p'}} \, dt \\
& \lesssim_{L} r \sum_{j \leq  j_{0}} \int_{0}^{1} \left( \int_{\mathcal{A}_{l}}  \left\{ (2^lsr^{\theta})^{-L} d(\gamma_{0}(t), v)^{L} |X_x K_j(\gamma_{0}(t), v)| \right\}^{p'} \, dv \right)^{\frac{1}{p'}} \, dt \\ 
& \lesssim_L r (2^l sr^{\theta})^{-L} \sum_{j \leq j_{0}} \int_{0}^{1} \left(\sup_{v \in \mathcal{A}_{l}}\left\{ d(\gamma_{0}(t), v)^{L} |X_x K_j(\gamma_{0}(t), v)| \right\}^{p'-2} \right.\\
& \quad \quad \quad \quad \quad \quad \quad \left.\int_{\mathcal{A}_{l}}  \left\{ d(\gamma_{0}(t), v)^{2L} |X_x K_j(\gamma_{0}(t), v)|^{2} \right\} \, dv \right)^{\frac{1}{p'}}\, dt \\ 
& \lesssim_{L,\epsilon} r (2^l sr^{\theta})^{-L} \sum_{j \leq j_{0}} \left( \left\{ 2^{-jQa/2q} 2^{-jL(1-a)/2}  2^{jQ/2} 2^{ja\epsilon/2}  \right\}^{\frac{p'-2}{p'}} \right.\\
& \left. \quad \quad \quad \quad \quad \left\{ 2^{-jQa/q} 2^{-jL(1-a)} 2^{jQ/2}2^{ja\epsilon} \right\} \right)^{\frac{1}{p'}} 2^{\frac{j}{2}(\frac{p'-2}{p'} + \frac{2}{p'})} \\
& \lesssim_{L, \epsilon} r (2^l sr^{\theta})^{-L} \sum_{j\leq  j_{0}} 2^{-\frac{jQa}{2q}}  2^{-\frac{jL(1-a)}{2}} 2^{\frac{jQ}{2}(\frac{p'-2}{p'} + \frac{1}{p'})} 2^{\frac{j}{2}} 2^{\frac{ja\epsilon}{2}} \\
& \lesssim_{L, \epsilon} r (2^l sr^{\theta})^{-L} \sum_{j \leq j_{0}}   2^{-\frac{jQa}{2q} } 2^{-\frac{jL(1-a)}{2}} 2^{\frac{jQ}{2p}} 2^{\frac{j}{2}} 2^{\frac{ja\epsilon}{2}}. 
\end{align*}
Now, if $\frac{L(1-a)}{2} - \left(\frac{-Qa}{2q} + \frac{Q}{2p} \right) - \frac{1 + a \epsilon}{2} < 0$, which is same as $L < \frac{\frac{-Qa}{q} + \frac{Q}{p}}{1-a} + \frac{1 + a\epsilon}{1-a}$, then the above sum is convergent and 
$$ I_1 \lesssim_{L, \epsilon} r (2^l sr^{\theta})^{-L} 2^{j_0 \left\{ \left(\frac{-Qa}{2q} + \frac{Q}{2p} \right) -\frac{L(1-a)}{2} + \frac{1 + a\epsilon}{2} \right\}} \lesssim_{L, s, \theta} (2^{l})^{-L} r^{ L
(1-a-\theta)+\frac{Qa}{q}-\frac{Q}{p}-a\epsilon}.$$

\medskip \textbf{\underline{Estimate of $I_2$}:} Note that 
\begin{align*} %\label{est:for-II}
I_2 & = \sum_{j> j_{0}} \left(\int_{\mathcal{A}_{l}}| K_j(y, v) |^{p'} \, dv \right)^{\frac{1}{p'}} \\ 
\nonumber & \lesssim_{L} \sum_{j > j_{0}} \left(\int_{\mathcal{A}_{l}} \left\{ (2^l sr^{\theta})^{-L} d(y, v)^{L} |K_j(y, v)| \right\}^{p'} \, dv  \right)^{\frac{1}{p'}} \\ 
\nonumber & \lesssim_{L} (2^l sr^{\theta})^{-L} \sum_{j> j_{0}} \left( \int_{\mathcal{A}_{l}} \left\{ d(y, v)^{L} |K_j(y, v)| \right\}^{p'-2} \left\{ d(y, v)^{2L}|K_j(y, v)|^{2} \right\} \, dv \right)^{\frac{1}{p'}}\\
\nonumber & \lesssim_{L} (2^l sr^{\theta})^{-L} \sum_{j > j_{0}} \left( \sup_{v \in \mathbb{R}^{n_1 + n_2}} \left\{ d(y, v)^{L} |K_j(y, v)| \right\}^{p'-2} \int_{\mathcal{A}_{l}}  \left\{ d(y, v)^{2L}|K_j(y, v)|^{2} \right\} \, dv \right)^{\frac{1}{p'}}, 
\end{align*}
and then making use of estimates \eqref{cond:General-hypo} and \eqref{cond:General-hypo-sup} together with $|B(y, 2^{-j/2})| \gtrsim 2^{-jQ/2}$ and $|B(v, 2^{-j/2})| \gtrsim 2^{-jQ/2}$, we get 
\begin{align*}
I_{2} & \lesssim_{L, \epsilon} \sum_{j \geq j_{0}} (2^l sr^{\theta})^{-L} \left( \left\{ 2^{-jQa/2q} 2^{-j L (1-a)/2} 2^{jQ/2} 2^{j a \epsilon/2} \right\}^{p'-2} \left\{ 2^{-jQa/q} 2^{-jL(1-a)} 2^{jQ/2}  2^{ja\epsilon} \right\} \right)^{\frac{1}{p'}} \\
& \lesssim_{L, \epsilon} (2^l sr^{\theta})^{-L} \sum_{j\geq j_{0}} 2^{-jQa/2q} 2^{-jL(1-a)/2} 2^{\frac{jQ}{2} \left( \frac{p'-2}{p'} + \frac{1}{p'} \right)}  2^{ja\epsilon/2}\\ 
& \lesssim_{L, \epsilon} (2^l sr^{\theta})^{-L} \sum_{j\geq j_{0}} 2^{-jQa/2q} 2^{-jL(1-a)/2}2^{jQ/2p} 2^{ja\epsilon/2}. 
\end{align*}

Clearly, the above some converges when $L>\frac{\frac{-Qa}{q} + \frac{Q}{p}}{1-a}$ and $\epsilon>0$ is sufficiently small, and in that case we get
\begin{align*}
I_{2} & \lesssim_{L, \epsilon} (2^l sr^{\theta})^{-L} 2^{-j_{0}Qa/2q} 2^{-j_{0} L(1-a)/2} 2^{\frac{j_{0}Q}{2p}} 2^{j_0a\epsilon/2}\\
& \lesssim_{L, \epsilon} (2^l sr^{\theta})^{-L} r^{\frac{Qa}{q} + L(1-a) - \frac{Q}{p} - a\epsilon} \lesssim_{L, \theta, \epsilon} (2^{l})^{-L} r^{(1-a-\theta)L+\frac{Qa}{q} - \frac{Q}{p}-a\epsilon}.  
\end{align*}

Performing similar calculations, one can show that 
$$I_{3} \lesssim_{L, \theta, \epsilon} (2^{l})^{-L} r^{(1-a-\theta)L+\frac{Qa}{q} - \frac{Q}{p}-a\epsilon},$$
provided that $L>\frac{\frac{-Qa}{q}+\frac{Q}{p}}{1-a}$. 

The above estimates of $I_{1}, I_{2}$, and $I_{3}$ together imply estimate \eqref{ms1-HD}. 

\medskip \noindent \textbf{\underline{Proof of estimate \eqref{ms2-HD}}:} Let us decompose the left hand side of \eqref{ms2-HD} into three parts $I_1, I_2,$ and $I_3$ exactly as we did in the proof of \eqref{ms1-HD}. We shall only show the changes in the estimate of $I_{1}$. Similar arguments can be carried out for $I_{2}$ and $I_{3}$. We here use the fact that if $d(\mathfrak{z}, v)\thicksim 2^{l} sr^{\theta}$ and  $\frac{1}{2}|\mathfrak{z}'| \geq  2^{l+1} sr^{\theta}$ then $|\mathfrak{z}'| \leq 2 |v'|$. Similarly, $|\mathfrak{z}'| \leq 2 |\gamma_0(t)^{'}|$. Then, 
\begin{align*}
I_1 
& \lesssim_{L} r (2^l sr^{\theta})^{-L} \sum_{j\leq j_{0}} \int_{0}^{1}  \sup_{v \in \mathcal{A}_{l}} \left\{ d(\gamma_{0}(t), v)^{L} |X_x K_j(\gamma_{0}(t), v)| \right\}^{\frac{p'-2}{p'}}\\ 
&\quad\quad\quad\quad\quad\quad \left(\int_{\mathcal{A}_{l}}  \left\{ d(\gamma_{0}(t), v)^{2L} |X_x K_j(\gamma_{0}(t), v)|^{2} \right\} \, dv \right)^{\frac{1}{p'}}\, dt \\ 
& \lesssim_{L, \epsilon} r (2^l sr^{\theta})^{-L} \sum_{j \leq j_{0}} \int_{0}^{1} \sup_{v \in \mathcal{A}_{l}}  \left\{  \frac{2^{-jQa/2q} 2^{-jL(1-a)/2} 2^{ja\epsilon/2} 2^{j/2}}{|B(\gamma_{0}(t),2^{-j/2})|^{-1/2} |B(v,2^{-j/2})|^{-1/2}} \right\}^{\frac{p' - 2}{p'}} \\
&\quad \quad\quad\quad\quad\quad\quad\quad \left( 2^{-jQa/q} 2^{-jL(1-a)} |B(\gamma_{0}(t),2^{-j/2})|^{-1} 2^{ja\epsilon}2^{j} \right)^{\frac{1}{p'}}\, dt\\
 & \lesssim_{L, \epsilon} r (2^l sr^{\theta})^{-L} \sum_{j\leq j_{0}} \int_{0}^{1}  \sup_{v \in \mathcal{A}_{l}} \left\{ 2^{-jQa/2q} 2^{-jL(1-a)/2} 2^{j(n_1 + n_2)/2} |\gamma_0(t)^{'}|^{-\frac{\varkappa n_2}{2}} |v'|^{-\frac{\varkappa n_2}{2}} 2^{\frac{j(1+a\epsilon)}{2}}  \right\}^{\frac{p'-2}{p'}}\\
 & \quad\quad\quad\quad\quad\quad\quad\quad \left( 2^{-jQa/q} 2^{-jL(1-a)} 2^{j(n_1 + n_2)/2}  |\gamma_0(t)^{'}|^{-\varkappa n_2} 2^{ja\epsilon} 2^{j} \right)^{\frac{1}{p'}}\, dt\\
& \lesssim_{L, \epsilon} r (2^l sr^{\theta})^{-L} \sum_{j\leq j_{0}} \left\{ 2^{-jQa/2q} 2^{-jL(1-a)/2}  2^{j(n_1 + n_2)/2} 2^{ja\epsilon/2} |\mathfrak{z}'|^{-\varkappa n_2} \right\}^{\frac{p'-2}{p'}} \\
&  \quad\quad\quad\quad\quad\quad\quad \left\{ 2^{-jQa/q} 2^{-jL(1-a)} 2^{j(n_1 + n_2)/2} 2^{ja\epsilon} |\mathfrak{z}'|^{-\varkappa n_2} \right\} ^{\frac{1}{p'}} 2^{\frac{j}{2}(\frac{p'-2}{p'}+\frac{2}{p'})} \\
& \lesssim_{L, \epsilon} r (2^l sr^{\theta})^{-L} \sum_{j\leq  j_{0}} 2^{-jQa/2q}  2^{-\frac{jL(1-a)}{2}} 2^{\frac{j(n_1 + n_2)}{2}(\frac{p'-2}{p'}+\frac{1}{p'})} 2^{\frac{j}{2}}2^{\frac{ja\epsilon}{2}} |\mathfrak{z}'|^{-\varkappa  n_2(\frac{p'-2}{p'}+\frac{1}{p'})} \\
& \lesssim_{L, \epsilon} r (2^l sr^{\theta})^{-L} \sum_{j\leq  j_{0}}   2^{-j(n_1 + n_2)a/2q} 2^{-\frac{jL(1-a)}{2}} 2^{\frac{j(n_1 + n_2)}{2p}} 2^{\frac{j}{2}} 2^{\frac{ja\epsilon}{2}} |\mathfrak{z}'|^{-\frac{\varkappa n_2}{p}}. 
\end{align*}

Now, if we take $L$ such that $L \, (1-a) < -\frac{(n_1 + n_2)a}{q} + \frac{n_1 + n_2}{p} + 1 + a\epsilon$, then the above sum is convergent, and 
\begin{align*}
I_1 & \lesssim r (2^l sr^{\theta})^{-L} 2^{j_0 \left\{ \left( -\frac{(n_1 + n_2)a}{2q}+\frac{n_1 + n_2}{2p} \right) -\frac{L(1-a)}{2} + \frac{1 + a\epsilon}{2} \right\}} |\mathfrak{z}'|^{-\frac{\varkappa n_2}{p}}\\
& \lesssim_{L, \epsilon, \theta} (2^{l})^{-L} r^{\frac{(1-a-\theta)L+(n_1 + n_2)a}{q}-\frac{n_1 + n_2}{p}-a\epsilon}|\mathfrak{z}'|^{-\frac{\varkappa n_2}{p}}.  
\end{align*}

Similarly, in estimating terms $I_2$ and $I_3$, we would require the condition $L \, (1-a) > -\frac{(n_1 + n_2)a}{q}+ \frac{n_1 + n_2}{p}$, and with that one can show that 
$$ I_2, I_3 \lesssim_{L, \epsilon, \theta} (2^{l})^{-L} r^{\frac{(1-a-\theta)L+(n_1 + n_2)a}{q}-\frac{n_1 + n_2}{p}-a\epsilon}|\mathfrak{z}'|^{-\frac{\varkappa n_2}{p}},$$
completing the proof of \eqref{ms2-HD}. 

This completes the proof of Lemma \ref{lem: kernel-difference-p}. 
\end{proof}

%%%%%%%%%%%%%%%%%%%%%%%%%%%%%%%%%%
%%%%%%%%%%%%%%%%%%%%%%%%%%%%%%%%%%
%%%%%%%%%%%%%%%%%%%%%%%%%%%%%%%%%%
%%%%%%%%%%%%%%%%%%%%%%%%%%%%%%%%%%
%%%%%%%%%%%%%%%%%%%%%%%%%%%%%%%%%%
%%%%%%%%%%%%%%%%%%%%%%%%%%%%%%%%%%
%%%%%%%%%%%%%%%%%%%%%%%%%%%%%%%%%%

\subsection{Estimates for the grand maximal truncated operator} \label{subsec: estimates-grand-maximal}
Now we focus on the following pointwise estimates for the grand maximal truncated operator which is among the key ingredients for the sparse domination. 

\begin{lemma} \label{lem:staubach-Intermediate} 
Let $m\in \mathscr{S}^{-Qa/q}_{1-a, \delta}(G_{\varkappa})$ with $\sigma = - Qa/q$ and $1 \leq q < 2$ and take $s=3C_0^2 C_{1,\varkappa}$. Then, for the operator $T=m(x, G_{\varkappa})$, and for any $q<p<\infty$, we have 
$$\mathcal{M}_{T, s}^{\sharp}f(x) \lesssim_{T,s,p} \, \mathcal M_{p}f(x),$$
for every $f\in C_{c}^{\infty}(\mathbb{R}^{n_1 + n_2})$. 
\end{lemma}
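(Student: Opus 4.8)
The plan is to estimate the grand maximal truncated operator $\mathcal{M}^{\sharp}_{T,s}f(x)$ by fixing a point $x$ and a ball $B = B(\mathfrak{z}, r) \ni x$, and bounding the oscillation $|T(f\chi_{(sB)^c})(y) - T(f\chi_{(sB)^c})(z)|$ for $y, z \in B$ in terms of $\mathcal{M}_p f(x)$. Writing $T = \sum_j m_j(x, G_\varkappa)$ with kernels $K_j$, the difference becomes
\begin{align*}
\left| \sum_j \int_{(sB)^c} \bigl( K_j(y, v) - K_j(z, v) \bigr) f(v)\, dv \right|,
\end{align*}
and the natural move is to replace $z$ by the center $\mathfrak{z}$ (up to harmless constants) and decompose $(sB)^c$ into the dyadic annuli $\mathcal{A}_l = \{ v : 2^l s r^\theta \leq d(v, \mathfrak{z}) \leq 2^{l+1} s r^\theta \}$ for a suitable choice of exponent $\theta \in \{0, 1\}$ (or more precisely $\theta$ to be optimized, handling the two regimes $r \geq 1$ trivially since then $T$ is bounded and one uses $s B \supseteq$ a genuine ball, and $r < 1$ via the kernel estimates). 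For each annulus, applying Hölder's inequality with exponents $p, p'$ gives
\begin{align*}
\int_{\mathcal{A}_l} |K_j(y,v) - K_j(\mathfrak{z}, v)|\, |f(v)|\, dv \leq \left( \int_{\mathcal{A}_l} |K_j(y,v) - K_j(\mathfrak{z},v)|^{p'} dv \right)^{1/p'} \left( \int_{\mathcal{A}_l} |f|^p \right)^{1/p}.
\end{align*}

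The key inputs are the kernel difference estimates \eqref{ms1-HD} and \eqref{ms2-HD} from Lemma \ref{lem: kernel-difference-p}, summed over $j$, which control exactly the $\ell^{p'}$-type sums appearing above. The second factor is controlled by $|\mathcal{A}_l|^{1/p} \mathcal{M}_p f(x)$ since $x \in B \subseteq$ a fixed dilate of $B$ and the annulus $\mathcal{A}_l$ sits inside a ball of radius comparable to $2^l s r^\theta$ centered near $x$; here one uses the doubling property so that $|\mathcal{A}_l| \lesssim |B(\mathfrak{z}, 2^{l+1} s r^\theta)|$, and $\mathcal{M}_p f(x) \geq \left( \frac{1}{|B(\mathfrak{z}, 2^{l+1}sr^\theta)|} \int_{B(\mathfrak{z}, 2^{l+1} s r^\theta)} |f|^p \right)^{1/p}$. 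One must separate the two cases according to whether $\frac{1}{2}|\mathfrak{z}'| \geq 2^{l+1} s r^\theta$ or not — in the first case the volume $|B(\mathfrak{z}, 2^{l+1}sr^\theta)| \sim (2^{l+1} s r^\theta)^{n_1+n_2} |\mathfrak{z}'|^{\varkappa n_2}$ and estimate \eqref{ms2-HD} (which carries the compensating factor $|\mathfrak{z}'|^{-\varkappa n_2/p}$) is used; in the second case the volume is $\sim (2^{l+1}sr^\theta)^Q$ and \eqref{ms1-HD} applies. In both cases, choosing $L$ in the admissible range dictated by Lemma \ref{lem: kernel-difference-p} (which is nonempty precisely because $q < p$, so that $-Qa/q + Q/p < 0 <$ the upper threshold, allowing an $L$ with $L(1-a)$ in the prescribed window, and picking $\theta$ to kill the residual power of $r$), the product of the kernel bound $\lesssim (2^l)^{-L} r^{L(1-a-\theta) + Qa/q - Q/p - a\epsilon}$ with $|\mathcal{A}_l|^{1/p} \lesssim (2^l s r^\theta)^{Q/p}$ (resp. the mixed-homogeneity version) yields a bound $\lesssim (2^l)^{-L + Q/p}$ times a nonpositive (in fact zero, after optimizing $\theta$) power of $r$, times $\mathcal{M}_p f(x)$; requiring $L > Q/p$ makes the sum over $l$ converge.

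Summing the geometric series in $l$ and recombining the two regimes then gives $\mathcal{M}^{\sharp}_{T,s}f(x) \lesssim \mathcal{M}_p f(x)$, uniformly over balls $B \ni x$, which is the claim. I expect the main obstacle to be the bookkeeping of exponents: one must verify that the window for $L$ in Lemma \ref{lem: kernel-difference-p} is compatible with the convergence requirement $L > Q/p$ (and its lower-dimensional analogue $L > (n_1+n_2)/p$), that $\theta$ can be chosen to make the power of $r$ exactly zero (so there is no blow-up as $r \to 0$), and that the small parameter $\epsilon$ can be absorbed — all of which hinge on the strict inequality $q < p$ and on $0 \le \delta \le 1-a$. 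A secondary technical point is the reduction from arbitrary $y, z \in B$ to the pair $(y, \mathfrak{z})$ and the verification that $(sB)^c$ is contained in $\bigcup_{l \geq l_0} \mathcal{A}_l$ with $2^{l_0} s r^\theta \sim s r$ for the correct threshold, together with handling the easy large-radius case $r \gtrsim 1$ separately using the $L^p$-boundedness of $T$ from Theorem \ref{thm:unweighted-Lq-intermediate-class} (and Theorem \ref{thm:unweightedLp:pseudo} for $q = 1$).
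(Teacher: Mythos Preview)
Your strategy for the main case $r < 1$ matches the paper: decompose into dyadic annuli around $\mathfrak{z}$, apply H\"older with exponents $(p,p')$, invoke Lemma~\ref{lem: kernel-difference-p}, and split according to whether $2^{l+1}sr \gtrless \tfrac{1}{2}|\mathfrak{z}'|$, using \eqref{ms1-HD} or \eqref{ms2-HD} respectively. Two corrections are needed. First, there is no ``optimization'' of $\theta$: the paper simply takes $\theta = 1$ so the annuli begin at the boundary of $sB$, and the residual power of $r_0$ is not killed but made nonnegative by also requiring $L < Q/q$ (resp.\ $L < (n_1+n_2)/q$), so that $r_0^{-a(L-Q/q+\epsilon)}$ is bounded for $r_0<1$. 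Combined with the convergence condition $L > Q/p$ and the window from Lemma~\ref{lem: kernel-difference-p}, this forces $p$ to lie just above $q$ (one needs $1 + aQ(1/p - 1/q) > 0$, and analogously with $n_1+n_2$); the full range $q < p < \infty$ then follows from the trivial monotonicity $\mathcal{M}_{p_0}f \leq \mathcal{M}_p f$ for $p \geq p_0$, which you should make explicit.

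Second, and more seriously, your handling of $r \geq 1$ has a genuine gap. The $L^p$-boundedness of $T$ gives no pointwise control on the oscillation $|T(f\chi_{(sB)^c})(y)-T(f\chi_{(sB)^c})(z)|$, so the appeal to Theorems~\ref{thm:unweightedLp:pseudo} and~\ref{thm:unweighted-Lq-intermediate-class} here is not justified. The paper instead bounds $|K_j(y,v)-K_j(z,v)| \leq |K_j(y,v)| + |K_j(z,v)|$ and applies the pointwise weighted kernel estimate \eqref{cond:General-hypo-sup} directly, with weight exponent $\mathfrak{r} = Q+\tfrac12+\epsilon_1$ when $2^{l+1}sr \geq \tfrac12|\mathfrak{z}'|$ (and $\mathfrak{r} = n_1+n_2+\tfrac12+\epsilon_2$ otherwise), where $\epsilon_1, \epsilon_2$ are chosen depending on $a,q$ so that the decay in $j$ survives. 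This yields a bound $\lesssim 2^{-j(1-a)/8}\,2^{-l/2}\,\mathcal{M}f(x)$ on each annular piece, which sums in $j$ and $l$.
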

\begin{proof}
Let us fix some $x\in \mathbb{R}^{n_1 + n_2}$ and a ball $B=B(\mathfrak{z}, r)$ containing $x$. Let $y, z\in B$. Denote $r_{0}=sr$. For each $l \in \mathbb{N}$, we consider $\mathcal{A}_{l} = \left\{v: 2^l r_0 \leq d(\mathfrak{z}, v) \leq  2^{l+1} r_0 \right\}.$ 

Now, we have 
\begin{align} \label{est:main-summation1-lem:staubach-Intermediate}
&|T(f\chi_{\mathbb{R}^{n_1 + n_2} \setminus B(\mathfrak{z}, r_{0})})(y) - T(f\chi_{\mathbb{R}^{n_1 + n_2} \setminus B(\mathfrak{z}, r_{0})})(z)|\\
\nonumber & \quad \leq\sum_{j=0}^{\infty} \int_{\mathbb{R}^{n_1 + n_2}\setminus B(\mathfrak{z}, r_{0})}|K_j(y,v)-K_j(z,v)||f(v)| \, dv \\ 
\nonumber & \quad \lesssim \sum_{j=0}^{\infty} \sum_{l = 0}^{\infty}\int_{\mathcal{A}_{l}}|K_j(y, v) - K_j(z, v)||f(v) |\, dv
\end{align}

We need to consider the following two cases for $r_0$. 

\medskip \noindent \textbf{\underline{Case 1 $(r_{0} \geq 1)$:}} First of all, we have 
\begin{align*}
\int_{\mathcal{A}_{l}}|K_j(y,v )-K_j(z,v)||f (v)|\, dv \leq \int_{\mathcal{A}_{l}}|K_j(y, v)||f(v)| \, dv + \int_{\mathcal{A}_{l}}|K_j(z, v)| |f(v)|\, dv. 
\end{align*}
Estimation of the above two terms is similar, so we only pursue the first one. 

As also seen in the previous section, we perform separate analysis in two regions for $l$ as follows. 

First, let $l$ be such that $ 2^{l+1}r_0 \geq \frac{1}{2} |\mathfrak{z}'| $, then using \eqref{cond:General-hypo-sup} we proceed as follows. Choose  $\epsilon_1 = \frac{aQ}{1-a}(1-\frac{1}{q})$. Then, 
\begin{align*}
& \int_{\mathcal{A}_{l}}|K_j(y, v)| |f(v)| \, dv \\
& = \int_{\mathcal {A}_{l}} d({y}, v)^{-(Q + \frac{1}{2} + \epsilon_1)}d({y}, v)^{Q + \frac{1}{2} + \epsilon_1 }|K_j({y}, v)| |f(v)|\, dv \\
& \lesssim_{\epsilon} \int_{\mathcal{A}_{l}} (2^{l} r_0)^{-(Q+ \frac{1}{2} + \epsilon_1)} 2^{-jQa/2q} 2^{-j(\frac{Q}{2}+\frac{1}{4} + \frac{\epsilon_1}{2})(1-a)} 2^{ja\epsilon /2} \frac{1}{|B(x, 2^{-j/2})|^{1/2}}\frac{|f(v)|}{|B(y, 2^{-j/2})|^{1/2}}\ dv\\
& \lesssim_{\epsilon} 2^{-j(1-a)/4} (2^{l} r_0)^{-(Q + \frac{1}{2} + \epsilon_1 )} 2^{aj\epsilon/2}  |B(\mathfrak{z},2^{l} r_0)| \mathcal{M}f(x)\\
& \lesssim_{s,\epsilon} 2^{-j(1-a)/4} 2^{ja\epsilon /2} \frac{(2^{l}  r_0)^{Q}}{(2^{l} r_0)^{(Q + \frac{1}{2} + \epsilon_1)}} \\ 
& \lesssim_{s,\epsilon} 2^{-j(1-a)/8} \frac{1}{2^{l/2}} \mathcal{M}f(x),
\end{align*}
where in the second last inequality we have chosen $\epsilon>0$ such that $\epsilon < \frac{1-a}{4a}$ and the last inequality follows from the fact that $r_{0}\geq 1$. 

Next, if $l$ is such that $ 2^{l+1}r_0 < \frac{1}{2}|\mathfrak{z}'|$, then we proceed as follows. Using \eqref{cond:General-hypo-sup} with $\mathfrak{r}=n_1 + n_2+\frac{1}{2}+\epsilon_{2}$ where $\epsilon_2 = \frac{a(n_1 + n_2)}{1-a}(1-\frac{1}{q}) $, we obtain 
\begin{align*}
 \int_{\mathcal{A}_{l}}|K_j(y, v)| |f(v)| & \, dv \lesssim_{s, \epsilon} 2^{-j(1-a)/4} 2^{ja\epsilon /2} \frac{(2^{l} r_0)^{n_1 + n_2}}{(2^{l} r_0)^{(n_1 + n_2 + \frac{1}{2} + \epsilon_2)}}|\mathfrak{z}'|^{\varkappa n_{2}} |\mathfrak{z}'|^{-\varkappa n_2} \mathcal{M}f(x) \\
 & \lesssim_{s,\epsilon} 2^{-j(1-a)/8} \frac{1}{2^{l/2}} \mathcal{M}f(x),
\end{align*}
provided that $0 < \epsilon < \frac{1-a}{4a}$. 

Putting the above two estimates in \eqref{est:main-summation1-lem:staubach-Intermediate}, we obtain 
\begin{align*}
|T(f \chi_{\mathbb{R}^{n_1 + n_2} \setminus B(\mathfrak{z}, r_{0})}(y) - T(f \chi_{\mathbb{R}^{n_1 + n_2} \setminus B(\mathfrak{z}, r_{0})}(z)| & \lesssim_{s,\epsilon} \sum_{j = 0}^\infty \sum_{l = 0}^\infty 2^{-j(1-a)/8} \frac{1}{2^{l/2}} \mathcal{M}f(x) \\ 
& \lesssim_{s,\epsilon} \mathcal{M}f(x),
\end{align*}
completing the proof of Lemma \ref{lem:staubach-Intermediate} in the case of $r_{0} \geq 1$. 

\medskip \noindent \textbf{\underline{Case 2 $(r_{0} < 1)$:}} Again, we consider two regions of $l$. 

To start with, let $l_{0} \geq 0$ be such that $\frac{1}{2}|\mathfrak{z}'|\leq 2^{l+1} r_{0}$ for all $l > l_{0}$. Then, 
\begin{align*}
|T(f\chi_{\mathbb{R}^{n_1 + n_2} \setminus B(\mathfrak{z}, r_{0})})(y) - T(f\chi_{\mathbb{R}^{n_1 + n_2}  \setminus B(\mathfrak{z}, r_{0})})(z)| \leq \mathcal{J}_{1} + \mathcal{J}_{2},
\end{align*}
where,
\begin{align*}
& \mathcal{J}_{1}:= \sum_{l > l_0} \sum_{j=0}^{\infty} \int_{\mathcal{A}_{l}}|K_j(y, v)-K_j(z, v)||f(v)|\, dv,\\
& \mathcal{J}_{2}:= \sum_{l = 0}^{l_{0}}\sum_{j=0}^{\infty} \int_{\mathcal{A}_{l}}|K_j(y, v)-K_j(z, v)||f(v)|\, dv.
\end{align*}

\medskip \underline{Estimate for $\mathcal{J}_{1}$:}
For $l> l_{0}$, we proceed as follows. 

Using \eqref{ms1-HD} with $L>0$ such that $-\frac{Qa}{q} + \frac{Q}{p} < L \, (1-a) < -\frac{Qa}{q} + \frac{Q}{p} + 1 + a\epsilon$, we obtain 
\begin{align*}
& \sum_{j=0}^{\infty}\int_{\mathcal{A}_{l}}|K_j(y, v)-K_j(z, v)||f(v)|\, dv\\
& \lesssim \sum_{j=0}^{\infty}\left(\int_{\mathcal{A}_{l}}|K_j (y, v)-K_j(z, v)|^{p'}\, dv \right)^{1/p'} \left(\int_{\mathcal{A}_{l}} |f(v)|^{p} \, dv \right)^{1/p}\\
& \lesssim_{\epsilon, p} (2^{l})^{- L} r_{0}^{-La+\frac{Qa}{q}-\frac{Q}{p}-a\epsilon}|B(\mathfrak{z}, 2^{l} r_{0})|^{1/p} \mathcal{M}_{p}f(x)\\
& \lesssim_{\epsilon, p} r_{0}^{-a(L-\frac{Q}{q}+\epsilon)} 2^{-l(L-\frac{Q}{p})}\mathcal{M}_{p}f(x),
\end{align*}
and therefore 
\begin{align} \label{est1:lem:staubach-Intermediate-r0<1}
\mathcal{J}_{1}\lesssim_{\epsilon, p} \mathcal{M}_{p}f(x) \sum_{l\geq l_{0}} r_{0}^{-a(L - \frac{Q}{q} + \epsilon)} 2^{-l(L-\frac{Q}{p})}.
\end{align}

It is straightforward to see that the infinite sum over $l > l_0$ in \eqref{est1:lem:staubach-Intermediate-r0<1} converges and will be bounded by constant independent of $r_{0}$ provided $\frac{Q}{p} < L < \frac{Q}{q}$. Actually, we need $L<\frac{Q}{q}-\epsilon$, but that can also be ensured by assuming $ L < \frac{Q}{q}$ and choosing $\epsilon$ suitably small. Altogether, we have the following conditions on $L$: 
\begin{align*} 
\max \left\{ \frac{Q}{p}, \frac{-\frac{Qa}{q} + \frac{Q}{p}}{1-a} \right\} < L < \min \left\{ \frac{Q}{q}, \frac{-\frac{Qa}{q} + \frac{Q}{p}}{1-a} + \frac{1}{1-a} \right\}. 
\end{align*}

But, since $p > q$, we already have $\frac{Q}{p} < \frac{Q}{q}$ and $\frac{\frac{-Qa}{q} + \frac{Q}{p}}{1-a} \leq \frac{Q}{p}$, and therefore the only condition to be verified is the following:
\begin{align*}
\frac{Q}{p} < \frac{\frac{-Qa}{q} + \frac{Q}{p}}{1-a} + \frac{1}{1-a} \iff 1 + a Q \left( \frac{1}{p} - \frac{1}{q} \right) > 0,
\end{align*} 
which is possible by choosing $p > q$ sufficiently close to $q$.

\medskip {\underline{Estimate for $\mathcal{J}_{2}$}:}
By definition of $l_0$, we have that $\frac{1}{2}|\mathfrak{z}'|\geq 2^{l + 1} r_{0}$ for all $l \leq l_{0}$. This time, we use \eqref{ms2-HD} with $L>0$ such that $-\frac{(n_{1}+n_{2})a}{q} + \frac{n_1 + n_2}{p} < L \, (1-a) < -\frac{(n_{1}+n_{2})a}{q} + \frac{n_{1}+ n_{2}}{p} + 1 + a\epsilon$, to get 
\begin{align*} 
& \sum_{j=0}^{\infty}\int_{\mathcal{A}_{l}}|K_j(y, v)-K_j(z, v)||f(v)|\, dv\\
&\lesssim \sum_{j=0}^{\infty}\left(\int_{\mathcal{A}_{l}}|K_j (y, v)-K_j(z, v)|^{p'}\, dv \right)^{1/p'} \left(\int_{\mathcal{A}_{l}} |f(v)|^{p} \, dv \right)^{1/p}\\
& \lesssim_{\epsilon, p} (2^{l})^{-L} r^{-La + \frac{(n_{1}+n_{2})a}{q} - \frac{n+1}{p}-a\epsilon} |\mathfrak{z}'|^{-\varkappa n_2/p}|B(\mathfrak{z}, 2^{l} r_{0})|^{1/p} \mathcal{M}_{p}f(x)\\
& \lesssim_{\epsilon, p} (2^{l})^{-L} r^{-La + \frac{(n_{1}+n_{2})a}{q} - \frac{n_1 + n_2}{p}-a\epsilon} |\mathfrak{z}'|^{-\varkappa n_2/p} 
(2^l r_0)^{\frac{n_1 + n_2}{p}} |\mathfrak{z}'|^{\varkappa n_2/p} |B(\mathfrak{z}, 2^{l} r_{0})|^{1/p} \mathcal{M}_{p}f(x)\\
& \lesssim_{\epsilon, p} r_{0}^{-a(L-\frac{(n_{1}+n_{2})}{q}+\epsilon)} 2^{-l(L-\frac{n_1 + n_2}{p})}\mathcal{M}_{p}f(x), 
\end{align*}
and therefore 
\begin{align} \label{est2:lem:staubach-Intermediate-r0<1}
\mathcal{J}_{2} \lesssim_{\epsilon, p} \mathcal{M}_{p}f(x) \sum_{l \leq l_{0}} r_{0}^{-a(L-\frac{(n_{1}+n_{2})}{q}+\epsilon)} 2^{-l(L-\frac{n_1 + n_2}{p})}.
\end{align}

The sum over $l \leq l_0$ in \eqref{est2:lem:staubach-Intermediate-r0<1} will be bounded by constant independent of $r_{0}$ provided $\frac{n_1 + n_2}{p} < L < \frac{n_{1}+n_{2}}{q}.$ Combining the conditions on $L$ we need to ensure the following
 \begin{align*}
\max \left\{ \frac{n_1 + n_2}{p}, \frac{-\frac{(n_{1}+n_{2})a}{q} + \frac{n_1 + n_2}{p}}{1 - a} \right\} < L < \min \left\{ \frac{n_1 + n_2}{q}, \frac{-\frac{(n_{1}+n_{2})a}{q} + \frac{n_1 + n_2}{p}}{1 - a}+\frac{1}{1 - a} \right\}. 
\end{align*}
As earlier, one notices that the two conditions are simultaneously satisfied if  
$$1 + a(n_1 +n_2) \left( \frac{1}{p} - \frac{1}{q}\right) > 0,$$
which is possible by choosing $p > q$ sufficiently close to $q$.

This completes the proof of the Lemma \ref{lem:staubach-Intermediate}.
\end{proof}

%%%%%%%%%%%%%%%%%%%%%%%%%%%%%%%%%%
%%%%%%%%%%%%%%%%%%%%%%%%%%%%%%%%%%
%%%%%%%%%%%%%%%%%%%%%%%%%%%%%%%%%%
%%%%%%%%%%%%%%%%%%%%%%%%%%%%%%%%%%
%%%%%%%%%%%%%%%%%%%%%%%%%%%%%%%%%%
%%%%%%%%%%%%%%%%%%%%%%%%%%%%%%%%%%
%%%%%%%%%%%%%%%%%%%%%%%%%%%%%%%%%%

\subsection{Proof of Theorem \ref{thm:Main-Sparse-Result}} \label{subsec:Proof-thm:Main-Sparse-Result}

We are now in a position to prove Theorem \ref{thm:Main-Sparse-Result}. 

\begin{proof}[Proof of Theorem \ref{thm:Main-Sparse-Result}] 
We divide the proof into two parts. 

Let us first prove it for the operator $T = m(x, G_{\varkappa})$ with $m \in \mathscr{S}^{-Qa}_{1 - a, \delta}(G_{\varkappa})$. It follows from Theorem \ref{thm:unweightedLp:pseudo} that $T$ is bounded from $L^p(\mathbb{R}^{n_1 + n_2})$ to itself for $1<p<2$. Also, we obtain from Lemma \ref{lem:staubach-Intermediate} that 
$$ \mathcal{M}_{T, s}^{\sharp}f(x) \lesssim_{T,p} \, \mathcal M_{p}f(x),$$
for every for every $1 < p < \infty$, $f \in C_{c}^{\infty}(\mathbb{R}^{n_1 + n_2})$, and $s=3C_{0}^2 C_{1, \varkappa}$. 

The above inequality implies that $\mathcal{M}_{T, s}^{\sharp}$ is weak-type $(p, p)$ for all $1<p<\infty$. Therefore, the sparse domination principle (Theorem 1.1 in \cite{Lorist-pointwaise-sparse2021}) is applicable, ensuring that for each $1<r<\infty$ and for each $f\in C_{c}^{\infty}(\mathbb{R}^{n_1 + n_2})$, there exist a sparse family $S$ such that 
\begin{align*}
|Tf(x)| \lesssim \mathcal{A}_{r, S}f(x)
\end{align*}
for almost every $x\in \mathbb{R}^{n_1 + n_2}$.

The proof of the sparse domination result for the operator $T = m(x, G_{\varkappa})$ with $m \in \mathscr{S}^{-Qa/q}_{1 - a, \delta}(G_{\varkappa})$ and $1 < q < 2$, is similar to that of the previous case. Observe that Theorem \ref{thm:unweighted-Lq-intermediate-class} implies that $T$ is bounded from $L^q(\mathbb{R}^{n_1 + n_2})$ to itself and an application of Lemma~\ref{lem:staubach-Intermediate} implies that  $\mathcal{M}_{T, s}^{\sharp}$ is weak-type $(p, p)$ for all $q<p<\infty$. Hence, the proof follows from Theorem 1.1 of \cite{Lorist-pointwaise-sparse2021}.
\end{proof}

\begin{remark} \label{rem:about-chanillo-type-class}
As mentioned in Section \ref{subsec:about-chanillo-type-class}, our techniques of establishing sparse domination do not extend to classes $ \mathscr{S}^{-Qa/2}_{1-a, \delta}(G_{\varkappa})$. This is because even if one proves refined weighted Plancherel estimates (of Section \ref{sec:Kernel-estimates}) with the removal of the extra $\epsilon > 0$ from the order of differentiability in the Sobolev norm of symbol functions, we note that with kernel estimates of Lemma \ref{lem: kernel-difference-p} we can not work in the case of $ \mathscr{S}^{-Qa/q}_{1-a, \delta}(G_{\varkappa})$ when $q = 2$. For example, to get the convergence of the infinite sum over $l > l_0$ in \eqref{est1:lem:staubach-Intermediate-r0<1} (or the sum over $l \leq l_0$ in \eqref{est2:lem:staubach-Intermediate-r0<1}), we need to have $p > q$. But, estimates \eqref{ms1-HD} and \eqref{ms2-HD} of Lemma \ref{lem: kernel-difference-p} are valid only for $p \leq 2$. So, with these estimates at hand, we are forced to restrict ourselves to the analysis of classes $ \mathscr{S}^{-Qa/q}_{1-a, \delta}(G_{\varkappa})$ with $q < 2$.
\end{remark}

%%%%%%%%%%%%%%%%%%%%%%%%%%%%%%%%%%%%%%%%%%%%%%%%%%%%%%
%%%%%%%%%%%%%%%%%%%%%%%%%%%%%%%%%%%%%%%%%%%%%%%%%%%%%%
%%%%%%%%%%%%%%%%%%%%%%%%%%%%%%%%%%%%%%%%%%%%%%%%%%%%%%
%%%%%%%%%%%%%%%%%%%%%%%%%%%%%%%%%%%%%%%%%%%%%%%%%%%%%%
%%%%%%%%%%%%%%%%%%%%%%%%%%%%%%%%%%%%%%%%%%%%%%%%%%%%%%
%%%%%%%%%%%%%%%%%%%%%%%%%%%%%%%%%%%%%%%%%%%%%%%%%%%%%%
%%%%%%%%%%%%%%%%%%%%%%%%%%%%%%%%%%%%%%%%%%%%%%%%%%%%%%
%%%%%%%%%%%%%%%%%%%%%%%%%%%%%%%%%%%%%%%%%%%%%%%%%%%%%%
%%%%%%%%%%%%%%%%%%%%%%%%%%%%%%%%%%%%%%%%%%%%%%%%%%%%%%
%%%%%%%%%%%%%%%%%%%%%%%%%%%%%%%%%%%%%%%%%%%%%%%%%%%%%%

\section{Weighted boundedness result for \texorpdfstring{$\mathscr{S}^{-Qa/2}_{1-a, \delta}(\boldsymbol{L}, \boldsymbol{U})$}{} } \label{sec:joint-funct-calculus-weighted-boundedness}

In this section, we shall study weighted boundedness for symbol classes $\mathscr{S}^{-Qa/2}_{1-a, \delta}(\boldsymbol{L}, \boldsymbol{U})$ with the help of the Fefferman--Stein sharp maximal function, which is defined as follows. For a locally integrable function $f$, 
$$\mathcal{M}^{\sharp}f(x):=\sup_{x\in B}\frac{1}{|B|}\int_{B}|f-f_{B}|,$$
where $f_{B}:=\frac{1}{|B|}\int_{B}f$ denotes the average of $f$ over the ball $B$.  

We begin with stating the following kernel estimates which will be crucial for the proof of Theorem \ref{thm:joint-fun-Channilo-type}. The proof follows arguing exactly as in the proof of Lemma \ref{lem: kernel-difference-p}, and we leave those details. As earlier, take $s=3C_0^2 C_{1,\varkappa}$. 

\begin{lemma} \label{lem:kernel-estimate-channilo-type-joint-funct}
Let $m \in \mathscr{S}^{-Qa/2}_{1-a, \delta}(\boldsymbol{L}, \boldsymbol{U}) $ be such that it satisfies condition \eqref{def:grushin-symb-vanishing-0-condition} for all $|\beta| \leq \lfloor \frac{Q}{2} + \frac{1}{1-a} \rfloor + 1$. Let $B=B(\mathfrak{z}, r)$ be any ball with $0< r \leq 1$. Then for any ${y\in B}$,
\begin{align} \label{lem:kernel-estimate-difference-1} 
\sum_{j = 0}^{\infty} \left(\int_{2^{l} s r^{1-a} \leq d(v, \mathfrak{z}) \leq  2^{l+1} s r^{1-a} }| K_j (y,v)-K_j(\mathfrak{z},v)|^{2} \, dv \right)^{1/2} \lesssim_{L} ( 2^l r^{1-a})^{-L} d(y, \mathfrak{z})^{(1-a) (L - \frac{Q}{2})}, 
\end{align} 
whenever \, $\frac{Q}{2} < L < \frac{Q}{2} + \frac{1}{(1-a)}$, and  
\begin{align} \label{lem:kernel-estimate-difference-2} 
& \sum_{j = 0}^{\infty} \left(\int_{ 2^{l} s r^{1-a} \leq d(v,\mathfrak{z}) \leq  2^{l+1} s r^{1-a} } | K_j (y,v)-K_j(\mathfrak{z}, v)|^{p'} \, dv  \right)^{1/p'}  \\
\nonumber & \quad \lesssim_{L} (2^{l}r^{1-a})^{-L} d(y, \mathfrak{z})^{(1-a) (L-\frac{(n_1 + n_2)}{2})} |\mathfrak{z}'|^{-\varkappa n_{2}/2},
\end{align}
whenever \, $\frac{n_1 + n_{2}}{2} < L < \frac{n_1 + n_{2}}{2} + \frac{1}{(1-a)} $ and $\frac{1}{2}|\mathfrak{z}'| \geq 2^{l}sr^{1-a}$.
\end{lemma}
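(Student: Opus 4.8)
The statement is the $(\boldsymbol{L},\boldsymbol{U})$-analogue of Lemma~\ref{lem: kernel-difference-p}, specialised to $q=2$ (so $\sigma=-Qa/2$) and with the radius $r$ replaced throughout by $r^{1-a}$, which is exactly the scale at which the off-diagonal kernel estimates are used in the Fefferman--Stein sharp maximal function argument. The plan is to run the same decomposition-and-summation scheme as in the proof of Lemma~\ref{lem: kernel-difference-p}, but feeding in the cancellation-aware weighted Plancherel estimates for the joint functional calculus, namely \eqref{cond:joint-functional-kernel-hypo} and \eqref{cond:joint-functional-kernel-hypo-grad} from Corollary~\ref{cor:joint-funct-wighted-plancherel-esti} (valid for $0\le\mathfrak r\le r_0$ once the cancellation condition \eqref{def:grushin-symb-vanishing-0-condition} holds up to order $r_0$), in place of \eqref{cond:General-hypo}--\eqref{cond:General-hypo-y-grad-sup}. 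The crucial bookkeeping point is that since we assume the cancellation condition only up to $|\beta|\le \lfloor\frac{Q}{2}+\frac{1}{1-a}\rfloor+1$, we set $r_0=\lfloor\frac{Q}{2}+\frac{1}{1-a}\rfloor+1$ and must keep the exponent $L$ in the range $\frac{Q}{2}<L<\frac{Q}{2}+\frac{1}{1-a}$ (respectively $\frac{n_1+n_2}{2}<L<\frac{n_1+n_2}{2}+\frac{1}{1-a}$), so that $L\le r_0$ and the estimates of Corollary~\ref{cor:joint-funct-wighted-plancherel-esti} apply with $\mathfrak r=L$; this is precisely why $r_0$ is chosen as in the hypothesis.

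First I would fix $y\in B=B(\mathfrak z,r)$, let $j_0$ be the integer with $2^{j_0}r^2\simeq 1$, write $\mathcal A_l=\{v: 2^l s r^{1-a}\le d(v,\mathfrak z)\le 2^{l+1}sr^{1-a}\}$, and split the left-hand side as $I_1+I_2+I_3$ with $I_1$ the sum over $j\le j_0$ of the $L^{p'}(\mathcal A_l)$-norm of the \emph{difference} $K_j(y,\cdot)-K_j(\mathfrak z,\cdot)$, and $I_2,I_3$ the sums over $j>j_0$ of $\|K_j(y,\cdot)\|_{L^{p'}(\mathcal A_l)}$ and $\|K_j(\mathfrak z,\cdot)\|_{L^{p'}(\mathcal A_l)}$ respectively. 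For $I_1$ I would apply the mean-value inequality (Lemma~\ref{lem:general-grushin-Mean-value}) to gain a factor $d(\mathfrak z,y)\lesssim r$ and replace the difference by $r\int_0^1|X_x K_j(\gamma_0(t),v)|\,dt$ with $\gamma_0(t)\in B(\mathfrak z,C_{1,\varkappa}r)$; then insert the weight $d(\gamma_0(t),v)^L\sim (2^lsr^{1-a})^L$ on $\mathcal A_l$, interpolate the $L^{p'}$-norm between the pointwise ($L^\infty$) bound and the $L^2$ bound — i.e. write $\|g\|_{p'}^{p'}\le \|g\|_\infty^{p'-2}\|g\|_2^2$ — and use \eqref{cond:joint-functional-kernel-hypo-grad} for the $L^2$ piece together with the corresponding $L^\infty$ estimate and $|B(\cdot,2^{-j/2})|\gtrsim 2^{-jQ/2}$ for the sup piece. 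For the statement \eqref{lem:kernel-estimate-difference-1} one works globally in $v$, while for \eqref{lem:kernel-estimate-difference-2}, exactly as in Lemma~\ref{lem: kernel-difference-p}, one additionally uses that $\frac12|\mathfrak z'|\ge 2^l sr^{1-a}$ forces $|\mathfrak z'|\le 2|v'|$ and $|\mathfrak z'|\le 2|\gamma_0(t)'|$, so that $|B(v,2^{-j/2})|\gtrsim 2^{-j(n_1+n_2)/2}|\mathfrak z'|^{\varkappa n_2}$ can be used to convert $Q$ into $n_1+n_2$ and extract the factor $|\mathfrak z'|^{-\varkappa n_2/p}$ (here $p=2$ for \eqref{lem:kernel-estimate-difference-1} and $p\le 2$ general for \eqref{lem:kernel-estimate-difference-2}). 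Summing the resulting geometric series in $j$: the $j\le j_0$ sum in $I_1$ converges precisely when $L<\frac{Q}{2}+\frac{1}{1-a}$ (resp.\ $<\frac{n_1+n_2}{2}+\frac{1}{1-a}$) and is dominated by its top term at $j=j_0$, which upon substituting $2^{j_0}\simeq r^{-2}$ produces the claimed powers $(2^l r^{1-a})^{-L}d(y,\mathfrak z)^{(1-a)(L-Q/2)}$ — note $d(y,\mathfrak z)$ appears rather than $r$ only because we keep track of the mean-value factor as $d(y,\mathfrak z)$ instead of bounding it by $r$; for $I_2,I_3$ the $j>j_0$ sum converges when $L>\frac{Q}{2}$ (resp.\ $>\frac{n_1+n_2}{2}$) and is again controlled by its endpoint at $j=j_0$, giving the same bound.

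The estimates for $I_2$ and $I_3$ are handled identically to $I_1$ but without the mean-value step: one inserts $d(y,v)^L$ (resp.\ $d(\mathfrak z,v)^L$), uses the same $L^\infty$--$L^2$ interpolation with \eqref{cond:joint-functional-kernel-hypo} and its $L^\infty$ counterpart, and sums over $j>j_0$. Since in \eqref{cond:joint-functional-kernel-hypo}--\eqref{cond:joint-functional-kernel-hypo-grad} there is \emph{no} extra $2^{ja\epsilon}$ loss (this is the whole point of assuming the cancellation condition \eqref{def:grushin-symb-vanishing-0-condition}), the exponent arithmetic is cleaner than in Lemma~\ref{lem: kernel-difference-p} and in particular no $\epsilon>0$ appears in the final exponents — which matches the statement. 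I expect the main obstacle to be purely administrative rather than conceptual: carefully tracking the two competing constraints on $L$ (the lower bound from convergence of the $j>j_0$ tail and the upper bound from convergence of the $j\le j_0$ head, which meet in the nonempty window $(\frac{Q}{2},\frac{Q}{2}+\frac{1}{1-a})$), verifying that $L\le r_0=\lfloor\frac{Q}{2}+\frac{1}{1-a}\rfloor+1$ so that Corollary~\ref{cor:joint-funct-wighted-plancherel-esti} is legitimately applicable at $\mathfrak r=L$, and making sure the $L^\infty$ bounds used in the interpolation step are the ones provided by the $p=\infty$ case of Corollaries~\ref{cor:pseudo-weighted-Plancherel-Lp} and \ref{cor:pseudo-weighted-Plancherel-Lp-gradient} rather than something stronger. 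Given that the proof of Lemma~\ref{lem: kernel-difference-p} is already spelled out in full, it is reasonable to present this proof in the compressed form "argue exactly as in the proof of Lemma~\ref{lem: kernel-difference-p}, replacing \eqref{cond:General-hypo}--\eqref{cond:General-hypo-grad-sup} by \eqref{cond:joint-functional-kernel-hypo}--\eqref{cond:joint-functional-kernel-hypo-grad} and $r$ by $r^{1-a}$", which is indeed what the excerpt announces.
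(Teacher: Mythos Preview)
Your proposal is correct and follows exactly the approach the paper intends: the paper's own proof consists of the single sentence ``The proof follows arguing exactly as in the proof of Lemma~\ref{lem: kernel-difference-p}, and we leave those details,'' and your write-up is a faithful expansion of precisely that reduction, with the correct substitutions (the $(\boldsymbol L,\boldsymbol U)$ kernel estimates \eqref{cond:joint-functional-kernel-hypo}--\eqref{cond:joint-functional-kernel-hypo-grad} in place of \eqref{cond:General-hypo}--\eqref{cond:General-hypo-grad-sup}, $\theta=1-a$, $q=2$, and no $\epsilon$-loss) and the correct bookkeeping on the admissible range of $L$ vis-\`a-vis $r_0$.
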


We shall establish pointwise domination of the Fefferman--Stein maximal function in Subsection \ref{subsec:Proof-thm:Chanilloy-type}. For the same, we need the following $L^2$-boundedness result, which we state and prove here. 
\begin{theorem} \label{thm:Compo-operator-L2-boundedness}
Let $m \in \mathscr{S}^{-Qa/2}_{1-a, 1-a}(\boldsymbol{L}, \boldsymbol{U})$ satisfies condition \eqref{def:grushin-symb-vanishing-0-condition} for all $|\beta| \leq 4 \left( \lfloor\frac{Q}{4}\rfloor + 1 \right)$. Then, the operator $(I + G)^{Qa/4} m(x,\boldsymbol{L}, \boldsymbol{U})$ is $L^2$-bounded. 
\end{theorem}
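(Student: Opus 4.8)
The plan is to reduce the $L^2$-boundedness of $(I+G)^{Qa/4}m(x,\boldsymbol{L},\boldsymbol{U})$ to an application of the Calder\'on--Vaillancourt type theorem, Theorem \ref{thm:L2-joint-calc}(2), after a careful composition and symbol-calculus argument. Write $m(x,\boldsymbol{L},\boldsymbol{U}) = \sum_{j\ge 0} m_j(x,\boldsymbol{L},\boldsymbol{U})$ with $m_j(x,\tau,\kappa)=m(x,\tau,\kappa)\psi_j(|(\tau,\kappa)|_1)$ as in Section \ref{sec:Kernel-estimates}, and correspondingly decompose the multiplier $(1+|(\tau,\kappa)|_1)^{Qa/4}$. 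The key point is that the operator $(I+G)^{Qa/4}$ acts, via the joint functional calculus, on the spectral variable $\eta \sim |\tau|_1 + |\kappa|$; more precisely since $G$ corresponds to $\sum_j L_j$, on the joint spectrum $(I+G)^{Qa/4}$ is the Fourier multiplier in $(\tau,\kappa)$ given by $(1+|\tau|_1)^{Qa/4}$, which on the $j$-th dyadic piece is comparable to $2^{jQa/4}$ up to a symbol of order $0$. The idea is therefore to write
\[
(I+G)^{Qa/4} m(x,\boldsymbol{L},\boldsymbol{U}) = \widetilde{m}(x,\boldsymbol{L},\boldsymbol{U}), \quad \widetilde{m}(x,\tau,\kappa) := (1+|\tau|_1)^{Qa/4}\, m(x,\tau,\kappa),
\]
which is legitimate because $(I+G)^{Qa/4}$ commutes with the joint functional calculus (its spectral multiplier depends only on $|\tau|_1$), and then to check that $\widetilde{m} \in \mathscr{S}^{0}_{1-a,1-a}(\boldsymbol{L},\boldsymbol{U})$ and still satisfies the cancellation condition \eqref{def:grushin-symb-vanishing-0-condition} for $|\beta|\le 4(\lfloor Q/4\rfloor+1)$.

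The first main step is the symbol-class verification. Differentiating $\widetilde{m}$: $X^\Gamma \partial_\tau^\theta \partial_\kappa^\beta \widetilde{m}$ is, by the Leibniz rule, a sum of terms $\big(\partial_\tau^{\theta_1}(1+|\tau|_1)^{Qa/4}\big)\, X^\Gamma \partial_\tau^{\theta_2}\partial_\kappa^\beta m$ with $\theta_1+\theta_2=\theta$. Since $|\partial_\tau^{\theta_1}(1+|\tau|_1)^{Qa/4}| \lesssim (1+|\tau|_1)^{Qa/4 - |\theta_1|}$ and $m\in\mathscr{S}^{-Qa/2}_{1-a,1-a}$ gives $|X^\Gamma\partial_\tau^{\theta_2}\partial_\kappa^\beta m|\lesssim (1+|\tau|+|\kappa|)^{-Qa/4 - \frac{2-a}{2}(|\theta_2|+|\beta|) + \frac{1-a}{2}|\Gamma|}$, the product is bounded by $(1+|\tau|+|\kappa|)^{-\frac{2-a}{2}(|\theta_2|+|\beta|) - |\theta_1| + \frac{1-a}{2}|\Gamma|}$, and since $|\theta_1| \ge \frac{2-a}{2}|\theta_1|$ this is $\lesssim (1+|\tau|+|\kappa|)^{-\frac{2-a}{2}(|\theta|+|\beta|) + \frac{1-a}{2}|\Gamma|}$, exactly the $\mathscr{S}^{0}_{1-a,1-a}(\boldsymbol{L},\boldsymbol{U})$ estimate. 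For the cancellation condition, note $(1+|\tau|_1)^{Qa/4}$ does not depend on $\kappa$, so $\lim_{\kappa\to 0}\partial_\kappa^\beta\widetilde m(x,\tau,\kappa) = (1+|\tau|_1)^{Qa/4}\lim_{\kappa\to 0}\partial_\kappa^\beta m(x,\tau,\kappa) = 0$ for all $|\beta|\le 4(\lfloor Q/4\rfloor+1)$, which is inherited directly from $m$. With $\widetilde m \in \mathscr{S}^0_{1-a,1-a}(\boldsymbol{L},\boldsymbol{U})$ satisfying \eqref{def:grushin-symb-vanishing-0-condition}, Theorem \ref{thm:L2-joint-calc}(2) (with $\rho=\delta=1-a<1$) gives that $\widetilde m(x,\boldsymbol{L},\boldsymbol{U})$ is bounded on $L^2$, hence so is $(I+G)^{Qa/4}m(x,\boldsymbol{L},\boldsymbol{U})$.

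The step I expect to be the main obstacle is making rigorous the identity $(I+G)^{Qa/4}m(x,\boldsymbol{L},\boldsymbol{U}) = \widetilde m(x,\boldsymbol{L},\boldsymbol{U})$, since $(I+G)^{Qa/4}$ is an unbounded operator and the composition of an unbounded spectral multiplier of $G$ with a pseudo-multiplier (which does \emph{not} commute with $G$ in the $x$-variable) requires justification. The clean way is to work with the dyadic decomposition: set $\widetilde m_j(x,\tau,\kappa) = (1+|\tau|_1)^{Qa/4}\psi_j(|(\tau,\kappa)|_1) m(x,\tau,\kappa)$ — here the extra factor $(1+|\tau|_1)^{Qa/4}\psi_j$ is, after rescaling, a fixed bump function times $2^{jQa/4}$, so it behaves like a symbol of order zero localized at frequency $2^j$ — and observe that on the range of the spectral projector $\psi_j(|(\tau,\kappa)|_1)$ the operator $(I+G)^{Qa/4}$ acts boundedly (by the spectral theorem, with norm $\sim 2^{jQa/4}$), so $(I+G)^{Qa/4}m_j(x,\boldsymbol{L},\boldsymbol{U}) = \widetilde m_j(x,\boldsymbol{L},\boldsymbol{U})$ holds on a dense class (e.g.\ the set $\mathcal{V}_\varkappa$ of Section \ref{subsec:Lp-boundedness-intermediatary-classes} or finite joint-spectral combinations). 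One then sums in $j$: the partial sums $\sum_{j\le N}\widetilde m_j(x,\boldsymbol{L},\boldsymbol{U})$ converge to $\widetilde m(x,\boldsymbol{L},\boldsymbol{U})$ in the strong operator topology because $\sum_{j\le N}\widetilde m_j$ has $\mathscr{S}^0_{1-a,1-a}$ seminorms uniformly bounded in $N$ and satisfies the cancellation condition uniformly, and the corresponding operators converge on the dense class $\mathcal{V}_\varkappa$; a standard uniform-boundedness/density argument (identical in spirit to the one used in the proof of Proposition \ref{prop:weighted-CV}, where $\sum_{j=0}^N \widetilde m_j$ is handled) then upgrades this to the full operator identity and yields the asserted $L^2$-bound. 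This completes the proof of Theorem \ref{thm:Compo-operator-L2-boundedness}.
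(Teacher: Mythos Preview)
Your proposal contains a genuine error at the very first step: the identity
\[
(I+G)^{Qa/4}\, m(x,\boldsymbol{L},\boldsymbol{U}) \;=\; \widetilde m(x,\boldsymbol{L},\boldsymbol{U}),
\qquad \widetilde m(x,\tau,\kappa)=(1+|\tau|_1)^{Qa/4}m(x,\tau,\kappa),
\]
is \emph{false} in general. A spectral multiplier of $G$ composes with a pseudo-multiplier on the \emph{right} in the way you describe --- that is, $m(x,\boldsymbol{L},\boldsymbol{U})\circ(I+G)^{Qa/4}=\widetilde m(x,\boldsymbol{L},\boldsymbol{U})$ --- because $(I+G)^{Qa/4}$ applied to $f$ simply rescales the coefficients $(f^\lambda,\Phi_\mu^\lambda)$ before you read them off in the definition \eqref{def:joint-Gru-pseudo}. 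But when $(I+G)^{Qa/4}$ is applied on the \emph{left}, it acts on the \emph{output} $g(x)=m(x,\boldsymbol{L},\boldsymbol{U})f(x)$, and because the symbol $m$ depends on $x$, the function $g$ is not spectrally supported where $f$ is. Concretely, if $m(x,\tau,\kappa)=\phi(x)\psi(\tau,\kappa)$ then $m(x,\boldsymbol{L},\boldsymbol{U})f=\phi\cdot\psi(\boldsymbol{L},\boldsymbol{U})f$, and $(I+G)^{Qa/4}(\phi\cdot h)$ is certainly not $\phi\cdot(I+G)^{Qa/4}h$. Your dyadic version has the same flaw: the range of $m_j(x,\boldsymbol{L},\boldsymbol{U})=m(x,\boldsymbol{L},\boldsymbol{U})\psi_j(|(\boldsymbol{L},\boldsymbol{U})|_1)$ is \emph{not} contained in the range of the spectral projector $\psi_j(|(\boldsymbol{L},\boldsymbol{U})|_1)$, so you cannot say $(I+G)^{Qa/4}$ acts on it with norm $\sim 2^{jQa/4}$.

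This left/right asymmetry is exactly the obstruction the paper confronts. In the Euclidean case one passes to adjoints to swap sides, but here $m(x,\boldsymbol{L},\boldsymbol{U})^*$ need not be a pseudo-multiplier. The paper instead rewrites $(I+G)^{Qa/4}T=(I+G)^{Qa/4}T_1(I+G)^{-Qa/4}\mathcal P_a$ with $T_1\in\mathscr S^0_{1-a,1-a}$ and $\mathcal P_a$ an $L^2$-bounded spectral multiplier, so the problem reduces to showing $T_1:H^{Qa/2,2}\to H^{Qa/2,2}$; this is obtained by proving $T_1:H^{2N,2}\to H^{2N,2}$ for every $N\in\mathbb N$ (via an explicit computation of $G^N T_1$ using creation/annihilation operators and Lemma \ref{lem:shifted-CV}) and then interpolating. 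That machinery is what replaces the one-line symbol argument you attempted.
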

\begin{proof}
Note first that in the case of the Euclidean pseudo-differential operators, if we have $m \in \mathscr{S}^{-Qa/2}_{1-a, 1-a}(\Delta)$ then $m(x, \Delta)^*$ is also a pseudo-differential operator with symbol from the same class $\mathscr{S}^{-Qa/2}_{1-a, 1-a}(\Delta)$. Therefore, $m(x, \Delta)^* (I + \Delta)^{Qa/4}$ is a pseudo-differential operator with symbol from the class $\mathscr{S}^{0}_{1-a, 1-a}(\Delta)$, and thus it is $L^2$-bounded. Finally, since $\left\| (I + \Delta)^{Qa/4} m(x, \Delta) \right\|_{op} = \left\| \left( (I + \Delta)^{Qa/4} m(x, \Delta) \right)^* \right\|_{op} = \left\| m(x, \Delta)^* (I + \Delta)^{Qa/4} \right\|_{op}$, it follows that $(I + \Delta)^{Qa/4} m(x, \Delta)$ is $L^2$-bounded. 

In the context of the Grushin operator, we need not have the adjoint operator $m(x,\boldsymbol{L}, \boldsymbol{U})^*$ to be a pseudo-multiplier operator, therefore we do not have a direct argument to conclude the theorem. We therefore write a more detailed analysis to argue our claim. 

Given $m \in \mathscr{S}^{-Qa/2}_{1-a, 1-a}(\boldsymbol{L}, \boldsymbol{U})$, let us write $T =  m(x,\boldsymbol{L}, \boldsymbol{U})$ and $T_1 = \widetilde{m}(x,\boldsymbol{L}, \boldsymbol{U})$ where $\widetilde{m} (x, \tau, \kappa) = m(x, \tau, \kappa) (1 + |\tau|^2 + |\kappa|^2)^{Qa/8}$. It can be easily verified that $\widetilde{m} \in \mathscr{S}^{0}_{1-a, 1-a}(\boldsymbol{L}, \boldsymbol{U})$, with the symbol seminorm of $\widetilde{m}$ controlled by that of $m$. Furthermore, let us denote by $\mathcal{P}_a$ the spectral multiplier operator in the joint functional calculus of $(\boldsymbol{L}, \boldsymbol{U})$, with symbol $(\tau, \kappa) \mapsto (1 + |\tau|_1)^{Qa/4} (1 + |\tau|^2 + |\kappa|^2)^{-Qa/8}$. Now, 
\begin{align*}
(I + G)^{Qa/4} T = (I + G)^{Qa/4} T_1 (I + G)^{-Qa/4} \mathcal{P}_a, 
\end{align*}
and since the operator $\mathcal{P}_a$ is $L^2$-bounded (by Plancherel theorem), in order to show that $(I + G)^{Qa/4} T$ is $L^2$-bounded, it suffices to prove that $(I + G)^{Qa/4} T_1 (I + G)^{-Qa/4}$ is $L^2$-bounded. 

For any $b>0$, let us consider the Sobolev space $H^{b, 2}(\mathbb{R}^{n_1 + n_2}) = (I + G)^{b/2} L^2 (\mathbb{R}^{n_1 + n_2})$. Using the spectral resolution of $G$, we have 
\begin{align*}
H^{b, 2}(\mathbb{R}^{n_1 + n_2}) = \left\{ f \in L^2(\mathbb{R}^{n_1 + n_2}) : \left\| (I + G)^{b/2}f \right\|_{L^2} < \infty \right\}, 
\end{align*}
where 
$$ \left\| (I + G)^{b/2}f \right\|^2_{L^2} = \int_{R^{n_2}} \sum_{\mu} (1 + (2|\mu| + n_1)|\lambda|)^{b} \left| (f^{\lambda}, \Phi^{\lambda}_{\mu} ) \right|^2 \, d\lambda. $$

It is easy to observe that $(I + G)^{Qa/4} T_1 (I + G)^{-Qa/4} \in \mathcal{B} \left( L^2(\mathbb{R}^{n_1 + n_2}) \right)$ if and only if $T_1 \in \mathcal{B} \left( H^{\frac{Qa}{2}, 2}(\mathbb{R}^{n_1 + n_2}) \right)$. Now, if we can prove that $T_1 \in \mathcal{B} \left( H^{2N, 2}(\mathbb{R}^{n_1 + n_2}) \right)$ for each $N \in \mathbb{N}_{+}$, then we can invoke the interpolation theorem for operators with change of measures (see, for example, Section 5 in \cite{Stein-Weiss-Fourier-analysis-book}) to conclude that $T_1 \in \mathcal{B} \left( H^{b, 2}(\mathbb{R}^{n_1 + n_2}) \right)$ for each $b>0$, and our claim would then follow by taking $b = \frac{Qa}{2}$. So, we are left with showing that $T_1 \in \mathcal{B} \left( H^{2N, 2}(\mathbb{R}^{n_1 + n_2}) \right)$, which is equivalent to proving that $(I + G)^{N} T_1 (I + G)^{-N} \in \mathcal{B} \left( L^2 (\mathbb{R}^{n_1 + n_2}) \right)$, and again it is equivalent to proving that $(I + G)^{N} T_1 \circ m_{N, 0} (\boldsymbol{L}, \boldsymbol{U}) \in \mathcal{B} \left( L^2 (\mathbb{R}^{n_1 + n_2}) \right)$, where $m_{N, 0} (\tau, \kappa) = (1 + |\tau|_1^2 + |\kappa|^2)^{-N/2}$. 

It is enough to show that $G^{N_1} T_1 \circ m_{N, 0} (\boldsymbol{L}, \boldsymbol{U}) \in \mathcal{B} \left( L^2 (\mathbb{R}^{n_1 + n_2}) \right)$ for all $0 \leq N_1 \leq N$. Recall that $G = - \sum_{j=1}^{n_1} X_j^2 - \sum_{j=1}^{n_1} \sum_{k=1}^{n_2} X_{j,k}^2$, where $X_j= \frac{\partial}{\partial x'_j}$, $X_{j,k}= x'_j \frac{\partial}{\partial x_k''}$. So, we shall be done if we could prove that $X^{\Gamma} T_1 \circ m_{N, 0} (\boldsymbol{L}, \boldsymbol{U}) \in \mathcal{B} \left( L^2 (\mathbb{R}^{n_1 + n_2}) \right)$, where $|\Gamma| \leq 2N$ and $X = (X_j, X_{j, k})_{j, k}$. 

With $f \in S(\mathbb{R}^{n_1 + n_2}) $, using Leibniz formula we can write 
\begin{align*} %\label{est:Leibniz-formula-thm:Compo-operator-L2-boundedness}
& X^{\Gamma} T_1 \circ m_{N, 0} (\boldsymbol{L}, \boldsymbol{U}) f (x) \\ 
\nonumber & \quad = \sum_{\Gamma_1 + \Gamma_2 = \Gamma} C_{\Gamma_1, \Gamma_2} \int_{R^{n_2}} \sum_{\mu} \frac{X^{\Gamma_1} \widetilde{m} \left( x, (2 \mu + \tilde{1}) |\lambda|, \lambda \right)}{\left\{ 1 + (2|\mu|+n_1)^2 |\lambda|^2 + |\lambda|^2 \right\}^{N/2}} (f^{\lambda}, \Phi^{\lambda}_{\mu} ) \, X^{\Gamma_2} \left\{ \Phi^{\lambda}_{\mu}(x') e^{-i\lambda \cdot x''} \right\} d\lambda. 
\end{align*} 

Let us carefully look at the action of $X^{\Gamma_2}$ on $\Phi^{\lambda}_{\mu}(x') e^{-i\lambda \cdot x''}$. In doing so, we shall make use of the known properties of the annihilation operators $A_j (\lambda) = \frac{\partial}{\partial x'_j} + |\lambda| x'_j$, and the creation operators $A_j (\lambda)^* = - \frac{\partial}{\partial x'_j} + |\lambda| x'_j$, namely, 
\begin{align*} 
A_j(\lambda) \Phi_{\mu}^{\lambda} = \left( (2 \mu_j) |\lambda| \right)^{1/2} \Phi_{\mu - e_j}^{\lambda} \quad \textup{and} \quad A_j(\lambda)^* \Phi_{\mu}^{\lambda} = \left( (2 \mu_j + 2) |\lambda|\right)^{1/2} \Phi_{\mu + e_j}^{\lambda}. 
\end{align*} 
Since $X_j \left\{ \Phi^{\lambda}_{\mu}(x') e^{-i\lambda \cdot x''} \right\} = \frac{\partial}{\partial x'_j} \left\{ \Phi^{\lambda}_{\mu}(x') e^{-i\lambda \cdot x''} \right\} = \frac{1}{2} \left( A_j(\lambda) - A_j(\lambda)^* \right) \left\{ \Phi^{\lambda}_{\mu}(x') e^{-i\lambda \cdot x''} \right\}$, and $X_{j,k} \left\{ \Phi^{\lambda}_{\mu}(x') e^{-i\lambda \cdot x''} \right\} = \lambda_k x'_j \left\{ \Phi^{\lambda}_{\mu}(x') e^{-i\lambda \cdot x''} \right\} = \frac{\lambda_k}{2 |\lambda|} \left( A_j(\lambda) + A_j(\lambda)^* \right) \left\{ \Phi^{\lambda}_{\mu}(x') e^{-i\lambda \cdot x''} \right\}$, a successive application of gradient fields $X_{j}$'s and $X_{j,k}$'s implies that $X^{\Gamma_2} \left\{ \Phi^{\lambda}_{\mu}(x') e^{-i\lambda \cdot x''} \right\}$ can be expressed as a finite linear combination of terms of the form 
$$ \mathfrak{H}_{\Gamma_2} (\mu) \frac{\lambda^{\Gamma_3}}{|\lambda|^{|\Gamma_3|}} \left\{ (2|\mu| + n_1)|\lambda| \right\}^{\frac{|\Gamma_2|}{2}} \Phi^{\lambda}_{\mu +\tilde{\mu}}(x'), $$ 
where $|\tilde{\mu}| \leq |\Gamma_2|$, $|\Gamma_3| \leq |\Gamma_2|$, and $\mathfrak{H}_{\Gamma_2} (\mu)$ is a bounded function of $\mu$.

Summarising, we get that $X^{\Gamma} T_1 \circ m_{N, 0} (\boldsymbol{L}, \boldsymbol{U}) f (x)$ can be expressed as a finite linear combination of 
\begin{align*}
& \int_{R^{n_2}} \sum_{\mu} \frac{\left\{ X^{\Gamma_1} \widetilde{m} \left( x, (2 (\mu - \tilde{\mu}) + \tilde{1}) |\lambda|, \lambda \right) \right\} \left\{ (2|\mu - \tilde{\mu}| + n_1)|\lambda| \right\}^{\frac{|\Gamma_2|}{2}}}{\left\{ 1 + (2|\mu - \tilde{\mu}|+n_1)^2 |\lambda|^2 + |\lambda|^2 \right\}^{N/2}} \\ 
& \quad \quad \quad \quad \left( \left( \widetilde{\mathcal{T}}_{\Gamma_2, \Gamma_3} f \right)^{\lambda}, \Phi^{\lambda}_{\mu} \right) \Phi^{\lambda}_{\mu}(x') e^{-i\lambda \cdot x''} \, d\lambda, 
\end{align*}
where $\widetilde{\mathcal{T}}_{\Gamma_2, \Gamma_3}$ is the operator on 
defined by $ \left( \left( \widetilde{\mathcal{T}}_{\Gamma_2, \Gamma_3} f \right)^{\lambda}, \Phi^{\lambda}_{\mu} \right) = \mathfrak{H}_{\Gamma_2} (\mu - \tilde{\mu}) \frac{\lambda^{\Gamma_3}}{|\lambda|^{|\Gamma_3|}} \left( f^{\lambda}, \Phi^{\lambda}_{\mu - \tilde{\mu}} \right)$. 

While the $L^2$-boundedness of $\widetilde{\mathcal{T}}_{\Gamma_2, \Gamma_3}$ follows from Plancherel's theorem, it is straightforward to verify that the symbol function $M(x, \tau, \kappa) \in \mathscr{S}^0_{1-a, 1-a}(\boldsymbol{L}, \boldsymbol{U})$, where
$$ M \left( x, \tau, \kappa \right) = \left\{ X^{\Gamma_1} \widetilde{m} \left( x, \tau, \kappa \right) \right\} |\tau|_1^{\frac{|\Gamma_2|}{2}} \left\{ 1 + |\tau|_1^2 + |\kappa|^2 \right\}^{-N/2},$$
and it therefore boils down to analysing operators $T_{\tilde{\mu}}$, where 
\begin{align} \label{est:pseudo-mult-operator-with-shift}
T_{\tilde{\mu}} f(x) & = \int_{R^{n_2}} \sum_{\mu} M \left( x, (2 (\mu - \tilde{\mu}) + \tilde{1}) |\lambda|, \lambda \right) \left(f^{\lambda}, \Phi^{\lambda}_{\mu} \right) \Phi^{\lambda}_{\mu}(x') e^{-i\lambda \cdot x''} \, d\lambda. 
\end{align}
But, the same holds true by Lemma \ref{lem:shifted-CV}, implying that for all $|\tilde{\mu}| \leq 2N$, 
$$\|T_{\tilde{\mu}}\|_{op} \lesssim_N \|M\|_{\mathscr{S}^0_{1-a, 1-a}} \lesssim_N \|m\|_{\mathscr{S}^{-Qa/2}_{1-a, 1-a}}.$$

This completes the proof of Proposition \ref{prop:weighted-CV}. 
\end{proof}

%%%%%%%%%%%%%%%%%%%%%%%%%%%%%%%%%%
%%%%%%%%%%%%%%%%%%%%%%%%%%%%%%%%%%
%%%%%%%%%%%%%%%%%%%%%%%%%%%%%%%%%%
%%%%%%%%%%%%%%%%%%%%%%%%%%%%%%%%%%
%%%%%%%%%%%%%%%%%%%%%%%%%%%%%%%%%%
%%%%%%%%%%%%%%%%%%%%%%%%%%%%%%%%%%
%%%%%%%%%%%%%%%%%%%%%%%%%%%%%%%%%%

\subsection{Pointwise domination of the Fefferman--Stein maximal function} \label{subsec:Proof-thm:Chanilloy-type}

The following estimate is, in fact, the heart of the proof of Theorem \ref{thm:joint-fun-Channilo-type}.
\begin{theorem} \label{thm:Chanilloy-type}
Let $m \in \mathscr{S}^{-Qa/2}_{1-a, \delta}(\boldsymbol{L}, \boldsymbol{U}) $ be such that it satisfies condition   \eqref{def:grushin-symb-vanishing-0-condition} for all $|\beta| \leq \lfloor \frac{Q}{2} + \frac{1}{1-a} \rfloor + 1$. Then, for the operator $T=m(x, \boldsymbol{L}, \boldsymbol{U})$, we have 
$$ \mathcal{M}^{\sharp} (T f)(x) \lesssim_{T,s} \mathcal M_{2}f(x),$$ 
for every $f\in C_{c}^{\infty}(\mathbb{R}^{n_1 + n_2})$ and almost every $x \in \mathbb{R}^{n_1 + n_2}$. 
\end{theorem}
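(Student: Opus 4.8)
The plan is to follow the classical Fefferman--Stein sharp-maximal-function scheme adapted to the Grushin homogeneous space, splitting into a local piece handled by $L^2$-boundedness and a tail piece handled by the weighted Plancherel kernel estimates of Lemma 5.6. Fix $x \in \mathbb{R}^{n_1+n_2}$ and a ball $B = B(\mathfrak{z}, r)$ containing $x$; it suffices to produce a constant $c_B$ with
$$\frac{1}{|B|}\int_B |Tf(y) - c_B| \, dy \lesssim \mathcal{M}_2 f(x).$$
Decompose $f = f_1 + f_2$ with $f_1 = f \chi_{sB}$, $f_2 = f \chi_{(sB)^c}$, where $s = 3C_0^2 C_{1,\varkappa}$, and choose $c_B = (Tf_2)_{B}$ (or $Tf_2(\mathfrak{z})$, suitably interpreted). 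For the local term, since $m \in \mathscr{S}^{-Qa/2}_{1-a,\delta}(\boldsymbol{L},\boldsymbol{U})$ in particular lies in $\mathscr{S}^0_{1-a,\delta}$ and satisfies the cancellation condition, Theorem 1.16 (or the $\delta<\rho$ case of Remark 2.19 together with Theorem 5.4) gives $L^2$-boundedness of $T$; then by Cauchy--Schwarz
$$\frac{1}{|B|}\int_B |Tf_1| \lesssim \Big(\frac{1}{|B|}\int_B |Tf_1|^2\Big)^{1/2} \lesssim \Big(\frac{1}{|B|}\int_{sB}|f|^2\Big)^{1/2} \lesssim \mathcal{M}_2 f(x),$$
using doubling so that $|B| \sim |sB|$.

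For the tail term I would estimate $\frac{1}{|B|}\int_B |Tf_2(y) - c_B|\,dy$ by averaging the pointwise bound on $|Tf_2(y) - Tf_2(z)|$ (or $|Tf_2(y) - Tf_2(\mathfrak{z})|$) over $y,z \in B$. Writing $T = \sum_j T_j$ with kernels $K_j$, this is controlled by
$$\sum_{j\ge 0}\int_{(sB)^c} |K_j(y,v) - K_j(\mathfrak{z},v)|\,|f(v)|\,dv \le \sum_{j\ge 0}\sum_{l\ge 0}\int_{\mathcal{A}_l}|K_j(y,v) - K_j(\mathfrak{z},v)|\,|f(v)|\,dv,$$
where $\mathcal{A}_l = \{v : 2^l s r^{1-a} \le d(v,\mathfrak{z}) \le 2^{l+1} s r^{1-a}\}$ when $r \le 1$ (and the analogous annuli $2^l sr \le d \le 2^{l+1}sr$ when $r \ge 1$, handled as in Case 1 of Lemma 4.9). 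On each annulus apply Hölder with exponents $2$ and $2$: $\int_{\mathcal{A}_l} \le \big(\int_{\mathcal{A}_l}|K_j(y,v)-K_j(\mathfrak{z},v)|^2\big)^{1/2}\big(\int_{\mathcal{A}_l}|f|^2\big)^{1/2}$. For the first factor invoke Lemma 5.6: estimate \eqref{lem:kernel-estimate-difference-1} (with $p=2$, so $p'=2$) when $2^{l+1}sr^{1-a} \ge \tfrac12|\mathfrak{z}'|$, and \eqref{lem:kernel-estimate-difference-2} (again $p'=2$) when $\tfrac12|\mathfrak{z}'| \ge 2^l s r^{1-a}$, choosing $L$ in the admissible window $\tfrac Q2 < L < \tfrac Q2 + \tfrac1{1-a}$ (resp. $\tfrac{n_1+n_2}{2} < L < \tfrac{n_1+n_2}{2}+\tfrac1{1-a}$) — this window is nonempty precisely because $0 < a < 1$, and the floor in the hypothesis on $|\beta|$ is chosen so that Corollary 3.10 supplies enough differentiability. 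For the second factor, $\big(\int_{\mathcal{A}_l}|f|^2\big)^{1/2} \lesssim |B(\mathfrak{z}, 2^{l+1}sr^{1-a})|^{1/2}\mathcal{M}_2 f(x) \le |B(\mathfrak{z}, 2^{l+1}sr)|^{1/2}\mathcal{M}_2 f(x)$ since $r \le 1$. Collecting powers and using $d(y,\mathfrak{z}) \lesssim r$ for $y \in B$, one checks exactly as in the proof of Lemma 4.11 (estimates \eqref{est1:lem:staubach-Intermediate-r0<1}--\eqref{est2:lem:staubach-Intermediate-r0<1}) that the double sum over $j$ and $l$ converges to a constant times $\mathcal{M}_2 f(x)$, with $r$-dependence disappearing because the two ranges of $L$ overlap with the ``$2^l$'' decay.

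\textbf{Main obstacle.} The delicate point is the bookkeeping of the $r$-exponents: after Hölder and Lemma 5.6 one gets a factor $r^{(1-a)(L - Q/2)}$ (or the $n_1+n_2$ analogue) times $(2^l)^{-L}$ times $|B(\mathfrak{z}, 2^{l}r)|^{1/2} \sim (2^l r)^{Q/2}$ (modulo the $|\mathfrak{z}'|^{\varkappa n_2}$ bookkeeping in the regime near the $x'=0$ singularity), and one must verify that the total power of $r$ is $\ge 0$ (so it is harmless as $r \le 1$) while the total power of $2^l$ is $< 0$ (so the $l$-sum converges), simultaneously for both annular regimes; this forces exactly the constraint $L < \tfrac Q2 + \tfrac1{1-a}$ on one side and $L > \tfrac Q2$ on the other, which is why the admissible window — and hence the condition $0<a<1$ — is essential. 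A secondary subtlety, as in Lemma 4.9, is that when $r \ge 1$ one must use the plain annuli $2^l sr$ rather than $2^l sr^{1-a}$ and invoke the weighted Plancherel bounds of Corollary 3.10 directly (not Lemma 5.6) to close the geometric sum; this case is genuinely easier and I would dispatch it briefly. Once the pointwise bound $\mathcal{M}^{\sharp}(Tf)(x) \lesssim \mathcal{M}_2 f(x)$ is in hand, Theorem 1.18 follows by the standard good-$\lambda$ / Fefferman--Stein argument combined with the $L^2(w)$-boundedness input for $A_{p/2}$ weights, but that is carried out in the next subsection.
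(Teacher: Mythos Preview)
Your overall scheme is right, and Case 1 ($r\ge 1$) is correctly sketched. The gap is in Case 2 ($r<1$): the decomposition you wrote down and the kernel estimates you intend to invoke are incompatible, and the fix requires an ingredient you have not used.

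Concretely, you set $f_1=f\chi_{sB}$ with $sB=B(\mathfrak z,sr)$, so the tail $f_2$ lives on $\{d(v,\mathfrak z)\ge sr\}$; but the annuli you then use are $\mathcal A_l=\{2^l s r^{1-a}\le d(v,\mathfrak z)\le 2^{l+1}sr^{1-a}\}$, which only cover $\{d\ge sr^{1-a}\}$. Since $r<1$ we have $sr<sr^{1-a}$, so the shell $\{sr\le d< sr^{1-a}\}$ is unaccounted for. If you try to extend the annular argument down to this shell (equivalently, use $\theta=1$ instead of $\theta=1-a$ in the analogue of Lemma~\ref{lem: kernel-difference-p}), the bookkeeping gives a factor $(2^l)^{-(L-Q/2)}\,r^{-a(L-Q/2)}$ after pairing with $|B(\mathfrak z,2^l sr)|^{1/2}$; the $l$-sum forces $L>Q/2$, but then $r^{-a(L-Q/2)}\to\infty$ as $r\to 0$. (Your sentence ``$|B(\mathfrak z,2^{l+1}sr^{1-a})|^{1/2}\le |B(\mathfrak z,2^{l+1}sr)|^{1/2}$ since $r\le 1$'' is also backwards: for $r\le1$ one has $r^{1-a}\ge r$, hence the opposite inequality.)

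The paper resolves this by changing the decomposition in Case 2 to $f_1=f\chi_{B'}$ with $B'=B(\mathfrak z,sr^{1-a})$, so the tail matches Lemma~\ref{lem:kernel-estimate-channilo-type-joint-funct} exactly and the $r$-powers cancel on the nose. But this forces a harder local estimate: now $|B'|/|B|\sim r^{-aQ}$ (or $r^{-a(n_1+n_2)}$) blows up, so plain $L^2$-boundedness of $T$ no longer gives $\tfrac{1}{|B|}\int_B|Tf_1|\lesssim\mathcal M_2 f(x)$. This is precisely where the decay $\sigma=-Qa/2$ enters: one writes $T=J_{-Qa/2}\,(J_{Qa/2}T)$, uses Theorem~\ref{thm:Compo-operator-L2-boundedness} to bound $J_{Qa/2}T$ on $L^2$, and then applies the Hardy--Littlewood--Sobolev inequality (Theorem~\ref{theorem:Sobolev-embedding}) to $J_{-Qa/2}$ with exponents $p=2$, $q=2/(1-a)$. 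This smoothing step recovers exactly the missing factor $|B|^{1/q'}|B(\mathfrak z,1)|^{-a/2}|B'|^{1/2}/|B|\lesssim 1$, and it is the main new input your outline omits.
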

\begin{proof} 
Fix $f\in C_{c}^{\infty}(\mathbb{R}^{n_1 + n_2})$, a point $x\in \mathbb{R}^{n_1 + n_2}$ and a ball $B=B(\mathfrak{z}, r)$ containing $x$. As earlier, we take $s=3C_0^2 C_{1,\varkappa}$ and consider the two cases in $r$. 

\medskip \noindent \textbf{\underline{Case 1 $(r \geq 1)$}:} Writing $sB=B(\mathfrak{z}, sr)$, we decompose $f=f_{1}+f_{2}$ where $f_{1} = f\chi_{sB}$. 

Now, 
\begin{align*}
\frac{1}{|B|} \int_{B} |Tf(y) - Tf_{2}(\mathfrak{z})| \, dy \leq \tilde{I}_1 + \tilde{I}_2,
\end{align*} 
where $$\tilde{I}_1 = \frac{1}{|B|} \int_{B} |Tf_{1}(y)| \, dy \quad \text{and} \quad \tilde{I}_2 = \frac{1}{|B|}\int_{B}|Tf_{2}(y)-Tf_{2}(\mathfrak{z})| \, dy.$$ 

First, we consider the term $\tilde{I}_1$. Using H\"older's inequality and the $L^2$-boundedness of the operator $T$ we get
\begin{align*}
\tilde{I}_1 \leq \frac{1}{|B|^{\frac{1}{2}}} \left(\int_{B} |Tf_{1}(y)|^2 \, dx \right)^{\frac{1}{2}} & \lesssim_{T} \frac{1}{|B|^{\frac{1}{2}}} \left(\int_{sB}|f_{1}(y)|^2 \, dy \right)^{\frac{1}{2}} \lesssim  \frac{|sB|^{\frac{1}{2}}}{|B|^{\frac{1}{2}}} \mathcal{M}_{2}f(x)  \lesssim_{s} \mathcal{M}_2f(x).
\end{align*}

Next, we estimate $\tilde{I}_2$. For this part, we follow the same line of arguments as in Case 1 of the proof of Lemma \ref{lem:staubach-Intermediate}. For $l\in \mathbb{N}$, we write $\mathcal{A}_{l}= \left\{v: 2^l sr \leq d(\mathfrak{z}, v) \leq  2^{l+1}sr \right\}.$ 
Then
\begin{align*}
| Tf_{2}(y) - Tf_{2} (\mathfrak{z})| \leq  \sum_{j=0}^{\infty} \sum_{l=0}^{\infty} \int_{\mathcal{A}_{l}} |K_j(y,v) - K_j (\mathfrak{z}, v)| |f_{2}(v)| \, dv. 
\end{align*}

Now, for those $l$ for which $ 2^{l+1} sr \geq \frac{1}{2}| \mathfrak{z}'| $ holds, we make use of condition \eqref{cond:joint-functional-kernel-hypo} with $ r_0 = \lfloor Q/2 \rfloor + 1 $, to obtain
\begin{align*}
& \int_{\mathcal{A}_{l}} |K_j(y,v) - K_j (\mathfrak{z}, v)| |f_{2}(v)| \, dv \\
& \quad \lesssim \left( \int_{\mathcal{A}_{l}} d(y,v)^{Q+1} |K_j (y,v)|^{2} \, dv \right)^{\frac{1}{2}} \times \left( \int_{\mathcal{A}_{l}} \frac{1}{ d(y,v)^{Q + 1}} |f_{2}(v)|^{2} \, dv \right)^{\frac{1}{2}} \\
& \quad \quad + \left( \int_{\mathcal{A}_{l}} d(\mathfrak{z}, v)^{Q + 1} |K_j (\mathfrak{z}, v)|^{2} \, dv \right)^{\frac{1}{2}} \times \left( \int_{\mathcal{A}_{l}} \frac{1}{ d(\mathfrak{z}, v)^{Q + 1}} |f_{2}(v)|^{2} \, dv \right)^{\frac{1}{2}} \\
& \quad \lesssim_{T, s} 2^{- \frac{j (1 - a) }{4}} (2^{l}r)^{-(\frac{Q + 1}{2})} |B(\mathfrak{z}, 2^{l}sr)|^{\frac{1}{2}} \mathcal {M}_{2} f(x) \\ 
& \quad \lesssim_{T,s} 2^{- \frac{j ( 1 - a )}{4}} 2^{- l/2} \mathcal{M}_{2} f(x), 
\end{align*}
where the second last inequality follows using the fact $d(y,v) \sim d(\mathfrak{z}, v)$, and in the last inequality we have used the fact that $r \geq 1$ and $ |B(\mathfrak{z}, 2^{l}sr)| \lesssim_{s} (2^{l}r)^{Q} $ for $ 2^{l+1} sr \geq \frac{1}{2}| \mathfrak{z}'| $.

Next, for those $l$ for which $ 2^{l+1} sr <  \frac{1}{2} |\mathfrak{z}'| $ holds, we have 
\begin{align*}
& \int_{\mathcal{A}_{l}} |K_j(y,v) - K_j (\mathfrak{z}, v)| |f_{2}(v)| \, dv \\
& \quad \lesssim \left( \int_{\mathcal{A}_{l}} d(y, v)^{n_1 + n_2 + 1} |K_j (y,v)|^{2} \, dv \right)^{\frac{1}{2}} \times \left( \int_{\mathcal{A}_{l}} \frac{1}{d(y,v)^{n_1 + n_2 + 1}} |f_{2}(v)|^{2} \, dv \right)^{\frac{1}{2}} \\
& \quad \quad + \left( \int_{\mathcal{A}_{l}} d(\mathfrak{z}, v)^{n_1 + n_2 + 1} |K_j (\mathfrak{z}, v)|^{2} \, dv \right)^{\frac{1}{2}} \times \left( \int_{\mathcal{A}_{l}} \frac{1}{d(\mathfrak{z}, v)^{n_1 + n_2 + 1}} |f_{2}(v)|^{2} \, dv \right)^{\frac{1}{2}} \\
& \quad \lesssim_{T,s}  2^{-\frac{j(1-a)}{4}} |\mathfrak{z}'|^{-\frac{\varkappa n_2}{2}} \frac{(2^{l}r)^{\frac{n_1 + n_2}{2}}}{(2^{l}r)^{(\frac{n_1 + n_2 + 1}{2})}} |\mathfrak{z}'|^{\frac{\varkappa n_2}{2}}  \mathcal{M}_{2} f(x) \\
& \quad \lesssim_{T,s} 2^{-\frac{j(1-a)}{4}} 2^{-l/2} \mathcal{M}_{2} f(x), 
\end{align*}
where in the second last inequality we have used the fact $d(y,v) \sim d(\mathfrak{z}, v)$ and $ |B(\mathfrak{z}, 2^{l}sr)| \lesssim_{s} (2^{l}r)^{n_1 + n_2} |\mathfrak{z}'|^{\frac{\varkappa n_2}{2}} $ for $ 2^{l+1} sr < \frac{1}{2}| \mathfrak{z}'| $.

In view of the above two estimates, we have 
\begin{align*}
|Tf_{2}(y) - Tf_{2}(\mathfrak{z})| \leq \mathcal{M}_{2}f(x), 
\end{align*}
completing the proof in Case 1.

\medskip \noindent \textbf{\underline{Case 2 $(r < 1)$}:} With $B' = B(\mathfrak{z}, s r^{1-a})$, this time we decompose $ f = f_{1} + f_{2}$ where $f_{1} = f \chi_{B'} $. 

Now, 
\begin{align*}
\frac{1}{|B|}\int_{B} |Tf(x) - Tf_{2} (\mathfrak{z})| \, dx \leq \widetilde{II}_1 + \widetilde{II}_2,
\end{align*}
where $$ \widetilde{II}_1 = \frac{1}{|B|} \int_{B} |Tf_{1}(x)| \, dx \quad \text{and} \quad \widetilde{II}_2 = \frac{1}{|B|} \int_{B} |Tf_{2}(x) - Tf_{2} (\mathfrak{z}) | \, dx.$$ 

Let us first estimate $\widetilde{II}_1$. For $b \in \mathbb{R}$, denote by $ J_{b} $ the potential operator $(I + G)^{b/2}$. Since $ x \in B(\mathfrak{z}, r)$ and $r < 1$, we have $|B(\mathfrak{z}, 1)| \sim |B(x, 1)|$. Now, using Theorem \ref{thm:Compo-operator-L2-boundedness}, and the Hardy--Littlewood--Sobolev inequality \eqref{eq:Sobolev-ineq} for $J_{-Qa/2}$ with $q = \frac{2}{1-a}$ and $p = 2$, we get
\begin{align*}
\widetilde{II}_1 = \frac{1}{|B|}\int_{B}|Tf_{1}(x)| \, dx & \leq \frac{1}{|B|} |B|^{\frac{1}{q'}}\left(\int_{B} |T f_{1}|^q \, dx \right)^{\frac{1}{q}} \\
& = \frac{1}{|B|} |B|^{\frac{1}{q'} }\left(\int_{B} |J_{-Qa/2} J_{Qa/2} Tf_{1}|^q \, dx \right)^{\frac{1}{q}} \\
& \lesssim \frac{1}{|B|} |B|^{\frac{1}{q'}}|B(\mathfrak{z}, 1)|^{-\frac{a}{2}}\left(\int_{B}(|B(x, 1)|^{\frac{a}{2}} |J_{-Qa/2} J_{Qa/2} Tf_{1}|)^q \, dx \right)^{\frac{1}{q}} \\
&  \lesssim_{a} \frac{1}{|B|} |B|^{\frac{1}{q'}} |B(\mathfrak{z}, 1)|^{-\frac{a}{2}} \left( \int_{\mathbb{R}^{n_1 + n_2}} |J_{Qa/2} T f_{1}|^2 \, dx \right)^{\frac{1}{2}} \\
&  \lesssim_{T} \frac{1}{|B|} |B|^{\frac{1}{q'}} |B(\mathfrak{z}, 1)|^{-\frac{a}{2}} |B'|^{\frac{1}{2}} \left(\frac{1}{|B'|} \int_{B'} |f_{1}|^2 \, dx \right)^{\frac{1}{2}}\\
& \lesssim \frac{1}{|B|} |B|^{\frac{1}{q'}} |B(\mathfrak{z}, 1)|^{-\frac{a}{2}} |B'|^{\frac{1}{2}} \mathcal{M}_{2}f(x).
\end{align*}	

Now, observe that if $ sr^{1-a} > |\mathfrak{z}'| $, then  
\begin{align*}
\frac{1}{|B|} |B|^{\frac{1}{q'}} |B(\mathfrak{z}, 1)|^{-\frac{a}{2}} |B'|^{\frac{1}{2}} \lesssim \frac{1}{r^Q} r^{\frac{(1 + a) Q}{2}} r^{\frac{(1 - a) Q}{2}} s^{\frac{Q}{2}} \leq C_{s},
\end{align*}
whereas, if $ s r^{1-a} \leq |\mathfrak{z}'|$, then
\begin{align*}
\frac{1}{|B|} |B|^{\frac{1}{q'}} |B(\mathfrak{z}, 1)|^{-\frac{a}{2}} |B'|^{\frac{1}{2}} \lesssim r^{\frac{(n_1 + n_2)(a-1)}{2}} |\mathfrak{z}'|^{\frac{\varkappa n_{2}(a-1)}{2}} |\mathfrak{z}'|^{-\frac{\varkappa n_{2} a}{2}}  r^{\frac{(1-a)(n_1 + n_2)}{2}} s^{\frac{n_1 + n_2}{2}} |\mathfrak{z}'|^{\frac{n_2}{2}} \leq C_{s}.
\end{align*}
Put together, we get that $\widetilde{II}_1 \lesssim_{T, s} \mathcal{M}_{2}f(x)$. 

Next, we estimate the term $\widetilde{II}_2$. Let us write $\mathcal{B}_{l}: = \{v:  s 2^{l} r^{1-a} \leq d(v, \mathfrak{z}) \leq s 2^{l + 1} r^{1-a}\}$, and let $l_{0} \geq 0$ be such that $\frac{1}{2}|\mathfrak{z}'| \geq s 2^{l} r^{1-a}$ for all $l \leq l_{0}$. 

Now, 
\begin{align*}
|Tf_{2}(x) - Tf_{2}(\mathfrak{z})|
& \lesssim \sum_{j = 0}^{\infty} \sum_{l = 0}^{l_0} \left( \int_{\mathcal{B}_{l}} |K_j (x,y) - K_j (\mathfrak{z},y)|^{2} \, dy \right)^{\frac{1}{2}} \times \left(\int_{\mathcal{B}_{l}} |f_{2}(y)|^{2} \, dy \right)^{\frac{1}{2}}\\
& \quad + \sum_{j = 0}^{\infty} \sum_{l > l_0}^{\infty} \left( \int_{\mathcal{B}_{l}} |K_j (x,y) - K_j (\mathfrak{z},y)|^{2} \, dy \right)^{\frac{1}{2}} \times \left(\int_{\mathcal{B}_{l}} |f_{2}(y)|^{2} \, dy \right)^{\frac{1}{2}} \\
& =: \widetilde{II}_{2,1} + \widetilde{II}_{2,2}.
\end{align*}

On one hand, using \eqref{lem:kernel-estimate-difference-2} from Lemma \ref{lem:kernel-estimate-channilo-type-joint-funct}, we get 
\begin{align*}
\widetilde{II}_{2,1} & \lesssim_{T, L} \sum_{l=0}^{l_0}  (2^{l} r^{1-a})^{-L} r^{(1-a) (L- \frac{(n_1 + n_2)}{2})} |\mathfrak{z}'|^{-\frac{\varkappa n_2}{2}} |B(\mathfrak{z}, 2^{l} sr^{1-a})|^{\frac{1}{2}} \mathcal{M}_{2} f(x)\\
& \lesssim_{T, L} \sum_{l=0}^{l_0}  (2^{l} r^{1-a})^{-L} r^{(1-a) (L- \frac{(n_1 + n_2)}{2})} |\mathfrak{z}'|^{-\frac{\varkappa n_2}{2}} (2^{l} s r^{1-a})^{\frac{n_1 + n_2}{2}} |\mathfrak{z}'|^{\frac{\varkappa n_2}{2}} \mathcal{M}_{2} f(x)\\
& \lesssim_{s} \sum_{l=0}^{l_0} 2^{-l (L-\frac{n_1 + n_2}{2})} \mathcal{M}_{2} f(x) \\ 
& \lesssim_{L} \mathcal{M}_{2} f(x), 
\end{align*}
provided that $L > \frac{n_1+n_2}{2}$. 

On the other hand, using \eqref{lem:kernel-estimate-difference-1} from Lemma \ref{lem:kernel-estimate-channilo-type-joint-funct}, we get
\begin{align*}
\widetilde{II}_{2,2} \lesssim_{T, L} \sum_{l = l_{0}}^{\infty} (2^l r^{1-a})^{-L} r^{(1-a)(L-\frac{Q}{2})} |B(\mathfrak{z}, 2^{l} sr^{1-a})|^{\frac{1}{2}} \mathcal{M}_{2} f(x) & \lesssim_{s} \sum_{l=0}^{l_0} 2^{-l (L - \frac{Q}{2})} \mathcal{M}_{2} f(x) \\ 
& \lesssim_{L} \mathcal{M}_{2} f(x),
\end{align*}
provided that $ L > \frac{Q}{2}$. 

This completes the proof of Theorem \ref{thm:Chanilloy-type}. 
\end{proof}

%%%%%%%%%%%%%%%%%%%%%%%%%%%%%%%%%%
%%%%%%%%%%%%%%%%%%%%%%%%%%%%%%%%%%
%%%%%%%%%%%%%%%%%%%%%%%%%%%%%%%%%%
%%%%%%%%%%%%%%%%%%%%%%%%%%%%%%%%%%
%%%%%%%%%%%%%%%%%%%%%%%%%%%%%%%%%%
%%%%%%%%%%%%%%%%%%%%%%%%%%%%%%%%%%
%%%%%%%%%%%%%%%%%%%%%%%%%%%%%%%%%%

\subsection{Proof of Theorem \ref{thm:joint-fun-Channilo-type}} \label{subsec:Proof-thm:joint-fun-Channilo-type}
Proof of Theorem~\ref{thm:joint-fun-Channilo-type} is a consequence of Theorem~\ref{thm:Chanilloy-type} and good-$\lambda$-inequality. The proof follows from standard arguments. But, for self-containment, we write below a brief sketch. 

\begin{proof}[Proof of Theorem \ref{thm:joint-fun-Channilo-type}]
Recall that it was proved in part (i) of Lemma 4.11 in \cite{Grafakos-Homogeneous-Multilinear} that the following inequality holds: 
\begin{align}
\label{F-S}
\int_{\mathbb{R}^{n_1 + n_2}} \mathcal{M} F(x)^p w(x) \, dx \lesssim \int_{\mathbb{R}^{n_1 + n_2}}\mathcal{M}^{\sharp} F(x)^p w(x)~dx,    
\end{align}
for any $w \in A_{\infty}(\mathbb{R}^{n_1 + n_2})$, $0 < p_{0} < p < \infty$, and for all $F \in L^1_{loc}$ such that $\mathcal{M} F \in L^{p_{0}, \infty}(w)$. 

In our case we are concerned with $2 < p < \infty$ and $w \in A_{p/2}(\mathbb{R}^{n_1 + n_2})$. With $p$ and $w$ fixed, the reverse H\"older's inequality for Muckenhoupt weights implies there is $p_0> 2$ such that $w \in A_{p_0/2}(\mathbb{R}^{n_1 + n_2})$ with $p>p_0 > 2$. 

Now, if we can we prove that $\|m(\cdot, \boldsymbol{L}, \boldsymbol{U}) f\|_{L^{p_0}(w)} \leq C_{m(\cdot, \boldsymbol{L}, \boldsymbol{U}), f, p_0, w} < \infty$, then we can apply the above inequality \eqref{F-S} to $F = |m(\cdot, \boldsymbol{L}, \boldsymbol{U}) f|$ and Theorem \ref{thm:Chanilloy-type} to conclude that 
\begin{align*}
\int_{\mathbb{R}^{n_1 + n_2}} |m(x, \boldsymbol{L}, \boldsymbol{U}) f(x)|^{p} \, w(x) \, dx & \leq \int_{\mathbb{R}^{n_1 + n_2}} \mathcal{M} (m(\cdot, \boldsymbol{L}, \boldsymbol{U}) f)(x)^{p} \, w(x) \, dx \\
& \lesssim \int_{\mathbb{R}^{n_1 + n_2}} \mathcal{M}^{\sharp} ( m(\cdot, \boldsymbol{L}, \boldsymbol{U}) f ) (x)^{p} \, w(x) \, dx \\
& \lesssim \int_{\mathbb{R}^{n_1 + n_2}} \mathcal{M}_{2} f (x)^{p} \, w(x) \, dx \\ 
& \lesssim \int_{\mathbb{R}^{n_1 + n_2}} |f(x)|^{p} \, w(x) \, dx,
\end{align*}
where the last inequality follows from the fact that $\mathcal{M}_{2}: L^{p}(w) \to L^{p}(w)$ is bounded for $w\in A_{p/2}$ with $p> 2$.

So, we are left with showing that $\|m(\cdot, \boldsymbol{L}, \boldsymbol{U}) f\|_{L^{p_0}(w)} \leq C_{m(\cdot, \boldsymbol{L}, \boldsymbol{U}), f, p_0, w} < \infty,$ for any compactly supported bounded function $f$. 

Let $f \in C^{\infty}_{c}(\mathbb{R}^{n_1 + n_2})$ be supported on $B(\mathfrak{z}, r)$. Without loss of generality we may assume that $r > 1$. Since $m(x, \boldsymbol{L}, \boldsymbol{U})$ is bounded on $L^2(\mathbb{R}^{n_1+n_2})$, we have $m(x, \boldsymbol{L}, \boldsymbol{U})f\in L^2(\mathbb{R}^{n_1+n_2})$ implying $\mathcal{M}(m(x, \boldsymbol{L}, \boldsymbol{U})f)\in L^2(\mathbb{R}^{n_1+n_2})$. Therefore, an application of \eqref{F-S} with $w=1$ and the pointwise domination $\mathcal{M}^{\sharp}(m(x, \boldsymbol{L}, \boldsymbol{U})f)(x)\lesssim \mathcal{M}_{2}f(x)$, we obtain $m(x, \boldsymbol{L}, \boldsymbol{U})f\in L^q(\mathbb{R}^{n_1+n_2})$ for all $q>2$.

Now, 
\begin{align}\label{eq:local-weighted-bound}
& \int_{B(\mathfrak{z}, 2 C_0 r)} |m(x, \boldsymbol{L}, \boldsymbol{U}) f(x)|^{p_0} w(x) \, dx \\
\nonumber & \leq \left(\int_{B(\mathfrak{z}, 2 C_0 r)} w(x)^{1 + \epsilon} \, dx \right)^{\frac{1}{1 + \epsilon}} \left(\int_{ B( \mathfrak{z}, 2 C_0 r)} |m(x, \boldsymbol{L}, \boldsymbol{U}) f(x)|^{p_{0} \frac{1 + \epsilon}{\epsilon}} \, dx \right)^{\frac{\epsilon}{1 + \epsilon}},
\end{align} 
where $\epsilon > 0$ is chosen such that the reverse H\"older's inequality is satisfied and the first term of right side of \eqref{eq:local-weighted-bound} is finite. Since $m(x, \boldsymbol{L}, \boldsymbol{U})f\in L^q(\mathbb{R}^{n_1+n_2})$ for all $q>2$, we conclude that the second term in the right side of \eqref{eq:local-weighted-bound} is finite. 

Next, in order to estimate 
$$\int_{\mathbb{R}^{n_1 + n_2} \setminus B(\mathfrak{z}, 2 C_0 r)} |m(x, \boldsymbol{L}, \boldsymbol{U}) f(x)|^{p_0} w(x) \, dx,$$ 
note first that $d(x, \mathfrak{z}) \geq 2 C_0 r$ and $ d(y, \mathfrak{z}) < r$ together imply $d(x,y) \geq \frac{1}{2 C_0} d(x, \mathfrak{z})$. Fix some $x$, and let $l$ be such that $d(x, \mathfrak{z}) \simeq 2^{l}r$. Now, if $2^{l} r \geq \frac{1}{2} |\mathfrak{z}'|$ then we proceed as follows: 
\begin{align*}
|m_j(x, \boldsymbol{L}, \boldsymbol{U}) f(x)| & \leq \int_{B(\mathfrak{z}, r)} |K_j(x, y)||f(y)| \, dy \\
& \leq \left(\int_{B(\mathfrak{z}, r)} |K_j(x, y)|^2  d(x, y)^{Q+1}\right)^{1/2} \left( \int_{B(\mathfrak{z}, r)} d(x, y))^{-(Q + 1)} |f(y)|^2 \, dy \right)^{1/2} \\
& \lesssim 2^{-\frac{j (1 - a)}{4}} \frac{|B(\mathfrak{z}, 2^l r)|^{1/2}}{(2^l r)^{(Q+1)/2}} \mathcal{M}_{2}f(x) \lesssim 2^{-\frac{j (1-a)}{4}} \mathcal{M}_{2}f(x).
\end{align*} 
Similarly, if $2^{l} r \leq \frac{1}{2} |\mathfrak{z}'|$, we just use $d(x, y)^{n_1+n_2+1}$ in place of $d(x, y)^{Q+1}$ to conclude that 
\begin{align*}
|m_j(x,\boldsymbol{L}, \boldsymbol{U})f(x)|\lesssim 2^{-\frac{j(1-a)}{4}} |\mathfrak{z}'|^{-\frac{\varkappa n_2}{2}} \frac{(2^{l}r)^{\frac{n_1 + n_2}{2}}}{(2^{l}r)^{(\frac{n_1 + n_2 + 1}{2})}} |\mathfrak{z}'|^{\frac{\varkappa n_2}{2}}  \mathcal{M}_{2} f(x) \lesssim_{T,s} 2^{-\frac{j(1-a)}{4}} 2^{-l/2} \mathcal{M}_{2} f(x).
\end{align*}

In view of the above estimates, we get 
\begin{align*}
\int_{d(x, \mathfrak{z})\geq 2C_0r} |m(x, \boldsymbol{L}, \boldsymbol{U})f(x)|^{p_0} w(x) \, dx & \lesssim \int_{d(x, \mathfrak{z})\geq 2C_0r} (\mathcal{M}_{2}f(x))^{p_0} w(x)   \, dx\\
& \lesssim \int |f(x)|^{p_0} w(x) \, dx \\ 
& \lesssim w(B(\mathfrak{z}, r)) \, \|f\|_{L^{\infty}},
\end{align*}
where again the last inequality follows from the fact that $\mathcal{M}_{2}: L^{p_0}(w) \to L^{p_0}(w)$ is bounded for $w\in A_{p_0/2}$ with $p_0 > 2$. 

This completes the proof of the Theorem \ref{thm:joint-fun-Channilo-type}.
\end{proof}

%%%%%%%%%%%%%%%%%%%%%%%%%%%%%%%%%%%%%%%%%%%%%%%%%%%%%%
%%%%%%%%%%%%%%%%%%%%%%%%%%%%%%%%%%%%%%%%%%%%%%%%%%%%%%
%%%%%%%%%%%%%%%%%%%%%%%%%%%%%%%%%%%%%%%%%%%%%%%%%%%%%%
%%%%%%%%%%%%%%%%%%%%%%%%%%%%%%%%%%%%%%%%%%%%%%%%%%%%%%
%%%%%%%%%%%%%%%%%%%%%%%%%%%%%%%%%%%%%%%%%%%%%%%%%%%%%%
%%%%%%%%%%%%%%%%%%%%%%%%%%%%%%%%%%%%%%%%%%%%%%%%%%%%%%
%%%%%%%%%%%%%%%%%%%%%%%%%%%%%%%%%%%%%%%%%%%%%%%%%%%%%%
%%%%%%%%%%%%%%%%%%%%%%%%%%%%%%%%%%%%%%%%%%%%%%%%%%%%%%
%%%%%%%%%%%%%%%%%%%%%%%%%%%%%%%%%%%%%%%%%%%%%%%%%%%%%%
%%%%%%%%%%%%%%%%%%%%%%%%%%%%%%%%%%%%%%%%%%%%%%%%%%%%%%

\section*{Acknowledgements} 
SB and RG were supported in parts from their individual INSPIRE Faculty Fellowships from DST, Government of India. RB was supported by the Senior Research Fellowship from CSIR, Government of India. AG was supported in parts by the  INSPIRE Faculty Fellowship of RG and institute postdoctoral fellowships from IISER Bhopal and Centre for Applicable Mathematics, TIFR.

%%%%%%%%%%%%%%%%%%%%%%%%%%%%%%%%%%%%%%%%%%%%%%%%%%%%%%
%%%%%%%%%%%%%%%%%%%%%%%%%%%%%%%%%%%%%%%%%%%%%%%%%%%%%%
%%%%%%%%%%%%%%%%%%%%%%%%%%%%%%%%%%%%%%%%%%%%%%%%%%%%%%
%%%%%%%%%%%%%%%%%%%%%%%%%%%%%%%%%%%%%%%%%%%%%%%%%%%%%%
%%%%%%%%%%%%%%%%%%%%%%%%%%%%%%%%%%%%%%%%%%%%%%%%%%%%%%
%%%%%%%%%%%%%%%%%%%%%%%%%%%%%%%%%%%%%%%%%%%%%%%%%%%%%%
%%%%%%%%%%%%%%%%%%%%%%%%%%%%%%%%%%%%%%%%%%%%%%%%%%%%%%
%%%%%%%%%%%%%%%%%%%%%%%%%%%%%%%%%%%%%%%%%%%%%%%%%%%%%%
%%%%%%%%%%%%%%%%%%%%%%%%%%%%%%%%%%%%%%%%%%%%%%%%%%%%%%
%%%%%%%%%%%%%%%%%%%%%%%%%%%%%%%%%%%%%%%%%%%%%%%%%%%%%%

\providecommand{\bysame}{\leavevmode\hbox to3em{\hrulefill}\thinspace}
\providecommand{\MR}{\relax\ifhmode\unskip\space\fi MR }
% \MRhref is called by the amsart/book/proc definition of \MR.
\providecommand{\MRhref}[2]{%
  \href{http://www.ams.org/mathscinet-getitem?mr=#1}{#2}
}
\providecommand{\href}[2]{#2}

\end{document}